\def\CC {{\mathbb C}}     %% complex numbers
\def\EE {{\mathbb E}}     %% disc
\def\NN {{\mathbb N}}     %% natural numbers
\def\RR {{\mathbb R}}     %% real numbers
\def\ZZ {{\mathbb Z}}     %% integers
\def\ring#1{\ifmmode \mathaccent'027 #1\else \rm\accent'027 #1\fi}
\newcommand{\rI}{{\mathrm I}}
\newcommand{\ri}{{\mathrm i}}
\def\ol  {\overline}
\def\Lw  {\Longrightarrow}
\def\lw  {\longrightarrow}
\def\lo  {\longmapsto}
\def\wt  {\widetilde}
\def\mc {\mathcal}
\def\mk {\mathfrak}
\def\ms {\mathsf}
\def\be  {\begin{eqnarray}}
\def\ee  {\end{eqnarray}}
\def\ben {\begin{eqnarray*}}
\def\een {\end{eqnarray*}}
\def\bpr {\begin{proof}[Proof]}
\def\epr {\end{proof}}
\def\bsp {\begin{split}}
\def\esp {\end{split}}
\def\bprr {\begin{proof}[solution]}
\def\bpru {\begin{proof}[hint]}
\def\bpro {\begin{proof}[answer]}
\def\bcd {\begin{CD}}
\def\ecd {\end{CD}}
\newcommand{\abs}[1]{\left\vert#1\right\vert}
\newcommand{\scal}[1]{\left\langle#1\right\rangle}
\newcommand{\norm}[1]{\left\Vert#1\right\Vert}
\newcommand{\sco}[1]{\left(#1\right)}
\newcommand{\ksco}[1]{\left[#1\right]}
\newtheorem{theorem}{Theorem}[section]
\newtheorem{lemma}[theorem]{Lemma}
\newtheorem{prop}[theorem]{Proposition}
\newtheorem{coro}[theorem]{Corollary}
\newtheorem{remark}[theorem]{Remark}
\newtheorem{df}[theorem]{Definition}
\newtheorem{ex}[theorem]{Example}
\newtheorem*{ack}{Acknowledgements}
\begin{document}

\title[Homogeneous Hypercomplex Structures I ]\
{Homogeneous Hypercomplex Structures I - the compact Lie groups}

\author{George Dimitrov}
\address[Dimitrov]{University of Sofia "St. Kl. Ohridski"\\
Department of Mathematics and Informatics,\\ Blvd. James Bourchier
5,\\ 1164 Sofia, Bulgaria,}
\email{gkid@abv.bg}

\author{Vasil Tsanov}
\address[Tsanov]{University of Sofia "St. Kl. Ohridski"\\
department of Mathematics and Informatics,\\ Blvd. James Bourchier
5,\\ 1164 Sofia, Bulgaria,}
\email{tsanov@fmi.uni-sofia.bg}

\begin{abstract}
We introduce a remarkable subset "the stem" of the set of positive roots of a reduced root system. The stem
 determines several interesting decompositions of the corresponding reductive Lie algebra.
 It gives also a nice simple three dimensional subalgebra and a "Cayley transform". In the present paper we apply the above devices to give a complete classification of invariant hypercomplex structures on compact Lie groups.
\end{abstract}

\maketitle
\setcounter{tocdepth}{2}
\tableofcontents

%\normalsize

\section{Introduction}

This paper is the first in a series of two, whose purpose is to give a description of compact hypercomplex homogeneous manifolds with a transitive action of a compact group.
The classification and proofs are entirely based on the structure theory of reductive Lie groups, it turns out that in the language of roots we get surprisingly clear answers to the natural questions.

  So we start with a complex manifold $(M,I)$ with a transitive compact group of biholomorphic automorphisms and look for another invariant complex structure $J$ on $M$, such that $IJ = -JI$ (we say shortly that $J$ {\bf matches} $I$). We call the complex structure $I$ {\bf admissible} if there exists a matching $J$.

 Our classification problem splits into two:

{\bf Problem  A}: In the class of compact complex homogeneous manifolds $(M, I)$, discern those which are admissible.

{\bf Problem B} Given an admissible complex structure $I$ on $M$, describe the class of all homogeneous hypercomplex structures on $M$ (up to  equivalence) of which $I$ is one of the complex structures.

In the present first paper (Section \ref{ste}) we introduce and discuss a remarkable invariant of reduced root systems, which we call the "stem" of $\Delta^+$.
The stem is a certain maximal strongly orthogonal set of roots, which is determined by $\Delta$ up to the action of the Weyl group (see Theorem \ref{ir1}).

Also in this paper we use the stem combinatorics to solve Problem A (see Theorem \ref{mai1} and corollaries) and Problem B (see Theorem \ref{mai2}) when our homogeneous space is the underlying manifold of a compact Lie group.

The idea to use a highest root to construct homogeneous "quaternionic" spaces goes back to Wolf \cite{Wolf65}. A wide class of examples of such structures was given by Spindel et al \cite{SpSeTrPr88} and Joyce \cite{Joyce92} where many ideas of the present paper may be traced in implicit form.

It is well known (\cite{Samelson53},\cite{Wang54}) that each compact even dimensional Lie group carries a homogeneous complex structure. A comprehensive description of the regular homogeneous complex structures on reductive Lie groups (not necessarily compact) in terms of structure theory may be found in Snow \cite{Snow86}.

For a compact Lie group our problems are easily reduced to
determining the hypercomplex structures on the Lie algebra, which
are integrable in the sense that the Nijenhuis tensor vanishes. We
show in particular that a compact simple Lie group ${\bf U}$ admits a left
invariant hypercomplex structure if and only if ${\bf U} =
SU(2k+1),\ k \geq 1$.

When our compact Lie algebra $\mk{u}$ is  "nearest to semisimple"\footnote{This notion is well defined in terms of the stem, it means that the rank of our compact Lie algebra is exactly twice the number of elements in the stem (see Corollary \ref{qqq3} and Theorem \ref{mai1}).}, a homogeneous complex structure may participate in at most one hypercomplex structure. More precisely: we use the stem to define a Cayley transform  of the Lie algebra, which determines completely the hypercomplex structure.

\subsection{Conventions and notations} \label{scn}

    Here we fix notations and recall well known facts, to be used throughout the paper.

 We shall denote by $\mk{u}$ a compact Lie algebra. Then the complexification  $\mk{u}^\CC = \mk{g} = \mk{g}_s \oplus \mk{c}$ is a reductive complex Lie algebra,
 whose semisimple ideal is $\mk{g}_s$, and the center is $\mk{c} \cong \CC^r$. We denote by $\tau$ conjugation of $\mk{g}$ w.r. to the real form $\mk{u}$,
 so $\tau$ is an antilinear involution of $\mk{g}$, such that $\mk{u} = \mk{g}^\tau = \mk{u}_s \oplus \mk{c}_u$.
 We denote by ${\bf U_s}$ and ${\bf G}_s$ the corresponding simply connected Lie groups,
 by ${\bf U} = {\bf U_s}\times{\bf C}_u, \ {\bf G} = {\bf G_s}\times {\bf C}$ - the corresponding reductive Lie groups (${\bf C}_u$ is a compact torus).

For $X,Y \in \mk{g}$, we denote by $\scal{X,Y}$ an ad-invariant symmetric bilinear form such that its restriction to the compact real form $\mk{u}$
is negative definite. We assume that $\scal{.,.}$ coincides with   the Killing form on the semisimple part $\mk{g}_s$.

Let $\mk{h}$ be a $\tau$-stable Cartan subalgebra of $\mk{g}$, then $\mk{h} = \mk{h}_s\oplus \mk{c}$, where $\mk{h}_s$ is a Cartan subalgebra of $\mk{g}_s$.
Let ${\bf H}$ be the corresponding Cartan subgroup of ${\bf G}$.
We denote by $\Delta$ the root system of $\mk{g}_s$ w.r. to $\mk{h}_s$. For
$\alpha \in \Delta$ we denote by $h_{\alpha}$ the element of
$\mk{h}$  determined by $\scal{H,h_{\alpha}} = \alpha(H)$ for all
$H \in \mk{h}$, and we denote
$$
H_\alpha = \frac{2}{\scal{\alpha,\alpha}}h_\alpha,\qquad \mk{g}(\alpha) = \{X \in \mk{g}\vert \ adH(X) = \alpha(H)X,\ H \in \mk{h} \}.
 $$
Further for $\alpha,\beta \in \Delta $ we denote
$$C(\beta,\alpha) = \frac{2\scal{\beta,\alpha}}{\scal{\alpha,\alpha}},\qquad s_\alpha(\beta) = \beta - C(\beta,\alpha)\alpha.
$$
The map $\beta \lo s_\alpha(\beta)$ is the reflection along $\alpha$ (see e.g. \cite{Helgason78}, ch III).

By ${\bf Aut}(\Delta)$ we denote the group of all the elements in $GL(\mk{h}_{\RR}^*)$, which leave the set $\Delta \subset \mk{h}_s^*$
 invariant and the center $\mk{c}$ pointwise fixed.

The {\bf Weyl group} ${\bf W} = {\bf W}(\Delta)$ is the (normal)
subgroup of ${\bf Aut}(\Delta)$, which is generated by all
reflections $s_\alpha, \alpha\in \Delta$. The Weyl group acts
simply transitively on the set of all bases $\Pi$ of $\Delta$.
 For a fixed basis $\Pi$ of the root system $\Delta$ we denote
${\bf Aut}_{\Pi}(\Delta) = \{\phi \in {\bf Aut}(\Delta)\vert\ \phi(\Pi) = \Pi\}$.

The adjoint action of the Weyl group ${\bf W}$ on $\mk{h}$ is defined for $s\in {\bf W} $ by
$\alpha(s(H)) = s^{-1}(\alpha)(H),$ $ H \in \mk{h}$. For any $\gamma \in \Delta$ we have
$s_\gamma(H) =  H - \gamma(H) H_\gamma, \quad H \in \mk{h}$.
 The normalizer ${\bf N}\subset {\bf G}$ of the Cartan subalgebra $\mk{h}$ is $
{\bf N} = {\bf N}(\mk{h}) = \{g \in {\bf G} \vert\ Ad g(\mk{h}) = \mk{h}\},\quad {\bf N}_u = {\bf N}\cap{\bf U}$.
The following   exact sequence is a fundamental fact of structure theory
 \begin{equation}\label{elm1}
\bcd
1 @>>>{\bf H}_u @>\iota>> {\bf N}_u @>>> {\bf W} @>>> 1.
\ecd
\end{equation}
\subsubsection{Weyl - Chevalley basis}
We (may) choose elements $E_\alpha\in \mk{g}(\alpha)$, so that the structural constants are integer, i.e., for $\alpha, \beta, \alpha + \beta \in \Delta$:
\begin{gather}
[E_\alpha, E_{- \alpha}] = H_\alpha, \quad [E_{\alpha}, E_{\beta}] = N_{\alpha,\beta}E_{\alpha + \beta},\nonumber\\
\label{hel1}\\
N_{\alpha,\beta} = - N_{-\alpha,-\beta},\quad \abs{N_{\alpha,\beta}} = 1 - p,\nonumber
\end{gather}
where $\beta + n \alpha,\quad p \leq n \leq q$ is the $\alpha$-series of $\beta$ (see e.g. \cite{Helgason78}, ch III).

It is  convenient to extend \eqref{hel1} and {\bf define the symbol} $ N_{\alpha, \beta}$ for any couple of functionals $\alpha, \beta \in \mk{h}^*$ by
\be \label{structural constants}
N_{\alpha,\beta}= 0, \hbox{ if } \alpha \not \in \Delta \mbox{ or } \beta \not \in \Delta \mbox{ or } \alpha + \beta \not \in \Delta.
\ee

In the above basis for $\mk{g}_s$, the {\bf contragredience} involution  $\theta \in {\bf Aut}(\mk{g})$ is the complex linear map determined by
\begin{gather}\label{star2}
\theta(E_\alpha) = -E_{-\alpha},\ \theta(H) = - H,\quad  \alpha \in \Delta,\ H\in \mk{h}.
\end{gather}
The conjugation $\tau$ is the {\bf antilinear involution} given  by
\begin{gather*}
\tau(E_{\alpha}) = - E_{-\alpha}\quad \tau(H_\alpha) = - H_\alpha,\quad \tau((z_1,\dots,z_r)) = (-\ol{z}_1,\dots,-\ol{z}_r).
\end{gather*}
where $(z_1,\dots,z_r)\in\mk{c}$.

We have $\mk{u} = \mk{g}^\tau = \{X\in \mk{g}\vert\ \tau(X) = X \}$. As $\mk{h}$ is $\tau$-invariant:
\begin{gather} \label{re1}
\alpha(\tau(H)) = -\ol{\alpha(H)},\ \alpha \in \Delta,  H \in \mk{h}.
\end{gather}
We now fix a {\bf basis} $\Pi = \{\alpha_1,\dots,\alpha_l\}$ of $\Delta$, which gives us a {\bf system of positive roots} $\Delta^+$. We denote
\begin{gather*}
     \mk{n}^\pm = \bigoplus_{\alpha \in \Delta^\pm} \mk{g}(\alpha),\quad \mk{g} = \mk{h} \oplus \mk{n}^+ \oplus \mk{n}^-,\quad \mk{b}^\pm = \mk{h} \oplus \mk{n}^\pm.
\end{gather*}
The {\bf Borel subalgebra} $\mk{b}^+$ is a maximal solvable subalgebra of $\mk{g}$.

For any $\gamma \in \Delta$ we denote
\begin{gather*}
sl_\gamma(2) = span_\CC\{E_\gamma,E_{-\gamma}, H_\gamma\}\subset \mk{g},\quad su_\gamma(2) = \mk{u}\cap sl_\gamma(2).
\end{gather*}
\begin{df}
A subalgebra $\mk{a} \subset \mk{g}$  is called $\mk{h}$-{\bf regular} if its normalizer
$\mk{n}(\mk{a})$ contains a Cartan subalgebra $\mk{h}$ of $\mk{g}$. A subalgebra $\mk{a}$ is called {\bf regular} if it is $\mk{h}$-regular for some
Cartan subalgebra $\mk{h}$.
\end{df}
It is well known that if
$\mk{a}$ is an $\mk{h}$-regular subalgebra of $\mk{g}$, then we have a decomposition:
\begin{gather}\label{li3}
 \mk{a} = (\mk{h}\cap\mk{a}) \oplus \bigoplus_{\alpha \in \Theta}
\mk{g}({\alpha}),\mbox{ where }\quad \Theta = \{ \alpha \in \Delta \vert\ \mk{g}({\alpha}) \subset\mk{a}\}.
\end{gather}

\subsection{Complex structures on a compact Lie group } \label{css}

Any left invariant almost complex structure on the manifold ${\bf U}$, determines
(and is determined by) a complex structure $I: \mk{u} \rightarrow \mk{u}$. The obvious condition for the existence of a complex structure on $\mk{u}$ is  even dimension, and this is the same as even rank.

 It is clear that an invariant (hyper)comp;ex structure on ${\bf U}$ determines and is determined by an invariant (hyper)complex structure on the universal covering group $\wt{\bf U}  \cong {\bf U}_s \times \RR^r$. We have ${\bf U} = \wt{\bf U}/\Lambda$, where $\Lambda$ is some central lattice in $\wt{\bf U}$. It is well known that equivalent complex structures on $\wt{\bf U}$ may project to unequivalent complex structures on ${\bf U}$. In this paper we concentrate rather on classifying up to equivalence the invariant hypercomplex structures  on $\wt{\bf U}$, which is done in terms of data on the Lie algebra $\mk{u}$. The dependence on $\Lambda$ is well understood in the literature.

Let $I$ be any complex structure on $\mk{u}$. We extend $I$ to
$\mk{g}$ (and go on to denote the extension by $I$) setting $I(\ri
X) = \ri  IX$. Thus on $\mk{g}$ we have $I\circ \tau = \tau \circ
I$.
\begin{df}\label{cs2}
Let  $I$ be a complex structure on $\mk{u}$. We denote
 \begin{gather*}
 \mk{m}^+_I = \{X\in \mk{g}\vert IX =  \ri X\} = \{X  - \ri IX\vert X \in \mk{u}\};\\
 \mk{m}^-_I = \{X\in \mk{g}\vert IX =  - \ri X\} = \{X  + \ri IX\vert X \in \mk{u}\} = \tau( \mk{m}^+_I).
 \end{gather*}
 \end{df}
In other words: $\mk{m}^+_{I},\  \mk{m}^-_I$ are respectively the $(1,0)$ and $(0,1)$ components (w.r. to the left invariant almost complex structure $I$)
of the complexified tangent space to ${\bf U}$ at the unit element. It is also well known (and obvious) that
\begin{gather}\label{oo8}
\mk{g} = \mk{m}^+_I \oplus \mk{m}^-_I.
\end{gather}
If $I$ is a complex structure on $\mk{u}$ we define its {\bf Nijenhuis tensor}:
\begin{gather}\label{cs3}
N_I(X,Y) = [IX,IY] - I[I X, Y] - I[X,IY] - [X,Y],\quad X,Y \in \mk{u}.
\end{gather}
It is often convenient to "complexify" the Nijenhuis tensor by allowing $X, Y$ in the above formula to vary in $\mk{g} = \mk{u}^\CC$.
We denote the complexified Nijenhuis tensor by the same letter.

 The following proposition is well known (see e.g. Snow \cite{Snow86})
 \begin{prop}\label{li1}
The left invariant almost complex structure induced by $I$ on ${\bf U}$ is a complex structure if and only if,
any one of the following conditions is satisfied:

a) $\mk{m}^+_I$ is a subalgebra of $\mk{g}$; \quad b)  $N_I \equiv 0$.
\end{prop}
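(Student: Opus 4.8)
The plan is to combine the Newlander--Nirenberg theorem with a short algebraic computation on $\mk{g}$. Since the almost complex structure is left invariant, so is its Nijenhuis tensor, and a left invariant tensor is completely determined by its values on left invariant vector fields, i.e.\ by the algebraic tensor $N_I$ of \eqref{cs3} on $\mk{u}$ (here one uses that the Lie bracket of left invariant vector fields is the Lie algebra bracket). By the Newlander--Nirenberg theorem the induced almost complex structure on ${\bf U}$ is a complex structure precisely when this tensor vanishes, which is exactly condition (b). It therefore remains to prove the purely algebraic equivalence (a)$\iff$(b), and for this I would work with the $\CC$-bilinear extension of $N_I$ to $\mk{g}=\mk{m}^+_I\oplus\mk{m}^-_I$ and analyse it block by block according to the type decomposition \eqref{oo8}.

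First I would record two elementary identities obtained by substituting $IZ=\ri Z$ and $IW=\pm\ri W$ directly into \eqref{cs3}. For $Z,W\in\mk{m}^+_I$ one computes
\[
N_I(Z,W) \;=\; -2\bigl([Z,W]+\ri\,I[Z,W]\bigr)\;=\;-4\,[Z,W]^{-},
\]
where $[Z,W]^{-}$ is the $\mk{m}^-_I$-component of the bracket; the second equality uses that $V+\ri\,IV=2V^-$ for every $V\in\mk{g}$, which is immediate from $IV^{\pm}=\pm\ri V^{\pm}$. Hence $N_I$ vanishes on $\mk{m}^+_I\times\mk{m}^+_I$ if and only if $[\mk{m}^+_I,\mk{m}^+_I]\subset\mk{m}^+_I$, i.e.\ if and only if $\mk{m}^+_I$ is a subalgebra. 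A second, equally short substitution with $Z\in\mk{m}^+_I,\ W\in\mk{m}^-_I$ gives $N_I(Z,W)=0$ identically; this is the automatic vanishing of the mixed $(1,1)$-component of the Nijenhuis tensor of any almost complex structure.

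It then remains to deal with pairs from $\mk{m}^-_I\times\mk{m}^-_I$, and here I would invoke the conjugation $\tau$. Since $\tau$ is a real Lie algebra automorphism commuting with $I$ and interchanging $\mk{m}^+_I$ and $\mk{m}^-_I$, one has $\tau\bigl(N_I(Z,W)\bigr)=N_I(\tau Z,\tau W)$, so the restriction of $N_I$ to $\mk{m}^-_I\times\mk{m}^-_I$ is the $\tau$-image of its restriction to $\mk{m}^+_I\times\mk{m}^+_I$ and vanishes exactly when the latter does. Because $N_I$ is $\CC$-bilinear and antisymmetric, its vanishing on these three blocks forces $N_I\equiv0$ on all of $\mk{g}$; combined with the first identity this yields (b)$\iff$(a) and finishes the argument.

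The genuinely deep ingredient is the Newlander--Nirenberg theorem, which turns integrability on the manifold into the tensorial condition (b); once that is invoked, everything is bookkeeping with the type decomposition. I expect the step most in need of explicit justification to be the reduction of the \emph{manifold} Nijenhuis tensor to the \emph{algebraic} $N_I$ on $\mk{u}$ --- that is, the remark that left invariance identifies the two tensors --- rather than the subsequent block computation, which is routine.
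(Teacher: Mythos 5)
Your proof is correct, but there is nothing in the paper to compare it against: the paper states Proposition \ref{li1} as a well-known fact and simply cites Snow \cite{Snow86}, giving no proof of its own. Your argument is the standard one and all the steps check out. The identification of the manifold Nijenhuis tensor with the algebraic tensor $N_I$ on $\mk{u}$ via left invariance, and the appeal to Newlander--Nirenberg for the equivalence of integrability with $N_I\equiv 0$, are exactly the expected ingredients (one could note that a left invariant almost complex structure is real-analytic, so the easier real-analytic integrability theorem would suffice, but Newlander--Nirenberg is certainly valid). The block computation is right: for $Z,W\in\mk{m}^+_I$ one gets $N_I(Z,W)=-2\bigl([Z,W]+\ri\,I[Z,W]\bigr)=-4[Z,W]^-$, so vanishing on the $(+,+)$ block is precisely the subalgebra condition; the mixed block vanishes identically; and the equivariance $\tau\bigl(N_I(X,Y)\bigr)=N_I(\tau X,\tau Y)$ holds because $\tau$ is a real Lie algebra automorphism of $\mk{g}$ commuting with the complexified $I$, which carries the $(+,+)$ block to the $(-,-)$ block. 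Two small points you use implicitly and could state: the complex-bilinear extension of $N_I$ vanishes on $\mk{g}$ if and only if $N_I$ vanishes on the real form $\mk{u}$, and the antisymmetry of $N_I$ disposes of the $(-,+)$ block. With those remarks made explicit, the proof is complete.
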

In this paper, a complex structure on the compact Lie algebra $\mk{u}$ will be
called  {\bf integrable}, if it satisfies the conditions from Proposition \ref{li1}.
\begin{df}
Two complex structures $I, I'$ on $\mk{u}$ will be called {\bf equivalent} if there exists an automorphism $\xi$ of $\mk{u}$ such that $\xi\circ I = I'\circ \xi$.
\end{df}

\begin{df}
We shall say that a complex structure $I$ on a Lie algebra $\mk{u}$ is {\bf regular} if $\mk{m}^+_I$ is a regular subalgebra w.r. to some $\tau$-stable
Cartan subalgebra $\mk{h}$ of $\mk{u}^\CC$.
\end{df}

Since ${\bf U}$ is compact, we may assume that $I$ is  a regular complex structure (see Snow, \cite{Snow86}). Throughout
the paper $\mk{h}$ will denote a $\tau$-stable Cartan subalgebra in the normalizer of
$\mk{m}^+_I$.

 Let $\Delta \subset (\mk{h})^{*} $ be the root system of $\mk{g}$ w.r. to $\mk{h}$. We have

\begin{prop} \label{lem1} An integrable complex structure $I$ on $\mk{u}$
determines a system of positive roots $\Delta^+$, and
a subspace $\mk{h}^+ = \mk{m}^+_I\cap\mk{h} \subset \mk{h}$, such that
\begin{gather*}
\mk{m}^+_I = \mk{h}^+ \oplus \mk{n}^+, \quad \mk{h} = \mk{h}^+
\oplus \mk{h}^- ,\mbox{ where }   \mk{h}^- = \tau(\mk{h}^+) = \mk{m}^-_I\cap\mk{h} .
\end{gather*}
In particular $dim(\mk{h}^+) = \frac{1}{2} dim(\mk{h})$.
\end{prop}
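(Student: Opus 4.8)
The plan is to combine the regularity of $I$ with the $\tau$-symmetry relating $\mk{m}^+_I$ and $\mk{m}^-_I$, and then read off the root data from the decomposition \eqref{oo8}. Since $I$ is regular, $\mk{m}^+_I$ is an $\mk{h}$-regular subalgebra, so \eqref{li3} applies and gives
\[
\mk{m}^+_I = \mk{h}^+ \oplus \bigoplus_{\alpha \in \Theta}\mk{g}(\alpha), \qquad \mk{h}^+ := \mk{h}\cap\mk{m}^+_I,\quad \Theta := \{\alpha\in\Delta \mid \mk{g}(\alpha)\subset\mk{m}^+_I\}.
\]
The first thing I would record is the elementary fact that $\tau(\mk{g}(\alpha)) = \mk{g}(-\alpha)$: for $H\in\mk{h}$ and $X\in\mk{g}(\alpha)$ one computes $[H,\tau X] = \tau([\tau H,X]) = \overline{\alpha(\tau H)}\,\tau X = -\alpha(H)\,\tau X$, the last step using \eqref{re1} in the form $\overline{\alpha(\tau H)}=-\alpha(H)$. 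Applying $\tau$ to the displayed decomposition and using $\mk{m}^-_I = \tau(\mk{m}^+_I)$ then yields
\[
\mk{m}^-_I = \mk{h}^- \oplus \bigoplus_{\alpha\in -\Theta}\mk{g}(\alpha), \qquad \mk{h}^- := \tau(\mk{h}^+) = \mk{h}\cap\mk{m}^-_I.
\]

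Next I would feed these two decompositions into the direct sum $\mk{g} = \mk{m}^+_I\oplus\mk{m}^-_I$ of \eqref{oo8}. Since the right-hand sides are already written as sums of $\mathrm{ad}\,\mk{h}$-weight spaces, comparing with the canonical weight decomposition $\mk{g} = \mk{h}\oplus\bigoplus_{\alpha\in\Delta}\mk{g}(\alpha)$ and using its uniqueness is decisive. The weight-zero part forces $\mk{h} = \mk{h}^+\oplus\mk{h}^-$, while each nonzero weight space $\mk{g}(\alpha)$ must occur exactly once among the summands: overlap $\Theta\cap(-\Theta)\ne\emptyset$ would place some $\mk{g}(\alpha)$ inside $\mk{m}^+_I\cap\mk{m}^-_I=0$, and a root lying in neither $\Theta$ nor $-\Theta$ would leave $\mk{g}(\alpha)$ out of the sum. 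Hence $\Delta = \Theta\sqcup(-\Theta)$.

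It remains to upgrade the partition $\Delta=\Theta\sqcup(-\Theta)$ to the statement that $\Theta$ is a system of positive roots. Here I would invoke that $\mk{m}^+_I$ is a subalgebra (Proposition \ref{li1}): if $\alpha,\beta\in\Theta$ and $\alpha+\beta\in\Delta$ then $0\ne[\mk{g}(\alpha),\mk{g}(\beta)]=\mk{g}(\alpha+\beta)\subset\mk{m}^+_I$, so $\alpha+\beta\in\Theta$ and $\Theta$ is closed. A closed subset satisfying $\Delta=\Theta\sqcup(-\Theta)$ is precisely a positive system, which I denote $\Delta^+$; then $\bigoplus_{\alpha\in\Theta}\mk{g}(\alpha) = \mk{n}^+$ and the first decomposition becomes $\mk{m}^+_I = \mk{h}^+\oplus\mk{n}^+$. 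The dimension count is then immediate: $\tau$ restricts to an antilinear isomorphism $\mk{h}^+\to\mk{h}^-$, so $\dim_\CC\mk{h}^+ = \dim_\CC\mk{h}^-$, and together with $\mk{h}=\mk{h}^+\oplus\mk{h}^-$ this gives $\dim\mk{h}^+=\tfrac12\dim\mk{h}$.

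I expect the main obstacle to be this last identification step, that $\Theta$ is genuinely a positive system rather than an arbitrary $\pm$-partition of $\Delta$. Closedness is exactly the property distinguishing the two, and it is precisely where the integrability of $I$ (the subalgebra property of $\mk{m}^+_I$) is used; everything else is bookkeeping with the $\tau$-symmetry and the uniqueness of the weight decomposition.
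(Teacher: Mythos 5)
Your proof is correct and follows essentially the same route as the paper's: regularity gives the decomposition \eqref{li3}, the splitting \eqref{oo8} forces $\Theta$ to contain exactly one root from each pair $\{\alpha,-\alpha\}$, and the subalgebra property of $\mk{m}^+_I$ (closedness of $\Theta$) together with the standard Bourbaki fact identifies $\Theta$ as a positive system $\Delta^+$. The only divergence is in the bookkeeping for the $\pm$-partition: the paper derives disjointness from $H_\alpha=[E_\alpha,E_{-\alpha}]\in\mk{m}^+_I$ with $\tau(H_\alpha)=-H_\alpha$, and covering from the dimension count $\dim\mk{m}^+_I=\tfrac{1}{2}\dim\mk{g}$, whereas you obtain both from $\tau(\mk{g}(\alpha))=\mk{g}(-\alpha)$ and the uniqueness of the weight decomposition --- an equally valid, slightly cleaner variant of the same argument.
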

\begin{proof}
From regularity of $I$ we have the decomposition \eqref{li3}.

The Cartan subalgebra $\mk{h}$ is $\tau$-invariant, whence
$\mk{g} = \mk{m}^+_I \oplus \tau(\mk{m}^+_I)$ implies
$\mk{h}^+\oplus \mk{h}^- = \mk{h}$, whence the last statement of the lemma.

If $\alpha  \in \Theta$ and $-\alpha \in \Theta$,  then $H_\alpha = [E_{\alpha}, E_{-\alpha}] \in \mk{m}^+_I$, but $\tau(H_\alpha) = -H_\alpha$, whence $\mk{m}^+_I \cap \tau(\mk{m}^+_I) \neq \emptyset$,
 which contradicts Proposition \ref{li1}.
Because $dim(\mk{m}^+_I) = \frac{1}{2}dim(\mk{g})$,
we conclude that  $\Theta$ contains exactly one of the roots in each couple
$\{\alpha,-\alpha\} \subset \Delta$.   But $\mk{m}_I^+$ is also a subalgebra, so  $\Theta = \Delta^+$ for some basis of $\Delta$ (see e.g. \cite{Bourbaki68}, Ch VIII, Sect 3, Prop. 7). The lemma is proved.
\end{proof}

\begin{remark}\label{md12}
If $I$ is a regular complex structure on a noncompact reductive Lie algebra $\mk{g}_0$, then the subalgebra $\mk{m}^+_I$ may have a nontrivial Levy component (see e.g. Snow \cite{Snow86}) .
\end{remark}

Throughout this paper, given an integrable complex structure $I$ on $\mk{u} = \mk{g}^\tau$ we shall denote the corresponding $\tau$-invariant Cartan  subalgebra $\mk{h} = \mk{h}_I$, the subspace $\mk{h}^+ = \mk{h}^+_I$ with dimension $m = dim(\mk{h}^+) = \frac{1}{2} dim(\mk{h})$,
 the Borel subalgebra $\mk{b}^+ = \mk{b}_I^+ = \mk{h}_I\oplus \mk{n}_I^+$ etc.
When (we believe that) no confusion may arise, we shall omit the subscript $I$. When we have to refer to this connection between $I$ and the structural data,
we shall say briefly that {\bf $I$ is a $\mk{b}^+$ complex structure}.
In other words, a complex structure $I$ on $\mk{u}$, will be called a {\bf $\mk{b}^+$-complex structure} iff  $\mk{b}^+$ is the normalizer of $\mk{m}^+_I$.

It is well known that ${\bf Ad}\mk{u}$ acts transitively on the set of all Borel subalgebras of $\mk{g}$, thus if we fix a Borel subalgebra $\mk{b}^+$, then any integrable complex structure on $\mk{u}$ is equivalent to  a $\mk{b}^+$-complex structure.

\begin{remark}\label{md1}
It is well known that a compact group ${\bf U}$ may have a left invariant complex structure $I$ in such a way, that the simple factors are not complex submanifolds. Perhaps the best known semisimple example is a Calabi-Eckman invariant complex structure on $SU(2)\times SU(2)$. For examples with even dimensional factors s.
\end{remark}

\subsection{Left invariant almost hypercomplex structures}\label{hss}

\begin{df}
A {\bf left invariant almost hypercomplex structure} on ${\bf U}$ is a couple of complex structures $I, J:\mk{u} \lw \mk{u}$, which anti-commute i.e. $I\circ J = - J\circ I$. An almost hypercomplex structure will be called a {\bf hypercomplex structure} if both $I,J$ are integrable.

 Two hypercomplex structures $(I,J), (I',J')$ on $\mk{u}$ will be called {\bf equivalent} if there exists an automorphism $\xi$ of $\mk{u}$ such
 that $\xi\circ I = I'\circ \xi,\ \xi\circ J = J'\circ \xi $.

  We use the same letters to denote the complexifications of the operators  $I, J$, so we have two linear maps $I, J : \mk{g} \lw \mk{g}$, such that
\begin{gather}\label{hks1}
IJ = -JI,\quad I^2 = J^2 = -1,\quad \tau\circ I = I\circ\tau,\quad \tau\circ J = J\circ\tau.
\end{gather}
\end{df}
 First we show
 \begin{lemma}\label{oo1}
Let $I, J$ be complex structures on $\mk{u}$. Then $I \circ J = - J \circ I$ if and only if  $ I(\mk{m}^+_J) = \mk{m}^-_J $.
\end{lemma}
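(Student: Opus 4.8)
The plan is to treat this as a pure eigenspace computation, leaning only on the facts that both $I$ and $J$ extend to $\CC$-linear operators on $\mk{g}$ with $I^2 = J^2 = -1$. First I would record the two ingredients I will use repeatedly: that $\mk{m}^+_J$ and $\mk{m}^-_J$ are by Definition \ref{cs2} precisely the $\ri$- and $-\ri$-eigenspaces of $J$, so that $\mk{g} = \mk{m}^+_J \oplus \mk{m}^-_J$ with $J$ acting as $\ri$ on the first summand and $-\ri$ on the second; and that $I$, being the extension determined by $I(\ri X) = \ri IX$, is an invertible $\CC$-linear map with $I^{-1} = -I$ (since $I^2 = -1$).

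For the forward implication I would assume $I\circ J = -J\circ I$ and take $X \in \mk{m}^+_J$, so $JX = \ri X$; then $J(IX) = -I(JX) = -\ri\, IX$ by $\CC$-linearity of $I$, which is exactly the statement $IX \in \mk{m}^-_J$, giving $I(\mk{m}^+_J) \subseteq \mk{m}^-_J$. The identical computation applied to $\mk{m}^-_J$ yields $I(\mk{m}^-_J) \subseteq \mk{m}^+_J$, and applying the invertible $I$ (using $I^2 = -1$) to these two inclusions upgrades both of them to equalities; in particular $I(\mk{m}^+_J) = \mk{m}^-_J$.

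For the converse I would assume $I(\mk{m}^+_J) = \mk{m}^-_J$. Applying $I^{-1} = -I$, together with the fact that eigenspaces are stable under scalar multiplication, immediately gives $I(\mk{m}^-_J) = \mk{m}^+_J$, so $I$ interchanges the two eigenspaces of $J$. It then suffices to verify $IJ + JI = 0$ on each summand of $\mk{g} = \mk{m}^+_J \oplus \mk{m}^-_J$ separately: for $X \in \mk{m}^+_J$ one has $IJX = \ri\, IX$ while $JIX = -\ri\, IX$, because $IX \in \mk{m}^-_J$, and the two terms cancel; the computation on $\mk{m}^-_J$ is the same with the signs reversed. Restricting the resulting identity on $\mk{g}$ back to $\mk{u}$ yields $I\circ J = -J\circ I$.

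I do not anticipate a genuine obstacle, since the statement is essentially a linear-algebra identity about a pair of anticommuting complex structures. The only two points requiring a moment's care are confirming that $I$ is $\CC$-linear, so that it commutes with the scalar $\ri$ that arises as a $J$-eigenvalue, and, in the forward direction, passing from the eigenspace inclusion to an equality by exploiting the invertibility of $I$ rather than assuming it outright. It is worth noting that the $\tau$-compatibility of $I$ and $J$ plays no role in this particular lemma; the relations $I^2 = J^2 = -1$ do all the work.
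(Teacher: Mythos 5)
Your proof is correct, and the eigenspace computations at its core are the same as the paper's; the genuine difference is in how the converse is globalized. The paper checks $I\circ J = -J\circ I$ only on $\mk{m}^+_J$ and then invokes the $\tau$-equivariance of $I$ and $J$ together with $\mk{m}^-_J = \tau(\mk{m}^+_J)$ to propagate the identity to all of $\mk{g}$; you instead derive $I(\mk{m}^-_J) = \mk{m}^+_J$ from $I^2 = -1$ and verify the anticommutation separately on each summand of $\mk{g} = \mk{m}^+_J \oplus \mk{m}^-_J$, so that $\tau$ never enters --- a point you correctly flag at the end. Your route is thus purely linear-algebraic and would apply to any pair of complex structures on a real vector space, while the paper's appeal to $\tau$ buys a shorter verification (one summand instead of two). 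A second, smaller difference: in the forward direction the paper only exhibits the inclusion $I(\mk{m}^+_J) \subseteq \mk{m}^-_J$ and leaves the stated equality implicit, whereas you upgrade the inclusion to an equality using the invertibility of $I$; this makes your argument slightly more complete than the paper's own.
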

\begin{proof}  If $I \circ J =- J \circ I$, then for $X \in \mk{m}^+_J$ we have $JIX = - IJX= - \ri
IX$.  \\
 If $I(\mk{m}^+_J) = \mk{m}^-_J$, then for $X \in \mk{m}^+_J$ we have $JIX = - \ri IX$ and $IJX= \ri
IX$, hence $(I \circ J)_{\vert \mk{m}^+_J}= - (J \circ I)_{\vert
\mk{m}^+_J}$. Since $I$ and $J$ commute with $\tau$ and $\mk{m}^+_J\oplus
\mk{m}^-_J = \mk{g}$, we have $I \circ J = - J\circ I$.
\end{proof}
\begin{df}
Let $\mk{u}$ be a compact Lie algebra. Let $I$ be a $\mk{b}^+$ complex structure as described in Subsection \ref{css}.
We shall say that a complex structure $J$ on $\mk{u}$ {\bf matches $I$} if $J$ is integrable and $IJ = - JI$.
We call $I$ {\bf admissible} if there exists some $J$, which matches $I$.
\end{df}

Now we introduce more notation, which will be used throughout the paper. We are interested in hypercomplex structures, so from this moment {\bf we assume that we have fixed a $\mk{b}^+$ complex structure $I$ on $\mk{u}$} and use freely the notations from subsection \ref{css} and Proposition \ref{lem1}. Further we assume that $J$ is a complex structure on $\mk{u}$, such that $JI = - IJ$.
\begin{df}\label{bo2}
We fix a basis $U_1,\dots,U_m$ of $\mk{h}^+$, then we define $V_k = \tau(U_k)\in \mk{h}^-$ so that we have bases
\begin{gather*}
\{E_\alpha\vert \alpha \in \Delta^+\}\cup \{U_1,\dots,U_m\} \mbox{  of  } \mk{m}^+_I;\\ \{E_{-\alpha}\vert \alpha \in \Delta^+\}\cup\{V_1,\dots,V_m\} \mbox{ of } \mk{m}^-_I.
\end{gather*}
For  $\alpha \in \Delta^+,\ q=1,\dots, m$ we decompose the elements $JE_\alpha, JU_q$ as follows
\begin{gather}
JE_\alpha  =
\sum_{\beta \in \Delta^+} a_{\beta,\alpha} E_{-\beta} +  \sum_{t=1}^m \xi_{t,\alpha} V_t; \nonumber \\
\label{bo6}\\
JU_q  = \sum_{\beta \in \Delta^+} \eta_{\beta,q} E_{-\beta} + \sum_{t=1}^m  b_{t, q}V_t.\nonumber
\end{gather}
We introduce matrices with coefficients $a_{\alpha,\beta},\ b_{t,q},\ \xi_{t,\alpha},\ \eta_{\alpha,q}$ respectively:
$${\bf a} \in {\mc M}(n \times n);\quad {\bf b} \in {\mc M}(m \times m);\quad \xi \in {\mc M}(m
\times n);\quad \eta \in {\mc M}(n\times m).$$
\end{df}
\begin{prop}\label{lemma about J} Let $J$ be a complex structure on $\mk{u}$, such that
$J \circ I = - I \circ J$. In the bases of Definition \ref{bo2} the linear operator $J$ has the matrix
\begin{gather}\label{bo8}
\ms{J} =
\begin{bmatrix}
0 & 0 & \ol{\bf a}&-\ol{\eta}\\
0 & 0 & -\ol{\xi}&\ol{\bf b}\\
  {\bf a}& \eta &0&0\\
  \xi & {\bf b}&0&0
\end{bmatrix},\qquad
\begin{matrix}
\ol{\eta} \xi  - \ol{{\bf a}} {\bf a}  = \rI_{n},\ &  \ol{{\bf b}}\xi - \ol{\xi} {\bf a}  = 0,\\
\ol{\bf a} \eta -  \ol{\eta }{\bf b}  = 0, \ &  \ol{\xi}\eta - \ol{{\bf b}} {\bf b}= \rI_m.
\end{matrix}
\end{gather}
Conversely, for any choice of ${\bf a}, {\bf b}, \xi, \eta$ as in \eqref{bo8}, the operator given by the matrix $\ms{J}$
 commutes with $\tau$ and defines a complex structure $J$ on $\mk{u}$, such that $J \circ I = - I \circ J $.
\end{prop}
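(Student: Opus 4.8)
The plan is to translate the three defining properties of a matching structure $J$ — namely $\tau\circ J = J\circ\tau$ (reality), $J^2 = -1$, and $I\circ J = -J\circ I$ (anticommutation) — into constraints on the coefficient matrices of Definition \ref{bo2}, and then to run the same chain of implications backwards for the converse.

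First I would use the anticommutation. Since $I$ acts as $\ri$ on $\mk{m}^+_I$ and as $-\ri$ on $\mk{m}^-_I$, the identity $I\circ J = -J\circ I$ forces $J(\mk{m}^+_I)\subseteq\mk{m}^-_I$ and $J(\mk{m}^-_I)\subseteq\mk{m}^+_I$ (this is exactly Lemma \ref{oo1}): for $X\in\mk{m}^+_I$ one has $I(JX) = -J(IX) = -\ri\,JX$, so $JX\in\mk{m}^-_I$, and symmetrically. This accounts for the two vanishing diagonal blocks of $\ms{J}$, while the expansions \eqref{bo6} record the lower-left block $\ms{Q}=\left[\begin{smallmatrix}{\bf a}&\eta\\ \xi&{\bf b}\end{smallmatrix}\right]$, which describes $J\colon\mk{m}^+_I\to\mk{m}^-_I$ in the bases of Definition \ref{bo2}.

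Next I would recover the upper-right block from reality. Using $E_{-\alpha}=-\tau(E_\alpha)$, $V_q=\tau(U_q)$, $\tau^2 = 1$, the antilinearity of $\tau$, and the defining relations $\tau(E_{-\beta})=-E_\beta$, $\tau(V_t)=U_t$, I apply $\tau$ to the two lines of \eqref{bo6} and push it through $J$ via $J\circ\tau=\tau\circ J$, obtaining
\begin{gather*}
JE_{-\alpha} = -\tau(JE_\alpha) = \sum_{\beta}\ol{a_{\beta,\alpha}}\,E_\beta - \sum_t \ol{\xi_{t,\alpha}}\,U_t,\\
JV_q = \tau(JU_q) = -\sum_{\beta}\ol{\eta_{\beta,q}}\,E_\beta + \sum_t \ol{b_{t,q}}\,U_t.
\end{gather*}
These read off the upper-right block $\left[\begin{smallmatrix}\ol{\bf a}&-\ol{\eta}\\ -\ol{\xi}&\ol{\bf b}\end{smallmatrix}\right]$ of $\ms{J}$. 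I expect the sign bookkeeping here to be the only delicate point: the plus signs on $\ol{\bf a},\ol{\bf b}$ and the minus signs on $\ol{\eta},\ol{\xi}$ come from the interplay between the overall sign in $E_{-\alpha}=-\tau(E_\alpha)$ and the sign in $\tau(E_{-\beta})=-E_\beta$ versus the sign-free $\tau(V_t)=U_t$, so every entry has to be checked against the definition of $\tau$.

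Finally, writing $\ms{J}=\left[\begin{smallmatrix}0&\ms{P}\\ \ms{Q}&0\end{smallmatrix}\right]$ with $\ms{P}=\left[\begin{smallmatrix}\ol{\bf a}&-\ol{\eta}\\ -\ol{\xi}&\ol{\bf b}\end{smallmatrix}\right]$, the condition $J^2 = -1$ reads $\ms{J}^2 = \left[\begin{smallmatrix}\ms{P}\ms{Q}&0\\ 0&\ms{Q}\ms{P}\end{smallmatrix}\right] = -\rI$, and expanding $\ms{P}\ms{Q} = -\rI$ into its four sub-blocks gives precisely the four stated relations (for instance the top-left sub-block yields $\ol{\bf a}{\bf a}-\ol{\eta}\xi = -\rI_n$, i.e. $\ol{\eta}\xi-\ol{\bf a}{\bf a}=\rI_n$; the others are analogous, and $\ms{Q}\ms{P}=-\rI$ is then automatic for square blocks of equal size). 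For the converse I would reverse each step: given matrices satisfying the four relations, define $J$ on $\mk{g}$ by $\ms{J}$; the block pattern is exactly the reality computation above read backwards, so $J\circ\tau=\tau\circ J$ and hence $J$ preserves $\mk{u}=\mk{g}^\tau$; the relations give $\ms{P}\ms{Q}=\ms{Q}\ms{P}=-\rI$, hence $J^2=-1$; and the purely off-diagonal shape of $\ms{J}$, together with $I=\pm\ri$ on $\mk{m}^\pm_I$, yields $I\circ J=-J\circ I$.
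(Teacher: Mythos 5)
Your proposal is correct and takes essentially the same route as the paper's own proof: the paper likewise reads off the upper-right block by applying $\tau$ to \eqref{bo6} via $J\circ\tau = \tau\circ J$ (with exactly your sign bookkeeping), and then notes that the four relations in \eqref{bo8} are precisely the condition $\ms{J}^2 = -\rI$. The details you add --- Lemma \ref{oo1} for the vanishing diagonal blocks, the observation that $\ms{P}\ms{Q} = -\rI$ already forces $\ms{Q}\ms{P} = -\rI$ for square matrices, and the explicit verification of the converse --- are correct elaborations of steps the paper leaves implicit.
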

\begin{proof}
Using $J\circ\tau = \tau\circ J$ and \eqref{bo6} we compute
\begin{gather*}
 JE_{-\alpha} = -\tau(JE_\alpha) =  \sum_{\beta \in \Delta^+}
\ol{a_{\beta,\alpha}} E_\beta -  \sum_{t=1}^m \ol{\xi_{t,\alpha}} U_t, \\
JV_q = \tau (JU_q) = -\sum_{\beta \in \Delta^+} \ol{\eta_{\beta,q}} E_\beta  +  \sum_{t=1}^m \ol{ b_{t,q}}U_t.
\end{gather*}
The equalities in \eqref{bo8} mean the same as $\ms{J}^2 = - \rI$.
\end{proof}
Obviously, many invariant almost hypercomplex structures on ${\bf U}$ exist iff
$dim(\mk{u})$ is divisible by 4.

\section{Stems}\label{ste}
Throughout this section $\Delta$ is a reduced root system, $\Pi$ is a basis of $\Delta$ and $\Delta^+$ is the corresponding subset of positive roots.

\begin{df}\label{aa1}
For any $\gamma \in \Delta^+$ we denote
$$\Phi_\gamma^+ \doteq \{\alpha \in \Delta^+\vert\  \gamma - \alpha \in \Delta^+\}.
$$
A subset $\Gamma \subset \Delta^+$ will be called a {\bf stem of} $\Delta^+$ iff
\begin{gather} \label{nos1}
\Delta^+ = \Gamma \cup \bigcup_{\gamma \in \Gamma}\Phi_\gamma^+, \qquad \mbox{disjoint union}.
\end{gather}
If $\Gamma$ is a stem of $\Delta^+$ and $\gamma \in \Gamma$, we shall call $\Phi_\gamma^+$ the {\bf  branch} at $\gamma$.
\end{df}
In the present section we shall prove existence and uniqueness of a stem for a reduced root system $\Delta$ with a fixed basis $\Pi$ (hence fixed $\Delta^+$).
We also derive the properties of stems needed for applications to the existence and properties of hypercomplex structures.
 Next we give a list of notations related to a stem.

\begin{df}\label{wj1}
Let $\Gamma$ be a stem of $\Delta^+$. We denote
\begin{gather}
\Phi_\gamma^- = -\Phi_\gamma^+,\quad \Phi_\gamma = \Phi_\gamma^+\cup \Phi_\gamma^- ,\nonumber \\[-2mm] \label{ste1} \\[-2mm]
\Phi^+ = \bigcup_{\gamma\in \Gamma} \Phi_\gamma^+,\quad \Phi^- = -\Phi^+, \quad  \Phi= \Phi^+\cup \Phi^-. \nonumber
\end{gather}
\end{df}
So we have a disjoint union $\Delta^+ = \Gamma \cup \Phi^+.$

\subsection{Existence and uniqueness of the stem}\label{eu}
\begin{df}\label{lm1}

Let $\Delta = \Delta_1\cup\dots\cup\Delta_k$ be the decomposition of
$\Delta$ into mutually orthogonal, irreducible root subsystems.

We shall say that $\gamma\in \Delta$ is a {\bf long root} if
 $\norm{\gamma} \geq \norm{\alpha}$ for each $\alpha \in
\Delta_j$, where $\gamma \in \Delta_j$.

We shall say that $\gamma\in \Delta$ is a {\bf maximal root} if
 $\gamma$ is the highest root in some $\Delta_j$.

We shall say that two roots $\alpha,\beta \in \Delta$ are {\bf strongly orthogonal} if
 $\alpha \pm \beta \not\in \Delta $.
\end{df}

\begin{prop}\label{gor2}Let $\gamma\in \Delta^+$ be a long root, $\alpha\in \Delta,\quad \alpha \neq \pm\gamma$.
Let the $\gamma$-series of $\alpha$ be $\{\alpha +n\gamma,\ p\leq n \leq q\}$ (see e.g. \cite{Helgason78}). Then

a) $\abs{C(\alpha,\gamma)} \leq 1$.

b) The number of distinct roots in the $\gamma$-series of $\alpha$ is at most 2. More precisely
\begin{gather}\label{len1}
q-p+1 = \begin{cases}
1 &\mbox{ if and only if  } C(\alpha,\gamma) = 0,\\
2 &\mbox{ if and only if  } |C(\alpha,\gamma)| = 1.
\end{cases}
\end{gather}
\indent c) $\scal{\alpha,\gamma} = 0$ if and only if  $\gamma$ is strongly orthogonal to $\alpha$.
\end{prop}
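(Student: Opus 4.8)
The plan is to prove part a) first, since parts b) and c) follow almost mechanically from it together with the standard fact that the $\gamma$-series of $\alpha$ is symmetric under $s_\gamma$, i.e. $p + q = -C(\alpha,\gamma)$. For part a) I first dispose of the case where $\alpha$ and $\gamma$ lie in different irreducible components $\Delta_i,\Delta_j$ of $\Delta$: then $\scal{\alpha,\gamma} = 0$, so $C(\alpha,\gamma) = 0$ and there is nothing to prove. So I assume $\alpha,\gamma$ lie in the same component, where the hypothesis that $\gamma$ is long gives $\norm{\gamma} \geq \norm{\alpha}$. The key computational input is the identity
$$C(\alpha,\gamma)\,C(\gamma,\alpha) = \frac{4\scal{\alpha,\gamma}^2}{\scal{\alpha,\alpha}\scal{\gamma,\gamma}} = 4\cos^2\theta,$$
where $\theta$ is the angle between $\alpha$ and $\gamma$. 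Since $\Delta$ is reduced and $\alpha \neq \pm\gamma$, the roots $\alpha,\gamma$ are not proportional, so this product is an integer strictly less than $4$, hence lies in $\{0,1,2,3\}$.

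If $C(\alpha,\gamma) = 0$ part a) holds, so I suppose $C(\alpha,\gamma) \neq 0$; then $\scal{\alpha,\gamma} \neq 0$ and $C(\gamma,\alpha) \neq 0$ as well, with the same sign. Dividing the two Cartan integers gives $\abs{C(\alpha,\gamma)}/\abs{C(\gamma,\alpha)} = \scal{\alpha,\alpha}/\scal{\gamma,\gamma} = \norm{\alpha}^2/\norm{\gamma}^2 \leq 1$, so $\abs{C(\alpha,\gamma)} \leq \abs{C(\gamma,\alpha)}$. Now if $\abs{C(\alpha,\gamma)} \geq 2$ then also $\abs{C(\gamma,\alpha)} \geq 2$, forcing the product $C(\alpha,\gamma)\,C(\gamma,\alpha) \geq 4$ and contradicting the bound above. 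Hence $\abs{C(\alpha,\gamma)} \leq 1$, which is part a). I expect this to be the main (and essentially the only) obstacle: it is the one place where the longness of $\gamma$ is genuinely used, through the inequality $\norm{\alpha}^2/\norm{\gamma}^2 \leq 1$.

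For part b) I use $p + q = -C(\alpha,\gamma)$ together with part a) applied to shifted roots. When $C(\alpha,\gamma) = 0$ this reads $p = -q$, so if the series had more than one root we would have $q \geq 1$ and $\alpha + \gamma \in \Delta$; but then $C(\alpha+\gamma,\gamma) = C(\alpha,\gamma) + 2 = 2$ would violate part a) applied to the root $\alpha + \gamma$ (which is $\neq \pm\gamma$ since $\Delta$ is reduced), so $p = q = 0$ and the series is just $\{\alpha\}$. When $\abs{C(\alpha,\gamma)} = 1$, say $C(\alpha,\gamma) = 1$ (the case $-1$ is symmetric under $\alpha \mapsto -\alpha$, which preserves the number of roots in the series), we have $p + q = -1$ with $p \leq 0 \leq q$; were $q \geq 1$ then $C(\alpha+\gamma,\gamma) = 3$ would again contradict part a), so $q = 0$ and $p = -1$, giving exactly two roots. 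This establishes the stated dichotomy.

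Finally, part c) is immediate from b): we have $\scal{\alpha,\gamma} = 0 \Leftrightarrow C(\alpha,\gamma) = 0$, which by b) is equivalent to $q - p + 1 = 1$, i.e. to $\alpha + \gamma \notin \Delta$ and $\alpha - \gamma \notin \Delta$, which is precisely the definition of $\gamma$ being strongly orthogonal to $\alpha$.
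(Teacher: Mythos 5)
Your proof is correct and follows essentially the same route as the paper: you split off the different-component case and then use longness plus non-proportionality for part a) (your integrality bound $C(\alpha,\gamma)\,C(\gamma,\alpha)=4\cos^2\theta<4$ is just the strict Schwartz inequality the paper invokes, in integer form), and you obtain b) and c) exactly as the paper does, from $p+q=-C(\alpha,\gamma)$ together with a) applied to a shifted root in the series. There are no gaps; the only cosmetic difference is that the paper applies a) to the bottom root $\alpha+p\gamma$ of the series where you apply it to $\alpha+\gamma$.
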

\begin{proof}
a) If $\gamma$ and $\alpha$ belong to different irreducible components of $\Delta$, then $C(\alpha,\gamma) = 0$.
If  they belong to the same component, the claim follows from the Schwartz inequality.

b) We have $p + q = - C(\alpha,\gamma)$ (see e.g.
\cite{Bourbaki68}). Thus the length of the $\gamma$- series of
$\alpha$ is $1 - C(\alpha +p\gamma,\gamma) \leq 2$, the last
inequality obviously follows from a). Thus either $p=0$ or $q=0$.
But then obviously both vanish iff $C(\alpha,\gamma)= 0$.

c) trivially follows from b).
\end{proof}

\begin{prop}\label{gor6} Let $\gamma$ be a maximal root. Then

a) $\gamma$ is a long root;

b) If $\alpha \in \Delta^+$ and $\alpha \neq \gamma$, then $ 0 \leq C(\alpha,\gamma)\leq 1$.

c) Let $\alpha \in \Delta$ and $\alpha \neq \gamma$. Then $\alpha \in
\Phi_\gamma^+$ iff $C(\alpha,\gamma) = 1$.

\end{prop}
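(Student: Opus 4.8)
The plan is to prove the three parts in order, using Proposition \ref{gor2} as the main tool once part a) is established. The key observation is that a maximal root is by definition the highest root in some irreducible component $\Delta_j$, and the highest root in an irreducible root system is known to be long; so I would first record this as part a), citing the standard structure theory (e.g. \cite{Bourbaki68} or \cite{Helgason78}), or alternatively give a short self-contained argument: if $\gamma$ is the highest root and $\beta$ were a strictly longer root in $\Delta_j$, then since $\Delta_j$ is irreducible there is a root connected to $\gamma$ by a chain, and one shows $\gamma+\delta\in\Delta$ for an appropriate simple root $\delta$, contradicting maximality. The cleanest route is simply to invoke the well-known fact.

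Once a) is in hand, part b) follows by combining Proposition \ref{gor2}a), which gives $|C(\alpha,\gamma)|\leq 1$ for a long root $\gamma$, with the positivity statement. Since $\gamma$ is the highest root, $\gamma-\alpha$ cannot be obtained by adding a positive root to $\gamma$; more precisely, maximality of $\gamma$ means $\gamma+\alpha\notin\Delta$ for every $\alpha\in\Delta^+$. By the $\gamma$-series description, with $\alpha$-series $\{\alpha+n\gamma,\ p\leq n\leq q\}$ we have $p+q=-C(\alpha,\gamma)$, and the condition $\gamma+\alpha\notin\Delta$ translates into a sign constraint on $q$ (namely $q\leq 0$, since $\alpha+\gamma$ is the first term beyond $\alpha$ in the positive direction). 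Feeding this into $|C(\alpha,\gamma)|\leq 1$ forces $0\leq C(\alpha,\gamma)\leq 1$ for $\alpha\in\Delta^+$, $\alpha\neq\gamma$.

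For part c), I would again use the length-two dichotomy from Proposition \ref{gor2}b): $|C(\alpha,\gamma)|=1$ holds exactly when the $\gamma$-series of $\alpha$ has two roots, and $C(\alpha,\gamma)=0$ exactly when it has one. Restricting to $\alpha\in\Delta^+$ with $\alpha\neq\gamma$, part b) tells us $C(\alpha,\gamma)\in\{0,1\}$, so the only nontrivial case is $C(\alpha,\gamma)=1$, where $s_\gamma(\alpha)=\alpha-\gamma$ is also a root. Then $\gamma-\alpha=-(\alpha-\gamma)\in\Delta$, and I must check $\gamma-\alpha\in\Delta^+$: this follows because $\alpha\neq\gamma$ and $\gamma$ is the highest root (so $\alpha-\gamma$ is not positive, forcing $\gamma-\alpha\in\Delta^+$). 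This gives exactly $\alpha\in\Phi_\gamma^+$ by the definition in Definition \ref{aa1}. Conversely, if $\alpha\in\Phi_\gamma^+$ then $\gamma-\alpha\in\Delta^+$, so $\alpha$ and $\gamma$ are not strongly orthogonal, whence $\scal{\alpha,\gamma}\neq 0$ by Proposition \ref{gor2}c), and then part b) leaves no option but $C(\alpha,\gamma)=1$.

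I expect the main obstacle to be the bookkeeping in part c): one must carefully verify that $\gamma-\alpha$ lands in $\Delta^+$ rather than merely in $\Delta$, and that the equivalence is genuinely an "iff" in both directions. The forward direction hinges on maximality of $\gamma$ ruling out $\alpha-\gamma\in\Delta^+$, and the converse direction needs the contrapositive via strong orthogonality. All the root-theoretic machinery (the $\gamma$-series length, the formula $p+q=-C(\alpha,\gamma)$, and the reflection $s_\gamma$) is already available from the conventions and from Proposition \ref{gor2}, so the proof should be short once the sign conventions are pinned down.
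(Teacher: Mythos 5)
Your proposal is correct and follows the same skeleton as the paper's proof: part a) is handled by citing the standard fact, and your forward direction of c) is exactly the paper's argument ($C(\alpha,\gamma)=1$ gives $s_\gamma(\alpha)=\alpha-\gamma\in\Delta$, and maximality of $\gamma$ forces $\alpha-\gamma\in\Delta^-$, i.e.\ $\gamma-\alpha\in\Delta^+$). The differences are small but real: the paper disposes of both a) and b) with a single reference to \cite{Bourbaki68}, Ch.VI, Sect.1.8, whereas you derive b) yourself from Proposition \ref{gor2}a) together with the root-string identity $p+q=-C(\alpha,\gamma)$ and the observation that maximality forces $q=0$; and you spell out the converse direction of c) (from $\gamma-\alpha\in\Delta^+$ one gets failure of strong orthogonality, hence $\scal{\alpha,\gamma}\neq 0$ by Proposition \ref{gor2}c), and then b) leaves only $C(\alpha,\gamma)=1$), which the paper states as an ``iff'' but only proves in one direction. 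So your write-up is slightly more self-contained than the original. Two cosmetic points: you call the $\gamma$-series of $\alpha$ the ``$\alpha$-series'' at one spot, and in c) you tacitly restrict to $\alpha\in\Delta^+$ even though the statement allows $\alpha\in\Delta$; this is harmless, since by b) no negative root other than $-\gamma$ can have $C(\alpha,\gamma)=1$ (and $C(-\gamma,\gamma)=-2$), a point the paper's own proof also leaves implicit.
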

\begin{proof}
Claims a), b) are proved e.g. in (\cite{Bourbaki68}, Ch.VI, Sect.1.8).

c) If $C(\alpha,\gamma) = 1$, then $\alpha -\gamma = s_\gamma(\alpha) \in \Delta$. By maximality of $\gamma$ we have $\alpha - \gamma \in \Delta^-$.
Whence $\gamma - \alpha \in \Delta^+$.
\end{proof}

\begin{prop} \label{nsr6} Let $\gamma$ be a maximal root. Let $\alpha \in \Phi_\gamma^+,\ \nu \in \Delta,\ \nu\neq \pm\gamma$. Then

a) If $\nu \in \Phi_\gamma^+$ and $\alpha + \nu \in \Delta$, then $\alpha + \nu = \gamma$.

b)  If  $\nu \not\in \Phi_\gamma$ and $\alpha + \nu \in \Delta$, then $\alpha + \nu \in \Phi_\gamma^+$.
 \end{prop}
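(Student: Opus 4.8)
The plan is to translate every hypothesis into a statement about the Cartan integer $C(\cdot,\gamma)$, exploiting that this quantity is additive in its first argument, $C(\alpha+\nu,\gamma)=C(\alpha,\gamma)+C(\nu,\gamma)$, and that Proposition \ref{gor6}c gives a clean characterization: for any root $\delta\neq\gamma$ one has $\delta\in\Phi_\gamma^+$ if and only if $C(\delta,\gamma)=1$. The first step I would carry out is to read off the $C$-values attached to the hypotheses. Since $\alpha\in\Phi_\gamma^+$, Proposition \ref{gor6}c gives $C(\alpha,\gamma)=1$, and likewise $C(\nu,\gamma)=1$ whenever $\nu\in\Phi_\gamma^+$. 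The value forced by $\nu\notin\Phi_\gamma$ needs a short argument: here I would use that $\gamma$ is long (Proposition \ref{gor6}a), so by Proposition \ref{gor2}a we have $|C(\nu,\gamma)|\le 1$ for $\nu\neq\pm\gamma$; the cases $C(\nu,\gamma)=1$ and $C(\nu,\gamma)=-1$ would place $\nu$ in $\Phi_\gamma^+$ or in $\Phi_\gamma^-=-\Phi_\gamma^+$ respectively (applying Proposition \ref{gor6}c to $\nu$, resp. to $-\nu$), contradicting $\nu\notin\Phi_\gamma$. Hence $C(\nu,\gamma)=0$.

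With this dictionary in hand, part a) is immediate: additivity gives $C(\alpha+\nu,\gamma)=1+1=2$. If $\alpha+\nu\in\Delta$ were different from $\pm\gamma$, then $|C(\alpha+\nu,\gamma)|\le 1$ by Proposition \ref{gor2}a, a contradiction; so $\alpha+\nu=\pm\gamma$, and since $C(\gamma,\gamma)=2$ while $C(-\gamma,\gamma)=-2$, the only possibility is $\alpha+\nu=\gamma$. For part b), additivity gives $C(\alpha+\nu,\gamma)=1+0=1$. In particular $\alpha+\nu\neq\gamma$ (otherwise the value would be $2$), so Proposition \ref{gor6}c applies to the root $\alpha+\nu$ and yields $\alpha+\nu\in\Phi_\gamma^+$.

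The argument is essentially bookkeeping once the $C$-value dictionary is set up, so I do not anticipate a serious obstacle; the one place requiring care—and which I would single out as the crux—is the implication that $\nu\notin\Phi_\gamma$ forces $C(\nu,\gamma)=0$, since it is the only step that genuinely combines the definition $\Phi_\gamma=\Phi_\gamma^+\cup\Phi_\gamma^-$ with both the length bound of Proposition \ref{gor2}a and the full biconditional of Proposition \ref{gor6}c (used for $-\nu$ as well as $\nu$). Everything else reduces to additivity of $C$ together with the observation that $\pm\gamma$ are the only roots with $|C(\cdot,\gamma)|=2$.
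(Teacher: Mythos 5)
Your proposal is correct and follows essentially the same route as the paper: both arguments rest on additivity of $C(\cdot,\gamma)$, the characterization $\delta\in\Phi_\gamma^+\iff C(\delta,\gamma)=1$ from Proposition \ref{gor6}c, and the bound $\abs{C(\cdot,\gamma)}\le 1$ from Proposition \ref{gor2}a. The only differences are cosmetic: you spell out in more detail why $\nu\notin\Phi_\gamma$ forces $C(\nu,\gamma)=0$ (the paper simply cites Proposition \ref{gor6}c), and in part a) you rule out $\alpha+\nu=-\gamma$ via the sign of the Cartan integer where the paper uses $\alpha+\nu\in\Delta^+$.
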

\begin{proof} If $\alpha,\nu \in\Phi_\gamma^+$, then by Proposition \ref{gor6}, c), we have
$$
C(\alpha + \nu, \gamma) = C(\alpha, \gamma) + C(\nu, \gamma) = 2.
$$
By Proposition \ref{gor2}, a) we have $\alpha + \nu = \pm\gamma$, but $\alpha + \nu \in \Delta^+$, so a) is proved.

Now we prove b).  By Proposition \ref{gor6} c), $\nu \not \in
\Phi_\gamma$ implies $\scal{\nu,\gamma} = 0$ whence
$$
C(\alpha + \nu,\gamma) = C(\alpha,\gamma) + C(\nu,\gamma) = 1.
$$
Now, if $\alpha + \nu \in \Delta$, by Proposition \ref{gor6}, c), we have
$\alpha + \nu \in \Phi_\gamma^+$.
\end{proof}

\begin{prop}\label{yyy8}
Let $\gamma \in \Delta^+$ be a maximal root and let $\Pi$ be our fixed basis of $\Delta$. The set $\Phi_\gamma^+\cap\Pi$ has at most two elements. Also
$$\Phi_\gamma^+ = \emptyset \iff \gamma \in \Pi \iff  \Phi_\gamma^+\cap\Pi = \emptyset.$$
\end{prop}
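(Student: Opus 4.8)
The plan is to convert everything into statements about the Cartan integer $C(\alpha,\gamma)$ and then run a single inner-product computation. By Proposition \ref{gor6} c), a root $\alpha \neq \gamma$ lies in $\Phi_\gamma^+$ exactly when $C(\alpha,\gamma) = 1$, while by part b) the only other value for $\alpha \in \Delta^+$ is $C(\alpha,\gamma) = 0$. Hence I would first record that $\Phi_\gamma^+ \cap \Pi$ is precisely the set of simple roots $\alpha \in \Pi$, $\alpha \neq \gamma$, with $C(\alpha,\gamma) = 1$, i.e. $\scal{\alpha,\gamma} = \frac{1}{2}\scal{\gamma,\gamma}$; every remaining simple root (other than $\gamma$ itself) is orthogonal to $\gamma$.

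The heart of the argument is then one expansion. Let $\Delta_j$ be the irreducible component containing $\gamma$ and write the highest root as $\gamma = \sum_{\alpha_i \in \Delta_j} m_i \alpha_i$, where each $m_i \geq 1$. Simple roots outside $\Delta_j$ are orthogonal to $\gamma$ and contribute nothing. Suppose first $\gamma \notin \Pi$, so every $\alpha_i$ in the sum differs from $\gamma$ and $C(\alpha_i,\gamma) \in \{0,1\}$. Then
$$\scal{\gamma,\gamma} = \sum_{\alpha_i\in\Delta_j} m_i \scal{\alpha_i,\gamma} = \tfrac{1}{2}\scal{\gamma,\gamma}\sum_{\alpha_i\in\Delta_j} m_i\, C(\alpha_i,\gamma),$$
and dividing by $\scal{\gamma,\gamma}$ gives $\sum_{\alpha_i \in \Phi_\gamma^+\cap\Pi} m_i = 2$. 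Since every $m_i \geq 1$, this forces $\Phi_\gamma^+\cap\Pi$ to be nonempty and to contain at most two elements, which is the cardinality bound.

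For the chain of equivalences I would close a three-term cycle. The implication $\Phi_\gamma^+ = \emptyset \Rightarrow \Phi_\gamma^+\cap\Pi = \emptyset$ is trivial. The displayed identity gives the contrapositive of $\Phi_\gamma^+\cap\Pi = \emptyset \Rightarrow \gamma \in \Pi$: if $\gamma \notin \Pi$ the sum equals $2$, so $\Phi_\gamma^+\cap\Pi \neq \emptyset$. Finally, if $\gamma \in \Pi$ then $\gamma$ is simple and cannot be decomposed as $\alpha + (\gamma-\alpha)$ with both summands positive roots, so no $\alpha \in \Delta^+$ satisfies $\gamma - \alpha \in \Delta^+$; thus $\Phi_\gamma^+ = \emptyset$. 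Composing the three implications yields the stated biconditional, and it also settles the cardinality claim in the case $\gamma \in \Pi$, where $\Phi_\gamma^+\cap\Pi = \emptyset$.

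The step I expect to be the main obstacle is not conceptual but a matter of invoking the correct structural facts uniformly across components: namely that the coefficients $m_i$ of the highest root in its own irreducible component are all positive integers (hence $\geq 1$), and that simple roots from other components are orthogonal to $\gamma$. Both are standard (see \cite{Bourbaki68}, Ch. VI), so once they are cited the remaining steps are formal consequences of Proposition \ref{gor6}.
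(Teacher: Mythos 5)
Your proof is correct, and its core is the same computation as the paper's: expanding $2 = C(\gamma,\gamma) = \sum_{\alpha\in\Pi} n_\alpha C(\alpha,\gamma)$ and using Proposition \ref{gor6} b), c) to reduce the sum to $\sum_{\alpha\in\Phi_\gamma^+\cap\Pi} n_\alpha = 2$, which gives the cardinality bound. The one genuine difference is how the implication $\gamma\notin\Pi \Rightarrow \Phi_\gamma^+\cap\Pi \neq \emptyset$ is obtained: the paper invokes the chain representation of a positive root ($\gamma = \beta_1+\dots+\beta_k$ with simple summands and all partial sums roots, so that $\beta_k \in \Phi_\gamma^+\cap\Pi$), citing Helgason Ch.\ X, Lemma 3.10, whereas you extract nonemptiness from the same sum identity (an empty sum cannot equal $2$), then close a three-term cycle of implications. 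This is a small but real economy: one structural input (positivity of the highest root's coefficients, plus orthogonality of components) does all the work, and no separate chain lemma is needed. Conversely, the paper's route via the chain lemma does not need coefficient positivity for the nonemptiness step, and it dispatches reducibility by a WLOG reduction to the irreducible case, which you instead handle explicitly through orthogonality of the components; both treatments are sound.
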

\begin{proof} Without loss of generality we may assume that $\Delta$ is irreducible and $\gamma$ is the highest root.

The first equivalence claimed is just the definition of a simple root.
Any root $\gamma \in \Delta^+$ has a representation $\gamma = \beta_1+ \beta_2 + \dots+\beta_k$ where all summands are simple roots and
each partial sum is a root (see e.g. \cite{Helgason78}, Ch.X, Lemma 3.10 ).
The last root in the sequence belongs to $\Phi_\gamma^+\cap\Pi$. Whence the second equivalence follows.

How we treat the case when $\Phi_\gamma^+ \neq \emptyset$. We decompose
\begin{gather}\label{mar1}
\gamma = \sum_{\alpha\in\Pi}n_\alpha(\gamma)\alpha,
\end{gather}
where $n_\alpha(\gamma) \in \NN$ for each $\alpha\in\Pi$.
By Proposition \ref{gor6}, c) we have
$$
2 = C(\gamma,\gamma) = \sum_{\alpha\in\Pi}n_\alpha C(\alpha,\gamma) = \sum_{\alpha \in\Phi_\gamma^+\cap\Pi}n_\alpha.
$$
So if $\xi \in\Phi_\gamma^+\cap\Pi$, then either $n_\xi = 2$ and $\Phi_\gamma^+\cap\Pi =\{\xi\}$, or $n_\xi = 1$, and there is exactly one element $\eta \in \Phi_\gamma^+\cap\Pi$ with $\eta \neq \xi$.
\end{proof}

We shall need some properties of the orthogonal complement of a maximal root.
\begin{lemma}\label{nsr3}
Let $\gamma$  be a maximal root in $\Delta^+$. If $\widetilde{\Delta}=\Delta \setminus (\Phi_\gamma \cup \{\gamma,-\gamma\})$, then

 a) $\widetilde{\Delta}$ is a reduced root system,  $\wt{\Pi} = \Pi\cap\wt{\Delta} $ is a basis of $\widetilde{\Delta}$, and
 the corresponding subset of positive roots is $\widetilde{\Delta}^{+} = \Delta^+ \setminus (\Phi_\gamma^+ \cup \{\gamma\}) = \widetilde{\Delta}\cap\Delta^+ $.

 b) If $\alpha, \beta \in \widetilde{\Delta}$ and $\alpha + \beta \in \Delta$, then $\alpha + \beta \in \widetilde{\Delta}$.

 c) For any $\alpha \in \widetilde{\Delta}^+$ we have
$\quad \Phi_\alpha^+ =  \{\beta \in \Delta^+\vert\ \alpha - \beta \in \Delta^+ \} = \{\beta \in \widetilde{\Delta}^+\vert\ \alpha - \beta \in \widetilde{\Delta}^+ \}.
$
\end{lemma}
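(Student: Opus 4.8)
The plan is to reduce everything to one clean reformulation: \textbf{$\widetilde\Delta$ is precisely the set of roots orthogonal to $\gamma$}, i.e. $\widetilde\Delta = \{\nu \in \Delta \mid \scal{\nu,\gamma} = 0\} = \Delta\cap\gamma^\perp$. Indeed, $\gamma$ is long (Proposition \ref{gor6}\,a)), so by Proposition \ref{gor2}\,a) every $\nu\neq\pm\gamma$ satisfies $|C(\nu,\gamma)|\leq 1$; and by Proposition \ref{gor6}\,c) together with $\Phi_\gamma^- = -\Phi_\gamma^+$, such a $\nu$ lies in $\Phi_\gamma$ exactly when $C(\nu,\gamma)=\pm 1$, that is, exactly when $\scal{\nu,\gamma}\neq 0$. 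Removing $\Phi_\gamma$ and $\{\pm\gamma\}$ therefore leaves exactly the roots orthogonal to $\gamma$. I would also record at the outset the \emph{dominance} of $\gamma$: by Proposition \ref{gor6}\,b) one has $\scal{\beta,\gamma}\geq 0$ for every $\beta\in\Delta^+$ with $\beta\neq\gamma$, in particular for every $\beta\in\Pi$. Once these two facts are in place, all three parts become orthogonality bookkeeping.

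For part a), the root-system axioms for $\widetilde\Delta$ (inside $V := \mathrm{span}_\RR\,\widetilde\Delta\subseteq\gamma^\perp$) follow because, for $\alpha\in\widetilde\Delta$, orthogonality $\alpha\perp\gamma$ gives $s_\alpha(\gamma)=\gamma$, so $s_\alpha$ preserves $\gamma^\perp$ as well as $\Delta$, hence preserves $\widetilde\Delta=\Delta\cap\gamma^\perp$; integrality of the $C(\cdot,\cdot)$ and reducedness are inherited from $\Delta$. The set identity $\widetilde\Delta^+ = \widetilde\Delta\cap\Delta^+ = \Delta^+\setminus(\Phi_\gamma^+\cup\{\gamma\})$ is immediate from $\Phi_\gamma\cap\Delta^+=\Phi_\gamma^+$. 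The substantive point is that $\wt\Pi=\Pi\cap\widetilde\Delta$ is the corresponding basis. Here I use dominance: given $\alpha\in\widetilde\Delta^+$, write $\alpha=\sum_{\beta\in\Pi}n_\beta\beta$ with $n_\beta\in\NN$; then $0=\scal{\alpha,\gamma}=\sum_{\beta\in\Pi}n_\beta\scal{\beta,\gamma}$ is a sum of non-negative terms, forcing $n_\beta=0$ whenever $\scal{\beta,\gamma}>0$, i.e. whenever $\beta\notin\widetilde\Delta$. Thus every $\alpha\in\widetilde\Delta^+$ is a non-negative integer combination of the linearly independent set $\wt\Pi$, and by the usual characterization of a base (a linearly independent subset of the positive roots of which every positive root is a non-negative integral combination) this identifies $\wt\Pi$ as the basis of $\widetilde\Delta$ with positive system $\widetilde\Delta^+$.

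Parts b) and c) are then short. For b): if $\alpha,\beta\in\widetilde\Delta$ and $\alpha+\beta\in\Delta$, then $\scal{\alpha+\beta,\gamma}=0$, and since $\scal{\gamma,\gamma}\neq 0$ this already forces $\alpha+\beta\neq\pm\gamma$, so $\alpha+\beta\in\Delta\cap\gamma^\perp=\widetilde\Delta$. For c), the first equality is the definition of $\Phi_\alpha^+$ (Definition \ref{aa1}), and the inclusion $\supseteq$ in the second is trivial since $\widetilde\Delta^+\subseteq\Delta^+$. For $\subseteq$, let $\alpha\in\widetilde\Delta^+$ and $\beta\in\Delta^+$ with $\alpha-\beta\in\Delta^+$. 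Neither $\beta$ nor $\alpha-\beta$ can equal $\gamma$: if, say, $\beta=\gamma$, then $\alpha-\gamma$ is a positive root $\neq\gamma$ with $\scal{\alpha-\gamma,\gamma}=-\scal{\gamma,\gamma}<0$, contradicting dominance (the case $\alpha-\beta=\gamma$ is symmetric). Hence dominance applies to both, giving $\scal{\beta,\gamma}\geq 0$ and $\scal{\alpha-\beta,\gamma}\geq 0$; their sum is $\scal{\alpha,\gamma}=0$, so both vanish, placing $\beta$ and $\alpha-\beta$ in $\widetilde\Delta^+$.

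The only genuine obstacle is the basis claim in a): a priori the root system $\widetilde\Delta$ orthogonal to $\gamma$ could have simple roots that are not simple in $\Delta$, and what rules this out is precisely the dominance $\scal{\beta,\gamma}\geq 0$ of the maximal root. Everything else is formal once $\widetilde\Delta=\Delta\cap\gamma^\perp$ is established. In the write-up I would, as in the proof of Proposition \ref{yyy8}, reduce to $\Delta$ irreducible with $\gamma$ its highest root; roots in the other irreducible components are automatically orthogonal to $\gamma$, hence lie in $\widetilde\Delta$, and their simple roots already lie in $\Pi$, so they cause no trouble.
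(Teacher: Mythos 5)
Your proof is correct, and while it starts from the same foundation as the paper's proof --- both reduce the lemma to the identification $\wt{\Delta}=\Delta\cap\gamma^{\perp}$, obtained from Propositions \ref{gor2} and \ref{gor6} --- the two arguments then genuinely diverge. The paper dismisses a) and b) in one line as consequences of that identification (leaving the basis claim for $\wt{\Pi}$ entirely implicit), and proves c) by a case analysis driven by Proposition \ref{nsr6}: assuming $\alpha-\beta\notin\wt{\Delta}^{+}$ forces $\alpha-\beta\in\Phi_{\gamma}^{+}$, after which the two possible locations of $\beta$ (in $\Phi_{\gamma}^{+}$ or in $\wt{\Delta}^{+}$) lead, via parts a) and b) of Proposition \ref{nsr6}, to the contradictions $\alpha=\gamma$ and $\alpha\in\Phi_{\gamma}^{+}$ respectively. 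You never invoke Proposition \ref{nsr6}: your c) is pure inner-product bookkeeping --- neither $\beta$ nor $\alpha-\beta$ can equal $\gamma$ (its pairing with $\gamma$ would be $-\scal{\gamma,\gamma}<0$, violating the dominance $\scal{\beta,\gamma}\geq 0$ of Proposition \ref{gor6} b)), so both pairings are nonnegative and sum to $\scal{\alpha,\gamma}=0$, hence vanish. This is more elementary and self-contained. Conversely, you supply exactly the detail the paper omits: a real argument that $\wt{\Pi}=\Pi\cap\wt{\Delta}$ is a base of $\wt{\Delta}$ with positive system $\wt{\Delta}^{+}$, using dominance to force every simple root occurring in the $\Pi$-expansion of $\alpha\in\wt{\Delta}^{+}$ to lie in $\gamma^{\perp}$. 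The trade-off is clear: the paper's route recycles the already-proved Proposition \ref{nsr6} and stays short, while yours is independent of that proposition, makes the role of the maximality of $\gamma$ transparent, and is the more complete write-up.
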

\bpr
a) and b) follow from the fact (see Proposition \ref{gor6}) that $\widetilde{\Delta} = \{\alpha \in \Delta; \scal{\alpha,\gamma} = 0\}$.

c) Let  $\alpha - \beta \in \Delta^+$. Let us assume that
$\alpha - \beta \not \in \widetilde{\Delta}^+ $. Therefore by a) we have $\alpha -
\beta \in \Phi_{\gamma}^+ \cup \{\gamma\}$. But,
$\alpha-\beta \neq \gamma$, since $\gamma$ is a maximal
root. So $\alpha - \beta \in \Phi_{\gamma}^+ $.

Recalling that $\beta \in \Delta^+$, we see that $\beta \in
\Phi_{\gamma}^+$ or $\beta \in \widetilde{\Delta}^{+}$ (now,
obviously, $\beta = \gamma$ is impossible).

If $\beta \in \Phi_{\gamma}^+$ then by Proposition \ref{nsr6} a)
we obtain $\alpha = (\alpha - \beta)+\beta = \gamma$, which
contradicts $\alpha \in \widetilde{\Delta}^+$.
If $\beta \in \widetilde{\Delta}^{+}$ then by Proposition
\ref{nsr6} b) we obtain $\alpha = (\alpha - \beta)+\beta \in
\Phi_{\gamma}^+$, which contradicts $\alpha \in
\widetilde{\Delta}^+$.

Thus, we proved $\alpha - \beta  \in \widetilde{\Delta}^+.$

Now, using Proposition \ref{nsr6} again, one can easily show that
$\beta \in \widetilde{\Delta}^{+}$. The lemma is proved.
\epr

The construction of a stem is contained in the following
\begin{prop}\label{eu1}
There exists a sequence  $\Delta = \Delta_1 \supset \Delta_2 \supset \dots \supset \Delta_d$ of closed root subsystems\footnote{One says that a subsystem $\Theta \subset \Delta$ is {\bf closed} iff $\alpha,\beta \in \Theta$ and $\alpha + \beta \in \Delta$ imply $\alpha + \beta \in \Theta$.} with bases $\Pi_k = \Pi\cap\Delta_k$,
 corresponding sets of positive roots $\Delta_k^+  = \Delta^+ \cap\Delta_k$   and maximal roots $\gamma_k \mbox{ of } \Delta_k^+,\quad k = 1,\dots,d$,
  such that we have disjoint unions:
\begin{gather} \nonumber
\Delta_1^+ = \Phi_{\gamma_1}^+ \cup \{\gamma_1\} \cup \Delta_2^+ ,\quad \dots, \quad \Delta_{d-1}^+ = \Phi_{\gamma_{d-1}}^+ \cup \{\gamma_{d-1}\} \cup \Delta_d^+, \\[-2mm] \label{eu2} \\[-2mm] \nonumber \Delta_d^+=\Phi_{\gamma_d}^+ \cup \{\gamma_d\}.
\end{gather}
The set $\Gamma = \{\gamma_1,\dots,\gamma_d\}$ is a stem of $\Delta^+$.
\end{prop}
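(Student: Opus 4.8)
The plan is to construct the filtration recursively and to argue by induction on the cardinality $|\Delta^+|$, letting Lemma \ref{nsr3} perform the inductive step. First I would set $\Delta_1 = \Delta$ and choose $\gamma_1$ to be a maximal root of $\Delta_1^+ = \Delta^+$; such a root exists because each irreducible component of $\Delta$ has a highest root, and any one of these is maximal in the sense of Definition \ref{lm1}. Applying Lemma \ref{nsr3} to $\gamma_1$ produces the reduced root system $\Delta_2 := \widetilde{\Delta} = \Delta \setminus (\Phi_{\gamma_1} \cup \{\gamma_1, -\gamma_1\})$, which by part a) has basis $\Pi_2 = \Pi \cap \Delta_2$ and positive system $\Delta_2^+ = \Delta^+ \setminus (\Phi_{\gamma_1}^+ \cup \{\gamma_1\})$; this is precisely the first disjoint union in \eqref{eu2}, while part b) shows $\Delta_2$ is closed in $\Delta$. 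If $\Delta_2 = \emptyset$ then $d = 1$, and $\Delta^+ = \Phi_{\gamma_1}^+ \cup \{\gamma_1\}$ is already the required stem decomposition; otherwise $|\Delta_2^+| < |\Delta^+|$ and I may invoke the inductive hypothesis for the pair $(\Delta_2, \Pi_2)$.

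The induction furnishes a chain $\Delta_2 \supset \Delta_3 \supset \dots \supset \Delta_d$ of subsystems closed in $\Delta_2$, with bases $\Pi_k = \Pi_2 \cap \Delta_k$, positive systems $\Delta_k^+ = \Delta_2^+ \cap \Delta_k$, maximal roots $\gamma_k$, the disjoint unions \eqref{eu2} from the second onward, and the conclusion that $\{\gamma_2, \dots, \gamma_d\}$ is a stem of $\Delta_2^+$. I then have to promote all of this from ``relative to $\Delta_2$'' to ``relative to $\Delta$''. Closedness is transitive: if $\alpha, \beta \in \Delta_k$ and $\alpha + \beta \in \Delta$, then $\alpha + \beta \in \Delta_2$ because $\Delta_2$ is closed in $\Delta$, hence $\alpha + \beta \in \Delta_k$; so every $\Delta_k$ is closed in $\Delta$. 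Moreover, since $\Delta_k \subset \Delta_2$, the identities $\Pi_2 \cap \Delta_k = \Pi \cap \Delta_k$ and $\Delta_2^+ \cap \Delta_k = \Delta^+ \cap \Delta_k$ follow at once from $\Pi_2 = \Pi \cap \Delta_2$ and $\Delta_2^+ = \Delta^+ \cap \Delta_2$, and maximality of $\gamma_k$ in $\Delta_k^+$ is intrinsic to $\Delta_k$, hence unchanged.

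The genuinely delicate point, and the step I expect to be the crux, is that the branches $\Phi_{\gamma_k}^+$ occurring in \eqref{eu2} must be computed in the ambient $\Delta^+$ as in Definition \ref{aa1}, whereas the induction only supplies the branches computed inside $\Delta_2^+$. This is exactly what Lemma \ref{nsr3} c) resolves: for every $\alpha \in \Delta_2^+ = \widetilde{\Delta}^+$ one has $\{\beta \in \Delta^+ : \alpha - \beta \in \Delta^+\} = \{\beta \in \Delta_2^+ : \alpha - \beta \in \Delta_2^+\}$, so the two meanings of $\Phi_{\gamma_k}^+$ agree for each $k \geq 2$ (all $\gamma_k$ lie in $\Delta_2^+$, so the identification is uniform). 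Granting this, I combine the union $\Delta^+ = \Phi_{\gamma_1}^+ \cup \{\gamma_1\} \cup \Delta_2^+$ from Lemma \ref{nsr3} with the inductive stem decomposition of $\Delta_2^+$, obtaining the disjoint union $\Delta^+ = \{\gamma_1, \dots, \gamma_d\} \cup \bigcup_{k=1}^d \Phi_{\gamma_k}^+$ with every branch taken in $\Delta^+$. By Definition \ref{aa1} this says precisely that $\Gamma = \{\gamma_1, \dots, \gamma_d\}$ is a stem, which closes the induction.
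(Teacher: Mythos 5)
Your proof is correct and follows essentially the same route as the paper's: both pass to the orthogonal complement of a maximal root and both rest on Lemma \ref{nsr3} c) to identify branches computed inside the subsystem with branches computed in the ambient $\Delta^+$ (the identity \eqref{sol}, which you rightly single out as the crux). The only difference is bookkeeping: you organize the argument as a structural recursion on $\abs{\Delta^+}$, invoking the proposition itself for $(\Delta_2,\Pi_2)$ and promoting the result one level via Lemma \ref{nsr3}, whereas the paper runs a single forward induction on the chain index $k$, carrying the relative-equals-ambient branch identification as the invariant \eqref{induction ass 2}.
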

\begin{remark}
The construction goes by induction, taking at each step a maximal root $\gamma_k\in \Delta_k^+$ and defining  $\Delta_{k+1} = \gamma_k^\perp = \{\alpha\in \Delta_k\vert\ \scal{\alpha,\gamma_k} = 0 \}$. The point is to prove that for each $k = 1,\dots,d$ we have
\begin{gather}\label{sol}
\Phi_{\gamma_k}^+ = \{\alpha \in \Delta^+ \vert\ \gamma_k - \alpha \in \Delta^+\} = \{\alpha \in \Delta_k^+ \vert\ \gamma_k-\alpha \in \Delta_k^+ \}.
\end{gather}
The induction step is based on Lemma \ref{nsr3}, c).
\end{remark}

\bpr
Let $\gamma_1 \in  \Delta^+$ be a maximal root.  We put $\Delta_1 = \Delta,\ \Delta_1^+ = \Delta^+$ and define:
\begin{gather*}
\Delta_2 = \{\alpha\in \Delta_1\vert\ \scal{\alpha,\gamma_1} = 0 \},\quad \Pi_2 =  \Pi \cap\Delta_2,\quad \Delta_2^+ =  \Delta^+ \cap\Delta_2.
\end{gather*}
By Proposition \ref{gor6},c), we have $\Delta_1^+ = \Phi_{\gamma_1}^+ \cup \{\gamma_1\} \cup \Delta_2^+$. If $\Delta_2 = \emptyset$,
then obviously $\Gamma = \{\gamma_1\}$ is a stem, and our lemma is proved.

If $\Delta_2 \neq \emptyset$, we go on by induction. Assume that $k > 1$ and we have defined a sequence of root systems
$\Delta_1 \supset \Delta_2 \supset \dots \supset \Delta_k \neq \emptyset$   and maximal roots
$\gamma_1 \mbox{ of } \Delta_1^+,  \dots, \gamma_{k} \mbox{ of } \Delta_{k}^+$, such that for $i=1,\dots,k-1$ we have
\begin{gather} \nonumber
\Delta_{i+1} = \{\alpha \in \Delta_i \vert \ \scal{\gamma_i,\alpha} = 0\},\quad \Pi_{i+1} =  \Pi \cap\Delta_{i+1} , \nonumber \\ \label{induction ass 2} \Delta_i^+ =  \Delta^+ \cap\Delta_i = \Phi_{\gamma_i}^+ \cup \{\gamma_i\} \cup \Delta_{i+1}^{+} ;\\ \nonumber
  \alpha \in \Delta_{i+1}^+  \Lw \{\beta \in \Delta_{i+1}^+ \vert\ \alpha-\beta \in \Delta_{i+1}^+ \} = \Phi_{\alpha}^+.
\end{gather}
The above conditions are obviously valid for $k=2$ (see Proposition \ref{nsr6} and Lemma \ref{nsr3}).

To make the induction step we choose a maximal root $\gamma_k\in\Delta_k^+$ to get the sequence
$\gamma_1,\dots, \gamma_{k-1}, \gamma_k$.  By \eqref{induction ass 2} with $i=k-1$, we see that
 \be \label{for Delta_{k+1}0}
  \Phi_{\gamma_k}^+ = \{\beta \in \Delta_{k}^+ \vert\ \gamma_k - \beta \in \Delta_{k}^+ \} \subset \Delta_k^+.
  \ee
Now we define
$$
\Delta_{k+1} = \{\alpha\in \Delta_k\vert\ \scal{\alpha,\gamma_k} = 0 \},\quad \Pi_{k+1} =  \Pi \cap\Delta_{k+1}.
$$
Obviously we have disjoint union $\Delta_k^+ = \Phi_{\gamma_k}^+ \cup \{\gamma_k\} \cup \Delta_{k+1}^{+}$.

If  $\Delta_{k+1}=\emptyset$, then our sequences are $\{\Delta_i\}_{i=1}^k$, $\{\Delta_i^+\}_{i=1}^k$ and $\{\gamma_i\}_{i=1} ^k$.

  If $\Delta_{k+1}\neq \emptyset$, then from \eqref{for Delta_{k+1}0} and Lemma \ref{nsr3} a) it follows that $\Delta_{k+1}$
  is a root system and $\Delta_{k+1}^+ = \Delta_{k+1}\cap \Delta^+$.

By \eqref{for Delta_{k+1}0} and Lemma \ref{nsr3} b) applied to $\Delta_k$ and $\Delta_{k+1}$ it follows that  for any $\alpha \in \Delta_{k+1}^+$ we have
$
\{\beta \in \Delta_{k+1}^+\vert\ \alpha - \beta \in \Delta_{k+1}^+ \} =  \{\beta \in \Delta_k^+\vert\ \alpha - \beta \in \Delta_k^+ \}.$
 On the other hand by the induction assumption \eqref{induction ass 2} with $i=k-1$ it follows that for any $\alpha \in \Delta_{k+1}^+$ we have $
\Phi_\alpha^+ =  \{\beta \in \Delta_k^+\vert\ \alpha - \beta \in \Delta_k^+ \}.$ Hence for each $\alpha \in \Delta_{k+1}^+$ we have
$$\{\beta \in \Delta_{k+1}^+ ; \alpha-\beta \in \Delta_{k+1}^+ \} = \Phi_{\alpha}^+. $$
In particular we have proved \eqref{sol}.
The induction is complete, so we constructed a stem $\Gamma = \{\gamma_1, \dots,\gamma_d\}$.
\epr
From the proof of the last proposition we get some improvements of Proposition \ref{nsr6} and Lemma \ref{nsr3}
\begin{coro}\label{eu5} Let $\gamma_k, \Delta_k,\quad k=1,\dots,d$ be as in Proposition \ref{eu1}. Then

a) If $\alpha,\beta \in \Phi_{\gamma_{k}}^+$ and
$\alpha + \beta \in \Delta$, then $\alpha + \beta = \gamma_k$;

b) If $\alpha \in \Phi_{\gamma_{k}}^+ , \ \beta \in \Delta_{k+1}$ and
$\alpha + \beta \in \Delta$, then $\alpha + \beta \in \Phi_{\gamma_k}^+$;

c) If $\alpha, \beta \in \Delta_k$ and
$\alpha+\beta \in \Delta$, then $\alpha + \beta \in \Delta_k$.

\end{coro}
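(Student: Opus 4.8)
The guiding observation is that each pair $(\Delta_k, \gamma_k)$ is precisely a reduced root system together with one of its maximal roots, so that Propositions \ref{gor2}, \ref{gor6} and \ref{nsr6} may be invoked with $\Delta$ replaced by $\Delta_k$ and $\gamma$ by $\gamma_k$. This is legitimate because, by Proposition \ref{eu1} (through Lemma \ref{nsr3} a)), each $\Delta_k$ is a reduced root system whose maximal root is $\gamma_k$, and because the identity \eqref{sol} tells us that the branch $\Phi_{\gamma_k}^+$ is the same set whether computed inside $\Delta$ or inside $\Delta_k$. The one point demanding care is that $\gamma_k$ is maximal (hence long) only inside $\Delta_k$, not inside the ambient $\Delta$; consequently the earlier propositions cannot be applied to $\gamma_k$ directly in $\Delta$, and we must first push the sum $\alpha+\beta$ into $\Delta_k$. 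This is exactly why part c) has to be proved first.

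For c), I would argue by induction on $k$. The base case $\Delta_1 = \Delta$ is closed in $\Delta$ trivially. Assuming $\Delta_k$ is closed in $\Delta$, take $\alpha, \beta \in \Delta_{k+1}$ with $\alpha + \beta \in \Delta$. Since $\Delta_{k+1}\subset\Delta_k$, the inductive hypothesis gives $\alpha + \beta \in \Delta_k$; and because $\Delta_{k+1} = \{\delta \in \Delta_k \mid \scal{\delta, \gamma_k} = 0\}$, bilinearity yields $\scal{\alpha+\beta, \gamma_k} = \scal{\alpha,\gamma_k} + \scal{\beta,\gamma_k} = 0$, so $\alpha + \beta \in \Delta_{k+1}$. (This repackages Lemma \ref{nsr3} b) one step at a time.) In particular every $\Delta_k$ is closed in $\Delta$, so whenever $\alpha,\beta\in\Delta_k$ and $\alpha+\beta\in\Delta$ we already know $\alpha+\beta\in\Delta_k$.

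With c) in hand, I would derive a) and b) as immediate applications of Proposition \ref{nsr6} inside $\Delta_k$. For a), $\alpha,\beta\in\Phi_{\gamma_k}^+$ are positive roots of $\Delta_k$ different from $\gamma_k$ (note $\gamma_k\notin\Phi_{\gamma_k}^+$), so $\beta\neq\pm\gamma_k$; part c) upgrades $\alpha+\beta\in\Delta$ to $\alpha+\beta\in\Delta_k$, and Proposition \ref{nsr6} a) then forces $\alpha+\beta=\gamma_k$. For b), I take $\nu=\beta\in\Delta_{k+1}$, so $\scal{\beta,\gamma_k}=0$; this gives at once $\beta\neq\pm\gamma_k$ and, via Proposition \ref{gor6} c) (every element of $\Phi_{\gamma_k}$ satisfies $\abs{C(\cdot,\gamma_k)}=1$ and is therefore non-orthogonal to $\gamma_k$), also $\beta\notin\Phi_{\gamma_k}$. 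Again c) puts $\alpha+\beta$ in $\Delta_k$, and Proposition \ref{nsr6} b) delivers $\alpha+\beta\in\Phi_{\gamma_k}^+$, which by \eqref{sol} is the global branch claimed.

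The expected obstacle is not computational but conceptual: one must resist applying Proposition \ref{nsr6} (or \ref{gor6}, \ref{gor2}) to $\gamma_k$ in $\Delta$, since $\gamma_k$ loses its maximality and length there, and instead route everything through the subsystem $\Delta_k$, whose closedness --- established in c) --- is what makes the reduction valid. Once this is seen, the corollary is pure bookkeeping on top of Lemma \ref{nsr3} and Proposition \ref{nsr6}, with \eqref{sol} ensuring that the local and global notions of $\Phi_{\gamma_k}^+$ coincide.
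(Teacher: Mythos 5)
Your proposal is correct and follows exactly the paper's own route: part c) first, by an induction that unrolls Lemma \ref{nsr3} b), and then parts a) and b) as applications of Proposition \ref{nsr6} inside $\Delta_k$, with \eqref{sol} guaranteeing that the branch $\Phi_{\gamma_k}^+$ computed in $\Delta_k$ agrees with the global one. The paper's proof is just a three-line sketch of this same argument, so your write-up is essentially an expanded version of it, including the key point that $\gamma_k$ is only maximal within $\Delta_k$ and hence everything must be routed through the closedness of $\Delta_k$.
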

\bpr
c) Easy induction using Lemma \ref{nsr3}, b).

a) Because of c), we may apply  \eqref{sol} and Proposition \ref{nsr6}, a) to $\Delta_k^+$.

b) Now we apply \eqref{sol} and Proposition \ref{nsr6} b) to $\Delta_k^+$.
\epr
Now we can prove

\begin{theorem} \label{ir1}{\bf Existence and uniqueness}
Let $\Delta$ be a reduced root system, let $\Pi$ be a basis and let $\Delta^+$ be the corresponding set of positive roots. There exists exactly one stem of $\Delta^+$.
\end{theorem}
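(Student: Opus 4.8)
The existence half is already in hand: Proposition \ref{eu1} constructs a stem of $\Delta^+$ explicitly, so only uniqueness requires an argument. The plan is to prove uniqueness by induction on the number of roots $\abs{\Delta}$, the base case $\Delta = \emptyset$ being trivial since then the empty set is the only stem. For the inductive step I fix once and for all a maximal root $\gamma_1 \in \Delta^+$ (a highest root of one of the irreducible components, in the sense of Definition \ref{lm1}), and set $\widetilde{\Delta} = \Delta \setminus (\Phi_{\gamma_1} \cup \{\gamma_1,-\gamma_1\})$ as in Lemma \ref{nsr3}. By that lemma $\widetilde{\Delta}$ is again a reduced root system, with basis $\Pi \cap \widetilde{\Delta}$ and positive system $\widetilde{\Delta}^+ = \Delta^+ \setminus (\Phi_{\gamma_1}^+ \cup \{\gamma_1\})$, and it has strictly fewer roots than $\Delta$, so the induction hypothesis applies to it.

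The first key step is to show that \emph{every} stem $\Gamma$ of $\Delta^+$ must contain this particular $\gamma_1$. Indeed, by \eqref{nos1} either $\gamma_1 \in \Gamma$ or $\gamma_1 \in \Phi_\delta^+$ for some $\delta \in \Gamma$; in the latter case $\delta - \gamma_1 \in \Delta^+$, so $\delta - \gamma_1$ is a root, which forces $\delta$ to lie in the same irreducible component as $\gamma_1$ and to satisfy $\delta = \gamma_1 + (\delta - \gamma_1)$, contradicting the maximality of $\gamma_1$ in that component. Hence $\gamma_1 \in \Gamma$. Because the union in \eqref{nos1} is disjoint, the branch $\Phi_{\gamma_1}^+$ meets neither $\Gamma$ nor any other branch, so $\Gamma \setminus \{\gamma_1\} \subset \Delta^+ \setminus (\{\gamma_1\} \cup \Phi_{\gamma_1}^+) = \widetilde{\Delta}^+$.

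It remains to identify $\Gamma \setminus \{\gamma_1\}$ as a stem of $\widetilde{\Delta}^+$ and invoke the inductive hypothesis. The point, and the step I expect to carry the real content, is that for every $\gamma \in \Gamma \setminus \{\gamma_1\} \subset \widetilde{\Delta}^+$ the branch $\Phi_\gamma^+$ computed inside $\Delta^+$ already coincides with the branch $\{\alpha \in \widetilde{\Delta}^+ \mid \gamma - \alpha \in \widetilde{\Delta}^+\}$ computed inside $\widetilde{\Delta}^+$; this is exactly the assertion of Lemma \ref{nsr3} c), so the genuine obstacle has already been cleared by that lemma. Granting it, removing the disjoint piece $\{\gamma_1\} \cup \Phi_{\gamma_1}^+$ from the disjoint decomposition \eqref{nos1} of $\Delta^+$ leaves $\widetilde{\Delta}^+ = (\Gamma \setminus \{\gamma_1\}) \cup \bigcup_{\gamma \in \Gamma \setminus \{\gamma_1\}} \Phi_\gamma^+$, still a disjoint union with all branches contained in $\widetilde{\Delta}^+$; that is precisely the statement that $\Gamma \setminus \{\gamma_1\}$ is a stem of $\widetilde{\Delta}^+$. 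By the induction hypothesis $\widetilde{\Delta}^+$ has a unique stem, so $\Gamma \setminus \{\gamma_1\}$ is determined independently of the choice of $\Gamma$, and therefore $\Gamma = \{\gamma_1\} \cup (\Gamma \setminus \{\gamma_1\})$ is the same for every stem. This completes the induction and the proof.
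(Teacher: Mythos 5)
Your proof is correct, and its uniqueness half is organized genuinely differently from the paper's. The paper fixes the specific stem $\Gamma=\{\gamma_1,\dots,\gamma_d\}$ produced by the tower construction of Proposition \ref{eu1} and shows, by induction on the index $k$ along that tower, that any stem $\Gamma'$ contains each $\gamma_k$; the induction step is a contradiction argument resting on Corollary \ref{eu5} b), c) (closure properties of the subsystems $\Delta_k$, themselves extracted from the construction) together with maximality of $\gamma_{k+1}$ in $\Delta_{k+1}^+$. You instead induct on $\abs{\Delta}$ and work with an arbitrary stem directly: every stem contains the fixed maximal root $\gamma_1$ (the same maximality argument that opens the paper's proof), and Lemma \ref{nsr3} c) guarantees that the branch of any element of $\widetilde{\Delta}^+$ is the same whether computed in $\Delta^+$ or in $\widetilde{\Delta}^+$, so deleting the disjoint piece $\{\gamma_1\}\cup\Phi_{\gamma_1}^+$ from \eqref{nos1} exhibits $\Gamma\setminus\{\gamma_1\}$ as a stem of $\widetilde{\Delta}^+$, and the induction hypothesis finishes. (The base case is harmless even if one's conventions exclude the empty root system: when $\widetilde{\Delta}^+=\emptyset$ the disjoint decomposition forces $\Gamma=\{\gamma_1\}$.) What your route buys: uniqueness becomes logically independent of the existence construction --- no tower, no indexation, no Corollary \ref{eu5}, only Lemma \ref{nsr3} --- and the two competing stems are treated symmetrically, each shown to be $\{\gamma_1\}$ together with the unique stem of $\widetilde{\Delta}^+$. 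What the paper's route buys: it identifies the unique stem with the explicit output of the tower construction, together with its natural partial ordering, which the paper immediately reuses (Definition \ref{yyy2}, Corollary \ref{yyy7}).
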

\begin{proof}
The existence is already proved in Proposition \ref{eu1}. Let $\Gamma =\{\gamma_1, \dots, \gamma_d\}$ be the stem of $\Delta^+$ constructed there. Now we prove uniqueness.

Let $\Gamma'$ be any stem of $\Delta^+$. We have to prove that $\Gamma = \Gamma'$. It is obviously sufficient to prove $\Gamma\subset\Gamma'$.

By maximality $\gamma_1 + \alpha $ is not a root for any $\alpha \in \Delta^+$ , so $\gamma_1 \not \in \Phi_\gamma^+$ for any $\gamma \in \Gamma' $, and because of \eqref{nos1} we have $\gamma_1 \in \Gamma'$.

Now assume that for some $k<d$ we have $\{\gamma_1,\dots,\gamma_k\}\subset \Gamma'$.  Assume that $\gamma_{k+1}\not \in \Gamma'$. Since $\Gamma'$ is a stem, there is an element $\delta \in \Gamma'$, such that $\gamma_{k+1} \in \Phi_\delta^+$.   Now $\delta \not \in \{\gamma_1,\dots, \gamma_k\}$ (since $\Gamma$ is a stem and $\gamma_{k+1} \not \in \Phi_{\gamma_1}^+\cup \dots \cup \Phi_{\gamma_k}^+$). Furthermore $\delta \not \in \Phi_{\gamma_1}^+\cup \dots \cup \Phi_{\gamma_k}^+$ because $\Gamma'$ is a stem. Therefore $\delta, \gamma_{k+1} \in \Delta_{k+1}^+$ and $\delta-\gamma_{k+1} \in \Delta^+$.

By Corollary \ref{eu5}, b)  it follows that $\delta-\gamma_{k+1} \not\in \Phi_{\gamma_i}^+$ for all $i\leq k$, hence $\delta-\gamma_{k+1} \in \Delta_{k+1}^+$. This is impossible by Corollary \ref{eu5}, c) and since $\gamma_{k+1}$ is a maximal root in $\Delta_{k+1}^+$. So $\gamma_{k+1}\in \Gamma'$. The theorem is proved.
\end{proof}

\begin{ex}\label{yyy1}
The root system $\Delta = \mk{D}_4$ is irreducible, and fixing $\Delta^+$ we determine a highest root $\gamma_1$, while $\Delta_2 = \mk{A}_1\oplus \mk{A}_1\oplus \mk{A}_1 $, so we have $\Gamma = \{\gamma_1,\gamma_2,\gamma_3,\gamma_4\}$, where the last three roots are all maximal and may come in any order.
\end{ex}
We fix a useful fact
\begin{coro}\label{uu1}
If $\gamma \in \Gamma$ and $\alpha \in \Phi^+_\gamma$, then $\alpha(H_\gamma) = C(\alpha,\gamma) = 1$.
\end{coro}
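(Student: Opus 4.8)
The plan is to prove the two asserted equalities in turn: the first is a matter of unwinding the definitions, and the second is a direct application of Proposition \ref{gor6} to an appropriate subsystem.

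First I would establish $\alpha(H_\gamma) = C(\alpha,\gamma)$, which is purely formal. By the defining property $\scal{H,h_\gamma} = \gamma(H)$ together with the induced identification on $\mk{h}^*$, we have $\alpha(h_\gamma) = \scal{h_\gamma,h_\alpha} = \scal{\alpha,\gamma}$; since $H_\gamma = \frac{2}{\scal{\gamma,\gamma}}h_\gamma$, this gives $\alpha(H_\gamma) = \frac{2\scal{\alpha,\gamma}}{\scal{\gamma,\gamma}} = C(\alpha,\gamma)$.

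The substantive part is to show $C(\alpha,\gamma) = 1$ whenever $\alpha \in \Phi^+_\gamma$. Because $\gamma \in \Gamma$, the construction underlying Theorem \ref{ir1} (namely Proposition \ref{eu1}) lets me write $\gamma = \gamma_k$ for some $k$, where $\gamma_k$ is a maximal root of $\Delta_k^+$. By Corollary \ref{eu5} c) the subsystem $\Delta_k$ is closed under root addition, hence is itself a reduced root system, and $\gamma_k$ is a maximal root of $\Delta_k^+$ by construction. Moreover the identity \eqref{sol} proved in Proposition \ref{eu1} asserts that $\Phi^+_\gamma = \{\beta \in \Delta_k^+ \vert\ \gamma_k - \beta \in \Delta_k^+\}$, i.e. the branch of $\gamma_k$ computed inside the ambient $\Delta^+$ coincides with the branch computed intrinsically inside $\Delta_k^+$. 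I would then apply Proposition \ref{gor6} c) to the reduced root system $\Delta_k$ with its maximal root $\gamma_k$: for $\alpha \in \Delta_k$, $\alpha \neq \gamma_k$, belonging to the branch is equivalent to $C(\alpha,\gamma_k)=1$. Since $\alpha \in \Phi^+_\gamma$ forces $\gamma - \alpha \in \Delta^+$ and hence $\alpha \neq \gamma$, the proposition applies and yields $C(\alpha,\gamma) = 1$, which finishes the proof.

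The only point requiring care — and the step I expect to be the (mild) obstacle — is the legitimacy of invoking Proposition \ref{gor6} for $\Delta_k$ rather than for the ambient system $\Delta$. This rests on two ingredients already available: that $\Delta_k$ is a bona fide reduced root system with $\gamma_k$ maximal (supplied by Corollary \ref{eu5} c) together with the construction in Proposition \ref{eu1}), and that the branch $\Phi^+_\gamma$ is intrinsic to $\Delta_k$ (supplied by \eqref{sol}). Once both are in hand, the conclusion is immediate.
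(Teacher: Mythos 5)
Your proof is correct and follows essentially the same route as the paper: identify $\gamma = \gamma_k$ as a maximal root of the subsystem $\Delta_k$ via the construction in Proposition \ref{eu1}, note via \eqref{sol} that $\Phi_\gamma^+$ is the branch computed intrinsically in $\Delta_k^+$, and apply Proposition \ref{gor6} c). The paper's proof is just a terser version of this (it leaves the identification $\alpha(H_\gamma)=C(\alpha,\gamma)$ and the intrinsic-branch point implicit), so your extra care only makes explicit what the paper takes for granted.
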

\bpr By the construction of the stem, $\gamma=\gamma_k$ is a
maximal root in a root subsystem $\Delta_k$, where obviously
$\alpha$ belongs to the branch at $\gamma$. Now we apply
Proposition \ref{gor6}, c).  \epr

From the construction in Proposition \ref{eu1} we obtain a natural ordering of the stem $\Gamma$ - there is a sequence  $\Delta_1\supset\Delta_2\supset\dots\supset\Delta_d$, which gives the indexation $\Gamma = \{\gamma_1,\dots,\gamma_d\}$. The ordering is substantially {\bf partial}. As the Example \ref{yyy1} shows, each time when $\Delta_k$ is not irreducible we have to {\bf choose}  $\gamma_{k+1}$  among the maximal roots of $\Delta_k^+$.
We shall give now the formal definition. First we have

\begin{prop}\label{yyy23}
Let $\Delta$ be a reduced root system, let $\Pi$ be a basis and let $\Delta^+$ be the corresponding set of positive roots. Let $\Gamma$ be the stem of  $\Delta^+$.

For each $\gamma \in \Gamma$, there exists an unique irreducible closed subsystem of roots $\Theta_\gamma \subset \Delta$, such that

a) The set $\Pi_\gamma = \Pi\cap\Theta_\gamma$ is a basis of $\Theta_\gamma$;

b) $\Theta_\gamma^+ = \Theta_\gamma\cap\Delta^+$ is the set of positive roots determined by the basis  $\Pi_\gamma$, $\gamma$ is the highest root of $\Theta_\gamma^+$ and
 $$
\Phi_\gamma^+ = \{\alpha \in \Delta^+\vert\ \gamma - \alpha  \in \Delta^+\} = \{\alpha \in \Theta^+_\gamma \vert\ \gamma - \alpha  \in \Theta^+_\gamma\}.
$$
c) The stem of $\Theta_\gamma^+ $ is the subset $ \Theta_\gamma\cap\Gamma$. If $\delta\in \Theta_\gamma\cap\Gamma$, then
$ \Theta_\delta \subset \Theta_\gamma$.
 \end{prop}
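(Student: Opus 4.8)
The plan is to define $\Theta_\gamma$ through the chain of Proposition \ref{eu1} and then to recognise it intrinsically as the ``parabolic'' subsystem cut out by the support of $\gamma$. Fix an ordering $\gamma_1,\dots,\gamma_d$ of the (unique) stem $\Gamma$ realising the chain $\Delta=\Delta_1\supset\dots\supset\Delta_d$, with $\gamma=\gamma_k$, and let $\Theta_\gamma$ be the irreducible component of $\Delta_k$ that contains $\gamma$. By construction $\Theta_\gamma$ is closed and irreducible; its basis is the connected component of $\Pi_k=\Pi\cap\Delta_k$ supporting $\gamma$, which is exactly $\Pi\cap\Theta_\gamma=\Pi_\gamma$, giving (a). Since $\gamma$ is a maximal root of $\Delta_k^+$ lying in $\Theta_\gamma$, it is the highest root of $\Theta_\gamma^+=\Theta_\gamma\cap\Delta^+$. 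To see that $\Theta_\gamma$ does not depend on the chosen ordering, I would verify $\Theta_\gamma=\Delta\cap\mathrm{span}_\RR(\Pi_\gamma)$: roots in the other components of $\Delta_k$ lie in the orthogonal complement of $\mathrm{span}(\Pi_\gamma)$, and $\Pi_\gamma\subset\Delta_k\subset\gamma_i^\perp$ for $i<k$ forces $\Delta\cap\mathrm{span}(\Pi_\gamma)\subset\Delta_k$. As $\gamma$ is the highest root of the irreducible system $\Theta_\gamma$ it has full support, so $\Pi_\gamma=\mathrm{supp}(\gamma)$ (the simple roots entering $\gamma$ with positive coefficient); thus $\Theta_\gamma=\Delta\cap\mathrm{span}_\RR(\mathrm{supp}(\gamma))$ is manifestly intrinsic.

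For the displayed identity in (b) I would start from \eqref{sol}, which gives $\Phi_\gamma^+=\{\alpha\in\Delta_k^+\mid\gamma-\alpha\in\Delta_k^+\}$, and then replace $\Delta_k^+$ by $\Theta_\gamma^+$. The inclusion $\Theta_\gamma^+\subset\Delta_k^+$ gives one direction; for the other, if $\alpha\in\Phi_\gamma^+$ then Corollary \ref{uu1} yields $C(\alpha,\gamma)=1\neq0$ and hence $C(\gamma-\alpha,\gamma)=1\neq0$, so both $\alpha$ and $\gamma-\alpha$ meet $\gamma$ non-orthogonally and therefore lie in the component $\Theta_\gamma$ of $\Delta_k$. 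This proves (b).

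Uniqueness is then purely combinatorial. If $\Theta$ is any irreducible closed subsystem for which $\Pi\cap\Theta$ is a basis and $\gamma$ is the highest root of $\Theta\cap\Delta^+$, then, the highest root of an irreducible system having full support, we get $\Pi\cap\Theta=\mathrm{supp}(\gamma)=\Pi_\gamma$. A closed subsystem is determined by its basis inside $\Delta$ (it is the standard parabolic subsystem $\Delta\cap\mathrm{span}$ of the basis), so $\Theta=\Delta\cap\mathrm{span}(\Pi_\gamma)=\Theta_\gamma$.

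Finally, for (c) I would show that $\Theta_\gamma\cap\Gamma$ is a stem of $\Theta_\gamma^+$ and invoke the uniqueness Theorem \ref{ir1}. Intersecting the global disjoint decomposition $\Delta^+=\bigcup_{\delta\in\Gamma}(\{\delta\}\cup\Phi_\delta^+)$ with $\Theta_\gamma$, I must check that every $\alpha\in\Theta_\gamma^+$ is assigned to a stem element lying in $\Theta_\gamma$: since $\alpha\in\Theta_\gamma\subset\Delta_k$, the stem element $\delta$ to which it is assigned must lie in $\Delta_k$, and if $\alpha\neq\delta$ then $C(\alpha,\delta)=1$ by Corollary \ref{uu1}, so $\delta$ is non-orthogonal to $\alpha$ and hence lies in the same irreducible component $\Theta_\gamma$ of $\Delta_k$. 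This use of the component structure is the one genuinely delicate point and is the main obstacle. Using (b) one then checks $\Phi_\delta^+=\{\beta\in\Theta_\gamma^+\mid\delta-\beta\in\Theta_\gamma^+\}$ for $\delta\in\Theta_\gamma\cap\Gamma$, so the restricted decomposition is precisely the stem relation \eqref{nos1} for $\Theta_\gamma^+$. The nesting $\Theta_\delta\subset\Theta_\gamma$ is immediate from the intrinsic description, since $\delta\in\Theta_\gamma$ gives $\mathrm{supp}(\delta)\subset\mathrm{supp}(\gamma)$ and hence $\Theta_\delta=\Delta\cap\mathrm{span}(\mathrm{supp}\,\delta)\subset\Delta\cap\mathrm{span}(\mathrm{supp}\,\gamma)=\Theta_\gamma$.
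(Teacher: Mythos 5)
Your proposal is correct and takes essentially the same route as the paper: the paper also defines $\Theta_\gamma$ as the unique irreducible component of $\Delta_k$ of which $\gamma=\gamma_k$ is the highest root (via the chain of Proposition \ref{eu1}) and then declares the checks of a), b), c) immediate. Your write-up simply supplies the verifications the paper omits — most usefully the intrinsic characterization $\Theta_\gamma=\Delta\cap\mathrm{span}_\RR(\mathrm{supp}\,\gamma)$, which settles the uniqueness and ordering-independence that the paper's one-line proof never addresses explicitly.
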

\bpr
We look at the proof of Proposition \ref{eu1}. If $\gamma = \gamma_k$ in the  construction there, then $\gamma$ is a maximal root in the reduced root system $\Delta_k$, which means exactly that $\gamma$ is the highest root of exactly one irreducible component of $\Delta_k$, which we denote by $\Theta_\gamma$. The check of the properties a), b) and c) is immediate.
\epr

\begin{df}\label{yyy2}
Let $\Delta$ be a reduced root system, let $\Pi$ be a basis and let $\Delta^+$ be the corresponding set of positive roots. Let $\Gamma$ be the stem of  $\Delta^+$ and let $\gamma, \delta \in \Gamma$.

We shall say that $\gamma$ {\bf precedes} $\delta$, and we write $\gamma \prec \delta$, if $\delta\in \Theta_\gamma$, (see Proposition \ref{yyy23}).
\end{df}

In the following text, each time when we use {\bf indexation} of $\Gamma$ we shall assume that it is {\bf compatible} with the  {\bf order $\prec$}, that is, when we write
$\Gamma = \{\gamma_1,\dots,\gamma_d\}$, {\bf we assume that}
\begin{gather}\label{yyy21}
\gamma_k \prec \gamma_j \Lw k < j.
\end{gather}

We illustrate the importance of the order in $\Gamma$ by the following useful
\begin{coro}\label{yyy7}
Let $\Gamma = \{\gamma_1,\dots,\gamma_d\}\subset \Delta^+$ be the stem of $\Delta^+$. Then for $i=1,\dots, d$ and $ \alpha \in   \Phi_{\gamma_i}^+$ we have
\begin{gather}
\label{ord2}    1\leq p < i \Lw  \alpha \pm  \gamma_p \not \in \Delta , \\
\label{ord1}   \alpha + \gamma_i \not \in \Delta,\quad \alpha - \gamma_i = s_{\gamma_i}(\alpha) \in \Phi_{\gamma_i}^-\\
\label{ord3}  i < p \leq d \mbox{ and }\beta \in \Delta_p \mbox{ and } \alpha + \beta \in \Delta \ \Lw  \ \alpha + \beta \in \Phi_{\gamma_i}^+.
\end{gather}
\end{coro}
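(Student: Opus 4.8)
The plan is to read all three assertions off the structural results of Subsection \ref{eu}, applied to the descending chain $\Delta = \Delta_1 \supset \dots \supset \Delta_d$ of Proposition \ref{eu1}. The point needing the most care --- indeed the only one I expect to cause friction --- is that the given $\prec$-compatible indexation $\Gamma = \{\gamma_1, \dots, \gamma_d\}$ is itself realizable by such a construction, so that the subsystems $\Delta_p$ appearing in \eqref{ord3}, together with \eqref{sol} and Corollary \ref{eu5}, are genuinely at our disposal for this indexation. A construction step deletes a maximal root of the current $\Delta_k$, i.e. the highest root of one of its irreducible components; by Proposition \ref{yyy23} these are exactly the $\prec$-minimal elements of the stem of $\Delta_k^+$, since a non-highest root $\delta$ of a component $C$ lies in $\Theta_{\gamma_C} = C$ (whence $\gamma_C \prec \delta$), while a highest root $\gamma$ cannot lie in a proper $\Theta_\delta$ with $\delta \neq \gamma$ (being maximal in the whole component it would force $\gamma = \delta$). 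Hence selecting the next index of any $\prec$-compatible order is a legitimate construction step, and the chain $\Delta_1 \supset \dots \supset \Delta_d$, with all the properties proved in Subsection \ref{eu}, may be used.

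For \eqref{ord2} I would fix $p < i$ and note $\alpha \in \Phi_{\gamma_i}^+ \subset \Delta_i^+ \subset \Delta_{p+1}$, so that $\scal{\alpha, \gamma_p} = 0$ and $\alpha \in \Delta_p$ with $\alpha \neq \pm\gamma_p$. Inside the reduced root system $\Delta_p$ the root $\gamma_p$ is maximal, hence long (Proposition \ref{gor6} a)), so Proposition \ref{gor2} c) applied to $\Delta_p$ turns $\scal{\alpha,\gamma_p} = 0$ into strong orthogonality in $\Delta_p$, i.e. $\alpha \pm \gamma_p \notin \Delta_p$. Since $\Delta_p$ is closed (Corollary \ref{eu5} c)), any $\alpha \pm \gamma_p$ lying in $\Delta$ would already lie in $\Delta_p$; therefore $\alpha \pm \gamma_p \notin \Delta$.

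For \eqref{ord1}, Corollary \ref{uu1} gives $C(\alpha, \gamma_i) = 1$, so that $s_{\gamma_i}(\alpha) = \alpha - C(\alpha,\gamma_i)\gamma_i = \alpha - \gamma_i$. Because $\gamma_i - \alpha \in \Delta^+$ while $\gamma_i - (\gamma_i - \alpha) = \alpha \in \Delta^+$ as well, the root $\gamma_i - \alpha$ belongs to $\Phi_{\gamma_i}^+$, and hence $\alpha - \gamma_i \in \Phi_{\gamma_i}^-$; this settles the second clause. For the first clause, were $\alpha + \gamma_i$ a root, closedness of $\Delta_i$ would place it in $\Delta_i$, yet $C(\alpha + \gamma_i, \gamma_i) = C(\alpha,\gamma_i) + 2 = 3$ contradicts the bound $|C(\cdot,\gamma_i)| \leq 1$ of Proposition \ref{gor2} a) for the long root $\gamma_i$ of $\Delta_i$ (the alternative $\alpha + \gamma_i = \pm\gamma_i$ being impossible). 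Thus $\alpha + \gamma_i \notin \Delta$.

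Finally, \eqref{ord3} is immediate: for $i < p$ we have $\Delta_p \subset \Delta_{i+1}$, so the hypothesis $\beta \in \Delta_p$ gives $\beta \in \Delta_{i+1}$, and Corollary \ref{eu5} b) (with $k = i$) states precisely that $\alpha \in \Phi_{\gamma_i}^+$, $\beta \in \Delta_{i+1}$ and $\alpha + \beta \in \Delta$ force $\alpha + \beta \in \Phi_{\gamma_i}^+$. In summary, the entire corollary reduces to the stem combinatorics of Subsection \ref{eu}, the sole substantive input being the compatibility of the order $\prec$ with the construction chain established in the first step.
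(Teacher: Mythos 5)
Your proposal is correct and follows essentially the same route as the paper, whose entire proof is a one-line citation of the construction in Proposition \ref{eu1}, the properties in Corollary \ref{eu5} and Theorem \ref{ir1}. You simply make explicit the details the paper leaves implicit --- most notably that any $\prec$-compatible indexation of $\Gamma$ is realizable by the construction chain $\Delta_1 \supset \dots \supset \Delta_d$, which is exactly the intended reading.
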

\bpr
 All statements are direct consequences of the construction in Proposition \ref{eu1}, the properties in Corollary \ref{eu5} and Theorem \ref{ir1}.
 \epr
 The following corollary and remark are easy to verify and will be used freely in the rest of the paper.

\begin{coro}\label{yyy4} We use the notations of Proposition \ref{eu1}.

a) If $\Gamma = \{\gamma_1,\dots,\gamma_d\}$ is the stem of $\Delta^+$, then for each $k=1.\dots,d$ the stem of $\Delta_k^+$ is $\Gamma_k = \{\gamma_k,\dots,\gamma_d\} = \Gamma\cap \Delta_k^+$.

b) If we have a decomposition into orthogonal components $\Delta = \Delta'\cup\Delta''$ and $\Delta^+ = \Delta'^+\cup \Delta''^+$, then
$\Gamma = \Gamma'\cup\Gamma''$, where $\Gamma' = \Delta'\cap\Gamma$ and $\Gamma'' = \Delta''\cap\Gamma$. The order $\prec$ is induced in both directions.
\end{coro}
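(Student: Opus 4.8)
The plan is to prove both parts by \emph{exhibiting} a stem of the relevant sub-root-system and then invoking the uniqueness half of Theorem \ref{ir1}; in each case the only genuine work is checking that the branches $\Phi_\gamma^+$ computed inside the subsystem agree with those computed in the ambient $\Delta^+$.

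For part a), I would observe that the tail of the construction in Proposition \ref{eu1} is itself a legitimate run of that construction for the root system $\Delta_k$. Indeed $\gamma_k$ is a maximal root of $\Delta_k^+$, the passage $\Delta_{k+1} = \{\alpha \in \Delta_k\vert\ \scal{\alpha,\gamma_k} = 0\}$ is exactly the inductive step, and the same holds at every subsequent index. Hence the sequence $\Delta_k \supset \Delta_{k+1} \supset \dots \supset \Delta_d$ together with the maximal roots $\gamma_k,\dots,\gamma_d$ is precisely the data produced by Proposition \ref{eu1} applied to $\Delta_k$ with basis $\Pi_k$, and the decompositions \eqref{eu2} for indices $\geq k$ show that $\{\gamma_k,\dots,\gamma_d\}$ satisfies \eqref{nos1} for $\Delta_k^+$, where by \eqref{sol} the branch $\Phi_{\gamma_j}^+$ may be read off inside $\Delta_k^+$. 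Thus $\{\gamma_k,\dots,\gamma_d\}$ is a stem of $\Delta_k^+$, and by Theorem \ref{ir1} it is \emph{the} stem. Finally, since $\gamma_j \notin \Delta_{j+1}$ (because $\scal{\gamma_j,\gamma_j}\neq 0$) while $\gamma_j \in \Delta_j \subset \Delta_k$ for $j \geq k$, one reads off $\Gamma \cap \Delta_k^+ = \{\gamma_k,\dots,\gamma_d\}$.

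For part b), the decisive observation is that branches respect an orthogonal decomposition: if $\gamma \in \Delta'^+$ and $\alpha \in \Phi_\gamma^+$, then $\gamma - \alpha \in \Delta^+$, and since $\scal{\gamma-\alpha,\gamma} = \scal{\gamma,\gamma}\neq 0$ and $\scal{\gamma-\alpha,\alpha} = -\scal{\alpha,\alpha}\neq 0$, the root $\gamma - \alpha$ can lie in neither orthogonal piece alone unless $\alpha \in \Delta'^+$. Hence for $\gamma \in \Delta'$ the branch $\Phi_\gamma^+$ lies entirely in $\Delta'^+$ and coincides with the branch computed inside $\Delta'$, and symmetrically for $\Delta''$. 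Let $\Gamma'$ and $\Gamma''$ be the unique stems of $\Delta'^+$ and $\Delta''^+$; splitting \eqref{nos1} across the orthogonal decomposition then exhibits $\Gamma' \cup \Gamma''$ as a stem of $\Delta^+$, so Theorem \ref{ir1} gives $\Gamma = \Gamma' \cup \Gamma''$, and because the disjoint sets $\Gamma' \subset \Delta'$, $\Gamma'' \subset \Delta''$ partition $\Gamma$ we conclude $\Gamma' = \Gamma \cap \Delta'$ and $\Gamma'' = \Gamma \cap \Delta''$. For the order, each $\Theta_\gamma$ of Proposition \ref{yyy23} is irreducible, hence contained in a single orthogonal component, so for $\gamma,\delta$ in the same component the relation $\gamma \prec \delta$ is detected inside that component, while elements of $\Gamma'$ and $\Gamma''$ are never $\prec$-comparable; thus $\prec$ is induced from both factors.

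The only step demanding care, and the natural place for a slip, is the interchange between ``branch in $\Delta^+$'' and ``branch in the subsystem.'' In part a) this is handed to us directly by \eqref{sol}, proved during the construction, and in part b) it is the one-line orthogonality computation above, so I expect no serious obstacle to remain once those identifications are recorded.
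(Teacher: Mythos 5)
Your proof is correct and takes the route the paper intends: the paper states this corollary without proof (it is declared ``easy to verify'' and attributed to the notations of Proposition \ref{eu1}), and your exhibit-a-stem-then-invoke-uniqueness argument via Theorem \ref{ir1} --- with the branch identifications supplied by \eqref{sol} and the sandwich $\Delta_j \subset \Delta_k \subset \Delta$ in part a), and the one-line orthogonality computation in part b) --- is precisely the verification being left to the reader. No gaps worth flagging; the only points you compress (that the branch of $\gamma_j$ computed in an intermediate $\Delta_k$ agrees with the ambient one, and that $\gamma - \alpha$ itself lands in $\Delta'$) follow immediately from the inclusions and inner products you already wrote down.
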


\begin{remark}\label{yyy3}
Let $\Delta$ be irreducible, let $\Pi$ be our fixed basis and let
$\nu\in {\bf Aut}_{\Pi}(\Delta)$ be a diagram automorphism. So $\nu(\Delta^+) = \Delta^+$ and if $\Gamma$ is the stem of $\Delta^+$, then obviously $\nu(\Gamma)$ is also a stem. By uniqueness (see Theorem \ref{ir1}) we have $\nu(\Gamma) = \Gamma$. Also, because $\nu$ is an automorphism and $\nu(\Delta^+) = \Delta^+$ we have $\nu(\Phi_\gamma^+) = \Phi^+_{\nu(\gamma)}$.

Moreover, if $\gamma$ is the highest root, then $\nu(\gamma) = \gamma$.
\end{remark}

Also from the construction in Proposition \ref{eu5} and Proposition \ref{gor2}, c) we get
\begin{coro}\label{yyy11}
The stem $\Gamma$ is a maximal strongly orthogonal subset of
$\Delta^+$.
\end{coro}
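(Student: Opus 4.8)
The plan is to verify the two defining conditions separately: first that $\Gamma$ is pairwise strongly orthogonal, and then that no root of $\Delta^+$ lying outside $\Gamma$ can be adjoined to it while preserving strong orthogonality. Both facts will follow quickly from the inductive construction of the stem in Proposition \ref{eu1} together with Proposition \ref{gor2}, c), so essentially no new work is needed beyond invoking the structural results already in place.

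For the strong orthogonality, I would fix indices $k < j$ and recall from the construction that $\gamma_j \in \Delta_{k+1} = \{\alpha \in \Delta_k \mid \scal{\alpha,\gamma_k} = 0\}$, so that $\scal{\gamma_k,\gamma_j} = 0$. Since $\gamma_k$ is a maximal root it is long by Proposition \ref{gor6}, a), and $\gamma_j \neq \pm\gamma_k$ because the $\gamma_i$ are distinct positive roots. Proposition \ref{gor2}, c) then converts the vanishing of $\scal{\gamma_k,\gamma_j}$ directly into the statement that $\gamma_k$ is strongly orthogonal to $\gamma_j$, i.e. $\gamma_k \pm \gamma_j \notin \Delta$. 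As this holds for every pair, $\Gamma$ is strongly orthogonal.

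For the maximality, I would argue by contradiction, supposing some $\delta \in \Delta^+ \setminus \Gamma$ is strongly orthogonal to every element of $\Gamma$. The disjoint decomposition \eqref{nos1}, $\Delta^+ = \Gamma \cup \Phi^+$, forces $\delta \in \Phi_\gamma^+$ for some $\gamma \in \Gamma$. By the very definition of the branch $\Phi_\gamma^+$, we have $\gamma - \delta \in \Delta^+$, hence $\delta - \gamma \in \Delta$; this exhibits a relation that violates strong orthogonality between $\delta$ and $\gamma$, the desired contradiction. Thus no such $\delta$ exists and $\Gamma$ is maximal.

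I do not expect a serious obstacle here, since the whole content has already been packaged into the preceding structural results. The only point demanding a little care is the application of Proposition \ref{gor2}, c): it requires $\gamma_k$ to be long and the two roots to be distinct up to sign — both immediate — so that the geometric orthogonality $\scal{\gamma_k,\gamma_j}=0$ can be upgraded to the combinatorial strong orthogonality $\gamma_k\pm\gamma_j\notin\Delta$.
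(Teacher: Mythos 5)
Your proof is correct and follows essentially the same route as the paper's: strong orthogonality comes from the construction ($\gamma_j \in \Delta_{k+1} \subset \gamma_k^\perp$, with $\gamma_k$ long as a maximal root) combined with Proposition \ref{gor2}, c), and maximality follows directly from the disjoint decomposition \eqref{nos1}, exactly as in the paper. The one implicit point — shared with the paper's own proof — is that Proposition \ref{gor2}, c) is applied inside the subsystem $\Delta_k$ (where $\gamma_k$ is long, though it need not be long in $\Delta$, e.g.\ for $\mk{G}_2$), so one quietly uses that $\Delta_k$ is closed in $\Delta$ (Corollary \ref{eu5}, c)) to upgrade $\gamma_k \pm \gamma_j \notin \Delta_k$ to $\gamma_k \pm \gamma_j \notin \Delta$.
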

\bpr
Let $\gamma_p,\gamma_q \in \Gamma,\quad p < q$, then by construction $\gamma_q \in \Delta_{p+1} \subset \gamma_p^\perp$ and $\gamma_p$ is long in $\Delta_P$,
 whence we may apply Proposition \ref{gor2} c) to obtain strong orthogonality. From the definition of stem (formula \eqref{nos1}) it
  follows that no root may be strongly orthogonal to all $\gamma\in \Gamma$.
\epr

\begin{remark}\label{roo1}
A stem $\Gamma$ is a {\bf strongly orthogonal subset of $\Delta^+$ with maximal number of elements}, that is the number of elements
 of any strongly orthogonal subset $\Theta \subset \Delta$ is less or equal to the number of elements of $\Gamma$.
This fact is easy to prove and also easy to check comparing the list of stems of irreducible root systems in Section \ref{ex} with
the list of maximal strongly orthogonal subsets of irreducible root systems in \cite{AgaKan002}. We shall not use it in this paper.

It makes sense to notice that the converse is not true in general. For example, when $\Delta = \mk{A}_n$ (see Example \ref{fraka} below),
there are many different maximal strongly orthogonal subsets of $\Delta^+$, one of them is the stem.
Each of the others is the stem for some other choice of Weyl chamber.

On the other hand, if  $\Delta = \mk{C}_n$ (see Example \ref{frakc}) then the stem is the set of all long roots in $\Delta^+$.
It is the unique strongly orthogonal subset of $\Delta^+$ with maximal number of elements. In this case one and the same set $\Gamma$
is the stem of $\Delta^+$ for $n!$ different choices of the positive Weyl chamber. However the stem $\Gamma$ and the partial order $\prec$
 in it (see Definition \ref{yyy2}) determine $\Delta^+$ completely. The same holds in general.
\end{remark}

\begin{theorem}\label{yyy81}
Let $\Delta$ be a reduced root system, let $\Delta^+$ be a system of positive roots, let $\Gamma$ be the stem
 of $\Delta^+$ and let $\prec$ be the order in $\Gamma$ (see Definition \ref{yyy2}). Then the couple $(\Gamma,\prec)$
  determines $\Delta^+$.  Hence the Weyl group ${\bf W}$ acts simply transitively on the set of couples $(\Gamma,\prec)$.
\end{theorem}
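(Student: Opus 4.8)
The plan is to reduce the first assertion to the construction, \emph{out of the combinatorial data $(\Gamma,\prec)$ alone}, of a regular functional $t\in\mk{h}_\RR^*$ whose positive half-space recovers $\Delta^+$, i.e. $\Delta^+=\{\alpha\in\Delta\mid \scal{\alpha,t}>0\}$. Since a system of positive roots is exactly the set of roots positive on some regular element, producing such a $t$ from $(\Gamma,\prec)$ shows that $(\Gamma,\prec)$ determines $\Delta^+$. I will look for $t$ of the special form $t=\sum_{i=1}^{d}c_i\,\gamma_i$ with strictly positive coefficients $c_i$ chosen according to the order $\prec$. Note that $t$ lives only in $\mathrm{span}(\Gamma)$, which is in general a proper subspace, so the first thing to check is that such a $t$ can still be regular.

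The computation of $\scal{\alpha,t}$ rests on three facts, all available above. First, the stem is mutually strongly orthogonal (Corollary~\ref{yyy11}), so $\scal{\gamma_i,\gamma_j}=0$ for $i\neq j$. Second, if $\alpha\in\Phi_{\gamma_i}^+$ then $C(\alpha,\gamma_i)=1$ by Corollary~\ref{uu1}, hence $\scal{\alpha,\gamma_i}=\tfrac12\norm{\gamma_i}^2>0$. Third, maximality of $\Gamma$ as a strongly orthogonal set (Corollary~\ref{yyy11}) together with Proposition~\ref{gor2}~c) shows that no root is orthogonal to every element of $\Gamma$; equivalently $\bigcap_i\gamma_i^\perp$ contains no root, so a suitably generic $t$ is regular. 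The crucial step is a \textbf{support lemma}: for $\alpha\in\Phi_{\gamma_i}^+$ and $j\neq i$,
\[
\scal{\alpha,\gamma_j}\neq 0\ \Longrightarrow\ \gamma_i\prec\gamma_j .
\]
To prove it I would use that $\alpha\in\Theta_{\gamma_i}$ (Proposition~\ref{yyy23}~b)); for $j<i$ the pairing vanishes by Corollary~\ref{yyy7} \eqref{ord2}, while for $j>i$ a nonzero pairing forces $\gamma_j$ into the same irreducible component $\Theta_{\gamma_i}$ of $\Delta_i$ as $\alpha$, which by Definition~\ref{yyy2} means precisely $\gamma_i\prec\gamma_j$.

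With the support lemma in hand I would fix an indexing compatible with $\prec$ (so $\gamma_k\prec\gamma_j\Rightarrow k<j$) and choose the coefficients hierarchically, $c_1\gg c_2\gg\dots\gg c_d>0$; concretely $c_d=1$ and $c_i=K\,c_{i+1}$ with $K$ larger than $2d\max_{\alpha,j}\abs{\scal{\alpha,\gamma_j}}/\min_i\norm{\gamma_i}^2$. For $\alpha\in\Phi_{\gamma_i}^+$ (or $\alpha=\gamma_i$) the expansion $\scal{\alpha,t}=c_i\scal{\alpha,\gamma_i}+\sum_{j>i}c_j\scal{\alpha,\gamma_j}$ has a positive leading term $c_i\tfrac12\norm{\gamma_i}^2$ dominating the rest, because by the support lemma the remaining terms involve only indices $j>i$ with correspondingly tiny $c_j$; hence $\scal{\alpha,t}>0$ for every $\alpha\in\Delta^+$ and, by symmetry, $\scal{\alpha,t}<0$ on $\Delta^-$. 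This gives $\Delta^+=\{\alpha\mid\scal{\alpha,t}>0\}$ and proves the first assertion. For the last sentence I would note that $\Delta^+\mapsto(\Gamma,\prec)$ is ${\bf W}$-equivariant (by uniqueness, Theorem~\ref{ir1}, a Weyl element carries the stem-with-order of $\Delta^+$ to that of $w(\Delta^+)$) and, by the first assertion, injective; since ${\bf W}$ already acts simply transitively on the set of positive systems (Subsection~\ref{scn}), transporting this action through the bijection yields simple transitivity on the couples $(\Gamma,\prec)$.

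The main obstacle is the support lemma and the attendant verification that a \emph{single} functional $t$, confined to the proper subspace $\mathrm{span}(\Gamma)$, simultaneously separates all roots with the correct signs. Everything else is bookkeeping once one knows that the nonzero $\gamma$-components of a branch element $\alpha\in\Phi_{\gamma_i}^+$ sit strictly above $\gamma_i$ in the order $\prec$ — this is exactly what makes the hierarchical weighting work, and it is the point where the order $\prec$ (not merely the set $\Gamma$) is indispensable, as the $\mk{C}_n$ example in Remark~\ref{roo1} illustrates.
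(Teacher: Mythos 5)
Your proposal is correct, and at its core it is the paper's argument in different packaging. The paper proves directly the lexicographic identity
\begin{gather*}
\Delta^+ = \bsco{\alpha \in \Delta \;\vert\; C(\alpha,\gamma_1)=\dots=C(\alpha,\gamma_{k-1})=0,\ C(\alpha,\gamma_k)>0 \mbox{ for some } k},
\end{gather*}
using exactly your three ingredients: strong orthogonality (for $\alpha\in\pm\Gamma$), the vanishing of $C(\alpha,\gamma_j)$ for $j<k$ when $\alpha\in\Phi_{\gamma_k}$ (formula \eqref{ord2}, read inside $\Delta_j$ via Proposition \ref{gor6}, c)), and $C(\alpha,\gamma_k)=1>0$ on the branch (\eqref{ord1}, Corollary \ref{uu1}). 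Your element $t=\sum_i c_i\gamma_i$ with $c_1\gg\dots\gg c_d$ is precisely the linearization of this criterion: with hierarchical coefficients the sign of $\scal{\alpha,t}$ equals the sign of the first nonvanishing $\scal{\alpha,\gamma_j}$, so your half-space description and the paper's formula define the same set, and they are established by the same lemmas. What your packaging buys is the geometric statement that $\Delta^+$ is cut out by a regular functional lying in $\mathrm{span}(\Gamma)$ and built from $(\Gamma,\prec)$ alone; the cost is the quantitative bookkeeping (choice of $K$, domination estimates) that the paper's sign-only argument avoids, plus some redundancy: the $j>i$ half of your support lemma is never actually needed, since your domination bound absorbs all terms with $j>i$ whether they vanish or not, and the regularity of $t$ follows automatically once $\scal{\cdot,t}>0$ on $\Delta^+$, so the appeal to maximality of $\Gamma$ can be dropped. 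Two small remarks: your $j<i$ vanishing, quoted from \eqref{ord2}, tacitly uses that strong orthogonality of two roots implies their orthogonality (true, e.g. by Proposition \ref{gor2}, c)); it also follows with no intermediary from $\Phi^+_{\gamma_i}\subset\Delta_i\subset\Delta_{j+1}=\{\beta\in\Delta_j\vert\ \scal{\beta,\gamma_j}=0\}$. Finally, your deduction of simple transitivity (equivariance of $\Delta^+\mapsto(\Gamma,\prec)$ via uniqueness, Theorem \ref{ir1}, plus injectivity from the first assertion) is exactly the ``Hence'' that the paper leaves to the reader.
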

\bpr
Let $\gamma_1,\dots,\gamma_d$ be any indexation of $\Gamma$ compatible with $\prec$. Then
\begin{gather*}
\Delta^+ = \{\alpha \in \Delta \vert\ C(\alpha,\gamma_1)=\dots=C(\alpha,\gamma_{k-1}) = 0,\ C(\alpha,\gamma_k)>0   \\
 \mbox{ for some  }k \in \{1,\dots,d\} \}.
\end{gather*}
Indeed, if $\alpha \in \pm\Gamma$ the above follows from strong orthogonality. If $\alpha \in \Phi$ by \eqref{nos1} there is
 exactly one $k \in \{1,\dots,d\}$, such that either $\alpha \in \Phi_k^+$ or $-\alpha \in \Phi_k^+$.
 By \eqref{ord2} and Proposition \ref{gor6} c) (applied to $\Delta_j$) we see that for $1\leq j < k$ we have $ C(\alpha,\gamma_j) = 0$.
 Then using \eqref{ord1} we see that $\alpha \in \Phi_k^+ \subset \Delta_+$ iff $C(\alpha,\gamma_k)>0$. The theorem is proved.
\epr

The stem decomposition \eqref{ste1} determines an useful involution on $\Delta^+$. We define
\begin{df}
\begin{gather}\label{mug1}
\mu(\alpha) =
\begin{cases}
\alpha & \mbox{ if } \alpha \in \Gamma, \\
-s_\gamma(\alpha) & \mbox{ if } \alpha \in \Phi_\gamma^+,\ \gamma \in \Gamma.
\end{cases}
\end{gather}
\end{df}
Thus, for $\alpha \in \Phi_\gamma^+$ we have $\alpha + \mu(\alpha) = \gamma$.

\subsection{The stem subalgebra}\label{ct}

We introduce notation for the Lie algebra entities which correspond to the root system combinatorics of the preceding subsection. So now $\mk{u}$ is a compact Lie algebra, $\mk{g} = \mk{u}^\CC$ is a reductive Lie algebra, $\mk{h}$ is a $\tau$-invariant Cartan subalgebra of $\mk{g}$ and $\Delta$ is the root system of $\mk{g}$ w. r. to $\mk{h}$. We fix a basis $\Pi$ of $\Delta$ so we have a fixed $\Delta^+$, a corresponding Borel subalgebra $\mk{b}^+$ etc. We shall always denote by $\Gamma = \{\gamma_1,\dots,\gamma_d\}$ the stem of $\Delta^+$.
 \begin{df}\label{di1}
 Let $\Gamma$ be the stem of $\Delta^+$. We denote
 \begin{gather*}
 \mc{V}_\gamma^\pm = span_\CC \{E_\alpha \vert\; \alpha \in \Phi_\gamma^\pm\},\quad \mc{V}_\gamma = \mc{V}_\gamma^+\oplus \mc{V}_\gamma^-,\quad \mc{V}_\gamma^\mk{u} = \mc{V}_\gamma\cap\mk{u};\\
\mc{V}^\pm = \bigoplus_{\gamma\in \Gamma}\mc{V}_\gamma^\pm,\quad \mc{V} = \mc{V}^+\oplus \mc{V}^-,\quad \mc{V}^\mk{u} = \mc{V}\cap\mk{u},\\
\mk{f} = \bigoplus_{\gamma\in \Gamma}sl_\gamma(2),\quad  \mk{f}^\pm = span_\CC\{E_{\pm \gamma}\vert \gamma \in \Gamma\},\quad \mk{f}_\mk{u} = \mk{f}\cap\mk{u};\\
\mk{o} = \bigcap_{\gamma \in \Gamma}\{H\in\mk{h} \vert \ \gamma(H)  = 0\},\quad   \mk{o}_s =   \mk{o}\cap  \mk{h}_s,  \quad \mk{o}_u =   \mk{o}\cap  \mk{u}.
\end{gather*}
We shall call the subalgebra $\mk{f}$ defined above {\bf the stem subalgebra}. The corresponding subgroup of ${\bf G}_s$ will be denoted by ${\bf F}$ and will be called {\bf the stem subgroup}.
\end{df}
If $\Gamma = \{ \gamma_1,\dots,\gamma_d\} $, in order  to simplify notation  we shall write sometimes
\begin{gather*}
H_k = H_{\gamma_k},\ E_k = E_{\gamma_k},\  su_k(2) = su_{\gamma_k}(2),\ \mc{V}_k  = \mc{V}_{\gamma_k},\  \mbox{ etc}.
\end{gather*}
In the language of reductive Lie algebras, the stem decomposition
\eqref{nos1} gives a decomposition of $\mk{n}^+$ into two step
nilpotent subalgebras.
\begin{df}\label{hei1}
Let $\gamma \in\Gamma$. We denote $\mk{heis}_\gamma = \mk{g}(\gamma)\oplus \mc{V}_\gamma^+$. We shall call $\mk{heis}_\gamma$ the
$\gamma$-{\bf component of} $\mk{n}^+$.
\end{df}
\begin{prop}\label{yyy5}
Let $\gamma \in\Gamma$. Then $\mk{heis}_\gamma$ is a Heisenberg algebra. We have a decomposition
$$\mk{n}^+ = \mk{heis}_{\gamma_1} \oplus\dots\oplus\mk{heis}_{\gamma_d}.
$$
\end{prop}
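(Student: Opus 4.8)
The plan is to separate the two assertions: first that each $\mk{heis}_\gamma = \mk{g}(\gamma)\oplus\mc{V}_\gamma^+$ (Definition \ref{hei1}) is a Heisenberg algebra, and then that the $\mk{heis}_\gamma$ reassemble $\mk{n}^+$ as an internal direct sum. Recall that a Heisenberg algebra is a two-step nilpotent Lie algebra whose center equals its derived subalgebra and is one-dimensional, with a nondegenerate alternating form induced on the quotient. So the real content is to exhibit these features for $\mk{heis}_\gamma$, with the root line $\mk{g}(\gamma)=\CC E_\gamma$ playing the role of the center.

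First I would check closure together with the bracket relations. For $\alpha,\beta\in\Phi_\gamma^+$, Corollary \ref{eu5} a) says that if $\alpha+\beta\in\Delta$ then $\alpha+\beta=\gamma$, so $[E_\alpha,E_\beta]=N_{\alpha,\beta}E_\gamma\in\mk{g}(\gamma)$, and otherwise $[E_\alpha,E_\beta]=0$. On the other hand \eqref{ord1} gives $\alpha+\gamma\notin\Delta$ for every $\alpha\in\Phi_\gamma^+$, whence $[E_\gamma,E_\alpha]=0$. Taken together, these show that $\mk{heis}_\gamma$ is a subalgebra, that $E_\gamma$ is central, and that $[\mk{heis}_\gamma,\mk{heis}_\gamma]\subseteq\mk{g}(\gamma)$; in particular $\mk{heis}_\gamma$ is nilpotent of step at most two.

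The main point, and the step I expect to be the real obstacle, is to pin the center down to be exactly $\mk{g}(\gamma)$ and the induced form to be nondegenerate. Here the involution $\mu$ of \eqref{mug1} is the decisive device. For $\alpha\in\Phi_\gamma^+$, Corollary \ref{uu1} gives $C(\alpha,\gamma)=1$, so $s_\gamma(\alpha)=\alpha-\gamma$ and $\mu(\alpha)=\gamma-\alpha$; by \eqref{ord1} this again lies in $\Phi_\gamma^+$, and it is fixed-point-free since $\gamma=2\alpha$ is impossible in a reduced system. Thus $\mu$ pairs up the elements of $\Phi_\gamma^+$, and since $\alpha+\mu(\alpha)=\gamma\in\Delta$ we have $N_{\alpha,\mu(\alpha)}\neq0$ by \eqref{hel1}. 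Writing $[E_\alpha,E_\beta]=\omega(E_\alpha,E_\beta)E_\gamma$ for the resulting alternating form $\omega$ on $\mc{V}_\gamma^+$, a vector $\sum_\alpha c_\alpha E_\alpha$ lies in the radical of $\omega$ only if $c_{\mu(\beta)}N_{\mu(\beta),\beta}=0$ for every $\beta\in\Phi_\gamma^+$, which forces every $c_\alpha=0$. Hence $\omega$ is nondegenerate, the center of $\mk{heis}_\gamma$ equals $\mk{g}(\gamma)$, and $[\mk{heis}_\gamma,\mk{heis}_\gamma]=\mk{g}(\gamma)$ whenever $\Phi_\gamma^+\neq\emptyset$; this is precisely a Heisenberg algebra of dimension $|\Phi_\gamma^+|+1$. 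In the degenerate case $\Phi_\gamma^+=\emptyset$, which by Proposition \ref{yyy8} happens exactly when $\gamma\in\Pi$, one is left with $\mk{heis}_\gamma=\CC E_\gamma$, the one-dimensional (abelian) Heisenberg algebra.

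Finally, the direct sum is purely combinatorial. By \eqref{nos1} we have the disjoint union $\Delta^+=\Gamma\cup\bigcup_{\gamma\in\Gamma}\Phi_\gamma^+$, so grouping the root-space decomposition $\mk{n}^+=\bigoplus_{\alpha\in\Delta^+}\mk{g}(\alpha)$ according to which element of the stem each positive root is attached to yields $\mk{n}^+=\bigoplus_{\gamma\in\Gamma}\bigl(\mk{g}(\gamma)\oplus\mc{V}_\gamma^+\bigr)=\bigoplus_{\gamma\in\Gamma}\mk{heis}_\gamma$ as vector spaces. I would emphasize that this is only a vector-space direct sum, not a direct sum of Lie algebras: by \eqref{ord3} distinct summands need not commute, the bracket of $\mk{heis}_{\gamma_i}$ and $\mk{heis}_{\gamma_j}$ with $i<j$ landing back inside $\mk{heis}_{\gamma_i}$.
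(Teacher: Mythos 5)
Your proof is correct and takes essentially the same route as the paper's: the bracket relations come from Corollary \ref{eu5}, a) (so that $[E_\alpha,E_{\mu(\alpha)}] = N_{\alpha,\mu(\alpha)}E_\gamma$ are the only surviving brackets), and the vector-space direct sum is read off from the disjoint union \eqref{nos1}. The only difference is that you spell out what the paper leaves implicit --- nondegeneracy of the induced alternating form via the involution $\mu$, the identification of the center with $\mk{g}(\gamma)$, and the degenerate case $\Phi_\gamma^+ = \emptyset$ --- which is added care rather than a different approach.
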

\bpr
By Corollary \ref{eu5}, a), all brackets in $\mk{heis}_\gamma$ vanish except
$$[E_\alpha,E_{\mu(\alpha)}] = N_{\alpha,\mu(\alpha)}E_\gamma.$$
The direct sum decomposition (of vector spaces of course) follows readily from \eqref{nos1}.
\epr
\begin{prop}\label{www1}
Let $\gamma \in \Gamma,\ \alpha \in \Phi_\gamma^+,\ \beta = \gamma - \alpha$. Then
$$
|N_{\gamma,-\alpha}| = 1,\quad N_{\gamma,-\alpha}N_{\gamma,-\beta} = -1.
$$
\end{prop}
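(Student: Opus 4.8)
The plan is to split the statement into a modulus claim, $\abs{N_{\gamma,-\alpha}}=1$, and a sign claim, $N_{\gamma,-\alpha}N_{\gamma,-\beta}=-1$, and to dispatch the first entirely with the root-string machinery already in place. Since $\gamma\in\Gamma$ is a maximal root it is long by Proposition \ref{gor6} a), and $\alpha\in\Phi_\gamma^+$ forces $C(\alpha,\gamma)=1$ by Corollary \ref{uu1}, so $C(-\alpha,\gamma)=-1$. First I would read off from \eqref{len1} that the $\gamma$-series through $-\alpha$ has exactly two roots, i.e.\ $q-p+1=2$, and combine this with the relation $p+q=-C(-\alpha,\gamma)=1$ (the same identity used in the proof of Proposition \ref{gor2} b)) to deduce $p=0$. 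The definition $\abs{N_{\gamma,-\alpha}}=1-p$ from \eqref{hel1} then gives $\abs{N_{\gamma,-\alpha}}=1$. Noting that $\beta=\gamma-\alpha$ also lies in $\Phi_\gamma^+$, because $\gamma-\beta=\alpha\in\Delta^+$, the identical argument yields $\abs{N_{\gamma,-\beta}}=1$; in particular the product $N_{\gamma,-\alpha}N_{\gamma,-\beta}$ has modulus $1$.

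For the sign I would apply the Jacobi identity to the triple $E_\gamma, E_{-\alpha}, E_{-\beta}$, whose roots sum to zero since $\alpha+\beta=\gamma$. Expanding the three double brackets via \eqref{hel1}, and using $[E_{-\gamma},E_\gamma]=-H_\gamma$, produces a relation in $\mk{h}$ of the shape $N_{\gamma,-\alpha}H_\beta - N_{-\alpha,-\beta}H_\gamma + N_{-\beta,\gamma}H_\alpha=0$. The preliminary observation that makes this usable is that $\alpha$ and $\beta$ have equal length: from $C(\alpha,\gamma)=C(\beta,\gamma)=1$ one gets $\scal{\alpha-\beta,\gamma}=0$, while $\gamma=\alpha+\beta$ gives $\scal{\alpha-\beta,\gamma}=\scal{\alpha,\alpha}-\scal{\beta,\beta}$, whence $\scal{\alpha,\alpha}=\scal{\beta,\beta}$. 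Writing $H_\alpha=\tfrac{2}{\scal{\alpha,\alpha}}h_\alpha$, $H_\beta=\tfrac{2}{\scal{\alpha,\alpha}}h_\beta$ and $H_\gamma=\tfrac{2}{\scal{\gamma,\gamma}}(h_\alpha+h_\beta)$, and using that $h_\alpha,h_\beta$ are linearly independent (as $\alpha,\beta$ are distinct positive roots, $\gamma=2\alpha$ being excluded in a reduced system), I would compare coefficients of $h_\alpha$ and $h_\beta$ to obtain $N_{\gamma,-\alpha}=N_{-\beta,\gamma}=\tfrac{\scal{\alpha,\alpha}}{\scal{\gamma,\gamma}}N_{-\alpha,-\beta}$.

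Finally, invoking antisymmetry $N_{\gamma,-\beta}=-N_{-\beta,\gamma}$, these relations combine to
\[
N_{\gamma,-\alpha}N_{\gamma,-\beta} = -\frac{\scal{\alpha,\alpha}^2}{\scal{\gamma,\gamma}^2}\,N_{-\alpha,-\beta}^2 \;\le\; 0 .
\]
The hard part is precisely producing this manifestly non-positive expression; everything before it is routine bracket bookkeeping and the equal-length observation. Once it is in hand the conclusion is immediate, since the product was already shown to have modulus $1$, so a non-positive quantity of modulus $1$ can only be $-1$. I expect the only points needing care to be the correct tracking of signs in the three double-bracket expansions and the check that $\alpha$ and $\beta$ are genuinely distinct and linearly independent.
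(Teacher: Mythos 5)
Your proposal is correct and follows essentially the same route as the paper: the modulus claim comes from the length-two $\gamma$-string through $-\alpha$ (forcing $p=0$ in \eqref{hel1}), and the sign claim comes from the equal lengths $\scal{\alpha,\alpha}=\scal{\beta,\beta}$ combined with the proportionality relations linking $N_{\gamma,-\alpha}$, $N_{-\beta,\gamma}$ and $N_{-\alpha,-\beta}$. The only difference is that you re-derive those relations from the Jacobi identity applied to $E_\gamma,E_{-\alpha},E_{-\beta}$, whereas the paper simply cites them as the Helgason identity \eqref{fo1}, so your argument is a self-contained inline proof of the same fact.
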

\bpr Proposition \ref{gor6}, c) implies that under our assumptions
one has $p=0$ in formula  \eqref{hel1}, whence the first equality.

The second equality follows from the fact that  $\norm{\alpha} = \norm{s_\gamma(\alpha)} = \norm{\beta}$ and the formula
 \begin{gather} \label{fo1}
 \frac{N_{\alpha,\beta}}{\scal{\gamma,\gamma}} = \frac{N_{\beta,-\gamma}}{\scal{\alpha,\alpha}} =
 \frac{N_{-\gamma,\alpha}}{\scal{\beta,\beta}},\qquad \alpha,\beta,\gamma \in \Delta, \quad \alpha + \beta = \gamma.
\end{gather}
proved in (\cite{Helgason78}, ch III).  \epr

Now we return to the stem subalgebra. Because $\Gamma$ is strongly orthogonal, we have a decomposition
$\mk{f}_u = su_1(2)\oplus\dots\oplus su_d(2)\ $ into commuting subalgebras.  Now we shall introduce convenient bases for $\mk{f}_u$.
\begin{df}\label{we1}
 For $\gamma \in \Gamma$ we choose a $\rho_\gamma \in \CC,\ |\rho_\gamma|=1$. We denote $\rho = \{\rho_\gamma \vert\ \gamma \in \Gamma\}$ and
\begin{gather*}
W_\gamma = \frac{\ri}{2}H_\gamma,\quad  X_\gamma(\rho) = \frac{1}{2}(\rho_\gamma E_{\gamma} - \ol{\rho_\gamma} E_{-\gamma}),\\ Y_\gamma(\rho) =  X_\gamma(\ri\rho_\gamma) = \frac{\ri}{2}(\rho_\gamma E_{\gamma} + \ol{\rho_\gamma} E_{-\gamma});\\
\mk{w} = span_\RR\{W_\gamma\vert \gamma \in \Gamma\}, \quad \mk{x}(\rho) = span_\RR\{X_\gamma(\rho)\vert \gamma \in \Gamma\},\\ \mk{y}(\rho) = span_\RR\{Y_\gamma(\rho)\vert \gamma \in \Gamma\};\\
W_\Gamma = \sum_{\gamma\in\Gamma}W_\gamma,\quad X_\Gamma = \sum_{\gamma\in\Gamma}X_\gamma,\quad Y_\Gamma = \sum_{\gamma\in\Gamma}Y_\gamma,  \quad E_{\pm \Gamma}= \sum_{\gamma\in\Gamma}E_{\pm \gamma }\\
sl_\Gamma(2,\CC) = span_\CC\{W_\Gamma,X_\Gamma,Y_\Gamma\},\quad su_\Gamma(2) = span_\RR\{W_\Gamma,X_\Gamma,Y_\Gamma\}.
\end{gather*}
  \end{df}
It is clear that the three dimensional simple subalgebra $sl_\Gamma(2,\CC)\subset \mk{g}$ is generated by
the semisimple element $H_\Gamma = - 2 \ri W_\Gamma \in \mk{h}$ and the nilpotent elements $E_\Gamma$, $E_{-\Gamma}$.
\begin{remark}\label{rer}
 In the formulas of this section we shall sometimes suppress the dependence on the torus parameter $\rho$, i.e.
  we shall  write $X_\gamma$ instead of $X_\gamma(\rho)$ or $\mk{x}$ instead of $\mk{x}(\rho)$ etc. We hope that no confusion for the reader comes from this. In any case we remark that the subalgebras $sl_\gamma(2,\CC)$ and hence the subalgebras $su_\gamma(2) = sl_\gamma(2,\CC)\cap\mk{u}$ do not depend on $\rho$.

  For the interpretation of the results of Section \ref{hs} it will be convenient to have done our computations and theorems in the presence of $\rho$ (see also Remark \ref{di7}).
 \end{remark}
Obviously, for any $\rho_\gamma$ with $|\rho_\gamma|=1$, the  elements $W_\gamma, X_\gamma(\rho), Y_\gamma(\rho)$ span $su_\gamma(2) \subset \mk{f}_\mk{u}$.
By strong orthogonality of $\Gamma$ we have three $\tau$-invariant Cartan subalgebras of $\mk{g}$:
\begin{gather}\label{pp1}
 \mk{h}_I = \mk{w}^\CC\oplus \mk{o},\quad  \mk{h}_K = \mk{x}^\CC\oplus \mk{o},\quad  \mk{h}_J = \mk{y}^\CC\oplus \mk{o}.
\end{gather}
 We note that the above direct decompositions are orthogonal.

Next we interpret the important Corollary \ref{yyy7} in Lie algebra language.
\begin{prop}\label{pp2}
If $\gamma \in  \Gamma$, then the subspace $\mc{V}_\gamma$ is a
representation of the stem subalgebra $\mk{f}$ under $ad$. We
denote it by ${\bf r}_\gamma:\mk{f} \lw sl(\mc{V}_\gamma)$. We
denote by the same letter the corresponding representation  ${\bf
r}_\gamma:{\bf F}_u \lw SU(\mc{V}_\gamma^u)$.

a) If $\gamma, \delta \in \Gamma$ , then the restriction of ${\bf r}_\gamma$ to $sl_\delta(2)$  may be nontrivial only if $\gamma \preceq \delta$. Moreover

b) If $\gamma \neq  \delta$, then ${\mathcal V}_\gamma^+$ and ${\mathcal V}_\gamma^-$ are invariant under the $ad$ representation of $sl_\delta(2)$;

c) The action of $sl_\gamma(2)$ on ${\mathcal V}_\gamma$ decomposes into  2-dimensional irreducible components:\quad
$span_\CC\{E_\alpha,E_{s_\gamma(\alpha)} \}, \quad \alpha \in \Phi^+_\gamma.$
\end{prop}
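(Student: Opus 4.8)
\emph{Overall strategy.} The plan is to reduce everything to the explicit $ad$-action of each summand $sl_{\gamma_p}(2)$ on the basis vectors $E_\alpha$, $\alpha \in \Phi_{\gamma_i}^+$, of $\mc{V}_{\gamma_i}^+$ (writing $\gamma=\gamma_i$, $\delta=\gamma_p$), and then transport the conclusions to $\mc{V}_{\gamma_i}^-$ by $\tau$. Since $\tau$ is an (antilinear) Lie algebra automorphism with $\tau(E_\beta)=-E_{-\beta}$ and $\tau(H_\gamma)=-H_\gamma$, it is $ad$-equivariant and satisfies $\tau(sl_{\gamma_p}(2))=sl_{\gamma_p}(2)$ and $\tau(\mc{V}_{\gamma_i}^+)=\mc{V}_{\gamma_i}^-$; hence once $\mc{V}_{\gamma_i}^+$ is shown to be $sl_{\gamma_p}(2)$-invariant (resp. acted on trivially), the same follows for $\mc{V}_{\gamma_i}^-$. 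The three relevant brackets are $ad(H_{\gamma_p})E_\alpha=C(\alpha,\gamma_p)E_\alpha$, $ad(E_{\gamma_p})E_\alpha=N_{\gamma_p,\alpha}E_{\alpha+\gamma_p}$ and $ad(E_{-\gamma_p})E_\alpha=N_{-\gamma_p,\alpha}E_{\alpha-\gamma_p}$, the last two being nonzero precisely when $\alpha\pm\gamma_p\in\Delta$. I would organize the argument by the three cases $p<i$, $p>i$ and $p=i$.

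\emph{Part (a).} I would first dispose of the two cases forcing triviality. If $p<i$, then \eqref{ord2} gives $\alpha\pm\gamma_p\notin\Delta$, so both off-diagonal brackets vanish; moreover strong orthogonality forces $\scal{\alpha,\gamma_p}=0$ (were $\scal{\alpha,\gamma_p}\neq 0$, one of $\alpha\pm\gamma_p$ would be a root, as $\alpha\neq\pm\gamma_p$), whence $C(\alpha,\gamma_p)=0$ and $ad(H_{\gamma_p})E_\alpha=0$. If instead $p>i$ but $\gamma_p\notin\Theta_{\gamma_i}$, I would observe that $\gamma_p\in\Delta_{i+1}\subset\Delta_i$ lies in an irreducible component of $\Delta_i$ different from $\Theta_{\gamma_i}$, while $\alpha\in\Phi_{\gamma_i}^+\subset\Theta_{\gamma_i}$ by Proposition \ref{yyy23}(b); since distinct irreducible components are orthogonal and no sum of roots from two different components is a root, again $\scal{\alpha,\gamma_p}=0$ and $\alpha\pm\gamma_p\notin\Delta$, so the action is trivial. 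Combining with $\gamma_i\prec\gamma_p\iff\gamma_p\in\Theta_{\gamma_i}$ (Definition \ref{yyy2}) and the compatibility \eqref{yyy21}, a nontrivial restriction forces $p=i$ or $\gamma_p\in\Theta_{\gamma_i}$, i.e. $\gamma\preceq\delta$. This orthogonal-component step is the main obstacle: it is the one place where the combinatorics of the stem order, rather than the crude inequalities of Corollary \ref{yyy7}, really enters.

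\emph{Parts (b) and (c).} For part (b) there remains the case $p>i$ with $\gamma_p\in\Theta_{\gamma_i}$. Here I apply \eqref{ord3} with $\beta=\pm\gamma_p\in\Delta_p$: whenever $\alpha\pm\gamma_p\in\Delta$ it already lies in $\Phi_{\gamma_i}^+$, so $ad(E_{\pm\gamma_p})E_\alpha\in\mc{V}_{\gamma_i}^+$, and trivially $ad(H_{\gamma_p})E_\alpha\in\mc{V}_{\gamma_i}^+$; thus $\mc{V}_{\gamma_i}^+$ is invariant, and $\mc{V}_{\gamma_i}^-$ by $\tau$. For part (c), with $p=i$, Corollary \ref{uu1} gives $C(\alpha,\gamma)=1$, so $s_\gamma(\alpha)=\alpha-\gamma$, which by \eqref{ord1} lies in $\Phi_\gamma^-$. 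A direct computation then shows $ad(H_\gamma)$ acts as $\pm 1$ on $E_\alpha,E_{\alpha-\gamma}$, that $ad(E_\gamma)E_\alpha=0=ad(E_{-\gamma})E_{\alpha-\gamma}$ (using $\alpha+\gamma\notin\Delta$ from \eqref{ord1} and $\alpha-2\gamma\notin\Delta$ from the length-$\le 2$ string of Proposition \ref{gor2}(b)), while the remaining two brackets are nonzero. Hence $span_\CC\{E_\alpha,E_{s_\gamma(\alpha)}\}$ is the standard two-dimensional irreducible $sl_\gamma(2)$-module, and since $s_\gamma$ maps $\Phi_\gamma^+$ bijectively onto $\Phi_\gamma^-$ these modules are independent and exhaust $\mc{V}_\gamma$ by a dimension count.

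\emph{Conclusion.} Assembling the three cases shows $ad(sl_{\gamma_p}(2))\mc{V}_\gamma\subset\mc{V}_\gamma$ for every $p$, hence $\mc{V}_\gamma$ is an $\mk{f}=\bigoplus_p sl_{\gamma_p}(2)$-submodule under $ad$; this is the representation $\mathbf r_\gamma$. The unitary group-level statement $\mathbf r_\gamma:{\bf F}_u\to SU(\mc{V}_\gamma^\mk{u})$ then follows by exponentiation, since ${\bf F}_u$ is generated by the $su_{\gamma_p}(2)$ and the invariant form $\scal{.,.}$ is negative definite on $\mk{u}$.
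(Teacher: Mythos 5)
Your proof is correct and takes essentially the same route as the paper, whose entire proof is the single line ``See Corollary \ref{yyy7}'': your case split $p<i$, $p>i$, $p=i$ on the summands $sl_{\gamma_p}(2)$, with the $\tau$-transport from $\mc{V}_\gamma^+$ to $\mc{V}_\gamma^-$, is exactly the intended unpacking of \eqref{ord2}, \eqref{ord3} and \eqref{ord1}. Two citations could be tightened, though neither affects the argument: in part (a), concluding $\alpha\pm\gamma_p\notin\Delta$ when $\gamma_p$ lies in a different irreducible component of $\Delta_i$ than $\Theta_{\gamma_i}$ requires the closedness of $\Delta_i$ in $\Delta$ (Corollary \ref{eu5}, c) -- or, more directly, \eqref{ord3} plus orthogonality of components -- and in part (c) the string argument via Proposition \ref{gor2}, b) presumes $\gamma$ is long in its component of $\Delta$ (not guaranteed a priori for a stem root), whereas $\alpha-2\gamma\notin\Delta$ follows at once from \eqref{ord1} applied to $\mu(\alpha)=\gamma-\alpha\in\Phi_\gamma^+$.
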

\bpr
See Corollary \ref{yyy7}.
\epr

 We shall need several explicit formulas, describing the action of one-parameter subgroups of the stem subgroup $\bf F$ in the $Ad$ representation.

 \begin{remark} \label{xi11}
Obviously for each $\gamma\in\Gamma$ we have $\tau(X_\gamma(\rho))= X_\gamma(\rho)$, so for each $t\in\RR$ we have
$
\exp(\rm{t} ad X_\gamma)\circ \tau =  \tau \circ  \exp(\rm{t} ad X_\gamma).
$
Thus $\exp(\rm{t} ad X_\gamma)$ preserves $\mk{u}$.

Strong orthogonality of $\Gamma$ immediately implies that if $\gamma,\delta\in \Gamma,\ s,t\in\RR$, then
\begin{gather*}
 \exp({\rm t} ad X_\gamma)\circ \exp( {\rm s} ad X_\delta) =  \exp({\rm s} ad X_\delta)\circ  \exp({\rm t} ad X_\gamma).
\end{gather*}
\end{remark}
\begin{prop} \label{xi1} Let $\gamma \in \Gamma,\quad t\in\RR$ and $H \in \mk{h}$. Then
\begin{gather*}
 \exp({\rm t} ad X_\gamma)(W_\gamma) = \cos(t)W_\gamma - \sin (t)Y_\gamma,\\
\exp({\rm t} ad X_\gamma)(Y_\gamma) =  \sin (t)W_\gamma + \cos(t)Y_\gamma,\\
 \exp({\rm t} ad X_\gamma)(H) = H + \ri\gamma(H)(\sin(t)Y_\gamma + (1 - \cos(t))W_\gamma).
\end{gather*}
\end{prop}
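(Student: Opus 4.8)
The plan is to write $D = ad X_\gamma$ and reduce the whole statement to three elementary bracket computations, after which the exponential follows because $D$ acts as a rotation generator on a single two-dimensional subspace. First I would compute directly from Definition \ref{we1} and the Weyl--Chevalley relations \eqref{hel1} --- using $[E_\gamma,E_{-\gamma}] = H_\gamma$ and $[H_\gamma,E_{\pm\gamma}] = \pm 2 E_{\pm\gamma}$, the latter because $\gamma(H_\gamma) = C(\gamma,\gamma) = 2$ --- the three brackets
\[
[X_\gamma, W_\gamma] = -Y_\gamma, \qquad [X_\gamma, Y_\gamma] = W_\gamma, \qquad [X_\gamma, H] = \ri\,\gamma(H)\,Y_\gamma \quad (H \in \mk{h}).
\]
Each is a one-line expansion: the first two use $|\rho_\gamma| = 1$ to collapse the $E_\gamma, E_{-\gamma}$ cross terms, and the third uses $[H, E_{\pm\gamma}] = \pm\gamma(H) E_{\pm\gamma}$ together with the definition of $Y_\gamma$.

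The first two brackets show that $D$ preserves the real plane $span_\RR\{W_\gamma, Y_\gamma\}$ and satisfies $D^2 = -\rI$ there, so on this plane $\exp(tD) = \cos(t)\rI + \sin(t)D$. Substituting $D(W_\gamma) = -Y_\gamma$ and $D(Y_\gamma) = W_\gamma$ yields the first two displayed formulas at once.

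For the third formula I would avoid resumming the exponential series by differentiating instead. Since $D(H) = \ri\gamma(H) Y_\gamma$ already lies in the plane on which $\exp(tD)$ is known, one has
\[
\frac{d}{dt}\,\exp(tD)(H) = \exp(tD)(DH) = \ri\gamma(H)\,\exp(tD)(Y_\gamma) = \ri\gamma(H)\bigl(\sin(t) W_\gamma + \cos(t) Y_\gamma\bigr),
\]
and integrating from $0$ to $t$, with value $H$ at $t = 0$, gives exactly $H + \ri\gamma(H)(\sin(t) Y_\gamma + (1-\cos(t)) W_\gamma)$. Alternatively one may verify $D^{2k+1}(H) = (-1)^k \ri\gamma(H) Y_\gamma$ and $D^{2k}(H) = (-1)^{k+1}\ri\gamma(H) W_\gamma$ for $k \geq 1$ and resum, the even part producing $1-\cos(t)$ and the odd part $\sin(t)$.

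There is no genuine obstacle here; the statement is a direct computation once the three brackets are secured. The only places demanding care are sign bookkeeping in the brackets --- note the asymmetry $[X_\gamma, W_\gamma] = -Y_\gamma$ against $[X_\gamma, Y_\gamma] = +W_\gamma$ --- and correctly matching the even part of the $H$-series to $1-\cos(t)$ rather than $\cos(t)$; the integration argument removes the latter hazard entirely.
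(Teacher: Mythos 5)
Your proposal is correct and follows essentially the same route as the paper: the paper's proof likewise reduces everything to the three brackets $ad X_\gamma(W_\gamma)=-Y_\gamma$, $ad X_\gamma(Y_\gamma)=W_\gamma$, $ad X_\gamma(H)=\ri\gamma(H)Y_\gamma$ and then exponentiates by induction on the powers of $ad X_\gamma$. Your rotation-plane observation and the differentiate-then-integrate treatment of the third formula are just cleaner packagings of that same resummation, with the details the paper leaves implicit filled in correctly.
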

\begin{proof} The three formulas follow by induction from the following:\\
$
ad X_\gamma(H)=  \ri\gamma(H)Y_\gamma,\quad ad X_\gamma(W_\gamma)= -Y_\gamma,\quad ad X_\gamma(Y_\gamma) = W_\gamma.$\end{proof}
 Because for $\gamma \in \Gamma, \ H\in \mk{o}$ we have $\gamma(H) = 0$, there is an obvious
 \begin{coro} \label{iexp11} Let $\gamma \in \Gamma,\ t\in \RR, \ H\in \mk{o}$. Then $\exp({\rm t} ad X_\gamma)(H)  =  H$.
\end{coro}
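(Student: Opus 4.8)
The plan is to read the conclusion straight off the third formula in Proposition \ref{xi1}, where the entire correction term to $H$ is scaled by $\gamma(H)$; the point is simply that $\gamma(H)$ vanishes for $H \in \mk{o}$.

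First I would invoke the definition of $\mk{o}$ from Definition \ref{di1}, namely
\[
\mk{o} = \bigcap_{\gamma \in \Gamma}\{H\in\mk{h}\mid \gamma(H) = 0\},
\]
so that every $H \in \mk{o}$ satisfies $\gamma(H) = 0$ for the fixed stem root $\gamma \in \Gamma$ under consideration. Substituting $\gamma(H) = 0$ into the identity
\[
\exp({\rm t}\, ad X_\gamma)(H) = H + \ri\gamma(H)\bigl(\sin(t)Y_\gamma + (1 - \cos(t))W_\gamma\bigr)
\]
of Proposition \ref{xi1} kills the bracketed term, leaving $\exp({\rm t}\, ad X_\gamma)(H) = H$, which is exactly the claim. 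Equivalently, one may argue one level lower: the infinitesimal formula $ad X_\gamma(H) = \ri\gamma(H)Y_\gamma$ established in the proof of Proposition \ref{xi1} shows that $ad X_\gamma(H) = 0$ whenever $\gamma(H) = 0$, so $H$ is fixed by the whole one-parameter group $\exp({\rm t}\, ad X_\gamma)$.

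There is essentially no obstacle here. The only thing to verify is the containment of $\mk{o}$ in the kernel of each stem root, which is immediate from the definition, and the substance of the statement is carried entirely by Proposition \ref{xi1}; the corollary merely specializes that proposition to the common kernel $\mk{o}$ of the roots of $\Gamma$, on which $ad X_\gamma$ acts trivially.
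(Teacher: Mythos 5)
Your proof is correct and is exactly the paper's argument: the paper derives this corollary directly from the third formula of Proposition \ref{xi1} together with the fact that $\gamma(H)=0$ for $H\in\mk{o}$, which is immediate from Definition \ref{di1}. Your additional infinitesimal remark ($ad X_\gamma(H)=0$ on $\mk{o}$) is a harmless equivalent restatement.
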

We note also
 \begin{coro} \label{iexp3} Let $\gamma \in \Gamma$. Then
\begin{gather*}
\exp({\rm t} ad X_\gamma)(E_\gamma)  =  E_\gamma - \ri\ol{\rho_\gamma}((\cos(t) -1)Y_\gamma + \sin (t)W_\gamma ).
\end{gather*}
\end{coro}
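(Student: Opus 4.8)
The plan is to reduce the statement to a one-line bracket computation together with the rotation action of $ad X_\gamma$ that is already recorded in Proposition \ref{xi1}. First I would compute $ad X_\gamma(E_\gamma)$ directly from the definitions in Definition \ref{we1}. Using $X_\gamma = \frac{1}{2}(\rho_\gamma E_\gamma - \ol{\rho_\gamma} E_{-\gamma})$, the relation $[E_\gamma, E_{-\gamma}] = H_\gamma$ from \eqref{hel1}, and $W_\gamma = \frac{\ri}{2} H_\gamma$ (equivalently $H_\gamma = -2\ri W_\gamma$), one gets
\begin{gather*}
ad X_\gamma(E_\gamma) = -\tfrac{1}{2}\ol{\rho_\gamma}[E_{-\gamma}, E_\gamma] = \tfrac{1}{2}\ol{\rho_\gamma} H_\gamma = -\ri\ol{\rho_\gamma} W_\gamma.
\end{gather*}
The crucial observation is that the image lies in the plane $span_\RR\{W_\gamma, Y_\gamma\}$, on which $ad X_\gamma$ is completely understood: the brackets computed in the proof of Proposition \ref{xi1} give $ad X_\gamma(W_\gamma) = -Y_\gamma$ and $ad X_\gamma(Y_\gamma) = W_\gamma$, so this plane is $ad X_\gamma$-invariant and $ad X_\gamma$ acts on it as an infinitesimal rotation.

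I would then finish by solving the linear initial value problem $f'(t) = ad X_\gamma(f(t))$, $f(0) = E_\gamma$, for the curve $f(t) = \exp({\rm t}\, ad X_\gamma)(E_\gamma)$. Writing $f(t) = E_\gamma + a(t) W_\gamma + b(t) Y_\gamma$ — the ansatz is legitimate because $ad X_\gamma(E_\gamma)$, hence every higher power $(ad X_\gamma)^{n}(E_\gamma)$ for $n\geq 1$, stays in the $W_\gamma, Y_\gamma$ plane — the three bracket relations above turn the operator equation into the scalar system $a' = b - \ri\ol{\rho_\gamma}$, $b' = -a$, with $a(0) = b(0) = 0$. This yields $a'' = -a$, whence $a(t) = -\ri\ol{\rho_\gamma}\sin(t)$ and $b(t) = -\ri\ol{\rho_\gamma}(\cos(t) - 1)$, which is exactly the asserted formula. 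Alternatively one may sum the exponential series termwise, using the identity $\sum_{n\geq 1}\frac{t^n}{n!}(ad X_\gamma)^{n-1} = \int_0^t \exp({\rm s}\, ad X_\gamma)\,{\rm d}s$ applied to $W_\gamma$ and the rotation formula $\exp({\rm s}\, ad X_\gamma)(W_\gamma) = \cos(s) W_\gamma - \sin(s) Y_\gamma$ from Proposition \ref{xi1}; integrating $\cos$ and $\sin$ from $0$ to $t$ reproduces the same coefficients.

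There is no real conceptual obstacle here: the argument is entirely internal to the single summand $su_\gamma(2)$ and never touches the stem combinatorics, so strong orthogonality of $\Gamma$ plays no role and the formula holds for each $\gamma \in \Gamma$ independently. The only delicate point is the bookkeeping of the factors of $\ri$ and of the torus parameter $\ol{\rho_\gamma}$, together with the sign conventions $H_\gamma = -2\ri W_\gamma$ and $[E_{-\gamma}, E_\gamma] = -H_\gamma$; a slip in any one of these flips a sign in the final coefficient of $W_\gamma$ or $Y_\gamma$.
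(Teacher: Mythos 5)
Your proof is correct. The bracket computation $ad X_\gamma(E_\gamma) = -\ri\ol{\rho_\gamma}W_\gamma$ is right under the paper's conventions ($[E_\gamma,E_{-\gamma}]=H_\gamma$, $W_\gamma=\tfrac{\ri}{2}H_\gamma$), the plane spanned by $W_\gamma,Y_\gamma$ is $ad X_\gamma$-invariant, and the system $a'=b-\ri\ol{\rho_\gamma}$, $b'=-a$, $a(0)=b(0)=0$ integrates to exactly the asserted coefficients; the integral-form variant via $\int_0^t\exp({\rm s}\,ad X_\gamma)(W_\gamma)\,{\rm d}s$ is equally valid. However, your route is genuinely different from the paper's, which sets up no differential equation at all: from Definition \ref{we1} and $|\rho_\gamma|=1$ one has the decomposition $E_\gamma=\ol{\rho_\gamma}(X_\gamma-\ri Y_\gamma)$, and the corollary then follows by linearity from the trivial identity $\exp({\rm t}\,ad X_\gamma)(X_\gamma)=X_\gamma$ together with the already-proved rotation formula $\exp({\rm t}\,ad X_\gamma)(Y_\gamma)=\sin(t)W_\gamma+\cos(t)Y_\gamma$ of Proposition \ref{xi1}; recombining the terms gives the formula in two lines. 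What each approach buys: the paper's argument is shorter and purely algebraic, exploiting that the initial vector is a combination of a fixed vector and a rotating one, whereas yours is self-contained at the level of the raw brackets and would still work if no such convenient decomposition of the initial vector were visible. As you note, both arguments live entirely inside $sl_\gamma(2,\CC)$, so strong orthogonality of $\Gamma$ is irrelevant. One cosmetic slip: since the coefficient $-\ri\ol{\rho_\gamma}$ is not real, the invariant plane should be $span_\CC\{W_\gamma,Y_\gamma\}$ rather than $span_\RR\{W_\gamma,Y_\gamma\}$; your scalar functions $a(t),b(t)$ are complex-valued, so this changes nothing in the argument, but the wording should match.
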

\begin{proof} Follows from the trivial $\exp(\rm{t} ad X_\gamma)(X_\gamma) = X_\gamma$
 and the formula for $Y_\gamma$ in Proposition \ref{xi1}.
\end{proof}
\begin{prop} \label{cay3} Let $\gamma \in \Gamma,\ \alpha \in \Phi_\gamma^+$. Then
\begin{gather*}
\exp({\rm t} ad X_\gamma)(E_\alpha) = \cos(\tfrac{t}{2})E_\alpha + N_{\gamma,-\alpha} \ol{\rho_\gamma}\sin(\tfrac{t}{2}) E_{s_\gamma(\alpha)}.
\end{gather*}
\end{prop}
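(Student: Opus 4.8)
The plan is to restrict $ad X_\gamma$ to the two-dimensional subspace $span_\CC\{E_\alpha, E_{s_\gamma(\alpha)}\}$, which by Proposition \ref{pp2} c) is invariant under $ad\, sl_\gamma(2)$, hence under $ad X_\gamma$. Since $\alpha \in \Phi_\gamma^+$ forces $C(\alpha,\gamma)=1$ (Corollary \ref{uu1}), we have $s_\gamma(\alpha) = \alpha - \gamma \in \Phi_\gamma^-$, so this plane is indeed spanned by two honest root vectors. On it $ad X_\gamma$ is represented by a $2\times 2$ matrix, and once its entries are known the formula will drop out by a bare exponentiation.

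First I would compute the two off-diagonal entries directly from $X_\gamma = \tfrac{1}{2}(\rho_\gamma E_\gamma - \ol{\rho_\gamma}E_{-\gamma})$ together with the structure constants \eqref{hel1}. Acting on $E_\alpha$: the term $[E_\gamma, E_\alpha]$ vanishes because $\alpha + \gamma \notin \Delta$ by \eqref{ord1}, while $[E_{-\gamma}, E_\alpha] = N_{-\gamma,\alpha}E_{\alpha-\gamma}$; invoking $N_{-\gamma,\alpha} = -N_{\gamma,-\alpha}$ from \eqref{hel1} gives $ad X_\gamma(E_\alpha) = \tfrac{1}{2}\ol{\rho_\gamma}N_{\gamma,-\alpha}E_{s_\gamma(\alpha)}$. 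Acting on $E_{s_\gamma(\alpha)} = E_{\alpha-\gamma}$: the term $[E_{-\gamma}, E_{\alpha-\gamma}]$ vanishes because $\alpha - 2\gamma \notin \Delta$ (the $\gamma$-series of $\alpha$ has length two and stops at $\alpha-\gamma$, by Proposition \ref{gor2} b), leaving $ad X_\gamma(E_{s_\gamma(\alpha)}) = \tfrac{1}{2}\rho_\gamma N_{\gamma,\alpha-\gamma}E_\alpha$.

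Thus $ad X_\gamma$ acts on $span_\CC\{E_\alpha, E_{s_\gamma(\alpha)}\}$ by the matrix $M = \tfrac{1}{2}\left(\begin{smallmatrix} 0 & \rho_\gamma N_{\gamma,\alpha-\gamma} \\ \ol{\rho_\gamma}N_{\gamma,-\alpha} & 0\end{smallmatrix}\right)$. The crux is to verify $M^2 = -\tfrac{1}{4}\rI$: since $|\rho_\gamma|=1$ this amounts exactly to the identity $N_{\gamma,\alpha-\gamma}N_{\gamma,-\alpha} = -1$, which is Proposition \ref{www1} applied with $\beta = \gamma-\alpha$ (so that $\alpha-\gamma = -\beta$). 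Granting it, I write $M = \tfrac{1}{2}N$ with $N^2 = -\rI$, so that $\exp(t\, ad X_\gamma) = \cos(\tfrac{t}{2})\rI + 2\sin(\tfrac{t}{2})M$ on the plane; evaluating on $E_\alpha$ and substituting the entry $\tfrac{1}{2}\ol{\rho_\gamma}N_{\gamma,-\alpha}$ produces precisely $\cos(\tfrac{t}{2})E_\alpha + N_{\gamma,-\alpha}\ol{\rho_\gamma}\sin(\tfrac{t}{2})E_{s_\gamma(\alpha)}$. The only genuine obstacle is the sign bookkeeping in the two structure-constant reductions, and Propositions \ref{www1} and \ref{gor2} are exactly what make those signs come out right.
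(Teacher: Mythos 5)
Your proposal is correct and takes essentially the same route as the paper: its proof computes $(ad X_\gamma)^{2n}(E_\alpha)$ and $(ad X_\gamma)^{2n+1}(E_\alpha)$ by induction using Proposition \ref{www1} and then sums the sine/cosine series, which is exactly your observation that the matrix of $ad X_\gamma$ on $span_\CC\{E_\alpha,E_{s_\gamma(\alpha)}\}$ squares to $-\tfrac14\rI$ followed by the same summation. The ingredients you invoke (\eqref{ord1}, Corollary \ref{uu1}, Proposition \ref{www1}) are precisely the ones underlying the paper's induction.
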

\begin{proof} By induction, using Proposition \ref{www1}, for $n \geq 0$ we have
\begin{gather*}
(ad X_\gamma)^{2n+1}(E_\alpha) = \frac{(-1)^n}{2^{2n+1}} \ol{\rho_\gamma}N_{\alpha,-\gamma}E_{s_\gamma(\alpha)},\quad
(ad X_\gamma)^{2n}(E_\alpha) = \frac{(-1)^n}{2^{2n}}E_\alpha,
\end{gather*}
whence the proposition follows by summation of the series.
\end{proof}

 \subsection{The opposition involution}\label{di}
  \begin{df}\label{pni}
 Let $\Gamma = \{\gamma_1,\dots,\gamma_d\}$ be the stem of $\Delta^+$. For $\gamma \in \Gamma$ we denote
 $$
 \phi_\gamma = \phi_\gamma[\rho] = \exp(\pi ad X_\gamma(\rho)) \in {\bf Ad}(\mk{g}).
 $$
To simplify notations, for $k = 1,\dots,d$ we write $\phi_k = \phi_{\gamma_k}[\rho]$ and define
\begin{gather*}
\phi = \phi[\rho] = \phi_1\circ\dots\circ\phi_d = \exp(\pi ad X_\Gamma(\rho)).
\end{gather*}

\end{df}

\begin{remark}\label{di7}
It is well known (see e.g. Tits \cite{Tits66}), that if $\gamma \in \Delta,\ \rho \in \CC^\times = \CC\setminus\{0\}$ and
we define $X_\gamma = \frac{1}{2}(\rho E_{\gamma} - \frac{1}{\rho} E_{-\gamma})$, then the inner automorphism $exp(\pi ad X_\gamma)$
 is an extension of $s_\gamma$ (the reflection along $H_\gamma$ in $\mk{h}$) to an automorphism of $\mk{g}$.
 We have $\tau(X_\gamma(\rho)) = X_\gamma(\ol{\rho}^{-1})$, whence $X_\gamma(\rho) \in \mk{u} \iff |\rho| = 1$.

The reflections $\{s_\gamma \vert \gamma\in \Gamma\}$ generate an abelian subgroup ${\bf W}_\Gamma \subset {\bf W}$, which is obviously
isomorphic to $\ZZ_2\times\dots\times\ZZ_2$ ($d$ factors).

If we stay in the root system $\Delta$, the point of this subsection is the fact that for any choice of $\Delta^+$, hence of $\Gamma$, the product $s_{\gamma_1}\circ\dots\circ s_{\gamma_d}$ is the opposition element in the Weyl group of $\Delta$.
 However for our purposes we need to make explicit choice of a representative of the coset  $s_{\gamma_1}\circ\dots\circ s_{\gamma_d}$ in the exact sequence \eqref{elm1}.
\end{remark}

We recall that we denote by the same letter an automorphism $\psi \in {\bf N}(\mk{h}) \subset {\bf Aut}(\mk{g})$,
its action on $\mk{h}$ as an element of the Weyl group, and the conjugate action
on $\mk{h}^*$ given by $\psi(\alpha)(H) = \alpha(\psi^{-1}(H))$. In particular  from the third formula of Proposition \ref{xi1}
and the last remark we see that for each $\gamma$ we have $\phi_\gamma[\rho]\in {\bf N}_u(\mk{h})$ .

\begin{prop}\label{di4} The automorphism $\phi$ represents the "opposition involution" in the Weyl group, that is $\phi(\Delta^+) = \Delta^-$.
\end{prop}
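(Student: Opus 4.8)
The plan is first to reduce the claim about the automorphism $\phi$ to a statement about the Weyl group, and then to read off the action from the stem decomposition \eqref{ste1}. By Remark \ref{di7} each factor $\phi_{\gamma_k} = \exp(\pi\, ad X_{\gamma_k})$ lies in ${\bf N}_u(\mk{h})$ and represents the reflection $s_{\gamma_k}$; consequently $\phi = \phi_1\circ\cdots\circ\phi_d$ represents the Weyl element $w = s_{\gamma_1}\circ\cdots\circ s_{\gamma_d}$, and its action on $\Delta\subset\mk{h}^*$ is exactly $w$. Since the outcome is visibly independent of the torus parameter $\rho$ (Remark \ref{rer}), it suffices to prove $w(\Delta^+) = \Delta^-$. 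Moreover, because $\Gamma$ is strongly orthogonal (Corollary \ref{yyy11}), the reflections $s_{\gamma_k}$ pairwise commute (cf. Remark \ref{xi11}), so I am free to apply them in any order.

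I would then check $w(\alpha)\in\Delta^-$ for each $\alpha$ in the disjoint decomposition $\Delta^+ = \Gamma\cup\bigcup_i\Phi_{\gamma_i}^+$ of \eqref{nos1}. For $\alpha = \gamma_i\in\Gamma$ strong orthogonality gives $s_{\gamma_j}(\gamma_i) = \gamma_i$ for every $j\neq i$, hence $w(\gamma_i) = s_{\gamma_i}(\gamma_i) = -\gamma_i\in\Delta^-$.

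For $\alpha\in\Phi_{\gamma_i}^+$ I would use commutativity to write $w = \big(\prod_{j\neq i} s_{\gamma_j}\big)\circ s_{\gamma_i}$ and apply $s_{\gamma_i}$ first: by \eqref{ord1} we have $s_{\gamma_i}(\alpha) = \alpha - \gamma_i\in\Phi_{\gamma_i}^-$. The remaining task, which I expect to be the main obstacle, is to show that the off-diagonal factors keep us inside the branch $\Phi_{\gamma_i}^-$. This is supplied by Proposition \ref{pp2} b): for each $j\neq i$ the subspace $\mc{V}_{\gamma_i}^-$ is invariant under $ad\, sl_{\gamma_j}(2)$, and since $X_{\gamma_j}\in sl_{\gamma_j}(2)$ the automorphism $\phi_{\gamma_j}$ preserves $\mc{V}_{\gamma_i}^-$; as $\phi_{\gamma_j}$ carries each root space $\mk{g}(\beta)$ onto $\mk{g}(s_{\gamma_j}(\beta))$, this forces $s_{\gamma_j}(\Phi_{\gamma_i}^-) = \Phi_{\gamma_i}^-$. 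Applying $\prod_{j\neq i}s_{\gamma_j}$ to $\alpha-\gamma_i$ therefore lands again in $\Phi_{\gamma_i}^-\subset\Delta^-$, so $w(\alpha)\in\Delta^-$.

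Having established $w(\Delta^+)\subseteq\Delta^-$, equality follows at once because $w$ permutes $\Delta$ and $|\Delta^+| = |\Delta^-|$; thus $\phi(\Delta^+) = w(\Delta^+) = \Delta^-$. If one prefers to avoid Proposition \ref{pp2}, the invariance of the branch can instead be checked combinatorially: for $\beta\in\Phi_{\gamma_i}^+$ and $j\neq i$ one has $C(s_{\gamma_j}(\beta),\gamma_i) = C(\beta,\gamma_i) = 1$ because $\scal{\gamma_j,\gamma_i} = 0$, and then Proposition \ref{gor6} b), c) forces $s_{\gamma_j}(\beta)\in\Phi_{\gamma_i}^+$.
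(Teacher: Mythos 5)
Your main argument is correct and is essentially the paper's own proof: the paper derives the per-factor formulas \eqref{ccc1} --- each $\phi_k$ negates $\gamma_k$, fixes the other stem roots, swaps $\Phi_{\gamma_k}^+$ with $\Phi_{\gamma_k}^-$, and preserves every other branch --- from Corollaries \ref{yyy7} and \ref{yyy11}, which are exactly the facts you invoke, since Proposition \ref{pp2} b) is itself only a Lie-algebra restatement of Corollary \ref{yyy7}; your reordering of the commuting reflections and the final cardinality step are cosmetic variations.

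One warning about your optional ``combinatorial'' alternative: Proposition \ref{gor6} c) is stated only for maximal roots of $\Delta$, and for $i\geq 2$ the stem root $\gamma_i$ is in general \emph{not} maximal in $\Delta$, so the implication $C(\alpha,\gamma_i)=1 \Rightarrow \alpha\in\Phi_{\gamma_i}^+$ fails as you state it --- for instance in $\mk{A}_3$ with $\gamma_2=e_1-e_2$ one has $C(e_0-e_2,\gamma_2)=1$ while $\Phi_{\gamma_2}^+=\emptyset$ --- and to repair that route you must work inside $\Delta_i$ (where $\gamma_i$ is maximal), check $s_{\gamma_j}(\beta)\in\Delta_i$ using closedness (Corollary \ref{eu5} c)), and identify the branch computed in $\Delta_i$ with the branch in $\Delta$ via \eqref{sol}.
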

\bpr  The properties of $\Gamma$ from Corollaries \ref{yyy7} and \ref{yyy11} give even more precise formulas. For each $k,j = 1,\dots,d,\ k\neq j$ we have
\begin{gather}\label{ccc1}
\phi_k(\gamma_k) = -\gamma_k,\quad \phi_k(\gamma_j) = \gamma_j,\quad \phi_k(\Phi_k^+)= \Phi_k^-, \quad \phi_k(\Phi_j^+)= \Phi_j^+.
\end{gather}
The proposition is proved.
\epr

\begin{prop}\label{elm13} If $H\in\mk{h}$, then
\begin{gather*}
\phi(H)= H - \sum_{\gamma\in\Gamma}\gamma(H)H_\gamma.
\end{gather*}
\end{prop}
\begin{proof}
The proposition follows from the third formula in Proposition \ref{xi1} and strong orthogonality of $\Gamma$ ($\gamma(H_\delta) = 0 $ if $\gamma\neq\delta$).
\end{proof}
Proposition \ref{elm13} has an obvious consequence.
\begin{coro}\label{elm14} We have
$$\mk{o} = \{H\in\mk{h}\vert\ \phi(H)= H\};\quad \mk{w}^\CC = \{H\in\mk{h}\vert\ \phi(H)= -H\}.$$
\end{coro}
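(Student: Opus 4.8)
The plan is to read everything off the explicit formula $\phi(H) = H - \sum_{\gamma \in \Gamma} \gamma(H) H_\gamma$ supplied by Proposition \ref{elm13}. I would introduce the linear endomorphism $A \colon \mk{h} \to \mk{h}$ given by $A(H) = \sum_{\gamma \in \Gamma}\gamma(H)H_\gamma$, so that $\phi = \mathrm{id}_{\mk{h}} - A$ on $\mk{h}$. Then the condition $\phi(H) = H$ becomes $A(H) = 0$, and $\phi(H) = -H$ becomes $A(H) = 2H$; thus the corollary is equivalent to identifying the $0$-eigenspace of $A$ with $\mk{o}$ and its $2$-eigenspace with $\mk{w}^\CC$.

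For the first equality I would use linear independence. Since the roots in $\Gamma$ are strongly orthogonal (Corollary \ref{yyy11}), they are in particular pairwise orthogonal and nonzero, hence linearly independent, and consequently the coroots $\{H_\gamma \mid \gamma \in \Gamma\}$ are linearly independent in $\mk{h}$. Therefore $A(H) = \sum_{\gamma}\gamma(H)H_\gamma$ vanishes if and only if every coefficient $\gamma(H)$ vanishes, i.e. if and only if $H \in \bigcap_{\gamma \in \Gamma}\{H \in \mk{h} \mid \gamma(H) = 0\} = \mk{o}$ by the definition of $\mk{o}$. This yields $\mk{o} = \{H \in \mk{h} \mid \phi(H) = H\}$.

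For the second equality I would compute $A$ on the spanning set of $\mk{w}^\CC = span_\CC\{H_\gamma \mid \gamma \in \Gamma\}$ (recall $W_\gamma = \tfrac{\ri}{2}H_\gamma$). Using the standard identity $\gamma(H_\delta) = C(\gamma,\delta)$ (as in Corollary \ref{uu1}) together with strong orthogonality, which forces $C(\gamma,\delta) = 0$ for $\gamma \neq \delta$ in $\Gamma$ and $C(\delta,\delta) = 2$, one gets $A(H_\delta) = \sum_{\gamma} C(\gamma,\delta)H_\gamma = 2H_\delta$, hence $\phi(H_\delta) = -H_\delta$. By linearity $\phi(H) = -H$ for every $H \in \mk{w}^\CC$, giving $\mk{w}^\CC \subseteq \{H \mid \phi(H) = -H\}$. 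The reverse inclusion is immediate from the formula: $\phi(H) = -H$ forces $2H = \sum_{\gamma} \gamma(H)H_\gamma$, which already exhibits $H$ as an element of $span_\CC\{H_\gamma\} = \mk{w}^\CC$.

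I do not expect any genuine obstacle here; the statement really is a corollary. The only two facts needing a word of justification are the linear independence of the coroots (for the $\mk{o}$ part) and the strong orthogonality of $\Gamma$ (to diagonalize $A$ on $\mk{w}^\CC$), both already available. As an alternative packaging I would note that $\mk{h} = \mk{o} \oplus \mk{w}^\CC$ is the orthogonal decomposition recorded just after \eqref{pp1}, so that $A$ is twice the orthogonal projection onto $\mk{w}^\CC$ and $\phi = \mathrm{id} - 2A/2$ is the associated reflection; this identifies both eigenspaces at once, but the direct computation above is the shorter route.
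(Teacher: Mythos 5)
Your proposal is correct and follows exactly the route the paper intends: the paper states Corollary \ref{elm14} as an ``obvious consequence'' of the formula $\phi(H) = H - \sum_{\gamma\in\Gamma}\gamma(H)H_\gamma$ of Proposition \ref{elm13}, and your argument simply fills in the two details (linear independence of the coroots $H_\gamma$, and $\gamma(H_\delta) = C(\gamma,\delta)$ together with orthogonality of $\Gamma$) that make this reading-off rigorous. Nothing is missing.
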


From Proposition \ref{cay3} with $t = \pi$ we obtain
\begin{gather}\label{uu3}
 \phi_\gamma(E_\alpha)= \ol{\rho_\gamma}N_{\gamma,-\alpha}E_{s_\gamma(\alpha)},\quad \gamma\in\Gamma,\,\alpha\in\Phi^+_\gamma
 \end{gather}

\begin{df}
Let $\theta$ be the contragredience automorphism of $\mk{g}$ w.r. to $\mk{h}$(see \eqref{star2}), and let $\phi \in {\bf W}$ be the opposition automorphism of $\mk{g}$ w.r. to $\mk{h}$ (see Definition \ref{pni}). We denote
$$ \star =  \theta\circ\phi = \phi\circ\theta \in {\bf Aut}(\mk{g}).$$
We denote by the same symbol the adjoint involution $\star \in {\bf Aut}(\mk{h}^*)$.
\end{df}

It is well known that $\star \in {\bf Aut}_\Pi(\Delta)$ and
that when $\Delta$ is a reduced  irreducible root system then the involution $\star$ is nontrivial only when $\Delta = \mk{A}_n, n>1,\quad \Delta = \mk{D}_{2n+1}, n \geq 1,\quad \Delta = \mk{E}_6$.

\begin{coro}\label{elm15} We have
$$\mk{o} = \{H\in\mk{h}\vert\ \star(H)= -H\};\quad \mk{w}^\CC = \{H\in\mk{h}\vert\ \star(H)= H\}.$$
\end{coro}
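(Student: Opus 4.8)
The plan is to reduce this statement directly to Corollary \ref{elm14} by exploiting the fact that the contragredience involution $\theta$ acts as $-\mathrm{id}$ on the Cartan subalgebra $\mk{h}$. First I would record the key structural observation: since each $\phi_\gamma[\rho] \in {\bf N}_u(\mk{h})$ (as noted just before Proposition \ref{di4}), the product $\phi = \phi_1 \circ \dots \circ \phi_d$ lies in ${\bf N}(\mk{h})$, so $\phi$ maps $\mk{h}$ into itself. Consequently, for any $H \in \mk{h}$ we have $\phi(H) \in \mk{h}$, and then by the defining formula \eqref{star2} for $\theta$, which gives $\theta(H') = -H'$ for all $H' \in \mk{h}$, we obtain the simple identity
\begin{gather*}
\star(H) = \theta(\phi(H)) = -\phi(H), \qquad H \in \mk{h}.
\end{gather*}

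With this identity in hand, the two claimed equalities follow by mere sign comparison. For the first, $\star(H) = -H$ is equivalent to $-\phi(H) = -H$, i.e. $\phi(H) = H$, which by Corollary \ref{elm14} characterizes exactly $H \in \mk{o}$. For the second, $\star(H) = H$ is equivalent to $-\phi(H) = H$, i.e. $\phi(H) = -H$, which again by Corollary \ref{elm14} characterizes exactly $H \in \mk{w}^\CC$. This completes the argument.

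There is essentially no serious obstacle here; the content of the corollary is that passing from $\phi$ to $\star = \theta \circ \phi$ interchanges the $+1$ and $-1$ eigenspaces on $\mk{h}$, which is forced by $\theta\vert_\mk{h} = -\mathrm{id}$. The only point requiring a moment of care is confirming that $\phi$ indeed stabilizes $\mk{h}$ before applying $\theta$, so that the composition $\theta \circ \phi$ restricted to $\mk{h}$ can be computed simply as negation of $\phi\vert_\mk{h}$; this is already guaranteed by $\phi \in {\bf N}(\mk{h})$. I would therefore present the proof as a two-line deduction citing Corollary \ref{elm14} and formula \eqref{star2}.
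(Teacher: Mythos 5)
Your proof is correct and is exactly the intended argument: the paper states Corollary \ref{elm15} without proof precisely because it follows from Corollary \ref{elm14} together with $\theta\vert_{\mk{h}} = -\mathrm{id}$ (formula \eqref{star2}), which is what you do. The observation that $\phi$ stabilizes $\mk{h}$ (so that $\star(H) = -\phi(H)$ makes sense on $\mk{h}$) is indeed the only point needing care, and it is secured by Proposition \ref{elm13} or by $\phi \in {\bf N}(\mk{h})$ as you note.
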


\subsection{Back to the roots}
We are going to show (improving Proposition \ref{yyy8}) how $\star$ determines the number of elements of the stem:

\begin{prop} \label{star3} Let $\gamma \in \Gamma$. Then

a) $\star\gamma = \gamma,\quad \star(\Phi_{\gamma})^+  = \Phi_{\gamma}^+,\quad \star (\Phi_{\gamma}^+ \cap \Pi) = \Phi_{\gamma}^+ \cap \Pi$;

b) If $\gamma \in \Gamma$, then we have  a trihotomy:\qquad
i) $\gamma \in \Pi$ and $\Phi^+_\gamma = \emptyset$; \qquad ii) $\Phi^+_\gamma\cap\Pi$ has exactly one element;\qquad
iii) $\Phi^+_\gamma\cap\Pi$ has exactly two elements.

c) If $\alpha, \beta \in \Phi_\gamma^+ \cap \Pi$ and  $\alpha \neq \beta$, then $\star\alpha = \beta$.
\end{prop}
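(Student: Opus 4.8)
The plan is to dispose of parts a) and b) quickly from the combinatorics already established, and then to concentrate on c), which carries the only real content. Throughout I write $\gamma=\gamma_k$ and use that $\theta$ acts as $-\mathrm{id}$ on $\mk h^*$ (from \eqref{star2} together with the convention $\psi(\alpha)(H)=\alpha(\psi^{-1}(H))$), so that $\star=\theta\circ\phi$ acts on roots by $\delta\mapsto-\phi(\delta)$. For a), equation \eqref{ccc1} gives at once $\phi(\gamma_k)=-\gamma_k$ and $\phi(\Phi_{\gamma_k}^+)=\Phi_{\gamma_k}^-$; applying $\theta$ yields $\star\gamma=\gamma$ and $\star(\Phi_\gamma^+)=-\Phi_\gamma^-=\Phi_\gamma^+$. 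Since the text records $\star\in{\bf Aut}_\Pi(\Delta)$, i.e. $\star(\Pi)=\Pi$, intersecting gives $\star(\Phi_\gamma^+\cap\Pi)=\Phi_\gamma^+\cap\Pi$. For b), I would apply Proposition \ref{yyy8} to the irreducible subsystem $\Theta_\gamma$ of Proposition \ref{yyy23}, for which $\gamma$ is the highest root and $\Pi_\gamma=\Pi\cap\Theta_\gamma$ the basis; because $\Phi_\gamma^+\subset\Theta_\gamma$ we have $\Phi_\gamma^+\cap\Pi=\Phi_\gamma^+\cap\Pi_\gamma$ and $\gamma\in\Pi\iff\gamma\in\Pi_\gamma$, so the trichotomy is precisely the content of \ref{yyy8}.

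The heart is c). By a), $\star$ is an involution of the two-element set $\Phi_\gamma^+\cap\Pi=\{\alpha,\beta\}$, so it suffices to rule out $\star\alpha=\alpha$. My first step is to put $\phi(\alpha)$ into normal form. Writing $\phi=\prod_j s_{\gamma_j}$ with commuting factors (strong orthogonality, Corollary \ref{yyy11}), the factors with $j<k$ fix $\alpha$ since $\alpha\pm\gamma_j\notin\Delta$ forces $C(\alpha,\gamma_j)=0$ (by \eqref{ord2} and Proposition \ref{gor2}c)), while $s_{\gamma_k}(\alpha)=\alpha-\gamma$ by Corollary \ref{uu1}. Put $\psi=\prod_{j>k}s_{\gamma_j}$; by \eqref{ccc1} it maps $\Phi_\gamma^+$ into itself, and every $\gamma_j$ with $j<k$ is orthogonal to $\Delta_k\supset\Phi_\gamma^+$, so the $j<k$ reflections also fix $\psi(\alpha)$. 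A short computation with these commuting involutions then gives
\[
\phi(\alpha)=\psi(\alpha)-\gamma,\qquad \psi(\alpha)\in\Phi_\gamma^+,
\]
whence $\star\alpha=-\phi(\alpha)=\gamma-\psi(\alpha)$.

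The decisive point is a coefficient count. Each $\gamma_j$ with $j>k$ lies in $\Delta_{k+1}$, whose basis $\Pi_{k+1}=\Pi\cap\Delta_{k+1}$ contains neither $\alpha$ nor $\beta$, since these lie in $\Phi_\gamma^+$, which is disjoint from $\Delta_{k+1}$ by Proposition \ref{eu1}. Hence, expanded in $\Pi$, each $\gamma_j$ with $j>k$ has vanishing $\alpha$- and $\beta$-coefficients, so every $s_{\gamma_j}$ ($j>k$), and therefore $\psi$, leaves the $\alpha$- and $\beta$-coefficients of any root unchanged. Consequently the $\alpha$-coefficient of $\psi(\alpha)$ equals that of $\alpha$, namely $1$. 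Were $\star\alpha=\alpha$, i.e. $\psi(\alpha)=\gamma-\alpha$, that coefficient would instead be $n_\alpha(\gamma)-1=0$ (recall $n_\alpha(\gamma)=1$ in case iii); see \eqref{mar1} and the proof of \ref{yyy8}), a contradiction. Therefore $\star\alpha=\beta$.

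I expect the only delicate step to be the normal form $\phi(\alpha)=\psi(\alpha)-\gamma$: one must carefully separate the reflections fixing $\alpha$ from those fixing $\psi(\alpha)$ and verify $\psi(\Phi_\gamma^+)=\Phi_\gamma^+$. Once that is in hand the coefficient argument is immediate, and it has the advantage of being uniform, never requiring the case-by-case identification of $\Theta_\gamma$ (which in case iii) is in fact forced to be of type $\mk A_n$).
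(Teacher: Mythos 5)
Your proposal is correct and follows essentially the same route as the paper's proof: parts a) and b) are handled by citing \eqref{ccc1} (with $\star\in{\bf Aut}_\Pi(\Delta)$) and by applying Proposition \ref{yyy8} to $\Theta_\gamma$, while part c) rests on exactly the same coefficient count — the reflections $s_{\gamma_j}$ with $j<k$ fix $\alpha$, those with $j>k$ preserve the $\alpha$-coefficient because $n_\alpha(\gamma_j)=0$, and $s_{\gamma_k}$ shifts it by $n_\alpha(\gamma_k)=1$. That you apply the commuting factors in the opposite order (first $\psi=\prod_{j>k}s_{\gamma_j}$, then $s_{\gamma_k}$) and conclude by contradiction rather than by computing $n_\alpha(\star\alpha)=0$ directly is only a cosmetic rearrangement of the paper's argument.
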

\bpr
The properties a) follow directly from Proposition \ref{di4} and  \eqref{ccc1}.

The trihotomy b) is Proposition \ref{yyy8} in the case when $\gamma$ is a maximal root of $\Delta^+$.
To prove it for any $\gamma \in \Gamma$ we just have to apply  Proposition \ref{yyy8} to
the closed root subsystem $\Theta_\gamma$ (see Proposition \ref{yyy23}), where $\gamma$ is the highest root.

We proceed to prove c). We are going to use the order from Definition \ref{yyy2} (see also the convention \eqref{yyy21}).
Let $\Gamma = \{\gamma_1,\dots,\gamma_d\}$ and let $
\gamma = \gamma_k \in \Gamma,\quad \Phi_k^+ \cap \Pi = \{\alpha, \beta\}$ and $\alpha \neq \beta$.

Any $\zeta \in \Delta$ decomposes as follows:
\begin{gather}\label{aim1}
 \zeta = \sum_{\lambda \in \Pi } n_\lambda(\zeta) \lambda.
\end{gather}
Obviously  $n_\alpha(\alpha) = 1$. In order to prove $\star\alpha = \beta$ it is sufficient to prove that $n_\alpha(\star\alpha) = 0$, we proceed to do this.

We have $\star\alpha = -s_{\gamma_d}\circ\dots\circ s_{\gamma_1}(\alpha) = -s_{\gamma_d}\circ\dots\circ s_{\gamma_k}(\alpha)$ because, by \eqref{ord2}, for $j =1,\dots,k-1$ the reflection $s_{\gamma_j}$ leaves $\Phi_k$ pointwise fixed.

Denote $ \zeta = s_{\gamma_k}(\alpha) \in \Phi_k$. Then $n_\alpha(\zeta) = n_\alpha(\alpha) - n_\alpha(\gamma_k) = 0$,
since $n_\alpha(\gamma_k)= 1$ (see the proof of Proposition \ref{yyy8}).

The proposition will be proved if we show that for any $\zeta \in \Phi_k$ and $j > k$ we have
$n_\alpha(\zeta) = n_\alpha(s_{\gamma_j}\zeta)$.  The last equation follows obviously from the fact that  for $j>k$ we have $n_\alpha(\gamma_j) = 0$.
Indeed, by definition (see Proposition \ref{eu1}) of $\gamma_j$ as maximal root of $\Delta_j$ we know that $n_\lambda(\gamma_j) \neq 0$
 only for $\lambda \in \Pi\cap\Delta_j$ (see \eqref{aim1}). By the definition of stem $\alpha \not \in \Delta_j$.
\epr
From Proposition \ref{star3} it is trivial to get
\begin{coro}\label{ggg1}
Let $\mk{g}$ be a semisimple Lie algebra, let $\Pi$ be a basis of $\Delta$ and let $\Gamma = \{\gamma_1,\dots,\gamma_d\}$ be the corresponding stem. Then
\begin{gather*}
 d = \#(\Pi/\{id,\star\}).
\end{gather*}
In particular $\tfrac{1}{2}rank(\mk{g}) \leq d \leq rank(\mk{g})$.
\end{coro}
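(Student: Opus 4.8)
The plan is to exhibit a bijection between the stem $\Gamma$ and the orbit set $\Pi/\{id,\star\}$, thereby proving $d=\#(\Pi/\{id,\star\})$ directly. Since the defining property \eqref{nos1} of the stem gives the disjoint union $\Delta^+=\Gamma\cup\bigcup_{\gamma\in\Gamma}\Phi_\gamma^+$ and $\Pi\subset\Delta^+$, each simple root lies in exactly one piece $\{\gamma\}$ or $\Phi_\gamma^+$. Hence $\Pi$ is partitioned into the sets $P_\gamma:=(\{\gamma\}\cup\Phi_\gamma^+)\cap\Pi$, indexed by $\gamma\in\Gamma$. The first observation I would record is that $\star$ preserves each $P_\gamma$: Proposition \ref{star3}, a) gives $\star\gamma=\gamma$ and $\star(\Phi_\gamma^+\cap\Pi)=\Phi_\gamma^+\cap\Pi$, so (using also $\star(\Pi)=\Pi$) no simple root is carried by $\star$ out of its piece. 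Consequently every $\{id,\star\}$-orbit is contained in a single $P_\gamma$, and it suffices to show that each $P_\gamma$ is nonempty and constitutes exactly one orbit.

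The heart of the argument is then the trichotomy of Proposition \ref{star3}, b), applied to each $\gamma\in\Gamma$. In case i) we have $\gamma\in\Pi$ and $\Phi_\gamma^+=\emptyset$, so $P_\gamma=\{\gamma\}$ is a single $\star$-fixed point (again by $\star\gamma=\gamma$), hence one orbit. In cases ii) and iii) Proposition \ref{yyy8} forces $\gamma\notin\Pi$ (since there $\Phi_\gamma^+\neq\emptyset$), so $P_\gamma=\Phi_\gamma^+\cap\Pi$. In case ii) this is a single simple root $\xi$, which is $\star$-fixed because $\star$ permutes the one-element set $\Phi_\gamma^+\cap\Pi$; again one orbit. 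In case iii) we have $\Phi_\gamma^+\cap\Pi=\{\alpha,\beta\}$ with $\alpha\neq\beta$, and Proposition \ref{star3}, c), gives $\star\alpha=\beta$, so $P_\gamma$ is a single two-element orbit. In all three cases $P_\gamma$ is nonempty and equals exactly one $\{id,\star\}$-orbit, so $\gamma\mapsto P_\gamma$ is the desired bijection and $d=\#(\Pi/\{id,\star\})$.

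Finally, for the stated inequality I would simply count the orbits of the involution $\star$ on the $l$-element set $\Pi$, where $l=rank(\mk{g})=\#\Pi$ for semisimple $\mk{g}$. Writing $f$ for the number of $\star$-fixed simple roots, the remaining $l-f$ roots pair off, so $d=f+\tfrac{1}{2}(l-f)=\tfrac{1}{2}(l+f)$; since $0\le f\le l$ this gives $\tfrac{1}{2}rank(\mk{g})\le d\le rank(\mk{g})$. I do not anticipate a real obstacle, as all the structural content is already packaged in Propositions \ref{yyy8} and \ref{star3}; the only point demanding care is the bookkeeping that the trichotomy is exhaustive and that the equivalence $\Phi_\gamma^+=\emptyset\iff\gamma\in\Pi$ (Proposition \ref{yyy8}) is what lets me drop $\gamma$ from $P_\gamma$ in cases ii) and iii), so that each $P_\gamma$ collapses to a single orbit.
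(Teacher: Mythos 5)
Your proposal is correct and is essentially the paper's own argument: the paper dismisses the corollary as an immediate consequence of Proposition \ref{star3}, and your partition of $\Pi$ into the pieces $P_\gamma=(\{\gamma\}\cup\Phi_\gamma^+)\cap\Pi$, each of which is a single $\{id,\star\}$-orbit by the trichotomy and parts a), c) of that proposition, is precisely the bookkeeping the paper leaves implicit. The final orbit count $d=\tfrac{1}{2}(l+f)$ giving $\tfrac{1}{2}rank(\mk{g})\leq d\leq rank(\mk{g})$ is likewise the intended reading.
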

\begin{ex}
If $\Delta = \mk{A}_n$ (see Example \ref{fraka}), then $d = \ksco{\tfrac{n+1}{2}}$, if $\Delta = \mk{C}_n$  (see Example \ref{frakc})  then $d = n$.
\end{ex}
\begin{coro} \label{star5} Denote $\widetilde{\Gamma} = \{ \gamma \in \Gamma\ \vert\ \Phi_{\gamma}^+ \cap \Pi = \{ \alpha_{\gamma}, \beta_{\gamma} \}, \alpha_{\gamma} \neq  \beta_{\gamma} \}$. Then
\begin{gather*}
 (\mk{o}_s)_{\RR}^* = \Gamma^\bot = span\{ \alpha_{\gamma}-\beta_{\gamma}\vert \gamma \in \widetilde{\Gamma} \} ;\qquad \star(\zeta) = -\zeta,\quad \zeta \in \Gamma^\bot.
\end{gather*}
\end{coro}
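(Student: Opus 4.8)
The plan is to combine three ingredients: the identification of $(\mk{o}_s)_\RR^*$ with $\Gamma^\bot$ via the invariant form, the description of $\mk{o}$ as the $(-1)$-eigenspace of $\star$ from Corollary \ref{elm15}, and the orbit structure of $\star$ on the basis $\Pi$ furnished by Proposition \ref{star3}. First I would pin down the leftmost equality $(\mk{o}_s)_\RR^* = \Gamma^\bot$. The form $\scal{.,.}$ restricts to the Killing form on $\mk{g}_s$ and hence identifies $\mk{h}_s$ with $\mk{h}_s^*$, sending $h_\gamma$ to $\gamma$. Under this identification the defining condition $\gamma(H)=\scal{H,h_\gamma}=0$ for all $\gamma\in\Gamma$ says exactly that $\mk{o}_s = span\{h_\gamma\vert\ \gamma\in\Gamma\}^{\perp}$, so its dual $(\mk{o}_s)_\RR^*$, realized as the functionals vanishing on $span\{h_\gamma\}$, is precisely $\Gamma^\bot=\{\zeta\in\mk{h}_{s,\RR}^*\vert\ \scal{\zeta,\gamma}=0,\ \forall\gamma\in\Gamma\}$. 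This is routine linear algebra with the conventions of Subsection \ref{scn}.

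Next I would obtain the eigenvalue statement by transporting Corollary \ref{elm15}. Since $\star=\theta\circ\phi$ is a composite of the Weyl element $\phi$ and the contragredience $\theta$, it preserves $\scal{.,.}$, and the identification above is $\star$-equivariant; hence the equality $\mk{o}_s=\{H\in\mk{h}_s\vert\ \star H=-H\}$ (the restriction of Corollary \ref{elm15} to the semisimple part) translates into $\Gamma^\bot$ being exactly the $(-1)$-eigenspace of $\star$ acting on $\mk{h}_{s,\RR}^*$. In particular $\star(\zeta)=-\zeta$ for all $\zeta\in\Gamma^\bot$, which is the last assertion of the corollary.

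It then remains to identify this $(-1)$-eigenspace with $span\{\alpha_\gamma-\beta_\gamma\vert\ \gamma\in\widetilde{\Gamma}\}$. The key input is the orbit structure of $\star$ on $\Pi$. Using the disjoint union $\Delta^+=\Gamma\cup\bigcup_\gamma\Phi_\gamma^+$, every simple root lies either in $\Gamma\cap\Pi$ or in a unique $\Phi_\gamma^+\cap\Pi$; together with the trichotomy of Proposition \ref{star3} b) this writes $\Pi$ as a disjoint union of blocks, one per $\gamma\in\Gamma$ (the singleton $\{\gamma\}$ in case i), the set $\Phi_\gamma^+\cap\Pi$ in cases ii) and iii)). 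By Proposition \ref{star3} a) and c), $\star$ preserves each block, fixing it pointwise in cases i) and ii) and swapping $\alpha_\gamma\leftrightarrow\beta_\gamma$ in case iii). Since $\Pi$ is a basis of $\mk{h}_{s,\RR}^*$, decomposing it into these $\star$-orbits shows that the $(-1)$-eigenspace is spanned by the differences $\alpha_\gamma-\beta_\gamma$ over the two-element orbits, that is over $\gamma\in\widetilde{\Gamma}$; combined with the previous paragraph this yields the middle equality and completes the proof. As a consistency check for one inclusion, for $\delta\in\Gamma$ we have $\star\delta=\delta$ and $\star\alpha_\gamma=\beta_\gamma$, so $\star$-invariance of the form gives $\scal{\alpha_\gamma,\delta}=\scal{\beta_\gamma,\delta}$ and hence $\scal{\alpha_\gamma-\beta_\gamma,\delta}=0$, confirming $\alpha_\gamma-\beta_\gamma\in\Gamma^\bot$ directly.

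The main obstacle is the combinatorial bookkeeping in the last step: one must verify that the blocks genuinely partition $\Pi$ with exactly one block per stem element, so that counting two-element blocks gives $\#\widetilde{\Gamma}=\dim\Gamma^\bot$ and the linear independence of the $\alpha_\gamma-\beta_\gamma$ follows from $\Pi$ being a basis. This partition, and the fact that the number of $\star$-orbits equals $d=\#\Gamma$, is exactly what Corollary \ref{ggg1} records, so the counting is automatically consistent; the remaining care is purely in tracking which $\gamma$ falls into which case of the trichotomy.
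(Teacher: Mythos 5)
Your proposal is correct, but it is organized differently from the paper's proof, so a comparison is worthwhile. The paper never invokes Corollary \ref{elm15}: it first shows $\alpha_\gamma-\beta_\gamma \in \Gamma^\bot$ directly --- this is exactly your ``consistency check'' ($\star$ is an isometry fixing each $\delta\in\Gamma$ and swapping $\alpha_\gamma\leftrightarrow\beta_\gamma$) --- then notes that the differences are linearly independent and counts dimensions via $\#(\Gamma)+\#(\widetilde{\Gamma})=\#(\Pi)$ (Proposition \ref{yyy8}) to obtain the spanning statement, and only at the very end deduces $\star(\zeta)=-\zeta$ from $\star(\alpha_\gamma-\beta_\gamma)=\beta_\gamma-\alpha_\gamma$. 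You reverse this logic: you first identify $\Gamma^\bot$ with the $(-1)$-eigenspace of $\star$ on $\mk{h}_{s,\RR}^*$ by transporting Corollary \ref{elm15} through the $\star$-equivariant Killing-form identification $\mk{h}_s\cong\mk{h}_s^*$, and then compute that eigenspace from the permutation action of $\star$ on the basis $\Pi$, using the stem partition of $\Pi$ into blocks and Proposition \ref{star3} to see that the two-element orbits are exactly the pairs $\{\alpha_\gamma,\beta_\gamma\}$, $\gamma\in\widetilde{\Gamma}$. Both arguments ultimately rest on the same two pillars --- Proposition \ref{star3} and the count $\#(\Gamma)+\#(\widetilde{\Gamma})=\#(\Pi)$ --- so the difference is architectural rather than substantive. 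Your route makes the structural reason transparent (both sides of the middle equality are described as $(-1)$-eigenspaces, so the eigenvalue assertion comes for free), at the cost of importing the Lie-algebra-level Corollary \ref{elm15} and checking equivariance of the identification; the paper's route stays entirely inside root combinatorics plus the isometry property of $\star$, and is slightly more self-contained at that point in the text.
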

\bpr Obviously the set  $\{ \alpha_{\gamma}-\beta_{\gamma}; \gamma \in \widetilde{\Gamma} \}$ is linearly independent.

For any $\gamma \in \widetilde{\Gamma}$ and $\delta \in \Gamma$ by  Proposition  \ref{star3} we have $\star(\delta)=\delta$ and $\star(\alpha_\gamma) = \beta_{\gamma}$, hence, since $\star$ is isometry, $\scal{\alpha_{\gamma}, \delta} = \scal{\beta_{\gamma}, \delta}$. Therefore  $\scal{\alpha_{\gamma} - \beta_{\gamma}, \delta} = 0$.

By  Proposition \ref{yyy8} it follows that $\#(\Gamma) + \#(\widetilde{\Gamma}) = \#(\Pi)$ and the first formula follows.
The rest follows from $\star(\alpha_{\gamma})=\beta_{\gamma},\quad \gamma \in \widetilde{\Gamma}$.
\epr

\subsection{The Cayley transform}\label{cat}

We define an automorphism which is a square root of the opposition involution $\phi$ from the previous subsection, we use freely all notation introduced there.
\begin{df}\label{xi_p}
For $p=1,\dots, d$ we denote $X_p = X_{\gamma_p}(\rho)$. Let
\begin{gather*}
{\bf c}_p = {\bf c}_p[\rho] = \exp\left(\frac{\pi}{2} ad X_p(\rho) \right)  \in {\bf Ad}(\mk{g}),\\
  {\bf c}= {\bf c}[\rho] = {\bf c}_1 \circ {\bf c}_2 \circ \ldots \circ {\bf c}_d  = \exp\left(\frac{\pi}{2} ad X_\Gamma(\rho)\right).
  \end{gather*}
\end{df}
\begin{remark}\label{xxx1} By Remark \ref{xi11} we conclude that for $p = 1,\dots,d$ we have $ {\bf c}_p\circ \tau = \tau\circ{\bf c}_p$, so
all ${\bf c}_p$ and ${\bf c}$ are automorphisms of $\mk{u}$.

Also from Remark \ref{xi11} it follows that for $i,j= 1,\ldots,d$ we have ${\bf c}_i \circ {\bf c}_j = {\bf c}_j\circ {\bf c}_i$,
 whence the definition of the automorphism ${\bf c}$ does not depend on the order of the factors and that is why we may define it as exponent of one element, namely $\tfrac{\pi}{2}ad X_\Gamma$.

Obviously  for each $k = 1,\dots,d$ we have ${\bf c}_k^2 = \phi_k$ (see Definition \ref{pni}), whence $
{\bf c}^2 = \phi$, i.e. ${\bf c}$ is a square root of the opposition involution.

\end{remark}

We shall need an explicit description of the action of ${\bf c}$ on
$\mk{f}\oplus\mk{o}$.
\begin{prop}\label{elm2} If $\gamma\in\Gamma$, then $$\quad{\bf c}(X_\gamma) = X_\gamma,\quad {\bf c}(Y_\gamma) = W_\gamma,\quad {\bf c}(W_\gamma) = -Y_\gamma. $$
\end{prop}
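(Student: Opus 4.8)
The plan is to reduce the action of the full Cayley transform $\mathbf{c}$ on each three–dimensional block $su_\gamma(2)$ to the action of the single factor $\mathbf{c}_\gamma = \exp(\tfrac{\pi}{2}\,ad\,X_\gamma)$, and then read off the three formulas directly from Proposition \ref{xi1} evaluated at $t = \tfrac{\pi}{2}$.

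First I would exploit the strong orthogonality of the stem. By Corollary \ref{yyy11} the elements of $\Gamma$ are mutually strongly orthogonal, so for distinct $\gamma,\delta\in\Gamma$ we have $\gamma\pm\delta\notin\Delta$ and (since each stem element is long) $\scal{\gamma,\delta}=0$, whence $\delta(H_\gamma)=0$. Using the structure constants conventions in \eqref{structural constants}, this gives $[E_{\pm\gamma},E_{\pm\delta}]=0$ and $[H_\gamma,E_{\pm\delta}]=0$, so the subalgebras $su_\gamma(2)$ and $su_\delta(2)$ commute. In particular $ad\,X_\delta$ annihilates each of $W_\gamma, X_\gamma(\rho), Y_\gamma(\rho)$ whenever $\delta\neq\gamma$, so the factor $\mathbf{c}_\delta$ fixes these three elements pointwise.

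Next, since the factors $\mathbf{c}_1,\dots,\mathbf{c}_d$ commute (Remark \ref{xxx1}), I may write $\mathbf{c} = \mathbf{c}_\gamma\circ\prod_{\delta\neq\gamma}\mathbf{c}_\delta$. Applied to any element of $su_\gamma(2)$, the inner product of factors acts as the identity by the previous step, so $\mathbf{c}$ and $\mathbf{c}_\gamma$ agree on $su_\gamma(2)$. It then suffices to compute $\mathbf{c}_\gamma$ on $X_\gamma, Y_\gamma, W_\gamma$. Invoking Proposition \ref{xi1} with $t=\tfrac{\pi}{2}$: the trivial identity $\exp(t\,ad\,X_\gamma)(X_\gamma)=X_\gamma$ yields $\mathbf{c}(X_\gamma)=X_\gamma$; the second formula gives $\mathbf{c}(Y_\gamma)=\sin(\tfrac{\pi}{2})W_\gamma+\cos(\tfrac{\pi}{2})Y_\gamma=W_\gamma$; and the first gives $\mathbf{c}(W_\gamma)=\cos(\tfrac{\pi}{2})W_\gamma-\sin(\tfrac{\pi}{2})Y_\gamma=-Y_\gamma$.

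There is essentially no genuine obstacle here; the computation is routine once Proposition \ref{xi1} is available. The only point requiring a word of justification is the reduction of $\mathbf{c}$ to the single factor $\mathbf{c}_\gamma$ on each block, and this rests entirely on the strong orthogonality of $\Gamma$ established in Corollary \ref{yyy11} together with the commutativity of the factors noted in Remark \ref{xxx1}.
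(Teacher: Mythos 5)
Your proof is correct and takes essentially the same route as the paper: strong orthogonality of $\Gamma$ makes every factor ${\bf c}_\delta$ with $\delta\neq\gamma$ act trivially on $su_\gamma(2)$, so ${\bf c}$ agrees with ${\bf c}_\gamma$ there, and Proposition \ref{xi1} at $t=\tfrac{\pi}{2}$ then yields the three formulas. One small caveat: stem elements need not be long in $\Delta$ (e.g. $e_p\in\Gamma$ for $\mk{B}_{2q+1}$, or the short root $\alpha\in\Gamma$ for $\mk{G}_2$), so the orthogonality $\scal{\gamma,\delta}=0$ is better justified either by the construction of $\Gamma$ (where $\gamma_j\in\Delta_{k+1}\subset\gamma_k^\perp$ for $k<j$) or by the general root-string fact that strong orthogonality implies orthogonality, rather than by longness in $\Delta$.
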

\begin{proof}
The first equality obviously follows from strong orthogonality of $\Gamma$. The second and third formulas follow directly from Proposition \ref{xi1}.
\end{proof}

\begin{prop}\label{elm3} If $H\in\mk{h}$, then
\begin{gather}
{\bf c}(H)= H +\ri \sum_{j=1}^d
\gamma_j(H)(W_j + Y_j), \nonumber \\[-2mm]  \label{elm11} \\[-2mm] {\bf c}^{-1}(H)= H + \ri\sum_{j=1}^d
\gamma_j(H)(W_j - Y_j). \nonumber
\end{gather}
\end{prop}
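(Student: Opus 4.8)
The plan is to reduce the statement to the single-root formula already established in Proposition \ref{xi1}, using strong orthogonality of the stem $\Gamma$ to decouple the $d$ commuting factors of ${\bf c}$. The whole content of the proposition is that the one-root contributions simply add up over $\gamma \in \Gamma$.

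First I would record the one-factor computation. Since ${\bf c} = \exp(\tfrac{\pi}{2} ad X_\Gamma(\rho))$ and ${\bf c}^{-1} = \exp(-\tfrac{\pi}{2} ad X_\Gamma(\rho))$, and since each ${\bf c}_p = \exp(\tfrac{\pi}{2} ad X_p)$ acts on $H \in \mk{h}$ by the third formula of Proposition \ref{xi1}, evaluating that formula at $t = \pm\tfrac{\pi}{2}$ (where $\sin t = \pm 1$ and $1 - \cos t = 1$) gives at once ${\bf c}_\gamma^{\pm 1}(H) = H + \ri\,\gamma(H)(W_\gamma \pm Y_\gamma)$ for a single $\gamma \in \Gamma$.

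The key step, and the only one needing care, is the decoupling. For $p \neq j$ the roots $\gamma_p,\gamma_j$ are strongly orthogonal (Corollary \ref{yyy11}), in fact orthogonal by the construction there, so $\gamma_p \pm \gamma_j \not\in \Delta$ and $\scal{\gamma_p,\gamma_j} = 0$. The first fact gives $[E_{\pm\gamma_p},E_{\pm\gamma_j}] = 0$, hence $ad X_{\gamma_p}(Y_j) = 0$ and ${\bf c}_p(Y_j) = Y_j$; the second gives $\gamma_p(H_{\gamma_j}) = C(\gamma_p,\gamma_j) = 0$, hence $ad X_{\gamma_p}(W_j) = \ri\,\gamma_p(W_j)Y_{\gamma_p} = 0$ (as $W_j = \tfrac{\ri}{2}H_{\gamma_j} \in \mk{h}$) and ${\bf c}_p(W_j) = W_j$. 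Thus each Cayley factor ${\bf c}_p$ fixes the elements $W_j,Y_j$ produced by the other factors. Applying the factors, which commute by Remark \ref{xi11}, one at a time to $H$ and invoking the one-factor formula at each stage, a trivial induction then yields ${\bf c}(H) = H + \ri\sum_{j}\gamma_j(H)(W_j + Y_j)$, and the $t = -\tfrac{\pi}{2}$ version gives the formula for ${\bf c}^{-1}$.

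If one prefers to avoid the inductive bookkeeping, I would instead sum the exponential series for $\exp(t\,ad X_\Gamma)$ directly: the same strong-orthogonality identities show $ad X_\Gamma(H) = \ri\sum_\gamma \gamma(H)Y_\gamma$, $ad X_\Gamma(Y_\gamma) = W_\gamma$ and $ad X_\Gamma(W_\gamma) = -Y_\gamma$, so the span of $H$ together with the $Y_\gamma,W_\gamma$ is $ad X_\Gamma$-invariant and the powers $(ad X_\Gamma)^n(H)$ are $4$-periodic in the obvious way. Summing the series gives $H + \ri\sin t\sum_\gamma \gamma(H)Y_\gamma + \ri(1-\cos t)\sum_\gamma \gamma(H)W_\gamma$, which at $t = \pm\tfrac{\pi}{2}$ is exactly \eqref{elm11}. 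Either way the computation is entirely routine once the decoupling is in hand; strong orthogonality of the stem is precisely what makes the answer a clean sum over $\gamma \in \Gamma$ rather than an unwieldy composition.
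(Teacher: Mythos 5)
Your proposal is correct and takes essentially the same approach as the paper: the paper's own proof is exactly the observation that the formulas follow from the third identity of Proposition \ref{xi1} (evaluated at $t=\pm\tfrac{\pi}{2}$) together with strong orthogonality of $\Gamma$ (i.e.\ $\gamma(W_\delta)=0$ and the vanishing brackets), which is precisely your decoupling-plus-induction argument. Your alternative series summation for $\exp(t\,ad X_\Gamma)$ is only a cosmetic variant of the same computation, performed on $X_\Gamma$ all at once instead of factor by factor.
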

\begin{proof}
The proposition follows from the third formula in Proposition \ref{xi1} and strong orthogonality of $\Gamma$ ($\gamma(W_\delta) = 0 $ if $\gamma\neq\delta$).
\end{proof}
Obviously \eqref{elm11} and Proposition \ref{elm2} give
\begin{gather}\label{elm12}
\mk{o} = \{H\in\mk{h} \vert\ {\bf c}(H) = H \}.
\end{gather}
\begin{remark}\label{mai3}
We may define
$${\bf c}_\mk{y} = \exp\left(\frac{\pi}{2} ad Y_\Gamma\right),\quad {\bf c}_\mk{w} = \exp\left(\frac{\pi}{2} ad W_\Gamma\right). $$
Writing for the sake of symmetry ${\bf c}_\mk{x}$ for the Cayley transform defined at the beginning of this subsection, we have (see Proposition \ref{elm2}):
\begin{gather}
{\bf c}_\mk{w}(\mk{w})= \mk{w},\quad {\bf c}_\mk{w}(\mk{x}) = \mk{y}.\qquad {\bf c}_\mk{x}(\mk{x})= \mk{x},\quad {\bf c}_\mk{x}(\mk{y}) = \mk{w}.\nonumber \\
{\bf c}_\mk{y}(\mk{y})= \mk{y},\quad {\bf c}_\mk{y}(\mk{w}) = \mk{x}. \nonumber
\end{gather}
The elements ${\bf c}_\mk{x}^2$ and ${\bf c}_\mk{y}^2$ represent the opposition involution w.r. to the Cartan subalgebra $\mk{h}_I$, also ${\bf c}_\mk{w}^2$ and ${\bf c}_\mk{y}^2$ represent the opposition involution w.r. to the Cartan subalgebra $\mk{x}^\CC \oplus \mk{o}$, etc..

In order to prove the statement in this remark, there is no need for new computations, actually we know, that putting $\ri\rho$ in the place of $\rho$ we change $X_\gamma$ to $Y_\gamma$ and $Y_\gamma$ goes to $-X_\gamma$ in all formulas of this section. The corresponding statements about ${\bf c}_\mk{w}$ are very easy to check.
\end{remark}

\section{Existence of a hypercomplex structure}\label{sus1}

Now we use the root combinatorics of the stem to find sufficient conditions for admissibility of a  $\mk{b}^+$ complex structure on $\mk{u}$.
We present our candidate for a match to $I$.
\begin{df}\label{qw4} Let $I$ be a $\mk{b}^+$ complex structure on $\mk{u}$. We denote
 $$J = I_{\bf c} = {\bf c}\circ{I}\circ{\bf c}^{-1}.$$
\end{df}

By definition $J$ is equivalent to $I$, so $J$ is an integrable ${\bf c}(\mk{b^+})$ complex structure on $\mk{u}$. We obviously have $\mk{m}^+_J = {\bf c}(\mk{m}^+_I)$.
Proposition \ref{elm2} and formula \eqref{elm12} imply
$\mk{h}_J = {\bf c}(\mk{h}_I) = \mk{y}^\CC \oplus \mk{o}$.

In this section we give a necessary and sufficient condition for $IJ = -JI$.

\begin{remark}\label{iii7}
By the definition of $\Phi^+_\gamma$ we see that $dim(\mc{V}^+_\gamma)$ is even, whence $dim(\mc{V}_\gamma)$ is divisible by 4, whence $dim_\CC(\mc{V})$ is divisible by 4. So $dim_\RR(\mc{V}^u)$ is divisible by 4.

From the decomposition $\mk{u} = \mc{V}^u\oplus\mk{f}_u\oplus\mk{o}_u$ we see that $dim(\mk{u})$ is divisible by 4 if and only if $3d + dim(\mk{o}_u)$ is divisible by 4. So in the following we shall always assume (sometimes implicitly) that $dim(\mk{o}_u) = d + 2p$, where $p$ is some even integer.
\end{remark}
From Proposition \ref{pp2} it follows trivially that $\mc{V}_\gamma$ is ${\bf c}$-stable.
\subsection{The structure J on \texorpdfstring{$\mc{V}$}{\space}}\label{mcV}

First we are going to prove that $J(\mc{V}_\gamma^+) = \mc{V}_\gamma^-$, whence $IJ = -JI$ holds on $\mc{V}$ without any further conditions.
We begin with
\begin{prop}\label{ord5} If $V \in \mc{V}_\gamma$, then
$ IV =  2 adW_\gamma(V)$.
\end{prop}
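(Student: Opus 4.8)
The plan is to verify the asserted identity $IV = 2\,ad\,W_\gamma(V)$ directly on the natural basis of $\mc{V}_\gamma$, namely the root vectors $E_\alpha$ with $\alpha \in \Phi_\gamma = \Phi_\gamma^+ \cup \Phi_\gamma^-$. Since $I$ has been extended $\CC$-linearly to $\mk{g}$ and $ad\,W_\gamma$ is manifestly $\CC$-linear, it suffices to check the equality $I E_\alpha = 2\,ad\,W_\gamma(E_\alpha)$ for each such $\alpha$, and then extend by linearity.

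First I would compute the right-hand side once and for all. Since $W_\gamma = \frac{\ri}{2}H_\gamma$, we have $2 W_\gamma = \ri H_\gamma$, so
\[
2\,ad\,W_\gamma(E_\alpha) = [\ri H_\gamma, E_\alpha] = \ri\,\alpha(H_\gamma)\,E_\alpha = \ri\,C(\alpha,\gamma)\,E_\alpha .
\]
Now I split into the two cases. For $\alpha \in \Phi_\gamma^+$, Proposition \ref{lem1} gives $E_\alpha \in \mk{n}^+ \subset \mk{m}^+_I$, so by the eigenvalue description $\mk{m}^+_I = \{X : IX = \ri X\}$ we get $I E_\alpha = \ri E_\alpha$; and Corollary \ref{uu1} gives $C(\alpha,\gamma) = 1$, so the right-hand side is also $\ri E_\alpha$. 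For $\alpha \in \Phi_\gamma^-$, write $\alpha = -\beta$ with $\beta \in \Phi_\gamma^+$; then $E_\alpha = E_{-\beta} \in \mk{n}^- \subset \mk{m}^-_I$ yields $I E_\alpha = -\ri E_\alpha$, while $C(\alpha,\gamma) = -C(\beta,\gamma) = -1$ makes the right-hand side $-\ri E_\alpha$. In both cases the two sides agree, which proves the proposition.

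There is no genuine obstacle here: the statement is essentially a repackaging of Corollary \ref{uu1}, which pins the eigenvalue $\alpha(H_\gamma)$ to be exactly $\pm 1$ on $\Phi_\gamma$. The only point that requires care is the sign bookkeeping. One must invoke $\mk{m}^+_I \supset \mk{n}^+$ from Proposition \ref{lem1} to read off $I E_\alpha = +\ri E_\alpha$ on $\Phi_\gamma^+$ and $I E_\alpha = -\ri E_\alpha$ on $\Phi_\gamma^-$, and then match these signs against the sign of $C(\alpha,\gamma)$; the two sign assignments coincide precisely because $C(-\beta,\gamma) = -C(\beta,\gamma)$, which is what makes the single uniform formula $IV = 2\,ad\,W_\gamma(V)$ hold across the whole of $\mc{V}_\gamma$.
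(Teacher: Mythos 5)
Your proof is correct and follows essentially the same route as the paper's: the paper likewise computes $[2W_\gamma,E_{\pm\alpha}]=\pm\ri E_{\pm\alpha}$ for $\alpha\in\Phi_\gamma^+$ via Corollary \ref{uu1} and matches this against the eigenvalues $\pm\ri$ of the $\mk{b}^+$ complex structure $I$ on $\mk{n}^\pm$. Your write-up merely makes explicit the sign bookkeeping that the paper leaves implicit.
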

\begin{proof} If $\alpha \in \Phi_\gamma^+$, then by Corollary \ref{uu1} we have
\begin{gather*}
[2W_\gamma,E_\alpha] = \ri\alpha(H_\gamma)E_\alpha = \ri C(\alpha,\gamma)E_\alpha = \ri E_\alpha, \\
[2W_\gamma,E_{-\alpha}] = - \ri\alpha(H_\gamma)E_{-\alpha} = -\ri E_{-\alpha}.
\end{gather*}
The proposition is proved.
\end{proof}

We use Proposition \ref{ord5} to make the next step
\begin{prop} \label{bbb4} If $V \in \mc{V}_\gamma$, then
$\quad JV = - 2 adY_\gamma(V) = -\phi_\gamma[\ri\rho](V).$
\end{prop}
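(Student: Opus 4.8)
The plan is to establish the two asserted equalities in turn. For the first, $JV = -2\,ad Y_\gamma(V)$, I would conjugate Proposition \ref{ord5} through the Cayley transform. Recall that $\mc{V}_\gamma$ is ${\bf c}$-stable (noted just before this subsection, as a consequence of Proposition \ref{pp2}), so for $V \in \mc{V}_\gamma$ the element ${\bf c}^{-1}(V)$ again lies in $\mc{V}_\gamma$. Applying Proposition \ref{ord5} to ${\bf c}^{-1}(V)$ gives $I({\bf c}^{-1}(V)) = 2\,ad W_\gamma({\bf c}^{-1}(V))$. Since $J = {\bf c}\circ I \circ {\bf c}^{-1}$ and ${\bf c}$ is an automorphism of $\mk{g}$, we have ${\bf c}\circ ad W_\gamma \circ {\bf c}^{-1} = ad({\bf c}(W_\gamma))$, and Proposition \ref{elm2} gives ${\bf c}(W_\gamma) = -Y_\gamma$. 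Combining, $JV = {\bf c}(2\,ad W_\gamma({\bf c}^{-1}(V))) = 2\,ad({\bf c}(W_\gamma))(V) = -2\,ad Y_\gamma(V)$.

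For the second equality it suffices to show $\phi_\gamma[\ri\rho]|_{\mc{V}_\gamma} = 2\,ad Y_\gamma|_{\mc{V}_\gamma}$. First, by Definition \ref{we1} we have $X_\gamma(\ri\rho_\gamma) = Y_\gamma(\rho)$, so $\phi_\gamma[\ri\rho] = \exp(\pi\,ad X_\gamma(\ri\rho)) = \exp(\pi\,ad Y_\gamma)$. The key point is the identity $(ad Y_\gamma)^2 = -\tfrac14 \rI$ on the whole of $\mc{V}_\gamma$. This is exactly the computation that, in the proof of Proposition \ref{cay3}, yields $(ad X_\gamma)^2(E_\alpha) = -\tfrac14 E_\alpha$, now carried out for $Y_\gamma$; one may obtain it either by the substitution $\rho \mapsto \ri\rho$ of Remark \ref{mai3} or directly on each two-dimensional irreducible component $span_\CC\{E_\alpha, E_{s_\gamma(\alpha)}\}$ ($\alpha\in\Phi_\gamma^+$) of Proposition \ref{pp2} c), using Corollary \ref{uu1}, \eqref{ord1} and $|N_{\gamma,-\alpha}| = 1$ from Proposition \ref{www1}.

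Granting $(ad Y_\gamma)^2 = -\tfrac14\rI$ on $\mc{V}_\gamma$, the exponential series splits into even and odd powers: the even part sums to $\cos(\pi/2)\rI = 0$ and the odd part to $\frac{\sin(\pi/2)}{1/2}\,ad Y_\gamma = 2\,ad Y_\gamma$, so that $\exp(\pi\,ad Y_\gamma)|_{\mc{V}_\gamma} = 2\,ad Y_\gamma|_{\mc{V}_\gamma}$, which is the desired second equality. The main point requiring care is the square identity $(ad Y_\gamma)^2 = -\tfrac14\rI$ on all of $\mc{V}_\gamma$, in particular on the negative part $\mc{V}_\gamma^-$ (which the cited proof of Proposition \ref{cay3} does not treat explicitly); once the two-dimensional components are identified via Proposition \ref{pp2} c), this is a short verification, and the rest is bookkeeping.
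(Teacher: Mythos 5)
Your proof is correct and follows essentially the same route as the paper's: the first equality is obtained exactly as in the paper, by conjugating Proposition \ref{ord5} through ${\bf c}$ and using ${\bf c}(W_\gamma) = -Y_\gamma$ from Proposition \ref{elm2}, while the second rests on the substitution $\rho \mapsto \ri\rho$ turning $X_\gamma$ into $Y_\gamma$, which is also what the paper invokes (via \eqref{uu3}, i.e.\ Proposition \ref{cay3} at $t=\pi$). The only cosmetic differences are that you treat all of $\mc{V}_\gamma$ at once where the paper works on $E_\alpha$, $\alpha\in\Phi_\gamma^+$, and extends to $\mc{V}_\gamma^-$ by $\tau$-conjugation, and that you re-derive the $t=\pi$ Cayley formula from the square identity $(ad\,Y_\gamma)^2 = -\tfrac14\rI$ on the two-dimensional components rather than citing it with explicit structure constants.
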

\bpr
If $\alpha \in \Phi_\gamma^+$, then  using Propositions \ref{ord5} we have
$$
JE_\alpha =
 {\bf c}I{\bf c}^{-1}E_\alpha = 2{\bf c}[W_\gamma,{\bf c}^{-1}E_\alpha] =  2[{\bf c}W_\gamma,E_\alpha].
$$
By Proposition \ref{elm2} and formula \eqref{uu3} (using
$Y_\gamma(\rho) = X_\gamma(\ri\rho)$) we get
\begin{gather}\label{uu4}
JE_\alpha = -[2Y_\gamma,E_\alpha] = \ri\ol{\rho_\gamma}N_{\gamma,-\alpha}E_{s_\gamma(\alpha)} = - \phi_\gamma[\ri\rho](E_\alpha).
\end{gather}
Further $JE_{-\alpha} = \tau(J\tau{E_{-\alpha}} )=  \tau([-2Y_\gamma,-E_\alpha]) = [-2Y_\gamma,E_{-\alpha}]$, whence the proposition follows.
\epr
From Proposition \ref{bbb4} it follows that
\begin{coro}\label{qw6} For each $\gamma\in\Gamma$ we have
 $J(\mc{V}_\gamma^+) = \mc{V}_\gamma^-$.
\end{coro}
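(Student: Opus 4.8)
The plan is to read the claim off directly from the explicit computation already carried out in the proof of Proposition \ref{bbb4}. By Definition \ref{di1} the set $\{E_\alpha \mid \alpha \in \Phi_\gamma^+\}$ is a basis of $\mc{V}_\gamma^+$, so it suffices to track the image of each generator $E_\alpha$ under $J$.

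First I would invoke formula \eqref{uu4}, which records that for $\alpha \in \Phi_\gamma^+$
$$JE_\alpha = \ri \ol{\rho_\gamma} N_{\gamma,-\alpha} E_{s_\gamma(\alpha)}.$$
The decisive structural input is that $s_\gamma(\alpha) \in \Phi_\gamma^-$: this is exactly \eqref{ord1} in Corollary \ref{yyy7}, which asserts $\alpha - \gamma = s_\gamma(\alpha) \in \Phi_\gamma^-$ for $\alpha \in \Phi_\gamma^+$. Hence $E_{s_\gamma(\alpha)} \in \mc{V}_\gamma^-$, and therefore $JE_\alpha \in \mc{V}_\gamma^-$ for every $\alpha \in \Phi_\gamma^+$. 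Running over the basis yields the inclusion $J(\mc{V}_\gamma^+) \subseteq \mc{V}_\gamma^-$.

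For the reverse inclusion I would note that the scalar $\ri\ol{\rho_\gamma} N_{\gamma,-\alpha}$ is nonzero, since $|\rho_\gamma| = 1$ and $|N_{\gamma,-\alpha}| = 1$ by Proposition \ref{www1}. As $s_\gamma$ is an involution, $\alpha \mapsto s_\gamma(\alpha)$ is a bijection of $\Phi_\gamma^+$ onto $\Phi_\gamma^-$, so $J$ sends the basis $\{E_\alpha\}$ of $\mc{V}_\gamma^+$ to nonzero multiples of the basis $\{E_{s_\gamma(\alpha)}\}$ of $\mc{V}_\gamma^-$. In particular $\dim \mc{V}_\gamma^+ = \dim \mc{V}_\gamma^-$ and $J\vert_{\mc{V}_\gamma^+}$ is injective, hence an isomorphism onto $\mc{V}_\gamma^-$, which gives the equality. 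Alternatively one avoids the dimension count by applying the analogue of \eqref{uu4} to $E_{s_\gamma(\alpha)} \in \mc{V}_\gamma^-$ together with $J^2 = -1$, to see that every generator of $\mc{V}_\gamma^-$ is hit.

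There is essentially no obstacle here: the corollary is a bookkeeping consequence of Proposition \ref{bbb4} and \eqref{ord1}. The only point deserving a moment's care is the verification that $s_\gamma(\alpha)$ lands in $\Phi_\gamma^-$ rather than merely in $\Delta^-$, which is precisely the content of \eqref{ord1}; everything else is immediate.
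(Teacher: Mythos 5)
Your proof is correct and takes essentially the same route as the paper, which deduces the corollary directly from Proposition \ref{bbb4} (whose proof is exactly the computation \eqref{uu4} you quote). Your explicit check that $s_\gamma(\alpha)\in\Phi_\gamma^-$ via \eqref{ord1}, and the surjectivity argument from the nonvanishing of $\ri\ol{\rho_\gamma}N_{\gamma,-\alpha}$ together with the bijectivity of $s_\gamma:\Phi_\gamma^+\to\Phi_\gamma^-$, merely fill in bookkeeping that the paper leaves implicit.
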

At the end, from  Corollary \ref{qw6} (as in Lemma \ref{oo1}) we obtain
\begin{coro}\label{qw8} For each $V \in \mc{V}$ we have
 $IJV = -JIV$.
\end{coro}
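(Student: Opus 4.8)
The plan is to imitate the proof of Lemma \ref{oo1}, working on the subspace $\mc{V}$ in place of all of $\mk{g}$. The first observation I would record is that $\mc{V}^+$ and $\mc{V}^-$ are precisely the $(+\ri)$- and $(-\ri)$-eigenspaces of the restriction $I|_{\mc{V}}$. Indeed, $\mc{V}^+ = \bigoplus_{\gamma\in\Gamma}\mc{V}_\gamma^+$ is spanned by the $E_\alpha$ with $\alpha\in\Phi^+$, hence $\mc{V}^+\subset \mk{n}^+\subset\mk{m}^+_I$, so that $IV = \ri V$ for $V\in\mc{V}^+$; symmetrically $\mc{V}^-\subset\mk{n}^-\subset\mk{m}^-_I$, giving $IV = -\ri V$ for $V\in\mc{V}^-$. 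Here the inclusion $\mk{n}^\pm\subset\mk{m}^\pm_I$ is the decomposition $\mk{m}^+_I = \mk{h}^+\oplus\mk{n}^+$ from Proposition \ref{lem1}.

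Next I would upgrade Corollary \ref{qw6} from the individual branches to the whole of $\mc{V}$. Summing the equalities $J(\mc{V}_\gamma^+) = \mc{V}_\gamma^-$ over $\gamma\in\Gamma$ yields $J(\mc{V}^+) = \mc{V}^-$. Since $J^2 = -1$, applying $J$ once more gives $J(\mc{V}^-) = J^2(\mc{V}^+) = -\mc{V}^+ = \mc{V}^+$ (as subspaces). Thus $J$ interchanges $\mc{V}^+$ and $\mc{V}^-$.

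It then remains to verify $IJV = -JIV$ separately on the two eigenspaces, and to conclude by linearity using $\mc{V} = \mc{V}^+\oplus\mc{V}^-$. For $V\in\mc{V}^+$ one has $JIV = \ri\,JV$, while $JV\in\mc{V}^-$ forces $I(JV) = -\ri\,JV$, so $IJV = -JIV$; for $V\in\mc{V}^-$ the same computation with the signs reversed, namely $JIV = -\ri\,JV$ and $I(JV) = \ri\,JV$ since $JV\in\mc{V}^+$, gives the identity once more.

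I do not expect any genuine obstacle: the entire content is captured by the eigenspace description of $I|_{\mc{V}}$ together with Corollary \ref{qw6}, and everything else is exactly the elementary anticommutation bookkeeping already carried out in Lemma \ref{oo1}. The only point requiring a moment's care is confirming that $\mc{V}^\pm$ genuinely lie inside $\mk{m}^\pm_I$, which is where the structure of $\mk{m}^+_I$ from Proposition \ref{lem1} is used.
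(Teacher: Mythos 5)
Your proof is correct and is exactly the argument the paper intends: the paper derives Corollary \ref{qw8} ``from Corollary \ref{qw6} (as in Lemma \ref{oo1})'', i.e.\ by observing that $\mc{V}^\pm\subset\mk{n}^\pm\subset\mk{m}^\pm_I$ are the $\pm\ri$-eigenspaces of $I$ on $\mc{V}$ and that $J$ interchanges them, which is precisely what you spell out. The only cosmetic difference is that you verify the anticommutation on $\mc{V}^-$ directly from the eigenvalues instead of transporting it from $\mc{V}^+$ via $\tau$ as Lemma \ref{oo1} does; both are immediate.
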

\begin{remark}
In particular we have proved that for each $V\in \mc{V}_\gamma$ we have
$\quad adW_\gamma adY_\gamma (V) = -adY_\gamma adW_\gamma (V),$ whence
\begin{gather*}
adW_\gamma adY_\gamma (V) = \frac{1}{2}[ad W_\gamma,ad Y_\gamma](V) =  \frac{1}{2}ad[W_\gamma,Y_\gamma](V) =  - \frac{1}{2}adX_\gamma(V).
\end{gather*}
Denoting as usual $K=IJ$, for each $V\in \mc{V}_\gamma$ we have
\begin{gather}\label{uu5}
KV = IJV  = 2adX_\gamma(V) = \phi_\gamma(V).
\end{gather}
\end{remark}
\subsection{The complex structure \texorpdfstring{$J$}{\space} on \texorpdfstring{$\mk{f}\oplus\mk{o}$}{\space}}\label{fu1}

As explained in Subsection \ref{css} we have some freedom in defining a $\mk{b}^+$ complex structure $I$ on the Cartan subalgebra $\mk{h}$. While $IX_\gamma = Y_\gamma$ is fixed by the convention that $IX = \ri X \quad (IX = -\ri X )$ on $\mk{n}^+ \quad (\mk{n}^-) $, we have substantial freedom choosing the  elements $IW_\gamma\in \mk{h}_u$. At the end, it turns out that the necessary and sufficient condition for admissibility of $I$ is a condition on $I(\mk{w})$.
\begin{df}\label{eee81}
Let $I$ be a $\mk{b}^+$ complex structure on $\mk{u}$. For each $\gamma\in \Gamma$ we denote:
$$ Z_\gamma = IW_\gamma, \quad \mk{z} = \mk{z}_I = I(\mk{w}) = span_\RR\{Z_\gamma\vert\ \gamma\in \Gamma\}\subset \mk{h}_u.$$
We call the subalgebra $\mk{e} = \mk{e}_I = \mk{f}_u +  \mk{z}_I$ the {\bf extended stem subalgebra}.
\end{df}

First we compute the operator $J = I_{\bf c}$ on $\mk{w}$.
\begin{prop}\label{oo121}
For each $\gamma\in\Gamma$ we have $JW_\gamma = - X_\gamma$.
\end{prop}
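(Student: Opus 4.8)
The plan is to evaluate the composition $J = {\bf c}\circ I \circ {\bf c}^{-1}$ on $W_\gamma$ by peeling off the three factors one at a time, using the explicit action of ${\bf c}$ on the three-dimensional subalgebra $sl_\gamma(2,\CC)$ recorded in Proposition \ref{elm2}.

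First I would compute ${\bf c}^{-1}(W_\gamma)$. The middle formula of Proposition \ref{elm2} gives ${\bf c}(Y_\gamma) = W_\gamma$, and inverting this relation yields ${\bf c}^{-1}(W_\gamma) = Y_\gamma$. The only thing to watch here is the direction of the inverse, so I would read off ${\bf c}^{-1}$ directly from the three relations in Proposition \ref{elm2} rather than guess.

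Next I would apply $I$ to $Y_\gamma$. Since $I$ is extended complex-linearly to $\mk{g}$ (by $I(\ri X) = \ri IX$) and the convention fixing a $\mk{b}^+$ complex structure forces $IX_\gamma = Y_\gamma$, as recalled at the opening of this subsection, the identity $I^2 = -1$ gives at once
$$I(Y_\gamma) = I(IX_\gamma) = -X_\gamma.$$
As a check, this follows straight from Definition \ref{we1}: with $E_\gamma \in \mk{n}^+ \subset \mk{m}^+_I$ and $E_{-\gamma}\in\mk{n}^-\subset\mk{m}^-_I$ one has $IE_{\pm\gamma} = \pm\ri E_{\pm\gamma}$, so that $I(Y_\gamma) = \tfrac{\ri}{2}(\rho_\gamma\cdot \ri E_\gamma + \ol{\rho_\gamma}\cdot(-\ri)E_{-\gamma}) = -\tfrac12(\rho_\gamma E_\gamma - \ol{\rho_\gamma}E_{-\gamma}) = -X_\gamma$.

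Finally I would apply ${\bf c}$. By the first formula of Proposition \ref{elm2}, ${\bf c}(X_\gamma) = X_\gamma$, whence ${\bf c}(-X_\gamma) = -X_\gamma$. Composing the three steps gives
$$JW_\gamma = {\bf c}\bigl(I({\bf c}^{-1}(W_\gamma))\bigr) = {\bf c}(I(Y_\gamma)) = {\bf c}(-X_\gamma) = -X_\gamma,$$
as claimed. There is no serious obstacle in this argument: the only points requiring care are getting the direction of ${\bf c}^{-1}$ right and using the \emph{complex linearity} of the extended $I$ (not antilinearity), which is precisely why the auxiliary identity $I(Y_\gamma) = -X_\gamma$ is worth recording explicitly.
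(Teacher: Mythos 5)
Your proof is correct and takes essentially the same approach as the paper: both unwind $J = {\bf c}\circ I\circ {\bf c}^{-1}$ using Proposition \ref{elm2} together with the convention $IX_\gamma = Y_\gamma$. The only cosmetic difference is that the paper computes $JX_\gamma = {\bf c}I{\bf c}^{-1}X_\gamma = {\bf c}Y_\gamma = W_\gamma$ and tacitly applies $J^2 = -1$, whereas you evaluate $JW_\gamma$ directly via ${\bf c}^{-1}W_\gamma = Y_\gamma$, $IY_\gamma = -X_\gamma$, ${\bf c}(-X_\gamma) = -X_\gamma$; both are valid.
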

\begin{proof}
By Proposition \ref{elm2} we compute\\ $JX_\gamma = {\bf c}\circ I\circ {\bf c}^{-1}X_\gamma =  {\bf c} Y_\gamma = W_\gamma$.
\end{proof}

\begin{prop}\label{la1} Let $I$ be a $\mk{b}^+$ complex structure on $\mk{u}$ and let $J = I_{\bf c}$.
The following three conditions are equivalent

a) $\mk{z} \subset \mk{o}$;

b) For each $\gamma\in\Gamma$ we have $JZ_\gamma = Y_\gamma$.

c) For $X\in \mk{f}_u + \mk{z}$ we have $IJX = - JIX$.
\end{prop}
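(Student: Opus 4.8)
My plan is to reduce all three conditions to a single identity for the Cayley transform acting on $Z_\gamma$. The first step is to note that $\mk{e} = \mk{f}_u + \mk{z}$ is spanned over $\RR$ by the four families $W_\gamma, X_\gamma, Y_\gamma, Z_\gamma$ ($\gamma \in \Gamma$), so since $IJ + JI$ is $\RR$-linear, condition c) need only be checked on these generators. The action of $I$ is fixed by the conventions and by $Z_\gamma = IW_\gamma$: one has $IX_\gamma = Y_\gamma$, $IY_\gamma = -X_\gamma$, $IW_\gamma = Z_\gamma$, $IZ_\gamma = -W_\gamma$. For $J = {\bf c}\,I\,{\bf c}^{-1}$, Proposition \ref{oo121} and its proof already supply the unconditional values $JW_\gamma = -X_\gamma$ and $JX_\gamma = W_\gamma$, using only ${\bf c}X_\gamma = X_\gamma$ and ${\bf c}Y_\gamma = W_\gamma$ from Proposition \ref{elm2}.

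The computation carrying all the content is $JY_\gamma$: from ${\bf c}^{-1}Y_\gamma = -W_\gamma$ (Proposition \ref{elm2}), then $I(-W_\gamma) = -Z_\gamma$, and finally applying ${\bf c}$, I obtain the \emph{unconditional} identity $JY_\gamma = -{\bf c}Z_\gamma$; here I would stress that $Z_\gamma \in \mk{h}_u$ by Definition \ref{eee81}, so that the characterization \eqref{elm12} of $\mk{o}$ as the ${\bf c}$-fixed subspace of $\mk{h}$ applies to $Z_\gamma$. This already delivers a) $\tst$ b). Indeed, if a) holds then ${\bf c}Z_\gamma = Z_\gamma$ by \eqref{elm12}, hence ${\bf c}^{-1}Z_\gamma = Z_\gamma$, and $JZ_\gamma = {\bf c}\,I\,Z_\gamma = {\bf c}(-W_\gamma) = Y_\gamma$, which is b). Conversely, applying $J$ to $JZ_\gamma = Y_\gamma$ and using $J^2 = -1$ gives $-Z_\gamma = JY_\gamma = -{\bf c}Z_\gamma$, so ${\bf c}Z_\gamma = Z_\gamma$, i.e. $Z_\gamma \in \mk{o}$ by \eqref{elm12}, which is a).

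To close the loop I would prove a) $\tst$ c). Assuming a), the paragraph above yields the full $J$-table on the generators, namely $JX_\gamma = W_\gamma$, $JW_\gamma = -X_\gamma$, $JY_\gamma = -Z_\gamma$, $JZ_\gamma = Y_\gamma$; combined with the $I$-table, four short cancellations verify $IJ = -JI$ on each of $W_\gamma, X_\gamma, Y_\gamma, Z_\gamma$, giving c). For the converse it suffices to specialize c) to $X_\gamma$: unconditionally $IJX_\gamma = IW_\gamma = Z_\gamma$ while $JIX_\gamma = JY_\gamma = -{\bf c}Z_\gamma$, so $IJX_\gamma = -JIX_\gamma$ forces ${\bf c}Z_\gamma = Z_\gamma$, that is $Z_\gamma \in \mk{o}$, which is a). The work here is purely organizational rather than deep: the only point requiring care is getting the actions of ${\bf c}^{\pm 1}$ on $X_\gamma, Y_\gamma, W_\gamma$ exactly right and keeping track of $Z_\gamma \in \mk{h}_u$ so that \eqref{elm12} may be invoked. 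Once the single identity $JY_\gamma = -{\bf c}Z_\gamma$ is in hand, all three equivalences collapse to the one assertion $Z_\gamma = {\bf c}Z_\gamma$.
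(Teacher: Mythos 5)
Your proof is correct and takes essentially the same route as the paper's: both arguments rest on Proposition \ref{elm2}, the fixed-point characterization \eqref{elm12} of $\mk{o}$, and Proposition \ref{oo121}, and the individual computations coincide. The only difference is organizational — you hinge everything on the unconditional identity $JY_\gamma = -{\bf c}Z_\gamma$ and prove a)$\tst$b) and a)$\tst$c), whereas the paper runs the cycle a)$\Rightarrow$b)$\Rightarrow$c)$\Rightarrow$a).
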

\begin{proof}
a) $\Lw$ b). By  \eqref{elm12} and Proposition \ref{elm2} we have
$$
JZ_\gamma =  {\bf c}\circ I\circ {\bf c}^{-1}Z_\gamma =  {\bf c}\circ IZ_\gamma  = - {\bf c}W_\gamma  = Y_\gamma.
$$
b) $\Lw$ c) We use the definition of $I$ and Proposition \ref{oo121}. to compute
\begin{gather*}
IJW_\gamma = -I X_\gamma = -Y_\gamma = - JZ_\gamma = -JIW_\gamma;\\
 IJZ_\gamma = IY_\gamma = - X_\gamma = JW_\gamma =- JIZ_\gamma.
\end{gather*}
c) $\Lw$ a). From Proposition \ref{oo121} we get $JZ_\gamma = JIW_\gamma = -IJW_\gamma = I X_\gamma = Y_\gamma$. But then
$$
Z_\gamma = IW_\gamma = IJX_\gamma = - JIX_\gamma = -JY_\gamma = {\bf c}(IW_\gamma) = {\bf c}(Z_\gamma).
$$
So by \eqref{elm12} we have $Z_\gamma \in \mk{o}$.
\end{proof}
We collect the above results in the following
\begin{coro}\label{oo2} Let $I$ be a $\mk{b}^+$ complex structure on $\mk{u}$ and let $J = I_{\bf c}$.
Then $\mk{z}\subset \mk{o}$ if and only if
\begin{gather*}
IX_\gamma =  Y_\gamma,\quad IW_\gamma = Z_\gamma;\qquad JX_\gamma =  W_\gamma,\quad JZ_\gamma = Y_\gamma,\quad \gamma\in\Gamma.
\end{gather*}
\end{coro}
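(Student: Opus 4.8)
The plan is to observe that the displayed system of four equalities splits into three identities valid for \emph{every} $\mk{b}^+$ complex structure $I$ (with no assumption on the position of $\mk{z}$), together with a single equation that is exactly equivalent to $\mk{z}\subset\mk{o}$. The whole corollary will then collapse onto Proposition \ref{la1}, and nothing new has to be computed.

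First I would record the three unconditional identities. The equality $IX_\gamma = Y_\gamma$ is just the convention fixing $I$ on $\mk{n}^\pm$: since $X_\gamma,Y_\gamma$ are built from $E_{\pm\gamma}$ with $IE_{\pm\gamma} = \pm\ri E_{\pm\gamma}$, a one-line check gives $IX_\gamma = Y_\gamma$. The equality $IW_\gamma = Z_\gamma$ is literally the definition of $Z_\gamma$ in Definition \ref{eee81}. For the third, $JX_\gamma = W_\gamma$, I would invoke Proposition \ref{oo121}, whose computation already yields $JX_\gamma = {\bf c}\,I\,{\bf c}^{-1}X_\gamma = {\bf c}Y_\gamma = W_\gamma$ (equivalently, apply $J$ to $JW_\gamma = -X_\gamma$ and use $J^2 = -\rI$). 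None of these three uses any hypothesis on $\mk{z}$.

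The remaining equation, $JZ_\gamma = Y_\gamma$, is precisely condition b) of Proposition \ref{la1}. Hence I would close by citing the equivalence a) $\iff$ b) there: if $\mk{z}\subset\mk{o}$ then $JZ_\gamma = Y_\gamma$, so together with the three identities above all four formulas hold; conversely, if all four formulas hold then in particular $JZ_\gamma = Y_\gamma$, so condition b) is met and Proposition \ref{la1} gives $\mk{z}\subset\mk{o}$. There is no genuine obstacle here, since the statement is a repackaging of earlier results; the only point requiring care is to make explicit that three of the four displayed equalities are automatic, so that the entire biconditional rests on the single nontrivial equation $JZ_\gamma = Y_\gamma$ and thus on Proposition \ref{la1}.
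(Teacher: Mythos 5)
Your proposal is correct and coincides with the paper's own proof: the paper likewise notes that the first three equalities follow automatically from the definition of a $\mk{b}^+$ complex structure, Definition \ref{eee81}, and Proposition \ref{oo121}, and then reduces the entire biconditional to the equivalence a) $\iff$ b) of Proposition \ref{la1} via the single equation $JZ_\gamma = Y_\gamma$. Your write-up merely makes explicit the one-line checks that the paper labels ``obvious,'' which is a fair expansion of the same argument.
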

\bpr
The first three formulas obviously follow from the definition of $I$,  $Z_\gamma$ and $J = I_{\bf c}$ (see Propositions \ref{oo121}) . The last formula above was proved in Proposition \ref{la1} to be equivalent to $\mk{z}\subset \mk{o}$.
\epr
We also have obviously
\begin{coro}\label{uu2}
If $\mk{z}\subset \mk{o}$, then the extended stem subalgebra $\mk{e} = \mk{f}_u\oplus \mk{z}$ is invariant under $I,J$.
\end{coro}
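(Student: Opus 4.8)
The plan is to reduce everything to the explicit action of $I$ and $J$ on a convenient real spanning set of $\mk{e}$, which is already almost completely recorded in Corollary \ref{oo2}. Since $\mk{f}_u = su_1(2)\oplus\dots\oplus su_d(2) = span_\RR\{W_\gamma, X_\gamma, Y_\gamma\vert \gamma\in\Gamma\}$ and $\mk{z} = span_\RR\{Z_\gamma\vert\gamma\in\Gamma\}$, the subspace $\mk{e} = \mk{f}_u + \mk{z}$ is spanned over $\RR$ by the four families $W_\gamma, X_\gamma, Y_\gamma, Z_\gamma$, $\gamma\in\Gamma$. Invariance of a subspace under a linear map can be checked on a spanning set, so it suffices to verify that $I$ and $J$ carry each of these generators back into $\mk{e}$.

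First I would assemble the full table of images. Corollary \ref{oo2}, which applies precisely because $\mk{z}\subset\mk{o}$, gives $IX_\gamma = Y_\gamma$, $IW_\gamma = Z_\gamma$, $JX_\gamma = W_\gamma$ and $JZ_\gamma = Y_\gamma$. Applying $I$ once more and using $I^2 = -1$ yields $IY_\gamma = -X_\gamma$ and $IZ_\gamma = -W_\gamma$; similarly $J^2 = -1$ gives $JW_\gamma = -X_\gamma$ (this is Proposition \ref{oo121}) and $JY_\gamma = -Z_\gamma$. Every right-hand side lies in $span_\RR\{W_\gamma, X_\gamma, Y_\gamma, Z_\gamma\}\subset\mk{e}$, so $I(\mk{e})\subset\mk{e}$ and $J(\mk{e})\subset\mk{e}$. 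As $I,J$ are invertible, being complex structures, these inclusions are in fact equalities, and $\mk{e}$ is $I$- and $J$-invariant.

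It remains to justify the direct-sum notation $\mk{e} = \mk{f}_u\oplus\mk{z}$, that is, that the hypothesis forces $\mk{f}_u\cap\mk{z} = 0$. Here I would use that the Cartan part of the stem subalgebra is $\mk{f}\cap\mk{h} = \mk{w}^\CC = span_\CC\{H_\gamma\vert\gamma\in\Gamma\}$, while by the orthogonal decomposition $\mk{h}_I = \mk{w}^\CC\oplus\mk{o}$ of \eqref{pp1} the subspace $\mk{o}$ is orthogonal to $\mk{w}^\CC$. Since $\mk{z}\subset\mk{o}\subset\mk{h}$, we get $\mk{f}_u\cap\mk{z}\subset (\mk{f}\cap\mk{h})\cap\mk{o} = \mk{w}^\CC\cap\mk{o} = 0$.

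There is no serious obstacle: the statement is essentially a bookkeeping consequence of Corollary \ref{oo2}. The only points needing a moment's care are to complete the four formulas of that corollary to the full eight images by invoking $I^2 = J^2 = -1$, and to notice that the directness of the sum is exactly the second place where the hypothesis $\mk{z}\subset\mk{o}$ is used, beyond its role in making Corollary \ref{oo2} applicable.
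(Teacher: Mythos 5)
Your proof is correct and is exactly the argument the paper intends: the paper states this corollary with the words ``we also have obviously'' and gives no proof, and your bookkeeping --- reading off the action of $I$ and $J=I_{\bf c}$ on the real spanning set $\{W_\gamma,X_\gamma,Y_\gamma,Z_\gamma \vert\ \gamma\in\Gamma\}$ from Corollary \ref{oo2} and Proposition \ref{oo121}, completed via $I^2=J^2=-1$ --- is precisely the omitted check. Your extra verification that $\mk{f}_u\cap\mk{z}=0$ (so the sum is genuinely direct, using $\mk{f}\cap\mk{h}=\mk{w}^\CC$ and $\mk{w}^\CC\cap\mk{o}=0$ from \eqref{pp1}) is a small bonus beyond what the paper records, and it is also correct.
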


We are now ready to prove
 \begin{prop}\label{ij1}
If $I(\mk{w}) = \mk{o}_u$, then $J = I_{\bf c}$ matches $I$.
\end{prop}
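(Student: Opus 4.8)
The plan is to show separately that $J = I_{\bf c}$ is integrable and that $IJ = -JI$, the latter being the only point that actually uses the hypothesis. Integrability comes for free: since ${\bf c}$ is an automorphism of $\mk{u}$ commuting with $\tau$ (Remark \ref{xxx1}) and $I$ is integrable, $J = {\bf c}\circ I \circ {\bf c}^{-1}$ is a complex structure on $\mk{u}$ with $\mk{m}^+_J = {\bf c}(\mk{m}^+_I)$ a subalgebra, hence integrable by Proposition \ref{li1}; this is already recorded after Definition \ref{qw4}. So matching reduces to the anticommutation relation, which I would verify summand by summand on the decomposition $\mk{u} = \mc{V}^u \oplus \mk{f}_u \oplus \mk{o}_u$ of Remark \ref{iii7}.

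The role of the hypothesis $I(\mk{w}) = \mk{o}_u$ is to collapse the last two summands into the extended stem subalgebra. Indeed $\mk{z} = I(\mk{w}) = \mk{o}_u$ is in particular contained in $\mk{o}$, and the equality of the two spaces gives $\mk{e} = \mk{f}_u + \mk{z} = \mk{f}_u \oplus \mk{o}_u$. Thus the decomposition reads $\mk{u} = \mc{V}^u \oplus \mk{e}$, and it suffices to check $IJ = -JI$ on $\mc{V}^u$ and on $\mk{e}$.

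On $\mc{V}^u$ the relation holds unconditionally: Corollary \ref{qw8} gives $IJV = -JIV$ for every $V \in \mc{V}$, hence in particular on the real form $\mc{V}^u$. On $\mk{e} = \mk{f}_u + \mk{z}$ I would invoke Proposition \ref{la1}: the inclusion $\mk{z} \subset \mk{o}$ is exactly its condition a), which is equivalent to condition c), namely $IJX = -JIX$ for all $X \in \mk{f}_u + \mk{z}$. Since $IJ + JI$ is linear and vanishes on each block of the direct sum $\mk{u} = \mc{V}^u \oplus \mk{e}$, it vanishes on all of $\mk{u}$, giving $IJ = -JI$ and completing the match.

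The one genuinely load-bearing step is the use of \emph{equality} $I(\mk{w}) = \mk{o}_u$ rather than of mere inclusion $I(\mk{w}) \subset \mk{o}$. Injectivity of $I$ forces $\dim \mk{z} = \dim \mk{w} = d$, so the hypothesis secretly encodes $\mathrm{rank}(\mk{g}) = 2d$ (the ``nearest to semisimple'' case), and this is precisely what guarantees that $\mk{f}_u + \mk{z}$ exhausts $\mk{f}_u \oplus \mk{o}_u$, leaving no residual toral directions in $\mk{o}_u$ on which the anticommutation would still have to be arranged by hand. With only $\mk{z} \subsetneq \mk{o}_u$ one would have to control $J$ on the complementary toral directions, where admissibility can genuinely fail; here that difficulty is removed by the hypothesis.
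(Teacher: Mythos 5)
Your proof is correct and is essentially the paper's own argument: integrability of $J=I_{\bf c}$ for free because it is conjugate to $I$ by the automorphism ${\bf c}$, then anticommutation checked blockwise on the decomposition $\mk{u} = \mc{V}^u \oplus \mk{e}$, using Corollary \ref{qw8} on $\mc{V}^u$ and Proposition \ref{la1} (equivalently Corollary \ref{oo2}) on the extended stem subalgebra $\mk{e} = \mk{f}_u \oplus \mk{o}_u$, exactly as in the paper. Your closing observation about equality versus mere inclusion is also accurate: it is precisely the reason the paper's general existence result (Theorem \ref{suf1}) must supplement $I_{\bf c}$ by an extra complex structure $B$ on the residual toral part $\mk{j}_u$.
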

\bpr
In any case $J$ is integrable, because it is equivalent to $I$.

We have $\mk{g} = \mc{V} \oplus \mk{f}\oplus \mk{o}$ and under our assumption we have $\mk{f}\oplus\mk{o} = \mk{e}^\CC$. From Corollaries \ref{qw6} and \ref{oo2} respectively we get
$$
J(\mc{V}^u) = \mc{V}^u,\quad J(\mk{e}) = \mk{e}.
$$
 Now from Corollaries \ref{qw8} and \ref{oo2} we have $IJ = - JI$, on both direct summands. The theorem is proved.
\epr
\begin{remark}\label{ij31}
It is easy to see that the condition  $I(\mk{w}) = \mk{o}_u$ is equivalent to $2d = rank(\mk{u})$ which implies that $dim(\mk{u})$ is divisible by 4  (see Remark \ref{iii7}). When $2d =
rank(\mk{g})$, any complex structure $I$ on $\mk{h}_u$ with $I(\mk{w}) = \mk{o}_u$ extends in an obvious way to a $\mk{b}^+$  complex structure on $\mk{u}$. Thus by  Proposition \ref{ij1}, if ${\bf U}$ is a compact Lie group  such that $2d = rank(\mk{u})$, then ${\bf U}$ carries a left invariant hypercomplex structure.
\end{remark}
In order to state the sufficient condition in the general case we
need some more notation.
 \begin{df}\label{mj1}
Let $\Gamma$ be the stem of $\Delta^+$ and let $\mk{z} \subset \mk{o}$.  We denote
\begin{gather*}
P_\gamma = W_\gamma - \ri Z_\gamma,\quad Q_\gamma = W_\gamma + \ri Z_\gamma = \tau{P_\gamma},\qquad \gamma \in \Gamma;\\
 \mk{v} = (\mk{w}\oplus \mk{z})^\CC,\quad \mk{v}^+ = \mk{v}\cap\mk{h}^+_I,\quad \mk{v}^- = \mk{v}\cap\mk{h}^-_I \quad \mk{v}_u = \mk{w}\oplus \mk{z};\\
\mk{j}^+  = \mk{o}\cap \mk{h}^+,\quad \mk{j}^- =  \mk{o}\cap \mk{h}^-,\quad
\mk{j} = \mk{j}^+ \oplus \mk{j}^- ,\quad \mk{j}_u = \mk{j}\cap\mk{u}.
\end{gather*}
\end{df}
\begin{prop}\label{mj5} Let $I(\mk{w}) \subset \mk{o}_u$. Then $$\mk{v}^+ = span_\CC\{P_\gamma\vert\ \gamma\in \Gamma\},\quad \mk{v}^- = span_\CC\{Q_\gamma\vert\ \gamma\in \Gamma\}.$$ We have
\begin{gather}\label{mj7}
\mk{h}  = \mk{v}\oplus \mk{j},\quad \mk{h}^+  = \mk{v}^+\oplus \mk{j}^+,\quad\mk{h}^- = \mk{v}^-\oplus \mk{j}^-,\quad \mk{o}_u = \mk{z}\oplus\mk{j}_u.
\end{gather}
\end{prop}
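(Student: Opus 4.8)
The plan is to treat the basis statement for $\mk{v}^\pm$ and the four decompositions in \eqref{mj7} in turn; both hinge on the single observation that, under the hypothesis $\mk{z}=I(\mk{w})\subset\mk{o}_u$, the subspace $\mk{v}=(\mk{w}\oplus\mk{z})^\CC$ is a $\tau$- and $I$-stable subspace of $\mk{h}$ on which $I$ acts by $\mk{w}\leftrightarrow\mk{z}$.

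First I would locate the vectors $P_\gamma,Q_\gamma$. Since $W_\gamma=\tfrac{\ri}{2}H_\gamma$ is $\tau$-fixed we have $W_\gamma\in\mk{u}$, and by Definition \ref{eee81} $Z_\gamma=IW_\gamma$, so $P_\gamma=W_\gamma-\ri IW_\gamma$ and $Q_\gamma=W_\gamma+\ri IW_\gamma$. Using the descriptions $\mk{m}^+_I=\{X-\ri IX\mid X\in\mk{u}\}$ and $\mk{m}^-_I=\{X+\ri IX\mid X\in\mk{u}\}$ from Definition \ref{cs2} with $X=W_\gamma$, we get $P_\gamma\in\mk{m}^+_I$ and $Q_\gamma\in\mk{m}^-_I$; since moreover $P_\gamma,Q_\gamma\in\mk{w}^\CC\oplus\mk{z}^\CC=\mk{v}$, this places $P_\gamma\in\mk{v}\cap\mk{m}^+_I=\mk{v}^+$ and $Q_\gamma\in\mk{v}^-$. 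To see these are bases I would count dimensions. The orthogonal splitting $\mk{h}=\mk{w}^\CC\oplus\mk{o}$ of \eqref{pp1} gives $\mk{w}\cap\mk{o}=0$, and as $\mk{z}\subset\mk{o}$ this yields $\mk{w}\cap\mk{z}=0$; hence $\mk{w}\oplus\mk{z}$ is a true direct sum, $dim_\CC(\mk{v})=2d$, and a relation $\sum c_\gamma P_\gamma=0$ forces (by directness of $\mk{w}^\CC\oplus\mk{z}^\CC$) all $c_\gamma=0$, so the $P_\gamma$ are independent. Finally $\mk{v}$ is $I$-stable (as $I\mk{w}=\mk{z}$, $I\mk{z}=-\mk{w}$) and $\tau$-stable, with $\tau$ interchanging $\mk{v}^+$ and $\mk{v}^-$; thus $dim_\CC(\mk{v}^\pm)=d$, and the $d$ independent vectors $P_\gamma$ (resp.\ $Q_\gamma$) span $\mk{v}^+$ (resp.\ $\mk{v}^-$).

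For \eqref{mj7} the plan is to descend to the compact form and establish $\mk{o}_u=\mk{z}\oplus\mk{j}_u$, after which the remaining identities follow formally. Taking $\tau$-fixed points in \eqref{pp1} gives $\mk{h}_u=\mk{w}\oplus\mk{o}_u$, and since $\mk{j}^\pm=\mk{o}\cap\mk{h}^\pm$ are precisely the $\pm\ri$-eigenvectors of $I$ lying in $\mk{o}$, the real subspace $\mk{j}_u=\mk{j}\cap\mk{u}$ is the largest $I$-invariant subspace of $\mk{o}_u$, namely $\mk{j}_u=\mk{o}_u\cap I(\mk{o}_u)$. The disjointness $\mk{z}\cap\mk{j}_u=0$ is immediate: if $Z=IW\in\mk{z}$ also lies in the $I$-stable $\mk{j}_u\subset\mk{o}_u$, then $-W=IZ\in\mk{o}_u$, whence $W\in\mk{w}\cap\mk{o}_u=0$ and $Z=0$.

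The step I expect to be the crux is the dimension bookkeeping for this intersection. Writing $D=dim_\RR(\mk{o}_u)$, so that $dim_\RR(\mk{h}_u)=d+D$, I would prove $\mk{o}_u+I(\mk{o}_u)=\mk{h}_u$: it contains $\mk{o}_u$, and each $W\in\mk{w}$ satisfies $W=I(-IW)=I(-Z)\in I(\mk{o}_u)$ because $Z=IW\in\mk{z}\subset\mk{o}_u$, so it contains $\mk{w}$ as well. Then $dim_\RR(\mk{j}_u)=dim_\RR(\mk{o}_u\cap I(\mk{o}_u))=2D-(d+D)=D-d$, and together with $\mk{z}\cap\mk{j}_u=0$ and $dim_\RR(\mk{z})=d$ this gives $\mk{o}_u=\mk{z}\oplus\mk{j}_u$. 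Complexifying $\mk{h}_u=\mk{w}\oplus\mk{z}\oplus\mk{j}_u$ yields $\mk{h}=\mk{v}\oplus\mk{j}$; and since $\mk{v}$ and $\mk{j}$ are both $I$-stable, decomposing each into its $\pm\ri$-eigenspaces and comparing with $\mk{h}=\mk{h}^+\oplus\mk{h}^-$ gives $\mk{h}^+=\mk{v}^+\oplus\mk{j}^+$ and $\mk{h}^-=\mk{v}^-\oplus\mk{j}^-$, completing \eqref{mj7}.
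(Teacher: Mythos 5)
Your proof is correct, but it follows a genuinely different track from the paper's. The paper's entire argument rests on a single root computation: since $Z_j = IW_j \in \mk{o}$ and $\Gamma$ is strongly orthogonal, $\gamma_k(P_j) = \ri\,\delta_{kj}$. This dual-basis relation simultaneously gives linear independence of the $P_j$ and, because $\mk{j}^+$ is precisely the common kernel of the $\gamma_k$ restricted to $\mk{h}^+$, the decomposition $\mk{h}^+ = \mk{v}^+\oplus\mk{j}^+$ via the explicit projection $H\mapsto -\ri\sum_k\gamma_k(H)P_k$ (likewise for $\mk{h}^-$); the global splitting $\mk{h}=\mk{v}\oplus\mk{j}$ is then deduced by the eigenspace-comparison argument you also use at the end, and the identity $\mk{o}_u=\mk{z}\oplus\mk{j}_u$ is left implicit. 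You, by contrast, never evaluate a root on $P_\gamma$: you get independence from the directness of $\mk{w}\oplus\mk{z}$ (via \eqref{pp1}) together with a $\tau$-symmetry dimension count, and you prove the decompositions in the reverse order --- first $\mk{o}_u=\mk{z}\oplus\mk{j}_u$ over $\RR$, by identifying $\mk{j}_u=\mk{o}_u\cap I(\mk{o}_u)$ and counting dimensions using $\mk{o}_u+I(\mk{o}_u)=\mk{h}_u$ (which is exactly where the hypothesis $\mk{z}\subset\mk{o}_u$ enters), and only then complexifying. What the paper's route buys is brevity and explicitness: one computation using strong orthogonality makes everything fall out, with no auxiliary linear-algebra lemmas. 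What your route buys is that the fourth identity $\mk{o}_u=\mk{z}\oplus\mk{j}_u$, which the paper's proof never actually establishes, is proved first and carries the rest; the price is reliance on the (standard, and correctly applied) characterization of $\mk{j}_u$ as the largest $I$-invariant subspace of $\mk{o}_u$, and on the silent identification $\mk{j}=(\mk{j}_u)^\CC$ --- true because $\tau$ interchanges $\mk{j}^+$ and $\mk{j}^-$ --- both of which deserve a sentence of justification but are genuine.
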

\bpr
The condition $I(\mk{w}) \subset \mk{o}$ implies  $\gamma_k(P_j) = \ri \delta_{k,j}$, so $P_1,\dots,P_d$ is a basis of $\mk{v}^+ \subset \mk{h}^+$. On the other hand $\mk{j}^+ = \{H\in \mk{h^+}\vert\ \gamma_1(H) = \dots = \gamma_d(H) = 0\}$, thus $\mk{h}^+ = \mk{v}^+\oplus\mk{j}^+$. In the same way
$\mk{h}^- = \mk{v}^-\oplus\mk{j}^-$.

Now, in order to prove $\mk{h}  = \mk{v}\oplus \mk{j}$, we have to  show only that $\mk{v}\cap \mk{j}=\{0\}$.
Let $ X \in \mk{v} \cap \mk{j} $. We may decompose $X = X^+ + X^-$, where $X^+ \in \mk{v}^+, X^- \in \mk{v}^-$. Obviously $I(\mk{j})=\mk{j}$, hence $I(X)= \ri X^+ - \ri X^- \in \mk{j}$. The inclusions  $X^+ + X^- \in \mk{j},  \ri X^+ - \ri X^- \in \mk{j}$ imply $X^+ \in \mk{j}$, $X^- \in \mk{j}$, therefore  $X^+ \in \mk{j}^+ \cap \mk{v}^+$, $X^- \in \mk{j}^- \cap \mk{v}^-$. Now from $\mk{v}^+\cap \mk{j}^+ = \mk{v}^-\cap\mk{j}^-=\{0\}$ we obtain $X^+=X^-=0$ .

\epr

\begin{prop}\label{mj2} Let $I(\mk{w}) \subset \mk{o}$ and let $rank(\mk{g}) = 2d + 2p$, where $p$ is some even nonnegative integer. We have $dim_\RR(\mk{j}_u) = 2p$. Also\\
a) $I(\mk{j}) = \mk{j},\quad I(\mk{j}_u) = \mk{j}_u$;\\
b) If $H \in \mk{j}$, then $I_{\bf c}H = IH$.
\end{prop}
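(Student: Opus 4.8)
The plan is to deduce the whole statement from the decompositions already recorded in Proposition~\ref{mj5}, using only two further ingredients: that $I$ acts as $\pm\ri$ on $\mk{h}^{\pm}$, and that $\mk{o}$ is pointwise fixed by the Cayley transform ${\bf c}$ (formula \eqref{elm12}).

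First I would settle the dimension. By Proposition~\ref{mj5} we have $\mk{h} = \mk{v}\oplus\mk{j}$ with $\mk{v} = \mk{v}^+\oplus\mk{v}^-$, where $\mk{v}^+$ and $\mk{v}^-$ are spanned by the $P_\gamma$ and the $Q_\gamma$ respectively and are therefore each of complex dimension $d$; thus $dim_\CC\mk{v} = 2d$. Since $dim_\CC\mk{h} = rank(\mk{g}) = 2d+2p$, this forces $dim_\CC\mk{j} = 2p$. As $\mk{j} = \mk{j}^+\oplus\mk{j}^-$ is $\tau$-stable with $\tau(\mk{j}^+) = \mk{j}^-$, the real form $\mk{j}_u = \mk{j}\cap\mk{u}$ satisfies $dim_\RR\mk{j}_u = dim_\CC\mk{j} = 2p$.

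For part a) the key point is that $\mk{j}^{\pm} = \mk{o}\cap\mk{h}^{\pm}$ are \emph{complex} subspaces of $\mk{h}$, both $\mk{o}$ and $\mk{h}^{\pm}$ being cut out by complex-linear conditions. On $\mk{h}^+$ the operator $I$ acts as multiplication by $\ri$ and on $\mk{h}^-$ as multiplication by $-\ri$ (Definition~\ref{cs2} and Proposition~\ref{lem1}); hence $I(\mk{j}^+) = \ri\,\mk{j}^+ = \mk{j}^+$ and $I(\mk{j}^-) = -\ri\,\mk{j}^- = \mk{j}^-$, so $I(\mk{j}) = \mk{j}$. Since $I$ preserves $\mk{u}$, intersecting with $\mk{u}$ gives $I(\mk{j}_u) = I(\mk{j}\cap\mk{u}) = \mk{j}\cap\mk{u} = \mk{j}_u$.

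For part b) I would use $\mk{j}\subset\mk{o}$: by \eqref{elm12} every $H\in\mk{j}$ is fixed by ${\bf c}$, hence also by ${\bf c}^{-1}$, and by part a) the element $IH\in\mk{j}\subset\mk{o}$ is fixed by ${\bf c}$ as well. Unwinding $I_{\bf c} = {\bf c}\circ I\circ{\bf c}^{-1}$ (Definition~\ref{qw4}) then gives $I_{\bf c}H = {\bf c}(I({\bf c}^{-1}H)) = {\bf c}(IH) = IH$. I do not expect a genuine obstacle: once the splitting of Proposition~\ref{mj5} is in hand the statement is formal, and the only steps needing a moment's care are recognizing that $\mk{j}^{\pm}$ are complex (so that the real operator $I$ stabilizes them) and that $\mk{j}$ lies inside the ${\bf c}$-fixed subspace $\mk{o}$.
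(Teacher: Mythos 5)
Your proof is correct and takes essentially the same route as the paper's: part a) by observing that $I$ acts as $\pm\ri$ on $\mk{j}^{\pm}$ (the paper phrases this as $I(A+B)=\ri A-\ri B$ for $A\in\mk{j}^+$, $B\in\mk{j}^-$), and part b) by combining a) with the fact \eqref{elm12} that ${\bf c}$ fixes $\mk{o}$ pointwise, so that ${\bf c}\circ I\circ{\bf c}^{-1}$ collapses to $I$ on $\mk{j}$. Your explicit dimension count via Proposition \ref{mj5} is a welcome addition that the paper leaves implicit.
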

\bpr

If $H \in \mk{j}$ , then $H = A + B,\quad A \in \mk{j}^+,\ B \in \mk{j}^-$. Thus $IH = \ri A - \ri B \in \mk{j}$. For the second equality, note that $\mk{u}$ is also invariant under $I$. So a) is proved.

Because $\mk{j} \subset \mk{o}$, for $H \in \mk{j}$  \eqref{elm12} implies  ${\bf c}^{-1}H = H$, then by a) of this proposition we have $I{\bf c}^{-1}H = IH \in \mk{j}$ and again by \eqref{elm12} we have ${\bf c}IH = IH$. Thus, item b) is proved.
\epr

We have the following important
\begin{remark}\label{mj3}

Note that the extended stem subalgebra $\mk{e}$ (see Definition \ref{eee81}) is closed under the action of $I, I_{\bf c}$. The corresponding subgroup ${\bf E}_u$ may {\bf not be a closed subgroup} of ${\bf U}$. If  ${\bf E}_u$  is a closed subgroup, which is an arithmetic condition on the $Z_\gamma$ (vacuously fulfiled when $\mk{u}$ is nearest to semisimple), then ${\bf E}_u$ is a hypercomplex submanifold of ${\bf U}$.

Obviously also $\mk{e}^\CC =\mk{f}^+ \oplus \mk{f}^- \oplus \mk{v}$ is always a subalgebra  of $\mk{g}$ invariant under the action of $I,J$ ( the complexified extended stem subalgebra ).

The subspace $\mc{V}\oplus\mk{v} = \mk{n}^+ \oplus \mk{n}^- \oplus \mk{v}$  (and $\mc{V}^u \oplus \mk{v}_u$) is also invariant under the action of $I,J$, but is {\bf not} obliged to be {\bf a subalgebra}.  An example is $\mk{g} = sl(3,\CC)\oplus \mk{c}$ where $\mk{c} \cong \CC^4$. Then $\Gamma = \{\gamma\}$, we may take $IW_\gamma \in \mk{c}_u$ so  $\mk{v} = span_\CC\{P_\gamma,Q_\gamma\}$ does not contain $\mk{h}_s$
\end{remark}

\begin{theorem}\label{suf1}
Let $\mk{u}$ be a compact Lie algebra, whose dimension is divisible by 4, and let $I$ be a $\mk{b}^+$ complex structure on $\mk{u}$. If $\mk{z} = I(\mk{w})\subset \mk{o}_u$, then $I$ is admissible.
\end{theorem}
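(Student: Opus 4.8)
The plan is to build a matching $J$ by keeping the candidate $I_{\bf c}$ of Definition \ref{qw4} on the part of $\mk{u}$ where it already anticommutes with $I$, and correcting it on the remaining piece $\mk{j}_u$. The reason a correction is unavoidable is visible in Proposition \ref{mj2} b): on $\mk{j}$ one has $I_{\bf c}H=IH$, so $I$ and $I_{\bf c}$ \emph{commute} there rather than anticommute, and $I_{\bf c}$ can only be a match when $\mk{j}=\{0\}$ (the nearest-to-semisimple case of Proposition \ref{ij1}). So the whole problem is concentrated on $\mk{j}_u$, and the idea is to swap out $I_{\bf c}$ for a freshly chosen complex structure there.

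First I would fix the block decomposition $\mk{u}=\mc{V}^u\oplus\mk{e}_u\oplus\mk{j}_u$, where $\mk{e}_u=\mk{f}_u\oplus\mk{z}$ is the extended stem subalgebra and $\mk{o}_u=\mk{z}\oplus\mk{j}_u$ by \eqref{mj7}. Propositions \ref{ord5}, \ref{oo121}, Corollaries \ref{qw6}, \ref{oo2} and Proposition \ref{mj2} a) show that both $I$ and $I_{\bf c}$ map each of the three blocks into itself. Next comes the parity count: since $\dim_\RR\mc{V}^u$ is divisible by $4$ (Remark \ref{iii7}), $\dim_\RR\mk{f}_u=3d$ and $\dim_\RR\mk{o}_u=d+2p$, the hypothesis $4\mid\dim\mk{u}$ forces $p$ to be even, so $\dim_\RR\mk{j}_u=2p\equiv 0\pmod 4$. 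Because $I(\mk{j}_u)=\mk{j}_u$ makes $\mk{j}_u$ into a complex vector space of even complex dimension $p$, elementary linear algebra supplies a real complex structure $J_0$ on $\mk{j}_u$ with $J_0^2=-1$ and $IJ_0=-J_0I$ (pair up an $I$-complex basis and put $J_0(z_1,z_2)=(-\bar z_2,\bar z_1)$ on each pair); extending $J_0$ complex-linearly to $\mk{j}$ makes it commute with $\tau$.

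Then I would define $J$ to equal $I_{\bf c}$ on $\mc{V}^u\oplus\mk{e}_u$ and $J_0$ on $\mk{j}_u$, extended complex-linearly to $\mk{g}$. By construction $J^2=-1$ and $J\tau=\tau J$, so $J$ is a complex structure on $\mk{u}$; and since $I$ and $J$ both respect the block decomposition, the relation $IJ=-JI$ can be checked block by block, using Corollary \ref{qw8} on $\mc{V}^u$, Proposition \ref{la1} c) on $\mk{e}_u$, and the choice of $J_0$ on $\mk{j}_u$.

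The hard part will be the integrability of this modified $J$. Since $J$ is no longer globally conjugate to $I$, I cannot invoke equivalence as in Definition \ref{qw4}; instead I would check directly that $\mk{m}^+_J$ is a subalgebra and apply Proposition \ref{li1}. Writing $\mathfrak{l}=\mk{m}^+_{J_0}\subset\mk{j}$ for the $+\ri$-eigenspace of $J_0$, the block structure gives $\mk{m}^+_J={\bf c}(\mk{v}^+\oplus\mk{n}^+)\oplus\mathfrak{l}$, where $\mk{v}^+\oplus\mk{n}^+=\mk{m}^+_I\cap(\mc{V}\oplus\mk{e}^\CC)$. The \textbf{crucial structural fact} is that $\mk{m}^+_I=\mk{h}^+\oplus\mk{n}^+$ has $[\mk{m}^+_I,\mk{m}^+_I]\subseteq\mk{n}^+$, i.e. the bracket carries no Cartan component; hence $[{\bf c}(\mk{v}^+\oplus\mk{n}^+),{\bf c}(\mk{v}^+\oplus\mk{n}^+)]={\bf c}([\mk{v}^+\oplus\mk{n}^+,\mk{v}^+\oplus\mk{n}^+])\subseteq{\bf c}(\mk{n}^+)\subseteq\mk{m}^+_J$. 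Since $\mathfrak{l}\subset\mk{o}$ and ${\bf c}$ fixes $\mk{o}$ pointwise by \eqref{elm12}, one gets $[\mathfrak{l},{\bf c}(Y)]={\bf c}([\mathfrak{l},Y])\subseteq{\bf c}(\mk{n}^+)$ for $Y\in\mk{v}^+\oplus\mk{n}^+$ (using $[\mk{o},\mk{n}^+]\subseteq\mk{n}^+$ and $[\mk{o},\mk{h}]=0$), while $[\mathfrak{l},\mathfrak{l}]=0$ because $\mathfrak{l}\subset\mk{h}$ is abelian. Thus $\mk{m}^+_J$ is closed under the bracket and $J$ is integrable, so $J$ matches $I$. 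The one danger is that replacing $\mk{j}^+$ by $\mathfrak{l}$ might feed a stray $\mk{j}$-component into some bracket; the vanishing of the Cartan component of $[\mk{m}^+_I,\mk{m}^+_I]$ is precisely what forbids this, and it is the heart of the argument.
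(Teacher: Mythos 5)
Your proof is correct and follows essentially the same route as the paper: the paper also glues $I_{\bf c}$ on $\mc{V}^u\oplus\mk{f}_u\oplus\mk{z}$ with an arbitrary anticommuting complex structure $B$ on $\mk{j}_u$ (parametrized there by a matrix ${\bf b}$ with ${\bf b}\ol{\bf b}=-1$ rather than by your explicit pairing), and proves integrability via the identical identification $\mk{m}^+_J = {\bf c}(\mk{j}_B^+\oplus\mk{v}^+\oplus\mk{n}^+)$, which is a subalgebra since it is the image under an automorphism of a subspace of $\mk{b}^+$ consisting of all of $\mk{n}^+$ plus an abelian piece of $\mk{h}$. Your explicit bracket verification merely spells out that one-line observation, so the two arguments coincide in substance.
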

\bpr
We have a decomposition (of real vector spaces) $\mk{u} = \mc{V}^u \oplus \mk{f}_u\oplus \mk{z}\oplus \mk{j}_u$
\begin{gather}\label{iii1}
\mk{m}^+_I = \mk{n}^+ \oplus \mk{v}^+ \oplus \mk{j}^+.
\end{gather}

Let $S_1,\dots,S_p$ be a basis of $\mk{j}^+$. Define $T_k = \tau(S_k),\ k=1,\dots,p$, then $T_1,\dots,T_p$ is a basis of $\mk{j}^-$. Let ${\bf b}$ be any $p\times p$ complex matrix such that ${\bf b}\ol{\bf b} = -1$. Then we may define a complex structure $B$ on $\mk{j}_u$ by
\begin{gather}\label{iii2}
BS_j = \sum_{k=1}^p b_{k,j}T_k,\quad  j = 1,\dots, p.
\end{gather}
Obviously $(I,B)$ define a quaternionic structure on the vector space $\mk{j}_u$, whence we may decompose the  $\mk{j}= \mk{j}_B^+ \oplus \mk{j}_B^-$ into the $\ri$ and $-\ri$ eigenspaces of $B$ respectively.

Now we may define a matching complex structure $J$ on $\mk{u}$:
 \begin{gather}\label{iii3}
JX =
\begin{cases}
I_{\bf c}X& \mbox{ if } X  \in \mc{V}^u \oplus \mk{f}_u \oplus \mk{z},\\
BX &  \mbox{ if } X \in \mk{j}_u.
\end{cases}
\end{gather}
Obviously we have $IJ + JI = 0$. To show that $J$ is integrable we note that $J$ is a regular complex structure w.r. to the Cartan subalgebra $\mk{y}^\CC \oplus \mk{o}$ (see \eqref{elm12}). More explicitely from \eqref{elm12}, \eqref{iii2} and Corollary \ref{oo2} we have
$$
\mk{m}_J^+ = \mk{j}_B^+ \oplus span_\CC\{Y_\gamma + \ri Z_\gamma\vert\ \gamma \in \Gamma\}\oplus {\bf c}(\mk{n}^+) ={\bf c}(\mk{j}_B^+ \oplus \mk{v}^+  \oplus \mk{n}^+ ),
$$
which is a subalgebra. Certainly $\mk{n}(\mk{m}_J^+) = \mk{b}_J^+ = {\bf c}(\mk{b}_I^+)$.
\epr

\begin{remark}\label{oo3}
In the classic description of quaternions we have a third complex structure $K = IJ$. In order to get it we should have used another Cayley transform (see Remark \ref{mai3})
$$
{\bf c}_\mk{y} = \exp \sco{\tfrac{\pi}{2} ad Y_\Gamma}, \quad K = -{\bf c}_\mk{y}I{\bf c}_\mk{y}^{-1}\mbox{  on } \mc{V}^u\oplus \mk{e},
$$
and define $K = IB$ on $\mk{j}$. Obviously $K$ is regular w.r.to the Cartan subalgebra $\mk{x}^\CC \oplus \mk{o}$.

If we perceive a hypercomplex structure on ${\bf U}$ as a representation of $SU(2)$ on $\mk{u}$, which splits into real 4 dimensionnal irreducible components, then the hypercomplex structures constructed in this section do not depend on $\rho$.
\end{remark}

\subsection{Examples - The simple groups} \label{ex}

In this subsection we present the stems of the irreducible reduced root systems with some comments.

\begin{ex} \label{fraka} The root system $\Delta = \mk{A}_s, \quad \mk{u}_s = su(n+1)$. We have $d = \ksco{\frac{n+1}{2}}$.

We take an orthonormal basis $\{e_0,\dots,e_n\}$ in $\EE^{n+1}$ and put
\begin{gather*}
\Delta^+  =   \{  e_i - e_j\vert i, j = 0, \dots,n , \   i < j \}.
\end{gather*}
The involution $\star$ is given by $\star(e_j) = - e_{n-j},\ j = 0,1,\dots,n$.

The stem is $\Gamma =  \{\gamma\in \Delta^+\vert\star(\gamma) = \gamma\} = \{\gamma_1, \dots , \gamma_d \}$, where
\begin{gather*}\gamma_1 = e_0 - e_n,\dots,\gamma_k = e_{k-1} - e_{n-k +1},\dots,\gamma_d = e_{d-1} - e_{n-d+1},\\
 \Phi^+_{\gamma_k} = \{ e_{k-1} - e_j\vert k \leq j\leq n- k \} \cup  \{ e_j - e_{n-k+1}\vert  k\leq j\leq n-k \}.
\end{gather*}
{\bf Case 1}  $\Delta  = \mk{A}_{2d},\quad \mk{u}_s = su(2d+1)$. We have (see Corollary \ref{star5}):
\begin{gather*}
\mk{o} = \mk{o}_s = span_\CC\{E_k^k + E_{n-k}^{n-k} - 2E_d^d\ \vert \ 0\leq k \leq d-1\},
\end{gather*}
where $E_j^k$ is matrix with 1 on the intersection of the $j$-th column and $k$-th row, 0 elsewhere.
{\bf Case 2}  $\Delta  = \mk{A}_{2d-1},\quad \mk{u}_s = su(2d)$. We have
\begin{gather*}
 \mk{o}_s = span_\CC\{E_k^k - E_{d-1}^{d-1}-E_{d}^d+ E_{n-k}^{n-k} \ \vert \ 0\leq k \leq d-2\}.
\end{gather*}
\end{ex}

\begin{ex}\label{sop2}  The root system $\mk{D}_p, \quad \mk{u}_s = \mk{so}(2p)$.

Let $e_1,\dots,e_p$ be an orthonormal basis of $\EE^p$, we have
\begin{gather*}
 \Delta^+  =  \{  e_i \pm e_j; i, j =1, \dots,p,   \   i< j \}.
\end{gather*}
The basis is $\Pi = \{\alpha_1, \dots, \alpha_p\}$, where
\begin{gather*}
 \alpha_1=e_1-e_2,\dots, \alpha_i=e_i-e_{i+1}, \dots, \alpha_{p-1}=e_{p-1}-e_{p}, \alpha_{p}=e_{p-1}+e_{p}.
\end{gather*}
We have an involution $\nu \in {\bf Aut}_{\Pi}(\Delta)$ given by
\be \label{nuso}   \nu(e_i) = e_i, \ \ 1\leq i\leq p-1 \qquad \nu(e_p) = - e_p.\ee
The stem is $\Gamma = \{\gamma_1,\dots,\gamma_d \},\ d = 2q = 2\ksco{\frac{p}{2}}$, where
\begin{gather*}
\gamma_1 = e_1+e_2,\ \dots,\ \gamma_k=e_{2k-1}+e_{2k}, \dots, \ \gamma_q = e_{2q-1} + e_{2q},\\
\gamma_{q+1} = e_1 - e_{2},    \dots , \gamma_{q+k}=e_{2k-1}-e_{2k}, \dots,  \gamma_{2q} = e_{2q -1} - e_{2 q}.
 \end{gather*}
For $k=1,\dots, q$ we have $\Phi_{\gamma_{q+k}}^+ = \emptyset$ and
\begin{gather*}
\Phi_{\gamma_k}^+ = \{ e_{2k-1} \pm e_j ;  2 k <j \leq p \} \cup  \{ e_{2 k} \pm e_j ;  2 k <j \leq p \}.
\end{gather*}
{\bf Case 1} $p = 2q$. \label{so(4q)}Now $\star$ is trivial, $d = p$. The diagram automorphism $\nu$ is not trivial on $\Gamma$. We have $\quad \nu(\gamma_i) = \gamma_i,  \  i \not\in \{ q, 2 q \}; \quad  \nu(\gamma_q) = \gamma_{2q}$.

The group ${\bf Aut}_{\Pi}(\mk{D}_4)$ is  the permutation group of $\{\alpha_1, \alpha_3, \alpha_4\}$ and leaves  $\alpha_2$ fixed. In this case $ \Gamma = \{\gamma_1, \gamma_2, \gamma_3,\gamma_4\}$, where
\begin{gather*}
\gamma_1 = \alpha_1 + 2 \alpha_2 + \alpha_3 + \alpha_4, \ \  \gamma_2 = \alpha_4, \ \   \gamma_3=\alpha_1, \ \   \gamma_4 = \alpha_3.
\end{gather*}
So ${\bf Aut}_{\Pi}(\mk{D}_4)$  permutes   $\{\gamma_2, \gamma_3, \gamma_4\}$ and leaves $\gamma_1$ fixed.

{\bf Case 2}\label{nuso2} $p = 2q+1$.  Now $\nu = \star$ (see \eqref{nuso}) so  $d = p-1 =2q$ and $\mk{o}_s = \CC h_{e_p}$.
\end{ex}

\begin{ex}\label{tau3} The root system $\Delta = \mk{B}_p,\quad \mk{u}_s = \mk{so}(2p +1)$.

 We take orthonormal elements $e_1,\dots,e_p$ in $\EE^p$ and put
\begin{gather*}
\Delta^+  =  \{  e_i \pm e_j; i, j =1, \dots,p \ and  \  i< j \} \cup \{ e_i ; 1=1,\dots, p\}.
\end{gather*}
If $p = 2q$, then the stem is $ \Gamma = \{\gamma_1, \dots ,  \gamma_{2q}\}$, where
\begin{gather*}
\gamma_1 = e_1 + e_2, \quad \dots , \gamma_k=e_{2k-1}+e_{2k}, \dots, \ \gamma_q = e_{2 q -1} + e_{2q}, \nonumber \\
\gamma_{q+1} = e_1 - e_2, \dots , \gamma_{q+k}=e_{2k-1}-e_{2k}, \dots, \ \gamma_{2q} = e_{2 q -1} - e_{2q}
\end{gather*}
and for $ k=1,\dots, q$ we have $\Phi^+_{q+k} = \emptyset$ and
\begin{gather*}
 \Phi^+_k = \{ e_{2k-1} \pm e_j ;  2 k <j \leq p \} \cup  \{ e_{2 k} \pm e_j ;  2 k <j \leq p \}\cup \{e_{2k-1}, e_{2 k}\}.
\end{gather*}

If $p=2q+1 $ then the stem is $\Gamma = \{\gamma_1, \dots ,  \gamma_{2q+1}\}$, where
\begin{gather*}
\gamma_1 = e_1 + e_2, \gamma_2 = e_3+ e_4,\dots,\gamma_k=e_{2k-1}+e_{2k}, \dots, \ \gamma_q = e_{2q -1} + e_{2q} \\
\gamma_{q+1} = e_1 - e_2,\dots, \gamma_{q+k}=e_{2k-1}-e_{2k},\dots,\gamma_{2q} = e_{2q-1}-e_{2q}, \ \gamma_p = e_p.
\end{gather*}
and for $k=1,\dots,q$ we have $\Phi_{q+k}^+ = \Phi_d^+ = \emptyset$ and
\begin{gather*}
\Phi_k^+ = \{ e_{2k-1} \pm e_j ;  2 k <j \leq p \} \cup  \{ e_{2 k} \pm e_j ;  2 k <j \leq p \}\cup \{e_{2 k-1}, e_{2 k}\}.
\end{gather*}
On the root systems $\mk{B}_p$ the involution $\star$ is trivial, so $d = p$.
\end{ex}

\begin{ex}\label{frakc} The root system $\mk{C}_d,\quad \mk{u}_s = sp(d)$.

 We take an orthonormal basis $e_1,\dots,e_d$ of $\EE^d$ and put
\begin{gather*}
 \Delta^+  =  \{e_i \pm e_j\vert\ i \leq i \leq j\leq d \}.
\end{gather*}

The stem is $\Gamma = \{\gamma_1, \dots ,  \gamma_d\}$, where for $k =1,\dots,d$ we have
\begin{gather*}
\gamma_k = 2e_k,\quad\Phi_{\gamma_k}^+ = \{ e_k \pm e_j\vert\  k <j\leq d\}.
\end{gather*}
\end{ex}

\begin{ex}\label{e7e8} The root systems $\mk{E}_k,\ k = 6,7,8$.

We take an orthonormal basis $\{e_1,\dots,e_8 \}$  of $\EE^{8}$ and denote
\begin{gather}\label{bas1}
 \alpha_1=\frac{1}{2}\left(e_8 + e_1 - \sum_{i=2}^7 e_i\right),\\ \alpha_2 = e_1 + e_2, \ \alpha_3 = e_2-e_1,\ \alpha_4 = e_3 - e_2, \nonumber\\
\alpha_5 = e_4 - e_3,\ \alpha_6 = e_5 - e_4,\ \alpha_7 = e_6 - e_5,\ \alpha_8 = e_7 - e_6.\nonumber
\end{gather}
\begin{figure}[h]\label{dynd}
\setlength{\unitlength}{1mm}
\begin{picture}(130,30)(0,-7)
\linethickness{1pt}
\put(9,0){\circle{2}}
\put(10,0){\line(1,0){10}}
\put(21,0){\circle{2}}
\put(22,0){\line(1,0){10}}
\put(33,0){\circle{2}}
\put(34,0){\line(1,0){10}}
\put(45,0){\circle{2}}
\put(46,0){\line(1,0){10}}
\put(57,0){\circle{2}}
\put(58,0){\line(1,0){10}}
\put(69,0){\circle{2}}
\put(70,0){\line(1,0){10}}
\put(81,0){\circle{2}}
\put(82,0){\line(1,0){10}}
\put(93,0){\circle{2}}
\put(33,1){\line(0,1){10}}
\put(33,12){\circle{2}}
\put(27,10){\mbox{$\alpha_2$}}
\put(8,-4){\mbox{$\alpha_1$}}
\put(20,-4){\mbox{$\alpha_3$}}
\put(32,-4){\mbox{$\alpha_4$}}
\put(44,-4){\mbox{$\alpha_5$}}
\put(56,-4){\mbox{$\alpha_6$}}
\put(68,-4){\mbox{$\alpha_7$}}
\put(80,-4){\mbox{$\alpha_8$}}
\put(92,-4){\mbox{$\gamma_1$}}
\end{picture}
\caption{Extended Dynkin diagram of $\mk{E}_8$.}
\end{figure}
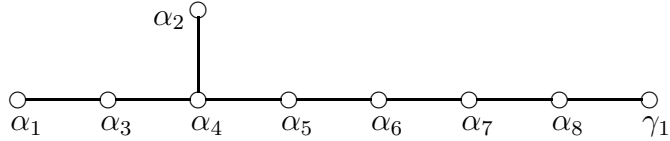

{\bf Case 1} \label{frake8}  The root system $\Delta = \mk{E}_{8}$. We have $d = 8$.

All indices vary in $\{1,\dots,8\}$. Below $\epsilon:\{1,\dots,8\}\lw \{0,1\}$ is any function.
\begin{gather*}
 \Delta^+  =  \{ (e_i \pm e_j)\vert\ i> j \}\cup \left \{\frac{1}{2}\left (e_8  + \sum_{i=1}^7(-1)^{\epsilon(i)} e_i\right )\big| \sum_{i=1}^7 \epsilon(i)\  \mbox{even} \right \}.
\end{gather*}
The basis is $\Pi = \{\alpha_1,\dots, \alpha_8\}$ (see \eqref{bas1}). The highest root is $\gamma_1= e_7+e_8$. Also
\begin{gather*}
\Phi_1^+ =  \{e_j \pm e_i\vert\ i\leq 6 <j \}\cup \left \{ \frac{1}{2}\left (\gamma_1 + \sum_{i=1}^6 (-1)^{\epsilon(i)} e_i \right )\bigl|\sum_{i=1}^6 \epsilon(i) \  \mbox{even} \right \}.\nonumber
 \end{gather*}
Now we move to

{\bf Case 2}\label{frake7}  The root system $\Delta_2 = \mk{E}_7$. We have $d = 7$.

All indices further vary in $\{1,\dots,6\}$, {\bf we denote} $f = e_8 - e_7$ and $\Delta^+_2$ is
\begin{gather*}
 \{f\}\cup\{ (e_i \pm e_j)\vert i> j\}  \cup \left \{\frac{1}{2}\left (f + \sum_{i=1}^6 (-1)^{\epsilon(i)} e_i \right )\big|\sum_{i=1}^6 \epsilon(i) \ \ \mbox{odd} \right\}.
\end{gather*}
The basis $\Pi_2$ of $\Delta_2$ is $\{ \alpha_1, \alpha_2, \dots, \alpha_7\}$ (see \eqref{bas1}).

{\bf The highest root} is $\gamma_2= f = e_8- e_7$. Further we have
\begin{gather*}
\Phi_f^+  = \left \{ \frac{1}{2}\left (f + \sum_{i=1}^6 (-1)^{\epsilon(i)} e_i \right )\big|\ \sum_{i=1}^6 \epsilon(i) \ \ \mbox{odd}\right \};\\
\Delta_3 =  \left \{e_i \pm e_j\vert\ i \neq j \right\} =
\mk{D}_6,\qquad \Delta_3^+ = \left \{ e_i \pm e_j\vert\ i > j\right \}.
\end{gather*}
To go on, move to Example \ref{sop2}, Case 1.

{\bf Case 3}\label{e6}
The root system $\Delta  = \mk{E}_{6}$. Here $d = 4$.

We use the notation from the two preceding examples. All indices vary in $\{1,\dots,5\}$. {\bf We denote} $e =
e_8 - e_7 - e_6 $.
\begin{gather*}
 \Delta^+ = \{e_i \pm e_j \vert\ i > j\} \cup \left \{\frac{1}{2}\left (e + \sum_{i=1}^5(-1)^{\epsilon(i)} e_i\right ) \big|\ \sum_{i=1}^5 \epsilon(i) \ \ \mbox{even}\right \}.
 \end{gather*}

The basis is $\Pi = \{ \alpha_1, \alpha_2,\alpha_3, \alpha_4, \alpha_5, \alpha_6\}$ (see \eqref{bas1}).
The involution $\star$ is given by:
\begin{gather*}
 \star(\alpha_1) = \alpha_6 ,\  \star(\alpha_3) = \alpha_5,\ \star(\alpha_2) = \alpha_2,\ \star(\alpha_4) = \alpha_4,\\
 \mk{o}_s = \CC h_{\alpha_1 - \alpha_6} \oplus \CC h_{\alpha_3 - \alpha_5}.
\end{gather*}
We have $ \quad \gamma_1 = \frac{1}{2}(e_1 + e_2 + e_3 + e_4 +e_5 - e_6 - e_7 + e_8)$ and
\begin{gather*}
\Phi_{\gamma_1}^+  =  \{e_i + e_j\vert\ i < j\}\cup \left \{ \frac{1}{2}\left ( e + \sum_{i=1}^5  (-1)^{\epsilon(i)} e_i \right )\vert\ \sum_{i=1}^5 \epsilon(i)=2\right\};\\
\Delta_2^+ = \left \{ (e_i - e_j)\vert\ i > j\right \}\cup \left\{ \frac{1}{2}\left( e + \sum_{i=1}^5  (-1)^{\epsilon(i)} e_i \right )\big|\ \sum_{i=1}^5 \epsilon(i)=4\right\} .
\end{gather*}
If we denote
\begin{gather*}
f_1 = \frac{1}{2}\left (\sum_{i=1}^7 e_i- e_8\right),\  f_2=e_1,\ f_3=e_2,\  f_4=e_3,\ f_5=e_4,\ f_6=e_5,
\end{gather*}
then we may represent $\Delta_2 = \{f_i-f_j\vert 1\leq i,j \leq 6\} = \mk{A}_5$, whence we go to Example \ref{fraka}.

Thus we get $\Gamma = \{ \gamma_1, \dots,\gamma_4 \}$, where $\gamma_2 = f_1 - f_6,\ \gamma_3 = f_2 - f_5,\ \gamma_4 = f_3 - f_4$.
\end{ex}

\begin{ex}\label{f4} The root system $\Delta = \mk{F}_{4}$.

Let $\{e_1,e_2,e_3,e_4\}$ be an orthonormal basis of $\EE^4$. We have
\begin{gather*}
\Delta^+ = \{ e_i \pm e_j\vert\ i< j\} \cup \{ e_i; 1 \leq i \leq 4\}\cup\left \{\frac{1}{2}( e_1 \pm e_2 \pm e_3 \pm e_4)\right \}.
\end{gather*}
The basis is $\Pi = \{\alpha_1, \alpha_2, \alpha_3, \alpha_4\}$, where
\begin{gather*}
\alpha_1 = e_2 - e_3,\ \alpha_2 = e_3-e_4,\ \alpha_3 = e_4,\ \alpha_4 = \frac{1}{2}(e_1 - e_2 - e_3 - e_4).
\end{gather*}

The highest root is $\gamma_1=e_1 + e_2$ and
\begin{gather*}
\Phi_{\gamma_1}^+  = \{e_1 \pm e_i, e_2 \pm e_i\vert\ i=3, 4 \} \cup \{e_1, e_2\} \cup  \left \{ \frac{1}{2}\left (e_1 + e_2 \pm e_3 \pm e_4 \right ) \right \};\\
\Delta_2^+ = \{e_1 - e_2, e_3, e_4\} \cup \{ e_3 \pm e_4\}\cup \left \{ \frac{1}{2}\left ( e_1 - e_2 \pm e_3 \pm e_4 \right ) \right \}.
\end{gather*}
If we denote $ f_1=\frac{1}{2}( e_1-e_2),\ f_2 = \frac{1}{2}( e_3+e_4),\ f_3=\frac{1}{2}( e_3-e_4)$, then
\begin{gather*}
\Delta_2 =  \{f_i \pm f_j\vert 1\leq i,j \leq 3\}\setminus \{0\} =
\mk{C}_3.
\end{gather*}
From Example \ref{frakc} we get $\Gamma = \{e_1+e_2,\ e_1-e_2,\ e_3 +e_4,\ e_3 - e_4 \}$.
\end{ex}

\begin{ex}\label{frakg2}The root system $\Delta = \mk{G}_2$.

Let $\{e_1,e_2,e_3\}$ be an orthonormal basis of $\EE^3$.  We have
\begin{gather*}
\Delta = \{\pm(e_i - e_j),\pm(2e_k - e_i - e_j)  \vert \ i,j,k = 1,2,3;\ i\neq j, i\neq k, j\neq k\};\\
\Pi = \{\alpha,\beta\}, \quad \alpha = e_1 - e_2,\ \beta = e_2 + e_3 - 2e_1,\\
\Delta^+ = \{\alpha, \beta, \beta+\alpha,\beta+2\alpha,\beta+3\alpha,2\beta+3\alpha\}.
\end{gather*}
The highest root is $\gamma = 3\alpha + 2\beta = 2e_3 - e_1 - e_2,\quad \Phi_\gamma^+ = \{\beta, \beta+\alpha,\beta+2\alpha,\beta+3\alpha\}$. The stem is $\Gamma = \{\gamma,\alpha\}$.
\end{ex}

\section{The hypercomplex structures}\label{hs}

In this section we prove that up to equivalence, the hypercomplex structures described in the preceding section are all the hypercomplex structures on $\mk{u}$.
So we assume that $I$ is any admissible $\mk{b}^+$ complex structure on $\mk{u}$ and $J$ is an {\bf integrable} complex structures on $\mk{u}$ matching $I$.

In this section we use freely the conventions and notations of sections \ref{css} and \ref{hss}. In particular we use the direct decompositions
\begin{gather}\label{ca3}
\mk{g} = \mk{m}_I^+ \oplus \mk{m}_I^- = \mk{n}^+ \oplus \mk{h}^+ \oplus \mk{n}^- \oplus \mk{h}^- = \mk{h}\bigoplus_{\alpha\in\Delta}\mk{g}(\alpha).
\end{gather}
When $\mk{a}$ is a direct summand in one of these decompositions and we write $pr_\mk{a}:\mk{g}\lw\mk{a}$ we always mean projection along the complementary component in the above formula. Obviously the basis of Definition \ref{bo2} is well adapted to such practices.

We work with the "complexified" Nijenhuis tensor, i.e. we extend $N(X,Y)$ to $\mk{g}$ by complex linearity.

\subsection{The Nijenhuis tensor} \label{sus2}

\begin{prop}\label{ca1}
Let $\alpha,\beta \in \Delta^+,\quad q = 1,\dots,m$. Then
\begin{gather}\label{ca6}
a_{\beta,\alpha}(\alpha + \beta)(U_q) = pr_{\mk{g}(-\beta)}\sco{\sum_{\nu\in\Delta^+}\eta_{\nu,q}[E_{-\nu},E_\alpha]}
\end{gather}
\end{prop}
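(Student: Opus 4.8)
The plan is to use the integrability of $J$ in the form $N_J \equiv 0$ (Proposition \ref{li1}), together with $J^2 = -\mathrm{id}$ and the matching relation $IJ = -JI$. First I would rewrite the left-hand side of \eqref{ca6} as a commutator expression. Since $U_q \in \mk{h}$, the Definition \ref{bo2} expansion of $JE_\alpha$ gives $[U_q, JE_\alpha] = -\sum_\gamma a_{\gamma,\alpha}\gamma(U_q)E_{-\gamma}$ (the $V_t$-part drops out, as $[U_q,V_t]=0$), while $J[U_q, E_\alpha] = \alpha(U_q)\,JE_\alpha$. Projecting both onto $\mk{g}(-\beta)$ and subtracting produces exactly $a_{\beta,\alpha}\big(\alpha(U_q)+\beta(U_q)\big)E_{-\beta} = a_{\beta,\alpha}(\alpha+\beta)(U_q)E_{-\beta}$. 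Thus \eqref{ca6}, read as an identity in the one-dimensional space $\mk{g}(-\beta)$, is equivalent to
\[
pr_{\mk{g}(-\beta)}\left(J[U_q, E_\alpha] - [U_q, JE_\alpha]\right) = pr_{\mk{g}(-\beta)}\left(\sum_{\nu\in\Delta^+}\eta_{\nu,q}[E_{-\nu}, E_\alpha]\right).
\]

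Next I would feed the Nijenhuis identity $N_J(U_q, E_\alpha) = 0$ into the left-hand side. Solving $N_J = 0$ for $J[U_q, JE_\alpha]$ and then applying $-J$ (using $-J^2 = \mathrm{id}$) yields the purely algebraic rearrangement $J[U_q, E_\alpha] - [U_q, JE_\alpha] = J[JU_q, JE_\alpha] + [JU_q, E_\alpha]$. So it suffices to control the $\mk{g}(-\beta)$-projections of the two terms on the right.

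For $[JU_q, E_\alpha]$, the Definition \ref{bo2} expansion of $JU_q$ gives $[JU_q, E_\alpha] = \sum_\nu \eta_{\nu,q}[E_{-\nu}, E_\alpha] + \sum_s b_{s,q}[V_s, E_\alpha]$; since each $[V_s, E_\alpha] = \alpha(V_s)E_\alpha$ lies in $\mk{g}(\alpha)$, its $\mk{g}(-\beta)$-projection vanishes, and $pr_{\mk{g}(-\beta)}[JU_q, E_\alpha]$ is precisely the right-hand side of \eqref{ca6}. For $J[JU_q, JE_\alpha]$ I would invoke two structural facts. First, $\mk{m}^-_I = \tau(\mk{m}^+_I)$ is a subalgebra (as $\tau$ is an automorphism and $\mk{m}^+_I$ is a subalgebra by integrability of $I$), and both $JU_q, JE_\alpha$ lie in $\mk{m}^-_I$ because $IJ=-JI$ forces $J(\mk{m}^+_I)=\mk{m}^-_I$; hence $[JU_q, JE_\alpha]\in\mk{m}^-_I$. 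Second, again by $IJ=-JI$, $J(\mk{m}^-_I)=\mk{m}^+_I$. Therefore $J[JU_q, JE_\alpha]\in\mk{m}^+_I = \mk{h}^+\oplus\mk{n}^+$, which meets the negative root space $\mk{g}(-\beta)$ trivially, so this term is invisible to $pr_{\mk{g}(-\beta)}$. Combining the three steps gives \eqref{ca6}.

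The computation is essentially mechanical once the Nijenhuis rearrangement is set up; the only points requiring care are the bookkeeping that isolates $J[U_q,E_\alpha]-[U_q,JE_\alpha]$ and the recognition that the two leftover terms land in $\mk{g}(\alpha)$ and in $\mk{m}^+_I$ respectively. The sole conceptual inputs are that $J$ interchanges $\mk{m}^+_I$ and $\mk{m}^-_I$ and that $\mk{m}^-_I$ is closed under bracket.
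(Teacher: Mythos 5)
Your proof is correct and is essentially the paper's own argument: the paper likewise applies $J$ to $N_J(U_q,E_\alpha)=0$, expands everything in the basis of Definition \ref{bo2}, and extracts the coefficient at $E_{-\beta}$, with the terms $J[JU_q,JE_\alpha]$, $-\alpha(U_q)pr_{\mk{h}}(JE_\alpha)$ and $\alpha(pr_{\mk{h}}(JU_q))E_\alpha$ lumped into a remainder $A\in\mk{b}^+$ that cannot contribute to negative root spaces. Your only (welcome) difference is that you spell out explicitly why $J[JU_q,JE_\alpha]\in\mk{m}^+_I$ --- namely that $J$ interchanges $\mk{m}^\pm_I$ and that $\mk{m}^-_I=\tau(\mk{m}^+_I)$ is a subalgebra --- a fact the paper uses silently in asserting $A\in\mk{b}^+$.
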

\bpr
We decompose the element $JN_J(U_q,E_\alpha)\in \mk{g}$ in the basis of Definition \ref{bo2} using formula \eqref{bo8}. Our purpose is to compute the coefficient before $E_{-\beta}$. From integrability of $J$ we have
\begin{gather*}
0 = JN_J(U_q,E_\alpha) =J[JU_q, JE_\alpha] - J[U_q,E_\alpha]   +  [U_q,JE_\alpha] + [JU_q,E_\alpha] =  \\
A - \alpha(U_q)\sum_{\beta\in \Delta^+}a_{\beta,\alpha} E_{-\beta}  - \sum_{\beta\in \Delta^+} a_{\beta,\alpha} \beta(U_q)E_{-\beta}  + \sum_{\nu\in\Delta^+}\eta_{\nu,q}[E_{-\nu},E_\alpha],
\end{gather*}
where $ A = J[JU_q, JE_\alpha] - \alpha(U_q)pr_{\mk{h}}(JE_\alpha) + \alpha(pr_{\mk{h}}(JU_q))E_\alpha\in \mk{b}^+$. For $\beta\in \Delta^+$ the coefficient at $E_{-\beta}$ must vanish, whence the proposition.
\epr

\begin{coro}\label{cor42} If $\alpha, \beta \in \Delta^+,\ \alpha+\beta\not\in \Delta$, then
$a_{\alpha,\beta} = a_{\beta,\alpha} = 0$.
\end{coro}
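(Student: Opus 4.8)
The plan is to derive everything from Proposition \ref{ca1}, which expresses $a_{\beta,\alpha}(\alpha+\beta)(U_q)$ as a projection onto $\mk{g}(-\beta)$. Since the hypothesis $\alpha+\beta\notin\Delta$ is symmetric in $\alpha$ and $\beta$, it suffices to prove $a_{\beta,\alpha}=0$; applying the same argument with the roles of $\alpha$ and $\beta$ interchanged then yields $a_{\alpha,\beta}=0$ as well.

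First I would argue that the right-hand side of \eqref{ca6} vanishes identically in $q$. Each bracket $[E_{-\nu},E_\alpha]$ is a multiple of $E_{\alpha-\nu}$ and hence lies in $\mk{g}(\alpha-\nu)$, so its component in $\mk{g}(-\beta)$ can be nonzero only when $\alpha-\nu=-\beta$, that is $\nu=\alpha+\beta$. As $\nu$ runs over $\Delta^+$ while $\alpha+\beta\notin\Delta$ by assumption, no index $\nu$ contributes, and therefore $pr_{\mk{g}(-\beta)}\sco{\sum_{\nu\in\Delta^+}\eta_{\nu,q}[E_{-\nu},E_\alpha]}=0$ for every $q$. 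Consequently $a_{\beta,\alpha}(\alpha+\beta)(U_q)=0$ for all $q=1,\dots,m$.

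The main point is then to show that the factor $(\alpha+\beta)(U_q)$ is not zero for every $q$, so that it may be divided out. Since $U_1,\dots,U_m$ is a basis of $\mk{h}^+$, this reduces to checking that the nonzero functional $\alpha+\beta$ does not restrict to zero on $\mk{h}^+$. Suppose it did. Applying \eqref{re1} termwise to the roots $\alpha$ and $\beta$ gives $(\alpha+\beta)(\tau H)=-\overline{(\alpha+\beta)(H)}$ for all $H\in\mk{h}$; hence vanishing of $\alpha+\beta$ on $\mk{h}^+$ forces its vanishing on $\mk{h}^-=\tau(\mk{h}^+)$ as well. As $\mk{h}=\mk{h}^+\oplus\mk{h}^-$ by Proposition \ref{lem1}, this would mean $\alpha+\beta=0$ on all of $\mk{h}$, contradicting that $\alpha,\beta$ are positive roots with nonzero sum. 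Thus some $(\alpha+\beta)(U_q)\neq0$, and $a_{\beta,\alpha}=0$ follows. I expect this last nondegeneracy step --- rather than the routine projection computation --- to be the only delicate point, precisely because $\mk{h}^+$ need not respect the splitting $\mk{h}=\mk{h}_s\oplus\mk{c}$; it is the interaction with the conjugation $\tau$ that rescues the conclusion.
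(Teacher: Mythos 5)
Your proof is correct and takes essentially the same route as the paper: both deduce from $\alpha+\beta\notin\Delta$ that the right-hand side of \eqref{ca6} vanishes, and then divide out a nonzero value $(\alpha+\beta)(U_q)$, using the symmetry of the hypothesis to get both coefficients. The only difference is that you spell out, via \eqref{re1} and the splitting $\mk{h}=\mk{h}^+\oplus\mk{h}^-$, the nondegeneracy step that the paper dismisses as obvious (there phrased as $\alpha+\beta$ being real and nonzero on $\mk{h}_\RR$).
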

\begin{proof}

By the assumption, in formula \eqref{ca6} the RHS is 0. On the other hand, the functional $\alpha+\beta$ is real at $\mk{h}_\RR$ and nonzero, so it is obvious that we may choose such a $q$ that $(\alpha+\beta)(U_q) \neq 0$.
\end{proof}

\begin{coro}\label{cu1} If $\gamma$ is a maximal root, then $JE_\gamma \in \mk{h}^-$.
\end{coro}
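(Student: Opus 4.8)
The plan is to read off $JE_\gamma$ directly from the expansion in Definition \ref{bo2}, namely
\[
JE_\gamma = \sum_{\beta\in\Delta^+} a_{\beta,\gamma}E_{-\beta} + \sum_{t=1}^m \xi_{t,\gamma}V_t ,
\]
and to notice that the second sum already lies in $\mk{h}^-$ (each $V_t = \tau(U_t)\in\mk{h}^-$), while the first lies in $\mk{n}^-$. Since the decomposition $\mk{m}^-_I = \mk{n}^-\oplus\mk{h}^-$ is direct, it suffices to show that every coefficient $a_{\beta,\gamma}$ vanishes.

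For this I would invoke Corollary \ref{cor42}, which gives $a_{\beta,\gamma}=0$ whenever $\beta,\gamma\in\Delta^+$ satisfy $\beta+\gamma\notin\Delta$. Thus the whole statement reduces to the purely combinatorial fact that, for a maximal root $\gamma$, one has $\gamma+\beta\notin\Delta$ for every $\beta\in\Delta^+$.

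This last fact is the only point needing a (trivial) remark: if $\beta$ lies in the same irreducible component $\Delta_j$ as $\gamma$, then $\gamma+\beta\notin\Delta$ because $\gamma$ is the highest root of $\Delta_j$; if $\beta$ lies in a different component, then $\gamma$ and $\beta$ are orthogonal and the sum of roots from orthogonal components is never a root. In either case $\gamma+\beta\notin\Delta$, so Corollary \ref{cor42} forces $a_{\beta,\gamma}=0$ for all $\beta\in\Delta^+$, leaving $JE_\gamma=\sum_{t=1}^m \xi_{t,\gamma}V_t\in\mk{h}^-$.

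I do not expect any genuine obstacle here; the content is carried entirely by Corollary \ref{cor42} together with the defining property of a maximal (highest) root. The only thing to keep straight is the bookkeeping of the two index positions in $a_{\beta,\gamma}$, but Corollary \ref{cor42} asserts the vanishing symmetrically in its two arguments, so this causes no difficulty.
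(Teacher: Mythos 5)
Your proof is correct and is essentially the paper's own argument: the paper also deduces $a_{\nu,\gamma}=0$ for all $\nu\in\Delta^+$ from Corollary \ref{cor42}, leaving only the $\mk{h}^-$ part of the expansion in Definition \ref{bo2}. You merely spell out the (implicit) combinatorial fact that a maximal root plus a positive root is never a root, handling the two cases of same and different irreducible components, which the paper leaves to the reader.
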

\begin{proof}
By Corollary \ref{cor42}, for all $\nu \in \Delta^+$ we have $a_{\nu,\gamma} = 0$.
\end{proof}
Further,  \eqref{ca6} obviously implies
\begin{coro}\label{su6}
Let $\alpha,\beta \in \Delta^+$ and  $ q=1,\dots, m$. If  $\gamma = \alpha
+ \beta \in \Delta$ then
$$
a_{\beta,\alpha}\gamma(U_q) = - N_{\gamma,-\alpha}\eta_{\gamma,q}.
$$
\end{coro}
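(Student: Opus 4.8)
The plan is to read the corollary straight off formula \eqref{ca6} by evaluating its right-hand side explicitly. First I would fix $\alpha,\beta \in \Delta^+$ with $\gamma = \alpha + \beta \in \Delta$, and observe that since $\alpha,\beta$ are both positive, $\gamma \in \Delta^+$; in particular $\gamma$ occurs among the summation indices $\nu$ in \eqref{ca6}.

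Next I would pin down which terms of $\sum_{\nu \in \Delta^+} \eta_{\nu,q}[E_{-\nu},E_\alpha]$ survive the projection $pr_{\mk{g}(-\beta)}$. Since $[E_{-\nu},E_\alpha] \in \mk{g}(\alpha - \nu)$, the projection onto $\mk{g}(-\beta)$ is nonzero only when $\alpha - \nu = -\beta$, that is $\nu = \alpha + \beta = \gamma$. Hence the whole right-hand side collapses to the single term $\eta_{\gamma,q}[E_{-\gamma},E_\alpha]$. Using the structural-constant convention \eqref{hel1} and $\alpha - \gamma = -\beta$ I would write $[E_{-\gamma},E_\alpha] = N_{-\gamma,\alpha}E_{-\beta}$, so that, identifying the one-dimensional space $\mk{g}(-\beta)$ with $\CC$ via the basis vector $E_{-\beta}$, the right-hand side of \eqref{ca6} equals $N_{-\gamma,\alpha}\,\eta_{\gamma,q}$.

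Finally I would substitute this into \eqref{ca6}, using $(\alpha+\beta)(U_q) = \gamma(U_q)$ on the left, to get $a_{\beta,\alpha}\gamma(U_q) = N_{-\gamma,\alpha}\,\eta_{\gamma,q}$; the antisymmetry relation $N_{-\gamma,\alpha} = -N_{\gamma,-\alpha}$ from \eqref{hel1} then gives exactly the asserted identity $a_{\beta,\alpha}\gamma(U_q) = -N_{\gamma,-\alpha}\,\eta_{\gamma,q}$. There is no genuine obstacle here: the only points requiring care are the bookkeeping that isolates the single contributing index $\nu = \gamma$ in the projection, and the sign convention for the $N_{\alpha,\beta}$, which is precisely why the statement can be billed as an immediate consequence of \eqref{ca6}.
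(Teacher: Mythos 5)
Your proof is correct and is exactly the argument the paper leaves implicit: Corollary \ref{su6} is stated there as an immediate consequence of \eqref{ca6}, and your computation (isolating the single index $\nu=\gamma$ under $pr_{\mk{g}(-\beta)}$, writing $[E_{-\gamma},E_\alpha]=N_{-\gamma,\alpha}E_{-\beta}$, and applying $N_{-\gamma,\alpha}=-N_{\gamma,-\alpha}$ from \eqref{hel1}) is precisely the omitted bookkeeping. Nothing to add.
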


\begin{coro} \label{hee4}
If $\alpha, \beta, \gamma \in \Delta^+$ and $\gamma = \alpha + \beta$, then
$ N_{\gamma,-\alpha}a_{\alpha,\beta} = N_{\gamma,-\beta}a_{\beta,\alpha}.$
\end{coro}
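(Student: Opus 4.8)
The plan is to deduce Corollary~\ref{hee4} directly from Corollary~\ref{su6} by eliminating the auxiliary quantity $\eta_{\gamma,q}$. Since the hypothesis $\gamma = \alpha+\beta$ is symmetric in $\alpha$ and $\beta$, I would apply Corollary~\ref{su6} twice. Applied to the decomposition $\gamma = \alpha + \beta$ it yields
$$
a_{\beta,\alpha}\,\gamma(U_q) = - N_{\gamma,-\alpha}\,\eta_{\gamma,q},
$$
and applied with the roles of $\alpha$ and $\beta$ interchanged (legitimate because $\gamma = \beta + \alpha$ as well) it yields
$$
a_{\alpha,\beta}\,\gamma(U_q) = - N_{\gamma,-\beta}\,\eta_{\gamma,q}.
$$
Both identities hold for every index $q = 1,\dots,m$, and crucially they share the same factor $\eta_{\gamma,q}$ on the right-hand side.

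Next I would cross-multiply to remove $\eta_{\gamma,q}$: multiplying the first identity by $N_{\gamma,-\beta}$ and the second by $N_{\gamma,-\alpha}$ makes the two right-hand sides literally coincide, so that
$$
N_{\gamma,-\beta}\,a_{\beta,\alpha}\,\gamma(U_q) = N_{\gamma,-\alpha}\,a_{\alpha,\beta}\,\gamma(U_q)
$$
for all $q$. At this point the desired identity $N_{\gamma,-\alpha}a_{\alpha,\beta} = N_{\gamma,-\beta}a_{\beta,\alpha}$ is visible; it remains only to cancel the common scalar $\gamma(U_q)$.

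The one point deserving care — and the only real, if mild, obstacle — is justifying that there exists an index $q$ with $\gamma(U_q) \neq 0$, which is precisely what licenses the cancellation. This is the same observation invoked in the proof of Corollary~\ref{cor42}: $\gamma$ is a nonzero functional that is real on $\mk{h}_\RR$, and it cannot vanish identically on the basis $U_1,\dots,U_m$ of $\mk{h}^+$. Indeed, were $\gamma(U_q)=0$ for every $q$, then by the $\tau$-equivariance \eqref{re1} we would also have $\gamma(\tau(U_q)) = -\ol{\gamma(U_q)} = 0$, so $\gamma$ would vanish on $\mk{h}^- = \tau(\mk{h}^+)$ and hence on all of $\mk{h} = \mk{h}^+\oplus\mk{h}^-$, contradicting that $\gamma$ is a root. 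Choosing such a $q$ and dividing by $\gamma(U_q)$ delivers the assertion.
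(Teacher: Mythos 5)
Your proof is correct and follows essentially the same route as the paper: apply Corollary~\ref{su6} twice with the roles of $\alpha$ and $\beta$ interchanged, eliminate the common factor $\eta_{\gamma,q}$, and cancel a nonzero $\gamma(U_q)$. Your explicit justification that some $U_q$ satisfies $\gamma(U_q)\neq 0$ (via the $\tau$-equivariance \eqref{re1} and $\mk{h}=\mk{h}^+\oplus\mk{h}^-$) is a welcome filling-in of a step the paper leaves as "obvious".
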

\begin{proof} Under the condition obviously
$N_{\gamma,-\beta} \neq 0 \neq N_{\gamma,-\alpha}$. We choose a q so that $\gamma(U_q)\neq 0$ and apply twice the formula in Corollary \ref{su6}.
\end{proof}

\begin{coro} \label{ss7}
 For any $\alpha,\beta \in \Delta^+$ we have $a_{\alpha,\beta}= 0 \iff a_{\beta,\alpha} = 0$.
\end{coro}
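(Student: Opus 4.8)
The plan is to dispose of the statement by a short case analysis on whether $\alpha+\beta$ is a root, feeding the two preceding corollaries into the two cases. Since $\alpha,\beta\in\Delta^+$, the sum $\alpha+\beta$ either fails to lie in $\Delta$ or lies in $\Delta^+$ (a sum of positive roots is positive when it is a root at all), so these two cases are exhaustive.

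First I would treat the case $\alpha+\beta\notin\Delta$. Here Corollary \ref{cor42} applies directly and gives $a_{\alpha,\beta}=a_{\beta,\alpha}=0$ outright; in particular the equivalence $a_{\alpha,\beta}=0\iff a_{\beta,\alpha}=0$ holds vacuously, both sides being true.

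The remaining case is $\gamma:=\alpha+\beta\in\Delta^+$. Here I would invoke Corollary \ref{hee4}, which yields the identity $N_{\gamma,-\alpha}\,a_{\alpha,\beta}=N_{\gamma,-\beta}\,a_{\beta,\alpha}$. The one point to verify is that both structural constants appearing here are nonzero, so that the identity genuinely links the vanishing of $a_{\alpha,\beta}$ with that of $a_{\beta,\alpha}$. This is immediate from the convention \eqref{structural constants}: for $N_{\gamma,-\alpha}$ we have $\gamma\in\Delta$, $-\alpha\in\Delta$ and $\gamma+(-\alpha)=\beta\in\Delta$, hence $N_{\gamma,-\alpha}\neq 0$, and symmetrically $\gamma+(-\beta)=\alpha\in\Delta$ gives $N_{\gamma,-\beta}\neq 0$. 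Dividing the identity by the appropriate nonzero constant then shows that $a_{\alpha,\beta}=0$ forces $a_{\beta,\alpha}=0$ and conversely, completing the equivalence.

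Since the work is carried entirely by Corollaries \ref{cor42} and \ref{hee4}, there is no real obstacle here; the only thing requiring attention is the nonvanishing of the two structural constants, which is exactly what permits the passage from the linear relation of Corollary \ref{hee4} to the biconditional on the vanishing of the $a$-coefficients.
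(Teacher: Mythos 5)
Your proof is correct and follows exactly the paper's argument: split on whether $\alpha+\beta$ is a root, apply Corollary \ref{cor42} when it is not, and Corollary \ref{hee4} when it is. Your explicit check that $N_{\gamma,-\alpha}\neq 0\neq N_{\gamma,-\beta}$ is the same observation made inside the paper's proof of Corollary \ref{hee4}, so nothing is missing.
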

\begin{proof} If $\alpha + \beta \not \in \Delta^+$ then we use Corollary \ref{cor42}, otherwise - Corollary \ref{hee4}.
\end{proof}
Before going on with the Nijenhuis tensor we introduce some convenient notation.
Let $\gamma \in \Delta^+, \quad JE_\gamma \in \mk{h}$. We denote
\begin{gather}\label{ca7}
V_\gamma = JE_\gamma \in \mk{h}^-,\qquad U_\gamma = JE_{-\gamma}=-\tau (JE_\gamma) = - \tau (V_\gamma)\in \mk{h}^+,
\end{gather}
From the above definition and $\alpha(\tau(H)) = -\ol{\alpha(H)}$ we get
\begin{gather}\label{ca5}
\gamma(V_\gamma) =  \ol{\gamma(U_\gamma)}.
\end{gather}
Now we may compute
\begin{prop}\label{eee4}
 Let $I$ be an admissible complex structure and let $J$ match $I$. Let $\gamma \in \Delta^+,\quad JE_\gamma \in \mk{h}$. Then
\begin{gather} \label{ca11}
|\gamma(U_\gamma)| = 1,\quad \gamma(IH_\gamma) = 0;\\
JE_\gamma = \frac{1}{2}\gamma(V_\gamma)\sco{H_\gamma + \ri IH_\gamma},\quad JE_{-\gamma} = \frac{1}{2}\gamma(U_\gamma)\sco{H_\gamma - \ri IH_\gamma}.\label{ca10}
\end{gather}
\end{prop}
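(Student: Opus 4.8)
The plan is to extract everything from a single instance of the vanishing of the (complexified) Nijenhuis tensor, namely $N_J(E_\gamma,E_{-\gamma})=0$, combined with the reality constraints that $\tau$ imposes on the functional $\gamma$.

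First I would record the ambient facts. Since $IJ=-JI$, for $X\in\mk{m}^+_I$ one has $I(JX)=-J(IX)=-\ri\,JX$, so $J(\mk{m}^+_I)=\mk{m}^-_I$; as $E_\gamma\in\mk{n}^+\subset\mk{m}^+_I$ and $JE_\gamma\in\mk{h}$ by hypothesis, this gives $V_\gamma=JE_\gamma\in\mk{h}\cap\mk{m}^-_I=\mk{h}^-$ and, by \eqref{ca7}, $U_\gamma=JE_{-\gamma}\in\mk{h}^+$, matching the notation. I would also note, using $I\tau=\tau I$ and $\tau(H_\gamma)=-H_\gamma$, that $\tau(IH_\gamma)=-IH_\gamma$, whence by \eqref{re1} the number $\gamma(IH_\gamma)$ is \emph{real}; likewise $\gamma(H_\gamma)=C(\gamma,\gamma)=2$.

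The key computation is $N_J(E_\gamma,E_{-\gamma})=0$. Since $U_\gamma,V_\gamma\in\mk{h}$ commute, and $[V_\gamma,E_{-\gamma}]=-\gamma(V_\gamma)E_{-\gamma}$, $[E_\gamma,U_\gamma]=-\gamma(U_\gamma)E_\gamma$, $[E_\gamma,E_{-\gamma}]=H_\gamma$, expanding the tensor collapses to the single relation
\[
\gamma(V_\gamma)\,U_\gamma+\gamma(U_\gamma)\,V_\gamma=H_\gamma .
\]
Projecting this onto the summands $\mk{h}^+$ and $\mk{h}^-$ (recall $U_\gamma\in\mk{h}^+$, $V_\gamma\in\mk{h}^-$, and that the projection of $H_\gamma$ onto $\mk{h}^-$ is $\tfrac12(H_\gamma+\ri\,IH_\gamma)$) splits it into the two identities $\gamma(V_\gamma)\,U_\gamma=\tfrac12(H_\gamma-\ri\,IH_\gamma)$ and $\gamma(U_\gamma)\,V_\gamma=\tfrac12(H_\gamma+\ri\,IH_\gamma)$.

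The finishing move, and the only point needing a little care, is to evaluate $\gamma$ on the second of these: $\gamma(U_\gamma)\gamma(V_\gamma)=\tfrac12\bigl(\gamma(H_\gamma)+\ri\,\gamma(IH_\gamma)\bigr)=1+\tfrac{\ri}{2}\gamma(IH_\gamma)$. By \eqref{ca5} the left side equals $|\gamma(U_\gamma)|^2\in\RR$, while $\gamma(IH_\gamma)\in\RR$; comparing real and imaginary parts forces \emph{simultaneously} $\gamma(IH_\gamma)=0$ and $|\gamma(U_\gamma)|^2=1$, which is \eqref{ca11} (in particular $\gamma(U_\gamma)\neq0$). Substituting $\gamma(U_\gamma)\gamma(V_\gamma)=1$ back into the two projected equations rewrites them as $V_\gamma=\tfrac12\gamma(V_\gamma)(H_\gamma+\ri\,IH_\gamma)$ and $U_\gamma=\tfrac12\gamma(U_\gamma)(H_\gamma-\ri\,IH_\gamma)$, which is exactly \eqref{ca10}. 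I expect no serious obstacle beyond keeping track of which $\pm\ri$-eigenspace of $I$ each projection lands in, and noticing that the reality of $\gamma(IH_\gamma)$ lets one read off both assertions of \eqref{ca11} from one equation.
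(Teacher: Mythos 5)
Your proof is correct, but it takes a genuinely leaner route than the paper in the decisive step. Both arguments begin identically: expanding $N_J(E_\gamma,E_{-\gamma})=0$ gives $\gamma(V_\gamma)U_\gamma+\gamma(U_\gamma)V_\gamma=H_\gamma$, and your projection onto the eigenspaces $\mk{h}^{\pm}$ is the same linear algebra as the paper's system \eqref{we2} (obtained there by applying $I$ to that relation). The divergence is in how \eqref{ca11} is extracted. The paper invokes integrability a \emph{second} time: it computes $N_J(H_\gamma,IH_\gamma)=0$, reduces it to $2\ri(|a|^2-1)H_\gamma-\ri\gamma(IH_\gamma)IH_\gamma=0$, and concludes by the linear independence of $H_\gamma$ and $IH_\gamma$. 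You instead evaluate $\gamma$ on one projected identity to get $|\gamma(U_\gamma)|^2=1+\tfrac{\ri}{2}\gamma(IH_\gamma)$, having first observed from $\tau\circ I=I\circ\tau$, $\tau(H_\gamma)=-H_\gamma$ and \eqref{re1} that $\gamma(IH_\gamma)$ is real (and using \eqref{ca5} for the left side); comparing real and imaginary parts then yields both claims of \eqref{ca11} simultaneously. So your argument consumes a single instance of the vanishing Nijenhuis tensor plus the $\tau$-reality constraints, where the paper spends a second, longer bracket computation to obtain the same two scalar identities; the only thing the paper's version saves is the (easy) a priori remark that $\gamma(IH_\gamma)\in\RR$, which in its approach falls out of the computation. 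The final substitution using $\gamma(U_\gamma)\gamma(V_\gamma)=1$ to recover \eqref{ca10} from the two projected equations is the same in both proofs and is carried out correctly in yours.
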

\bpr Integrability gives
\begin{gather*}
N_J(E_\gamma, E_{-\gamma}) = [V_\gamma, U_\gamma] - [E_\gamma,E_{-\gamma}]- J[V_\gamma, E_{-\gamma}]- J[E_\gamma,U_\gamma]\\
 = -H_\gamma + \gamma(V_\gamma)U_\gamma + \gamma(U_\gamma)V_\gamma = 0.
\end{gather*}
For the present computation we denote $a = \gamma(U_\gamma)$.
Now we apply $I$ on the last expression to get the second equation of the following system
\begin{gather}\label{we2}
H_\gamma = \ol{a}U_\gamma + aV_\gamma,\quad
IH_\gamma =  \ri\ol{a}U_\gamma - \ri aV_\gamma.
\end{gather}
First we use \eqref{we2} to compute
\begin{gather*}
0 = N_J(H_\gamma,IH_\gamma) = -[aE_\gamma + \ol{a}E_{-\gamma},\ri aE_\gamma - \ri \ol{a}E_{-\gamma}] \\
- [H_\gamma,IH_\gamma] - J[H_\gamma,\ri aE_\gamma - \ri \ol{a}E_{-\gamma}] + J[aE_\gamma + \ol{a}E_{-\gamma},IH_\gamma]\\ =  2\ri(|a|^2-1)H_\gamma  - \ri\gamma(IH_\gamma)IH_\gamma.
\end{gather*}
Because $H_\gamma, IH_\gamma$ are linearly independent, integrability implies \eqref{ca11}.
Now using \eqref{ca11} we solve the system \eqref{we2} to get \eqref{ca10}.
\epr
\begin{remark} At first glance formula \eqref{ca10} contains something like a vicious circle - we determine $V_\gamma = JE_\gamma$ using a circle parameter $\gamma(V_\gamma)$ on the RHS.

As we shall prove further $JE_\gamma \in \mk{h}$ iff $\gamma \in \Gamma$ (see Theorem \ref{ns1}).
Actually, given an admissible $\mk{b}^+$ complex structure $I$ and a matching $J$, we have proved that
\begin{gather}\label{ca12}
JE_\gamma = V_\gamma = - \ri\gamma(V_\gamma)Q_\gamma,\quad JE_{-\gamma} = U_\gamma = -\ri\gamma(U_\gamma)P_\gamma.
\end{gather}
(See Definition \ref{mj1} for $P_\gamma,Q_\gamma$). The important point here is that any matching complex structure $J$ sends the stem nilpotent $E_\gamma\in \mk{f}^+$ to $Q_\gamma \in \mk{h}^+$ multiplied by a complex number of norm 1, thus we recover the parameters $\rho_\gamma$ from Section \ref{ste}. We use this further to identify the Cayley transform which produces $J$ - see Definition \ref{qqq8} and further.
\end{remark}
 \begin{prop}\label{su1}
 Let $\alpha,\beta,\gamma \in \Delta^+,\ JE_\gamma\in\mk{h}$. Then
\begin{gather*}
a_{\beta,\alpha}(\beta + \alpha)(U_\gamma) = pr_{\mk{g}(-\beta)}([E_\alpha,E_{-\gamma}]).
 \end{gather*}
\end{prop}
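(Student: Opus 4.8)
The plan is to deduce this directly from Proposition \ref{ca1}, rather than run the Nijenhuis computation again from scratch. First I would record that the hypothesis $JE_\gamma \in \mk{h}$ puts us exactly in the setting of \eqref{ca7}: since $E_\gamma \in \mk{n}^+ \subset \mk{m}^+_I$ we have $JE_\gamma \in \mk{m}^-_I = \mk{n}^- \oplus \mk{h}^-$, so $JE_\gamma \in \mk{h}$ forces $V_\gamma = JE_\gamma \in \mk{h}^-$ and hence $U_\gamma = JE_{-\gamma} = -\tau(V_\gamma) \in \mk{h}^+$. Thus $U_\gamma$ is a genuine element of $\mk{h}^+$ and may be expanded in the basis $U_1,\dots,U_m$ of Definition \ref{bo2}.

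Next I would observe that both sides of the identity in Proposition \ref{ca1} are linear in the slot occupied by $U_q$: the left-hand side depends on $U_q$ only through the value $(\alpha+\beta)(U_q)$, while the right-hand side depends on $U_q$ only through the coefficients $\eta_{\nu,q}$, which are the $E_{-\nu}$-components of $JU_q$ and hence linear in $U_q$. Therefore Proposition \ref{ca1} extends verbatim to an arbitrary $U \in \mk{h}^+$; writing $pr_{\mk{n}^-}(JU) = \sum_\nu \eta_\nu E_{-\nu}$ one gets
\begin{gather*}
a_{\beta,\alpha}(\alpha+\beta)(U) = pr_{\mk{g}(-\beta)}\sco{[pr_{\mk{n}^-}(JU), E_\alpha]}.
\end{gather*}

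Finally I would specialize to $U = U_\gamma$. The key computation is $JU_\gamma = J(JE_{-\gamma}) = -E_{-\gamma}$, which lies entirely in $\mk{n}^-$, so that $pr_{\mk{n}^-}(JU_\gamma) = -E_{-\gamma}$. Substituting yields
\begin{gather*}
a_{\beta,\alpha}(\alpha+\beta)(U_\gamma) = pr_{\mk{g}(-\beta)}\sco{[-E_{-\gamma}, E_\alpha]} = pr_{\mk{g}(-\beta)}\sco{[E_\alpha, E_{-\gamma}]},
\end{gather*}
which is the assertion.

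There is essentially no hard step here; the only delicate point is the bookkeeping in the linearity extension—checking that the quantity playing the role of $\sum_\nu \eta_{\nu,q}E_{-\nu}$ is genuinely $pr_{\mk{n}^-}(JU)$, and that $JE_\gamma \in \mk{h}$ really does place $JE_{-\gamma}$ in $\mk{h}^+$. If one prefers a self-contained argument, the same conclusion comes out by repeating the computation of Proposition \ref{ca1} with the pair $(U_\gamma, E_\alpha)$ in place of $(U_q, E_\alpha)$: the term $J[JU_\gamma, JE_\alpha]$ again lands in $\mk{b}^+$ (both $JU_\gamma = -E_{-\gamma}$ and $JE_\alpha$ lie in $\mk{m}^-_I$), the terms $[U_\gamma, JE_\alpha]$ and $-J[U_\gamma, E_\alpha]$ reproduce the left-hand side $-a_{\beta,\alpha}(\alpha+\beta)(U_\gamma)$ in the $E_{-\beta}$-coefficient, while the term $[JU_\gamma, E_\alpha] = [-E_{-\gamma}, E_\alpha]$ now supplies the desired right-hand side.
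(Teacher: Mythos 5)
Your proof is correct, and it reaches the identity by a genuinely different route than the paper. The paper proves Proposition \ref{su1} the same way it proved Proposition \ref{ca1}: it runs the integrability computation afresh, expanding $0=N_J(E_{-\gamma},E_\alpha)$ in the basis of Definition \ref{bo2}, collecting everything except $[E_\alpha,E_{-\gamma}]$, $[U_\gamma,JE_\alpha]$ and the $\mk{n}^-$-part of $-J[U_\gamma,E_\alpha]$ into a remainder lying in $\mk{b}^+$, and equating the $E_{-\beta}$-coefficient to zero. You instead use Proposition \ref{ca1} as a black box and observe that it is an equality of two linear functions of the $\mk{h}^+$-variable: the left side depends on $U_q$ only through $(\alpha+\beta)(U_q)$, the right side only through $pr_{\mk{n}^-}(JU_q)=\sum_\nu\eta_{\nu,q}E_{-\nu}$, so the identity propagates to every $U\in\mk{h}^+$; the hypothesis $JE_\gamma\in\mk{h}$ together with Lemma \ref{oo1} and $J\circ\tau=\tau\circ J$ places $U_\gamma=JE_{-\gamma}$ in $\mk{h}^+$ (exactly as in \eqref{ca7}), and $J^2=-1$ gives $pr_{\mk{n}^-}(JU_\gamma)=-E_{-\gamma}$, from which the formula drops out. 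Both arguments are sound; yours buys a cleaner logical structure, in that integrability of $J$ is invoked only once (inside Proposition \ref{ca1}) and Proposition \ref{su1} becomes pure linear algebra, while the paper's recomputation keeps each proposition self-contained, which is the pattern it repeats later (Lemma \ref{hee1}, Proposition \ref{eta1}). One small remark: your closing ``self-contained'' variant is not really a second argument --- since the Nijenhuis tensor satisfies $N_J(JX,Y)=-JN_J(X,Y)$ and $JU_\gamma=-E_{-\gamma}$, computing $JN_J(U_\gamma,E_\alpha)$ is term-for-term the paper's computation of $N_J(E_{-\gamma},E_\alpha)$.
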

\bpr By integrability of J we have
\begin{gather*}
0 = N(E_{-\gamma}, E_\alpha) =[U_\gamma, JE_\alpha]-[E_{-\gamma},E_\alpha] - J[U_\gamma, E_\alpha]-J[E_{-\gamma},JE_\alpha]\\
 =[E_\alpha,E_{-\gamma}]  - \sum_{\beta\in \Delta^+} \beta(U_\gamma)a_{\beta,\alpha} E_{-\beta} -\alpha(U_\gamma)\sum_{\beta\in \Delta^+} a_{\beta,\alpha} E_{-\beta} +A,
\end{gather*}
where $A \in \mk{b}^+$. The statement of the proposition comes from equating to zero the coefficient at $ E_{-\beta}$ in the last expression.
\epr

\begin{coro}\label{le1} Let $\alpha, \beta ,\gamma\in \Delta^+,\ \alpha + \beta = \gamma,\quad JE_\gamma \in \mk{h}$. Then
\begin{gather*}
a_{\beta,\alpha} = N_{\gamma,-\alpha}\gamma(V_\gamma) \neq 0.
\end{gather*}
\end{coro}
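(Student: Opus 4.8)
The plan is to read the statement almost directly off Proposition \ref{su1}, whose hypotheses ($\alpha,\beta,\gamma\in\Delta^+$ with $JE_\gamma\in\mk{h}$) are precisely those assumed here, specialized to the case $\alpha+\beta=\gamma$. Applying that proposition gives $a_{\beta,\alpha}(\beta+\alpha)(U_\gamma)=pr_{\mk{g}(-\beta)}([E_\alpha,E_{-\gamma}])$. First I would evaluate the right-hand side explicitly: since $\alpha-\gamma=-\beta\in\Delta$, the bracket $[E_\alpha,E_{-\gamma}]=N_{\alpha,-\gamma}E_{-\beta}$ already lies in $\mk{g}(-\beta)$, so the projection is the bracket itself. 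Comparing coefficients of $E_{-\beta}$ and using $(\beta+\alpha)(U_\gamma)=\gamma(U_\gamma)$ then yields $a_{\beta,\alpha}\,\gamma(U_\gamma)=N_{\alpha,-\gamma}$.

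The next step is to rewrite this in terms of $\gamma(V_\gamma)$ rather than $\gamma(U_\gamma)$. By the first formula of Proposition \ref{eee4} we have $|\gamma(U_\gamma)|=1$, and by \eqref{ca5} we have $\gamma(V_\gamma)=\ol{\gamma(U_\gamma)}$; combining these gives $\gamma(U_\gamma)^{-1}=\ol{\gamma(U_\gamma)}=\gamma(V_\gamma)$. Substituting produces $a_{\beta,\alpha}=N_{\alpha,-\gamma}\,\gamma(V_\gamma)$.

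It then remains to replace $N_{\alpha,-\gamma}$ by $N_{\gamma,-\alpha}$ and to check nonvanishing. For the first I would combine the symmetry $N_{\mu,\nu}=-N_{-\mu,-\nu}$ recorded in \eqref{hel1} with the antisymmetry $N_{\mu,\nu}=-N_{\nu,\mu}$ (coming from $[E_\mu,E_\nu]=-[E_\nu,E_\mu]$): together these give $N_{\gamma,-\alpha}=-N_{-\gamma,\alpha}=N_{\alpha,-\gamma}$, so the asserted identity $a_{\beta,\alpha}=N_{\gamma,-\alpha}\gamma(V_\gamma)$ follows. Finally, $N_{\gamma,-\alpha}\neq0$ because $\gamma-\alpha=\beta\in\Delta$, while $\gamma(V_\gamma)\neq0$ because $|\gamma(V_\gamma)|=|\gamma(U_\gamma)|=1$ by \eqref{ca5} and Proposition \ref{eee4}; hence $a_{\beta,\alpha}\neq0$.

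I do not expect any genuine obstacle: the content is entirely contained in Proposition \ref{su1}, and the only care required is the routine bookkeeping of the conjugation relation \eqref{ca5} together with the correct sign conventions for the $N_{\cdot,\cdot}$ in \eqref{hel1}, which is the one place where a slip could occur.
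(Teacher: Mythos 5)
Your proof is correct and takes essentially the same route as the paper: the paper's own argument for Corollary \ref{le1} is precisely the specialization of Proposition \ref{su1} to $\alpha+\beta=\gamma$, combined with $|\gamma(U_\gamma)|=1$ from \eqref{ca11}, the conjugation relation \eqref{ca5}, and the fact that $N_{\gamma,-\alpha}\neq 0$. Your explicit verification of the sign identity $N_{\alpha,-\gamma}=N_{\gamma,-\alpha}$ via \eqref{hel1} and antisymmetry is exactly the bookkeeping the paper leaves implicit in the word ``trivially,'' and it is carried out correctly.
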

\begin{proof} Follows trivially from Proposition \ref{su1}, by \eqref{ca11} and the obvious fact that $N_{\gamma,-\alpha} \neq 0$.
\end{proof}

\begin{lemma}\label{hee1}   Let $\alpha,\beta \in \Delta^+,\quad  \gamma \in \Gamma, \quad JE_\gamma \in \mk{h}$. If $\alpha + \beta \neq \gamma$, then
$$
\sum_{\nu \in \Phi_\gamma^+ } a_{\nu,\beta}a_{\alpha,\mu(\nu)}N_{\gamma,-\nu } = 0.
$$
\end{lemma}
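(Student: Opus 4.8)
The plan is to read off the asserted identity from the vanishing of the (complexified) Nijenhuis tensor of $J$ on the pair $(E_\gamma,E_\beta)$, extracting the coefficient of $E_{-\alpha}$ in the basis of Definition \ref{bo2}. This particular pairing is forced by the shape of the sum: for $\nu\in\Phi_\gamma^+$ one has $\gamma-\nu=\mu(\nu)\in\Phi_\gamma^+$ and $[E_\gamma,E_{-\nu}]=N_{\gamma,-\nu}E_{\mu(\nu)}$, so the single bracket $[E_\gamma,E_{-\nu}]$ already manufactures the factor $N_{\gamma,-\nu}$ together with the index $\mu(\nu)$, and a subsequent application of $J$ to $E_{\mu(\nu)}$ produces the coefficient $a_{\alpha,\mu(\nu)}$ of $E_{-\alpha}$.

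First I would record a preliminary simplification from the hypothesis $JE_\gamma\in\mk{h}$. Since $JE_\gamma=V_\gamma\in\mk{h}$ (cf.\ \eqref{ca7}) has no root-space components, $a_{\delta,\gamma}=0$ for every $\delta\in\Delta^+$, and hence by Corollary \ref{ss7} also $a_{\gamma,\delta}=0$ for every $\delta$. This disposes of the case $\alpha=\gamma$ immediately, because then every summand carries the factor $a_{\gamma,\mu(\nu)}=0$; so I may assume $\alpha\neq\gamma$. It also shows that the term $a_{\gamma,\beta}H_\gamma$ coming from $[E_\gamma,E_{-\gamma}]$ drops out.

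Next I would expand $0=N_J(E_\gamma,E_\beta)=[JE_\gamma,JE_\beta]-J[JE_\gamma,E_\beta]-J[E_\gamma,JE_\beta]-[E_\gamma,E_\beta]$ and collect the coefficient of $E_{-\alpha}$. Using $JE_\gamma=V_\gamma\in\mk{h}$ together with $[V_\gamma,E_{-\delta}]=-\delta(V_\gamma)E_{-\delta}$ and $[V_\gamma,E_\beta]=\beta(V_\gamma)E_\beta$, the two brackets involving $V_\gamma$ jointly contribute $-(\alpha+\beta)(V_\gamma)\,a_{\alpha,\beta}$; the term $[E_\gamma,E_\beta]$ lies in a positive root space and contributes nothing. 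For the central term I would decompose $JE_\beta$ by Definition \ref{bo2} and compute $[E_\gamma,E_{-\delta}]$: the summands with $\delta\in\Phi_\gamma^+$ give $\sum_{\nu\in\Phi_\gamma^+}a_{\nu,\beta}N_{\gamma,-\nu}E_{\mu(\nu)}$, whose $J$-image contributes $\sum_\nu a_{\nu,\beta}N_{\gamma,-\nu}a_{\alpha,\mu(\nu)}$ to the coefficient of $E_{-\alpha}$, while the remaining summands (those $\delta$ with $\gamma-\delta\in\Delta^-$, together with the $E_\gamma$ arising from the $\mk{h}$-part of $JE_\beta$) land in $\mk{m}^+_I$, respectively in $\mk{h}^-$, after applying $J$ and so cannot affect the $E_{-\alpha}$-coefficient. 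Checking this last ``off-target'' claim and the attendant bookkeeping is the main technical obstacle; it relies only on $J(\mk{m}^+_I)=\mk{m}^-_I$ (Lemma \ref{oo1}, with the roles of $I$ and $J$ interchanged) and on $\{\delta\in\Delta^+:\gamma-\delta\in\Delta^+\}=\Phi_\gamma^+$. Assembling the contributions gives the identity $\sum_{\nu\in\Phi_\gamma^+}a_{\nu,\beta}N_{\gamma,-\nu}a_{\alpha,\mu(\nu)}=-(\alpha+\beta)(V_\gamma)\,a_{\alpha,\beta}$.

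Finally I would show the right-hand side vanishes when $\alpha+\beta\neq\gamma$. If $a_{\alpha,\beta}=0$ there is nothing to prove; otherwise $a_{\beta,\alpha}\neq0$ by Corollary \ref{ss7} (so $\alpha+\beta\in\Delta$ by Corollary \ref{cor42}). Since $\alpha+\beta\neq\gamma$, the bracket $[E_\alpha,E_{-\gamma}]$ has no $\mk{g}(-\beta)$-component, so Proposition \ref{su1} forces $(\alpha+\beta)(U_\gamma)=0$. As $U_\gamma=-\tau(V_\gamma)$ by \eqref{ca7}, the relation $\sigma(\tau(H))=-\overline{\sigma(H)}$ of \eqref{re1} yields $\sigma(U_\gamma)=\overline{\sigma(V_\gamma)}$ for all $\sigma$, whence $(\alpha+\beta)(V_\gamma)=\overline{(\alpha+\beta)(U_\gamma)}=0$. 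In either case the right-hand side is zero, which is the assertion.
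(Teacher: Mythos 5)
Your proof is correct and takes essentially the same route as the paper's: both expand $0 = N_J(E_\gamma,E_\beta)$, collect the coefficient of $E_{-\alpha}$ to get $\sum_{\nu\in\Phi_\gamma^+}a_{\nu,\beta}N_{\gamma,-\nu}a_{\alpha,\mu(\nu)} = -(\alpha+\beta)(V_\gamma)\,a_{\alpha,\beta}$, and then annihilate the right-hand side via Proposition \ref{su1} combined with Corollary \ref{ss7} and the conjugation relation $(\alpha+\beta)(V_\gamma)=\ol{(\alpha+\beta)(U_\gamma)}$ (the paper's footnote). Your only departure is to make explicit the bookkeeping that the paper absorbs into its unnamed remainder terms $A,B,C$ — namely that $a_{\gamma,\delta}=0$ kills the $[E_\gamma,E_{-\gamma}]$ term and that the off-target brackets land in $\mk{m}^+_I$ or in $\mk{h}$ — which is a sound and welcome clarification, not a different method.
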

\bpr
In the follwing computation we keep explicit only terms with component in $\mk{n}^-$ . We have
\begin{gather*}
0 = N(E_\gamma, E_\beta) =[V_\gamma, JE_\beta] - J[V_\gamma, E_\beta]-J[E_\gamma,JE_\beta] + A \nonumber\\
\sum_{\alpha\in\Delta^+}a_{\alpha,\beta} (\beta + \alpha)(V_\gamma)E_{-\alpha} - J\left[E_\gamma, pr_\mk{h}(JE_\beta) + \sum_{\nu\in\Delta^+}a_{\nu,\beta} E_{-\nu}\right] +A\\
=   -\sum_{\alpha\in\Delta^+}a_{\alpha,\beta} (\beta + \alpha)(V_\gamma)E_{-\alpha}
- \sum_{\nu\in\Delta^+}a_{\nu,\beta}J[E_\gamma,E_{-\nu}] +B\\
=-\sum_{\alpha\in \Delta^+} \left(a_{\alpha,\beta}{(\alpha+\beta )(V_\gamma)} + \sum_{\nu \in \Phi_\gamma^+ }N_{\gamma,-\nu } a_{\nu,\beta} a_{\alpha,\mu(\nu)}\right ) E_{-\alpha} +C.
\end{gather*}
From integrability we conclude that for each $\beta,\alpha,\gamma$ as assumed, we have
\begin{gather*}
a_{\alpha,\beta}(\alpha+\beta)(V_\gamma) + \sum_{\nu \in \Phi_\gamma^+ } N_{\gamma,-\nu} a_{\nu,\beta}a_{\alpha,\mu(\nu)}=0.
\end{gather*}
When $\beta +\alpha \neq \gamma$, Proposition \ref{su1} gives\footnote{Because $(\alpha+\beta )(V_\gamma) = \ol{(\alpha+\beta )(U_\gamma)}$.} $a_{\alpha,\beta}(\alpha+\beta)(V_\gamma) = 0$, whence the lemma.
  \end{proof}

\subsection{The coefficients of {\bf a}}\label{ca}

We recall that $\Gamma$ is the stem of $\Delta^+$.

\begin{prop} \label{hee31}
 Let $\gamma \in \Gamma,\ J(E_\gamma) \in \mk{h}$ and $\alpha \in \Phi_\gamma^+,\quad \beta \in \Delta^+$ Then
 $a_{\beta, \alpha} \neq 0$ if and only if $\alpha + \beta = \gamma$.
  \end{prop}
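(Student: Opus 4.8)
The plan is to prove the two implications separately, relying on the Nijenhuis identities already extracted in this subsection. The forward implication is immediate: if $\alpha + \beta = \gamma$, then $\alpha,\beta,\gamma \in \Delta^+$ satisfy $\alpha+\beta=\gamma$ with $JE_\gamma \in \mk{h}$, so Corollary \ref{le1} applies verbatim and gives $a_{\beta,\alpha} = N_{\gamma,-\alpha}\gamma(V_\gamma) \neq 0$. (Here $\alpha \in \Phi_\gamma^+$ is automatic, since $\gamma - \alpha = \beta \in \Delta^+$.) So the content of the proposition is the converse, which I would establish in contrapositive form: assuming $\alpha \in \Phi_\gamma^+$, $\beta \in \Delta^+$ and $\alpha + \beta \neq \gamma$, I aim to conclude $a_{\beta,\alpha} = 0$.

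The main tool is Lemma \ref{hee1}, whose hypotheses are met exactly under this assumption, yielding
\[
\sum_{\nu \in \Phi_\gamma^+} a_{\nu,\beta}\, a_{\alpha,\mu(\nu)}\, N_{\gamma,-\nu} = 0.
\]
The crux of the argument — and the step I expect to require the most care — is to show that this sum collapses to its single term $\nu = \alpha$. I would do this by examining the factor $a_{\alpha,\mu(\nu)}$. Since $\mu$ is an involution of $\Phi_\gamma^+$ with $\nu + \mu(\nu) = \gamma$, both $\alpha$ and $\mu(\nu)$ lie in the one branch $\Phi_\gamma^+$. By Corollary \ref{ss7}, $a_{\alpha,\mu(\nu)}$ vanishes iff $a_{\mu(\nu),\alpha}$ does; and for two roots of the same branch, Corollary \ref{cor42} forces $a_{\mu(\nu),\alpha}=0$ unless $\alpha + \mu(\nu) \in \Delta$, while Corollary \ref{eu5} a) forces any such sum to equal $\gamma$. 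Hence $a_{\alpha,\mu(\nu)} \neq 0$ only when $\alpha + \mu(\nu) = \gamma$, i.e. $\mu(\nu) = \gamma - \alpha = \mu(\alpha)$, i.e. $\nu = \alpha$. Every term with $\nu \neq \alpha$ therefore drops out.

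With the sum reduced to the single surviving term, the identity becomes
\[
a_{\alpha,\beta}\, a_{\alpha,\mu(\alpha)}\, N_{\gamma,-\alpha} = 0.
\]
Now $a_{\alpha,\mu(\alpha)} \neq 0$ by Corollary \ref{le1} (since $\mu(\alpha)+\alpha = \gamma$), and $N_{\gamma,-\alpha} \neq 0$ because $\gamma - \alpha \in \Delta$ (indeed $|N_{\gamma,-\alpha}|=1$ by Proposition \ref{www1}). Hence $a_{\alpha,\beta} = 0$, and a final application of Corollary \ref{ss7} gives $a_{\beta,\alpha} = 0$, completing the contrapositive. The one point I would verify carefully is that the collapse of the sum genuinely uses only the second factor $a_{\alpha,\mu(\nu)}$ and makes no hidden assumption on where $\beta$ lies; beyond this bookkeeping I expect no serious obstacle, since all the required nonvanishing facts are already in place.
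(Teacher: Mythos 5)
Your proof is correct and follows essentially the same route as the paper's: both rest on Lemma \ref{hee1}, collapse its sum to the single surviving term $\nu = \alpha$ via the branch-branch vanishing (Corollaries \ref{cor42}, \ref{eu5}~a) and \ref{le1}), and conclude with Corollary \ref{ss7}. The only difference is organizational: the paper first disposes of the case $\beta \in \Phi_\gamma^+$ separately (recording the branch-branch equivalence as its own formula) and invokes Lemma \ref{hee1} only for $\beta \notin \Phi_\gamma^+$ with $\alpha+\beta \in \Delta^+$, whereas you apply the lemma uniformly to every $\beta$ with $\alpha + \beta \neq \gamma$ --- a slightly cleaner bookkeeping of the same argument.
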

  \begin{proof}
If $\beta \in \Phi_\gamma^+ $, then from Corollary \ref{eu5}, a), we know  that $\alpha + \beta \in \Delta$ iff $\alpha +\beta = \gamma$, so Corollaries \ref{cor42} and \ref{le1} give
\be \label{hee2}
 a_{\alpha, \beta} \neq 0 \iff  \alpha + \beta = \gamma,\quad \alpha, \beta   \in \Phi_\gamma^+.
 \ee

  Let  $\beta \not \in \Phi_\gamma^+ $. If $\beta + \alpha \not \in \Delta^+$, then $a_{\alpha,\beta}=0$ by Corollary \ref{cor42}. So we have to treat just the case $\gamma \neq \beta + \alpha  \in \Delta^+$ as in  Lemma \ref{hee1}.
 Now if $\alpha,\nu,  \mu(\nu) \in \Phi_\gamma^+ $, then by
 \eqref{hee2} $a_{\alpha,\mu(\nu)} \neq 0$ if and only if  $\nu = \alpha$. Thus, the equality from Lemma \ref{hee1} reduces to $a_{\beta,\alpha} N_{\gamma ,-\alpha}  =0$.

We have $\alpha \in \Phi_\gamma^+ $, hence  $N_{\gamma ,-\alpha }
\neq 0$, whence $a_{\alpha,\beta} = 0$. The proposition is proved.
\end{proof}

Now we can prove
\begin{theorem} \label{ns1} Let I be an admissible $\mk{b}^+$ complex structure on $\mk{u}$ and let $J$ match I. Then

a) If $\gamma \in \Gamma$, then $J(E_\gamma) \in \mk{h}^-$;

b) If $\alpha \in \Phi^+,\ \beta\in\Delta^+$, then $ a_{\alpha, \beta} \neq 0$ if and only if  $\beta = -s_\gamma(\alpha)$.

\end{theorem}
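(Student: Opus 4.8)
The plan is to reduce b) to a) and to prove a) by induction along the order $\prec$ on the stem. Since $J$ maps $\mk{m}^+_I$ into $\mk{m}^-_I=\mk{n}^-\oplus\mk{h}^-$ (Lemma \ref{oo1}), the expansion \eqref{bo6} shows that $J(E_\gamma)\in\mk{h}^-$ is equivalent to $a_{\beta,\gamma}=0$ for all $\beta\in\Delta^+$; this is the form of a) I will establish. Granting a), part b) is immediate: for $\alpha\in\Phi^+$ let $\gamma\in\Gamma$ be the unique stem root with $\alpha\in\Phi_\gamma^+$. Then $J(E_\gamma)\in\mk{h}$ by a), so Proposition \ref{hee31} (with branch element $\alpha$) gives $a_{\beta,\alpha}\neq 0\iff\alpha+\beta=\gamma\iff\beta=\gamma-\alpha=-s_\gamma(\alpha)$, and Corollary \ref{ss7} transfers this to $a_{\alpha,\beta}$.

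For a) I index $\Gamma=\{\gamma_1,\dots,\gamma_d\}$ compatibly with $\prec$ and induct on $k$, assuming $J(E_{\gamma_i})\in\mk{h}$ for all $i<k$. Fix $\beta\in\Delta^+$; I must show $a_{\beta,\gamma_k}=0$, and by Corollary \ref{cor42} I may assume $\gamma_k+\beta\in\Delta$. If $\beta\in\Delta_k$, then $\beta\in\Delta_k^+$, and the maximality of $\gamma_k$ in $\Delta_k^+$ forces $\gamma_k+\beta\notin\Delta$, contrary to assumption; so this case does not arise. Hence $\beta\in\Phi_{\gamma_p}^+\cup\{\gamma_p\}$ for some $p<k$, and the possibility $\beta=\gamma_p$ is ruled out by strong orthogonality of the stem (Corollary \ref{yyy11}), which gives $\gamma_p+\gamma_k\notin\Delta$.

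There remains the decisive case $\beta\in\Phi_{\gamma_p}^+$ with $p<k$. Since $\gamma_k\in\Delta_k$ and $p<k$, formula \eqref{ord3} of Corollary \ref{yyy7} yields $\beta+\gamma_k\in\Phi_{\gamma_p}^+$. Now the inductive hypothesis $J(E_{\gamma_p})\in\mk{h}$ permits me to apply Proposition \ref{hee31} to the earlier stem root $\gamma_p$ and its branch element $\beta$: it gives $a_{\gamma_k,\beta}\neq 0\iff\beta+\gamma_k=\gamma_p$. But $\beta+\gamma_k\in\Phi_{\gamma_p}^+$ while $\gamma_p\notin\Phi_{\gamma_p}^+$, so $a_{\gamma_k,\beta}=0$, whence $a_{\beta,\gamma_k}=0$ by Corollary \ref{ss7}. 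This closes the induction; the base case $k=1$ is subsumed, since then no $p<1$ exists and $\gamma_1$, being the highest root, satisfies $\gamma_1+\beta\notin\Delta$ for every $\beta\in\Delta^+$.

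The main obstacle is exactly this last case: a priori $J(E_{\gamma_k})$ might acquire a term $E_{-\zeta}$ with $\zeta$ lying in an earlier branch $\Phi_{\gamma_p}^+$, and Corollary \ref{cor42} cannot rule this out. What makes the argument go through is that the order $\prec$ places such a $\zeta=\beta+\gamma_k$ in a branch whose stem root $\gamma_p$ has already been shown to satisfy $J(E_{\gamma_p})\in\mk{h}$; only then is the rigid dichotomy of Proposition \ref{hee31} available, and the combinatorial input \eqref{ord3} is precisely what forbids $\beta+\gamma_k$ from equalling $\gamma_p$. Thus the interplay between the stem order and the previously established structure of $J$ on earlier branches is the crux.
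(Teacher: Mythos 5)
Your proof is correct and follows essentially the same route as the paper's: an induction along the stem order $\prec$ in which Corollary \ref{cor42}, the stem combinatorics (Corollaries \ref{yyy7}, \ref{yyy11}), and Proposition \ref{hee31} applied to the earlier stem roots (legitimate, since the inductive hypothesis supplies $J E_{\gamma_p}\in\mk{h}$) force all coefficients $a_{\beta,\gamma_k}$ to vanish, with Corollary \ref{ss7} transferring vanishing across the two index orders. The only difference is organizational: the paper runs a simultaneous induction carrying both a) and the branch characterization b), whereas you induct on a) alone and recover b) at the end from Proposition \ref{hee31} — a logically equivalent packaging.
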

\begin{proof}Let $\Gamma = \{\gamma_1, \dots, \gamma_d\}$ be the stem of $\Delta^+$.
We know that $\gamma_1$ is a maximal root, so by Corollary \ref{cu1} we have $JE_{\gamma_1} \in \mk{h}$.

Now by Proposition \ref{hee31} we conclude that for any $ \alpha \in \Phi_{\gamma_1}^+,\ \beta \in \Delta^+$ we have
$$
a_{\alpha ,\beta} \neq 0  \iff \alpha + \beta = \gamma_1.
$$
Now we assume that for some $k < d$ we have $J(E_{\gamma_i}) \in \mk{h},\quad i=1,\dots,k$ and
\begin{gather}\label{ia1}
a_{\alpha ,\beta} \neq 0  \iff  \beta = \mu(\alpha) ,\quad \alpha \in \Phi_{\gamma_1}^+ \cup\dots \cup\Phi_{\gamma_k}^+,\ \beta \in \Delta^+ .
\end{gather}
If $\alpha \in \Phi_{\gamma_1}^+\cup\dots\cup\Phi_{\gamma_k}^+$ then by the definition of the stem $\gamma_{k+1}\neq \mu(\alpha)$, hence by the  induction assumption \eqref{ia1} and Corollary \ref{ss7} we have
$a_{\gamma_{k + 1}, \alpha}= a_{\alpha, \gamma_{k + 1}} = 0$.

If $\alpha \in \Phi_{\gamma_{k+1}}^+\cup\dots\cup\Phi_{\gamma_d}^+ \cup \Gamma$, then by Corollary \ref{yyy7} we have $\alpha + \gamma_{k+1} \not\in\Delta $, whence by Corollary \ref{cor42} we conclude $a_{\gamma_{k + 1},\alpha} = a_{\alpha, \gamma_{k + 1}} = 0$.

Thus for each $\alpha \in \Delta^+$ we nave $a_{\gamma_{k + 1},\alpha} = a_{\alpha, \gamma_{k + 1}} = 0$, which means
\begin{gather}\label{qqq1}
JE_{\gamma_{k+1}} \in \mk{h}.
\end{gather}
Now let $\alpha \in \Phi_{\gamma_{k+1}}^+$

If $\beta = \mu(\alpha)$ then Corollary \ref{le1} and \eqref{qqq1} give $a_{\alpha,\beta} \neq 0$.

If $\beta \in \Delta^+$ and $\beta \neq \mu(\alpha)$, we apply Proposition \ref{hee31} so
$$
 a_{\alpha,\beta} \neq 0 \iff  \alpha + \beta  = \gamma_{k+1},\quad \alpha \in \Phi_{\gamma_{k+1}}^+,\ \beta \in \Delta^+,
$$
which combined with the assumption \eqref{ia1} gives
$$
  a_{\alpha,\beta} \neq 0 \iff \mu(\alpha)= \beta,\quad \alpha \in \bigcup_{i=1}^{k+1}\Phi_{\gamma_i}^+, \  \beta \in \Delta^+.
$$
Our induction is complete, the theorem is proved.
\end{proof}

\begin{coro}\label{qqq32} The matrix $ {\bf a}$ is antisymmetric.
\end{coro}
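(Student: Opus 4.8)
The plan is to split $\Delta^+$ into the disjoint union $\Gamma \cup \Phi^+$ furnished by the stem decomposition and to verify the antisymmetry relation $a_{\alpha,\beta} = -a_{\beta,\alpha}$ separately, according to where the indices $\alpha,\beta$ fall. First I would dispose of the rows and columns indexed by the stem: if $\gamma \in \Gamma$, then Theorem \ref{ns1} a) gives $JE_\gamma \in \mk{h}^-$, so the entire $\gamma$-column of $\mathbf{a}$ vanishes, and Corollary \ref{ss7} then forces the entire $\gamma$-row to vanish as well. Hence every entry $a_{\alpha,\beta}$ with $\alpha \in \Gamma$ or $\beta \in \Gamma$ is zero, and antisymmetry is automatic in that range.

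It remains to treat the case $\alpha,\beta \in \Phi^+$. By Corollary \ref{ss7} we have $a_{\alpha,\beta} = 0 \iff a_{\beta,\alpha} = 0$, so whenever $a_{\alpha,\beta}=0$ both sides of $a_{\alpha,\beta} = -a_{\beta,\alpha}$ vanish and there is nothing to check; in particular the diagonal is settled, since $2\alpha \notin \Delta$ forces $a_{\alpha,\alpha} = 0$ by Corollary \ref{cor42}. So I may assume $a_{\alpha,\beta} \neq 0$. Theorem \ref{ns1} b) then pins down $\beta = -s_\gamma(\alpha) = \mu(\alpha)$, where $\gamma \in \Gamma$ is the unique element with $\alpha \in \Phi_\gamma^+$; equivalently $\alpha,\beta \in \Phi_\gamma^+$ and $\alpha + \beta = \gamma$.

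The decisive step is the sign computation in this surviving case. Since $\gamma = \alpha + \beta \in \Delta^+$, Corollary \ref{hee4} gives $N_{\gamma,-\alpha}\, a_{\alpha,\beta} = N_{\gamma,-\beta}\, a_{\beta,\alpha}$. Now Proposition \ref{www1}, applied to $\gamma \in \Gamma$ and $\alpha \in \Phi_\gamma^+$ with $\beta = \gamma - \alpha$, yields $|N_{\gamma,-\alpha}| = 1$ and $N_{\gamma,-\alpha}\, N_{\gamma,-\beta} = -1$. As the structural constants are real, $N_{\gamma,-\alpha} = \pm 1$, whence $N_{\gamma,-\beta} = -N_{\gamma,-\alpha}$. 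Substituting this into the relation from Corollary \ref{hee4} and cancelling the nonzero factor $N_{\gamma,-\alpha}$ gives exactly $a_{\alpha,\beta} = -a_{\beta,\alpha}$, which closes the last case.

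I do not expect a genuine obstacle: the statement is essentially a consequence of Theorem \ref{ns1} together with the bookkeeping corollaries. The only point that requires care is the origin of the minus sign, which is not formal but comes from the specific relation $N_{\gamma,-\alpha}\, N_{\gamma,-\beta} = -1$ of Proposition \ref{www1} (reflecting $\norm{\alpha} = \norm{s_\gamma(\alpha)}$ and the Weyl--Chevalley normalisation). One must be sure to invoke $|N_{\gamma,-\alpha}| = 1$, so that $N_{\gamma,-\alpha}^{-1} = N_{\gamma,-\alpha}$ and the two relevant structural constants differ precisely by a sign rather than by some other reciprocal factor.
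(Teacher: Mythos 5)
Your proof is correct and follows essentially the same route as the paper's own (one-line) argument: Theorem \ref{ns1} to locate the nonzero entries $a_{\alpha,\mu(\alpha)}$, then Corollary \ref{hee4} combined with the sign relation $N_{\gamma,-\alpha}N_{\gamma,-\beta}=-1$ of Proposition \ref{www1} to get the minus sign. You simply make explicit the bookkeeping (vanishing of stem rows/columns, the diagonal, and the fact that $|N_{\gamma,-\alpha}|=1$ with integer structural constants gives $N_{\gamma,-\alpha}^2=1$) that the paper leaves implicit.
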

\bpr
From Theorem \ref{ns1} we know that $a_{\alpha,\beta} \neq 0$ iff $\alpha \in \Phi^+$ and $\beta = \mu(\alpha)$. The result follows from  Proposition \ref{www1} and Corollary \ref{hee4}.
\epr

\begin{coro}\label{qqq2} $ J(\mk{f}^+) \subset \mk{h}^-.$
\end{coro}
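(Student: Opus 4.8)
The statement $J(\mk{f}^+)\subset \mk{h}^-$ is an immediate consequence of part a) of Theorem \ref{ns1}, so the plan is essentially to assemble two facts already in hand. First I would recall from Definition \ref{di1} that the space $\mk{f}^+$ is by definition $span_\CC\{E_\gamma\vert\ \gamma\in\Gamma\}$, i.e.\ it is spanned precisely by the root vectors attached to the stem $\Gamma$. Thus a general element of $\mk{f}^+$ is a $\CC$-linear combination $\sum_{\gamma\in\Gamma} c_\gamma E_\gamma$ with $c_\gamma\in\CC$.

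Next I would invoke Theorem \ref{ns1} a), which asserts exactly that $J(E_\gamma)\in\mk{h}^-$ for every $\gamma\in\Gamma$. Since $J$ is (the complexification of) a real linear operator and hence $\CC$-linear on $\mk{g}$, it follows that
$$
J\Bigl(\sum_{\gamma\in\Gamma} c_\gamma E_\gamma\Bigr) = \sum_{\gamma\in\Gamma} c_\gamma\, J(E_\gamma) \in \mk{h}^-,
$$
because $\mk{h}^-$ is a $\CC$-subspace and each summand lies in it. This gives $J(\mk{f}^+)\subset\mk{h}^-$, which is all that is claimed.

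There is no genuine obstacle here: all the real work has already been carried out in the inductive proof of Theorem \ref{ns1}, where one shows simultaneously that $JE_{\gamma_{k+1}}\in\mk{h}$ (formula \eqref{qqq1}) and that the nonvanishing pattern of the coefficients $a_{\alpha,\beta}$ forces $\beta=\mu(\alpha)$. The only point worth stating explicitly is the $\CC$-linearity of $J$, which is legitimate since we work throughout with the complexified operator on $\mk{g}=\mk{u}^\CC$ as fixed in \eqref{hks1}. Hence the corollary requires nothing beyond reading off the spanning set of $\mk{f}^+$ and applying $J$ term by term.
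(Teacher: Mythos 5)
Your proof is correct and follows essentially the same route as the paper: the paper's own proof also just cites Theorem \ref{ns1} (together with $J(\mk{m}^+_I) = \mk{m}^-_I$, which is what upgrades the $JE_\gamma \in \mk{h}$ established in the induction of that theorem to $JE_\gamma \in \mk{h}\cap\mk{m}^-_I = \mk{h}^-$, the form stated in part a)). Your additional remark on the $\CC$-linearity of the complexified $J$ is the only step the paper leaves implicit, and it is exactly right.
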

\bpr
Follows directly from Theorem \ref{ns1} and $J(\mk{m}^+_I) = \mk{m}^-_I$.
\epr

\begin{coro}\label{qqq3}
If $rank(\mk{u}) < 2d$, then ${\bf U}$ carries no hypercomplex structure.
\end{coro}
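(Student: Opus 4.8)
The plan is to read off the conclusion directly from Corollary \ref{qqq2} by a dimension count, since all the substantive work has already gone into Theorem \ref{ns1}. I would argue by contraposition: assuming that ${\bf U}$ (equivalently $\mk{u}$) carries a hypercomplex structure, I would derive the inequality $2d \leq rank(\mk{u})$.

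First I would fix such a structure. Since any integrable complex structure on the compact algebra $\mk{u}$ is equivalent to a $\mk{b}^+$ complex structure, there is no loss in assuming that the first member $I$ of the pair is an admissible $\mk{b}^+$ complex structure and that $J$ matches $I$, so that $IJ = -JI$ and both $I,J$ are integrable. This places us exactly in the hypotheses of Theorem \ref{ns1} and of its Corollary \ref{qqq2}.

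Next I would invoke Corollary \ref{qqq2}, which gives $J(\mk{f}^+) \subset \mk{h}^-$. The key observation is that $J$, being a complex structure, satisfies $J^2 = -1$ and is therefore a linear automorphism of $\mk{g}$; in particular its restriction to $\mk{f}^+$ is injective. Since $\{E_\gamma \mid \gamma \in \Gamma\}$ is a basis of $\mk{f}^+$ (Definition \ref{di1}), we have $dim_\CC(\mk{f}^+) = d$, while Proposition \ref{lem1} gives $dim_\CC(\mk{h}^-) = \tfrac{1}{2}dim_\CC(\mk{h}) = \tfrac{1}{2}rank(\mk{u})$. The injective linear map $J\colon \mk{f}^+ \hookrightarrow \mk{h}^-$ then forces $d \leq \tfrac{1}{2}rank(\mk{u})$, that is $2d \leq rank(\mk{u})$.

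Taking the contrapositive yields the statement: if $rank(\mk{u}) < 2d$, then no complex structure $J$ can match any $\mk{b}^+$ complex structure $I$, and hence $\mk{u}$ admits no hypercomplex structure. There is essentially no obstacle to overcome at this stage; the entire force of the corollary resides in the inclusion $J(\mk{f}^+) \subset \mk{h}^-$, whose proof in Theorem \ref{ns1} is the real content. I would only need to be mildly careful that $rank(\mk{u}) = dim_\CC(\mk{h})$ and that $\mk{f}^+ \subset \mk{m}^+_I$, so that Corollary \ref{qqq2} genuinely applies.
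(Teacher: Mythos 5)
Your proof is correct and follows exactly the paper's own argument: the paper deduces the corollary from Corollary \ref{qqq2}, the bijectivity of $J$, and the equality $2\dim(\mk{h}^-) = rank(\mk{g})$, which is precisely your dimension count $d = \dim_\CC(\mk{f}^+) \leq \dim_\CC(\mk{h}^-) = \tfrac{1}{2}rank(\mk{u})$. The only difference is that you spell out the (standard) reduction to a $\mk{b}^+$ complex structure and the injectivity of $J$, which the paper leaves implicit.
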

\bpr
Follows from Corollary \ref{qqq2}, the fact that $J$ is bijective and $2dim(\mk{h}^-) = rank(\mk{g})$.
\epr

 From here on, we assume (often implicitly) that $rank(\mk{g}) \geq 2d$.
\begin{coro}
A semisimple compact Lie group ${\bf U}$ carries a hypercomplex structure if and only if
$$ {\bf U} \cong SU(2d_1+1)\times\dots\times SU(2d_n+1),\quad d_1,\dots,d_n\in \NN.$$
\end{coro}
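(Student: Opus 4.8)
The plan is to squeeze the rank between the necessary lower bound of Corollary~\ref{qqq3} and the upper bound of Corollary~\ref{ggg1}, forcing the equality $rank(\mk{u}) = 2d$, and then to read off from the classification of Section~\ref{ex} that this equality holds factor-by-factor precisely for the groups $SU(2k+1)$. To prepare the factor-by-factor argument I would first reduce to simple ideals: writing $\mk{u}_s = \mk{u}_1\oplus\dots\oplus\mk{u}_n$ as a sum of simple ideals gives an orthogonal decomposition $\Delta = \Delta_1\cup\dots\cup\Delta_n$ into irreducible subsystems, and Corollary~\ref{yyy4}~b) (applied inductively) yields the disjoint decomposition $\Gamma = \Gamma_1\cup\dots\cup\Gamma_n$ of the stem. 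Hence $d = \sum_i d_i$ with $d_i = \#\Gamma_i$, while plainly $rank(\mk{u}) = \sum_i rank(\mk{u}_i)$.

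For necessity, suppose $\mathbf{U}$ admits a hypercomplex structure. Then Corollary~\ref{qqq3} forces $rank(\mk{u})\geq 2d$, whereas Corollary~\ref{ggg1} applied to each component gives $rank(\mk{u}_i)\leq 2d_i$, hence $rank(\mk{u})\leq 2d$. Therefore $rank(\mk{u}) = 2d$ and, the two estimates being termwise, $rank(\mk{u}_i) = 2d_i$ for every $i$. To interpret this equality I would use the sharp form $d_i = \#(\Pi_i/\{id,\star\})$ of Corollary~\ref{ggg1}: if $k_i$ denotes the number of $\star$-fixed simple roots in $\Pi_i$, counting orbits gives $2d_i = rank(\mk{u}_i) + k_i$, so $rank(\mk{u}_i) = 2d_i$ is equivalent to $k_i = 0$, i.e. to $\star$ acting on $\Pi_i$ without fixed points. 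Since $\star$ is nontrivial only for $\mk{A}_m\,(m>1)$, $\mk{D}_{2n+1}$ and $\mk{E}_6$ (see the text following the definition of $\star$ in Subsection~\ref{di}), and since the explicit formulas for $\star$ in Examples~\ref{fraka}, \ref{sop2} and \ref{e6} exhibit a $\star$-fixed simple root in every case except $\mk{A}_{2k}$, I would conclude $\Delta_i = \mk{A}_{2d_i}$, that is $\mk{u}_i = su(2d_i+1)$.

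For sufficiency, assume $\mathbf{U}\cong SU(2d_1+1)\times\dots\times SU(2d_n+1)$. By Example~\ref{fraka} each factor has type $\mk{A}_{2d_i}$ with $rank(\mk{u}_i) = 2d_i$, so $rank(\mk{u}) = 2d$. Remark~\ref{ij31} (through Proposition~\ref{ij1}) then guarantees a $\mk{b}^+$ complex structure $I$ with $I(\mk{w}) = \mk{o}_u$, which is admissible, yielding the desired left-invariant hypercomplex structure on $\mathbf{U}$.

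I expect the only delicate point to be the combinatorial step that identifies the equality $rank(\mk{u}_i) = 2d_i$ with fixed-point-freeness of $\star$ on $\Pi_i$ and matches this against the short list of irreducible systems. Everything there hinges on the clean identity $2d_i = rank(\mk{u}_i) + k_i$ coming from Corollary~\ref{ggg1}; once it is in hand, the remaining verification is a brief inspection of the data recorded in Section~\ref{ex}, confirming that $\mk{A}_{2k}$ is the unique irreducible type on which $\star$ permutes the simple roots freely.
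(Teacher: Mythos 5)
Your proposal is correct and is essentially the paper's own argument: the paper likewise pins down $rank(\mk{u}) = 2d$ by combining Corollary \ref{qqq3} with Corollary \ref{ggg1}, identifies the simple algebras satisfying $rank = 2d$ from the data of Subsection \ref{ex}, and obtains sufficiency from Remark \ref{ij31} (via Proposition \ref{ij1}). The only difference is one of presentation: where the paper disposes of the type-check by saying that inspection of Subsection \ref{ex} shows $SL(2n+1,\CC)$ is the unique simple type with $rank = 2d$, you organize that same check through the orbit-counting identity $2d_i = rank(\mk{u}_i) + k_i$ and fixed-point-freeness of $\star$ on $\Pi_i$, which is a clean and rigorous way to fill in the step the paper leaves to the reader.
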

\bpr
The only simple group with $rank(\mk{g}) = 2d$ is $SL(2n+1,\CC)$ (see subsection \ref{ex}). On the other hand, existence of a hypercomplex structure for our ${\bf U}$ follows from Remark \ref{ij31}.
\epr

Now we are ready to determine the complex structure $J$ on
$\mc{V}$ (see Definition \ref{di1}).
\begin{prop} \label{mxj1}  Let $I$ be an admissible complex structure and let $J$ match $I$. If $\gamma \in \Gamma,\quad \alpha \in \Phi_{\gamma}^+$, then
\begin{gather} \label{mxj11}
JE_\alpha = N_{\gamma,- \alpha}\gamma(V_\gamma)E_{s_\gamma(\alpha)}.
 \end{gather}
\end{prop}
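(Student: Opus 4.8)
The plan is to combine the structural information about the matrix $\mathbf{a}$ already extracted in Theorem \ref{ns1} with a single integrability computation that pins down the Cartan part of $JE_\alpha$. Fix $\gamma\in\Gamma$ and $\alpha\in\Phi_\gamma^+$, and set $\beta=\mu(\alpha)=\gamma-\alpha\in\Phi_\gamma^+$, so that $\alpha+\beta=\gamma$ and $s_\gamma(\alpha)=-\beta$. First I would record that $JE_\gamma=V_\gamma\in\mk{h}^-$ by Theorem \ref{ns1} a), and that by Theorem \ref{ns1} b) together with Corollary \ref{ss7} the only index $\delta\in\Delta^+$ with $a_{\delta,\alpha}\neq0$ is $\delta=\beta$. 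Hence in the basis of Definition \ref{bo2}
$$JE_\alpha = a_{\beta,\alpha}E_{s_\gamma(\alpha)} + h_\alpha,\qquad h_\alpha=\sum_t\xi_{t,\alpha}V_t\in\mk{h}^-,$$
and Corollary \ref{le1} gives $a_{\beta,\alpha}=N_{\gamma,-\alpha}\gamma(V_\gamma)$. Thus the $\mk{n}^-$-part of $JE_\alpha$ is already the asserted term, and the entire remaining content of the proposition is the vanishing of the Cartan remainder $h_\alpha$.

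To kill $h_\alpha$ I would compute the complexified Nijenhuis tensor $N_J(E_\gamma,E_\alpha)$ and invoke $N_J\equiv0$. Substituting $JE_\gamma=V_\gamma$ and $JE_\alpha=N_{\gamma,-\alpha}\gamma(V_\gamma)E_{s_\gamma(\alpha)}+h_\alpha$ and expanding the four brackets, the $\mk{n}^-$-contributions collapse to a multiple of $E_{-\beta}$ whose coefficient is proportional to $N_{\gamma,-\alpha}+N_{\gamma,-\beta}$, which vanishes since $N_{\gamma,-\alpha}N_{\gamma,-\beta}=-1$ with $N_{\gamma,-\alpha}=\pm1$ (Proposition \ref{www1}); moreover $[E_\gamma,E_\alpha]=0$ because $\gamma+\alpha\notin\Delta$ (Corollary \ref{yyy7}). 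What survives is purely in $\mk{h}$, and using $\alpha+\beta=\gamma$ the identity reduces to
$$0=N_J(E_\gamma,E_\alpha)=\beta(V_\gamma)\,h_\alpha+\gamma(h_\alpha)\,V_\gamma.$$

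The last step is to read $h_\alpha=0$ off this relation, and this is the only genuinely delicate point. If $\beta(V_\gamma)\neq0$ the relation forces $h_\alpha=\lambda V_\gamma$ for a scalar $\lambda$; applying $\gamma$ and resubstituting yields $\lambda\,(\beta(V_\gamma)+\gamma(V_\gamma))\,V_\gamma=0$, so $\lambda=0$ provided $\beta(V_\gamma)+\gamma(V_\gamma)\neq0$. Both nonvanishing statements I would obtain from Proposition \ref{eee4}, which gives $V_\gamma=\tfrac12\gamma(V_\gamma)(H_\gamma+\ri IH_\gamma)$ with $\gamma(V_\gamma)\neq0$, combined with the fact that $I$ preserves $\mk{h}_u=\mk{h}\cap\mk{u}$ (it acts as $\pm\ri$ on $\mk{h}^\pm$, hence $Z_\gamma=IW_\gamma\in\mk{h}_u$ and $IH_\gamma=-2\ri Z_\gamma$). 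By \eqref{re1} the value $\beta(Z_\gamma)$ is purely imaginary, so $\beta(IH_\gamma)$ is \emph{real}; then, using $\beta(H_\gamma)=C(\beta,\gamma)=1$ (Corollary \ref{uu1}),
$$\beta(V_\gamma)=\tfrac12\gamma(V_\gamma)\bigl(1+\ri\,\beta(IH_\gamma)\bigr),\qquad \beta(V_\gamma)+\gamma(V_\gamma)=\gamma(V_\gamma)\bigl(\tfrac32+\tfrac{\ri}{2}\beta(IH_\gamma)\bigr),$$
and after dividing by $\gamma(V_\gamma)$ both factors have nonzero real part, so neither vanishes. This gives $h_\alpha=0$ and hence the proposition. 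I expect the Nijenhuis bookkeeping in the middle step to be the most error-prone, but the real conceptual hinge is the reality of $\beta(IH_\gamma)$, which is exactly what forbids the Cartan remainder from surviving along the direction $V_\gamma$.
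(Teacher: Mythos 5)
Your proof is correct and takes essentially the same route as the paper's: decompose $JE_\alpha = a_{\beta,\alpha}E_{-\beta} + H$ with $H\in\mk{h}^-$ via Theorem \ref{ns1} and Corollary \ref{le1}, expand $N_J(E_\gamma,E_\alpha)=0$ to arrive at the same key relation $\beta(V_\gamma)H + \gamma(H)V_\gamma = 0$, and then kill $H$ using the nonvanishing of $\beta(V_\gamma)$ and $(\gamma+\beta)(V_\gamma)$ obtained from Proposition \ref{eee4} and $\beta(H_\gamma)=1$. Your explicit verification that $\beta(IH_\gamma)$ is real (via $Z_\gamma\in\mk{h}_u$ and \eqref{re1}) fills in a step the paper asserts without detail, which is a welcome addition rather than a deviation.
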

\bpr  We denote $\beta = \mu(\alpha) = -s_\gamma(\alpha)$. From Theorem \ref{ns1} we have
\begin{gather*}
JE_\alpha =a_{\beta,\alpha}E_{-\beta} + H, \qquad H \in \mk{h}^-.
\end{gather*}
In the following computation we keep explicit only terms which nave nontrivial projection to $\mk{h}$.
 \begin{gather*} N_J(E_\gamma,E_\alpha) = [V_\gamma,a_{\beta,\alpha}E_{-\beta} + H]  - J[E_\gamma,a_{\beta,\alpha}E_{-\beta} + H]- J[V_\gamma,E_\alpha] \\
 = \gamma(H)V_\gamma  - N_{\gamma,-\beta}a_{\beta,\alpha}JE_\alpha - \alpha(V_\gamma)JE_\alpha +A\\
 = \gamma(H)V_\gamma - ( N_{\gamma,-\beta}a_{\beta,\alpha} + \alpha(V_\gamma))H +B,
 \end{gather*}
 where $A,B \in \mk{n}^-$. Now we use $a_{\beta,\alpha} = N_{\gamma,-\alpha}\gamma(V_\gamma)$
(see  Corollary \ref{le1}) and $\quad N_{\gamma, -\alpha}N_{\gamma, -\beta}= -1$ (Proposition \ref{www1}) to get
\begin{gather}\label{mxj4}
\gamma(H)V_\gamma + \beta(V_\gamma)H = 0
\end{gather}
We apply $\gamma$ to this equation and obtain  $\gamma(H)(\gamma + \beta)(V_\gamma) = 0 $. By   Proposition \ref{eee4} and $\beta\in\Phi^+_\gamma$ we have
   $$
\beta(V_\gamma) = \gamma(V_\gamma)\sco{\frac{1}{2} + \mbox{ imaginary number }} \neq 0.$$
Thus $(\gamma + \beta)(V_\gamma) \neq 0 $, whence $\gamma(H) = 0$.
Now by \eqref{mxj4} and $\beta(V_\gamma)\neq 0$ we get $H = 0$. The Proposition follows.
\epr
Formula \eqref{mxj11} obviously implies
\begin{coro} \label{eee11}  Let $I$ be an admissible $\mk{b}^+$ complex structure and let $J_1,J_2$ be two complex structures matching $I$.

If $J_1E_\gamma = J_2E_\gamma$ for each $\gamma \in \Gamma$, then $J_1E_\alpha = J_2E_\alpha$ for each $\alpha \in \Delta$.
\end{coro}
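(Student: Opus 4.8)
The plan is to read off the result directly from the explicit formula \eqref{mxj11} of Proposition \ref{mxj1}, using the stem decomposition $\Delta^+ = \Gamma \cup \Phi^+$ to cover the positive roots and the $\tau$-equivariance of matching complex structures to reduce the negative roots to the positive ones. First I would record the consequence of the hypothesis: since $J_1E_\gamma = J_2E_\gamma$ for every $\gamma\in\Gamma$, I may denote this common value $V_\gamma$, and in particular the scalar $\gamma(V_\gamma)$ is the same whether computed from $J_1$ or from $J_2$.

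Next, I fix $\gamma\in\Gamma$ and an arbitrary $\alpha\in\Phi_\gamma^+$, and apply Proposition \ref{mxj1} to each of $J_1,J_2$. This gives $J_iE_\alpha = N_{\gamma,-\alpha}\,\gamma(V_\gamma)\,E_{s_\gamma(\alpha)}$ for $i=1,2$. The structural constant $N_{\gamma,-\alpha}$ and the reflected root $s_\gamma(\alpha)$ depend only on the root system, and $\gamma(V_\gamma)$ is common by the previous step, so $J_1E_\alpha = J_2E_\alpha$ for every $\alpha\in\Phi^+$. Together with the hypothesis on $\Gamma$ and the disjoint union $\Delta^+ = \Gamma\cup\Phi^+$, this already yields $J_1E_\alpha = J_2E_\alpha$ for all $\alpha\in\Delta^+$.

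Finally, for a negative root I write it as $-\beta$ with $\beta\in\Delta^+$ and invoke that every matching complex structure commutes with $\tau$ (see \eqref{hks1}) together with $\tau(E_\beta) = -E_{-\beta}$; hence $JE_{-\beta} = -\tau(JE_\beta)$ for any such $J$. Applying this to both $J_1$ and $J_2$ and using the equality already established on $\Delta^+$ gives $J_1E_{-\beta} = -\tau(J_1E_\beta) = -\tau(J_2E_\beta) = J_2E_{-\beta}$, completing the verification on all of $\Delta$. I do not expect any genuine obstacle here: the corollary is essentially a bookkeeping consequence of Proposition \ref{mxj1}. The only two points that need care are the use of the disjoint stem decomposition $\Delta^+ = \Gamma\cup\Phi^+$ to be sure every positive root is covered, and the $\tau$-equivariance step that transfers the conclusion from positive to negative roots.
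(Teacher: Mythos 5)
Your proof is correct and is essentially the paper's own argument: the paper simply notes that formula \eqref{mxj11} of Proposition \ref{mxj1} ``obviously implies'' the corollary, and your write-up just fills in the routine details (the common value $\gamma(V_\gamma)$, the stem decomposition $\Delta^+=\Gamma\cup\Phi^+$, and the $\tau$-equivariance step $JE_{-\beta}=-\tau(JE_\beta)$ for the negative roots).
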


Given an admissible $\mk{b}^+$ complex structure $I$, Proposition \ref{mxj1} determines the action of a matching $J$ on the invariant\footnote{Of course one point of Proposition \ref{mxj1} is proving the $J$ invariance of $\mc{V}$.} subspace $\mc{V}$. The result is so clean that it gives us more precise description of the matching Cayley structure $I_{\bf c}$ than we achieved in subsection \ref{sus1}.

\subsection{The action of J on the extended stem subalgebra}
Now we return to the notations of Section \ref{ste}. We show that if $I$ is any admissible complex structure  on $\mk{u}$ and if $J$ matches $I$, then $J = I_{\bf c}$ (as in Section \ref{ste}) for a certain value of the torus parameter $\rho$, namely:
\begin{df}\label{qqq8} Let $\gamma \in \Gamma$. We denote:
\begin{gather*}
\rho_\gamma = \ri\gamma(JE_{-\gamma}),\quad\rho =\{\rho_\gamma\vert\gamma\in\Gamma\} \\
 \end{gather*}
\end{df}

The first equality in formula \eqref{ca11} gives $|\rho_\gamma|=1$ whence we may use all the entities from  Definitions \ref{we1}, \ref{eee81}.

In particular for any $\gamma\in\Gamma$ from \eqref{ca10} we get:
\begin{gather}\label{oo51}
 JE_\gamma = V_\gamma = \ol{\rho_\gamma}(W_\gamma
 + \ri Z_\gamma),\quad
   JE_{-\gamma} = U_\gamma = - \rho_\gamma (W_\gamma
 - \ri Z_\gamma).
 \end{gather}

\begin{prop} \label{eee1}  Let $I$ be an admissible complex structure and let $J$ match $I$. Then for any $\gamma \in \Gamma$ we have
\begin{gather}\label{eee3}
IX_\gamma =  Y_\gamma,\quad IW_\gamma = Z_\gamma;\qquad JX_\gamma =  W_\gamma ,\quad JZ_\gamma = Y_\gamma.
\end{gather}
\end{prop}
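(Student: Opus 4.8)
The plan is to reduce the whole statement to the explicit formulas \eqref{oo51} for $JE_{\pm\gamma}$, together with the algebraic relations $I^2 = J^2 = -1$, the anticommutation $IJ = -JI$, and the normalization $|\rho_\gamma| = 1$ (recorded after Definition \ref{qqq8}). The genuinely substantive facts --- that $JE_\gamma \in \mk{h}^-$ and that $V_\gamma, U_\gamma$ have the stated shape --- have already been extracted in Proposition \ref{eee4} and Theorem \ref{ns1}, so in this proposition only bookkeeping remains. First I would dispose of the two identities involving $I$. Since $E_\gamma \in \mk{n}^+ \subset \mk{m}^+_I$ and $E_{-\gamma} \in \mk{n}^- \subset \mk{m}^-_I$, the definition of a $\mk{b}^+$ complex structure gives $IE_{\pm\gamma} = \pm\ri E_{\pm\gamma}$; substituting into the definition of $X_\gamma$ (Definition \ref{we1}) yields $IX_\gamma = \frac{\ri}{2}(\rho_\gamma E_\gamma + \ol{\rho_\gamma} E_{-\gamma}) = Y_\gamma$. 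The equality $IW_\gamma = Z_\gamma$ requires no argument at all: it is the definition of $Z_\gamma$ in Definition \ref{eee81}.

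Next I would establish $JX_\gamma = W_\gamma$ by direct substitution of \eqref{oo51}. Writing $JE_\gamma = \ol{\rho_\gamma}(W_\gamma + \ri Z_\gamma)$ and $JE_{-\gamma} = -\rho_\gamma(W_\gamma - \ri Z_\gamma)$ and plugging into $JX_\gamma = \tfrac{1}{2}(\rho_\gamma JE_\gamma - \ol{\rho_\gamma} JE_{-\gamma})$, the factor $\rho_\gamma\ol{\rho_\gamma} = |\rho_\gamma|^2 = 1$ turns the two contributions into $(W_\gamma + \ri Z_\gamma)$ and $(W_\gamma - \ri Z_\gamma)$; the $\ri Z_\gamma$ terms cancel and the $W_\gamma$ terms add, giving $JX_\gamma = W_\gamma$. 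The one point to watch here is carrying the complex scalars $\pm\ri$ and the conjugate $\ol{\rho_\gamma}$ so that the cancellation happens with the correct signs.

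Finally, for $JZ_\gamma = Y_\gamma$ I would chain the identities already in hand rather than recompute from \eqref{oo51}. Applying $J$ to the proved relation $JX_\gamma = W_\gamma$ and using $J^2 = -1$ gives $JW_\gamma = J^2 X_\gamma = -X_\gamma$. Then, using $Z_\gamma = IW_\gamma$, the anticommutation $IJ = -JI$, and $IX_\gamma = Y_\gamma$ from the first paragraph, one computes $JZ_\gamma = JIW_\gamma = -IJW_\gamma = -I(-X_\gamma) = IX_\gamma = Y_\gamma$. (Equivalently, applying $J$ directly to both equations of \eqref{oo51} and solving the resulting $2\times 2$ linear system in $JW_\gamma, JZ_\gamma$ produces the same conclusion together with $JW_\gamma = -X_\gamma$.) I do not expect any real obstacle in this proposition: all the difficulty was front-loaded into the determination of $JE_{\pm\gamma}$ in \eqref{oo51}, and what is left is a closed three-line computation once $IX_\gamma = Y_\gamma$ and $JX_\gamma = W_\gamma$ are in place.
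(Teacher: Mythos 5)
Your proposal is correct and follows essentially the same route as the paper: the identities for $I$ come from the definition of a $\mk{b}^+$ complex structure and of $Z_\gamma$, and the identities for $J$ come from the formulas \eqref{oo51} with $|\rho_\gamma|=1$. The only cosmetic difference is that you obtain $JZ_\gamma = Y_\gamma$ via $J^2=-1$ and the anticommutation $IJ=-JI$, whereas the paper solves the system \eqref{oo51} for $W_\gamma, Z_\gamma$ directly --- a variant you yourself note in your parenthetical.
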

\begin{proof} The first and second equality in \eqref{eee3} come from the definition of a $\mk{b}^+$ complex structure $I$. The third and fourth equality come by solving the system \eqref{oo51} for $W_\gamma , Z_\gamma $.
 \end{proof}

The conclusions of Proposition \ref{eee1} and Corollary \ref{oo2}
coincide, but the assumptions are different. The coincidence means
of course, that if $J$ is an arbitrary complex structure matching
$I$, then $J = I_{\bf c}$  on the extended stem subalgebra
$\mk{e}$ (with $\rho$ as in Definition \ref{qqq8}, see also Remark
\ref{di7}). Combining Corollary \ref{eee11} with Proposition
\ref{eee1} we get

\begin{coro} \label{eee21}  Let $I$ be an admissible $\mk{b}^+$ complex structure and let $J$ match $I$. Then
each of the subspaces $\mk{e}, \ \mc{V}$ is $J$-stable. Moreover, for each $X \in \mk{e} \oplus \mc{V}$ we have $JX = {\bf c}I{\bf c}^{-1} X$, where the Cayley transform ${\bf c} = {\bf c}[\rho]$ is as in Definition \ref{xi_p} and $\rho_\gamma = \ri\gamma(JE_{-\gamma})$ for each $\gamma\in\Gamma$.
\end{coro}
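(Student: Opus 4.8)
The plan is to fix $\rho_\gamma=\ri\gamma(JE_{-\gamma})$ and the Cayley transform ${\bf c}={\bf c}[\rho]$ of Definitions \ref{qqq8} and \ref{xi_p}, write $I_{\bf c}={\bf c}I{\bf c}^{-1}$, and verify the two claims of the corollary on the summands $\mc{V}$ and $\mk{e}$ separately. On $\mc{V}$ no extra hypothesis is needed: for $\gamma\in\Gamma$ and $\alpha\in\Phi_\gamma^+$ Proposition \ref{mxj1} gives $JE_\alpha=N_{\gamma,-\alpha}\gamma(V_\gamma)E_{s_\gamma(\alpha)}$, and since $s_\gamma(\alpha)\in\Phi_\gamma^-$ this already shows $J(\mc{V}_\gamma^+)\subset\mc{V}_\gamma^-$; applying $\tau$ gives the reverse inclusion, so $\mc{V}$ is $J$-stable. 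Because \eqref{ca5} and the definition of $\rho_\gamma$ give $\gamma(V_\gamma)=\ri\ol{\rho_\gamma}$, the same formula reads $JE_\alpha=\ri\ol{\rho_\gamma}N_{\gamma,-\alpha}E_{s_\gamma(\alpha)}$, which is precisely the value of $I_{\bf c}E_\alpha$ recorded in \eqref{uu4}; hence $J=I_{\bf c}$ on $\mc{V}^+$, and by $\tau$-equivariance on all of $\mc{V}$.

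The summand $\mk{e}=\mk{f}_u+\mk{z}=span_\RR\{W_\gamma,X_\gamma,Y_\gamma,Z_\gamma\mid\gamma\in\Gamma\}$ is where the work lies. Proposition \ref{eee1} records $JX_\gamma=W_\gamma$ and $JZ_\gamma=Y_\gamma$, whence $JW_\gamma=-X_\gamma$ and $JY_\gamma=-Z_\gamma$ by $J^2=-1$; this already shows $\mk{e}$ is $J$-stable and pins down $J|_{\mk{e}}$ completely. To recognise these as the values of $I_{\bf c}$ I would like to quote Corollary \ref{oo2}, whose conclusion is exactly this list of formulas for $I_{\bf c}$ --- but that corollary is conditional on $\mk{z}\subset\mk{o}$. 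So the whole argument reduces to establishing the inclusion $\mk{z}\subset\mk{o}$.

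The hard part will be proving $\mk{z}\subset\mk{o}$, i.e. $\delta(Z_\gamma)=0$ for all $\gamma,\delta\in\Gamma$. For $\delta=\gamma$ this is just $\gamma(IH_\gamma)=0$ from \eqref{ca11}, since $Z_\gamma=\tfrac{\ri}{2}IH_\gamma$. For $\delta\neq\gamma$ I would extract it from integrability of $J$ by evaluating the Nijenhuis tensor on $E_\gamma$ and $E_{-\delta}$. Strong orthogonality of $\Gamma$ gives $[E_\gamma,E_{-\delta}]=0$, and since $V_\gamma=JE_\gamma$ and $U_\delta=JE_{-\delta}$ lie in the abelian $\mk{h}$ also $[JE_\gamma,JE_{-\delta}]=0$; the two remaining terms collapse to $N_J(E_\gamma,E_{-\delta})=\delta(V_\gamma)U_\delta+\gamma(U_\delta)V_\gamma$. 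By \eqref{ca7} we have $U_\delta\in\mk{h}^+$ and $V_\gamma\in\mk{h}^-$, so these nonzero vectors are linearly independent and $N_J=0$ forces $\delta(V_\gamma)=0$. Combined with $\delta(W_\gamma)=0$ (strong orthogonality) and $V_\gamma=\ol{\rho_\gamma}(W_\gamma+\ri Z_\gamma)$ from \eqref{oo51}, this yields $\delta(Z_\gamma)=0$, completing $\mk{z}\subset\mk{o}$.

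With $\mk{z}\subset\mk{o}$ secured, Corollary \ref{oo2} applies and gives $I_{\bf c}X_\gamma=W_\gamma$, $I_{\bf c}Z_\gamma=Y_\gamma$, hence $I_{\bf c}W_\gamma=-X_\gamma$ and $I_{\bf c}Y_\gamma=-Z_\gamma$ as well; these agree with the action of $J$ found above, so $J=I_{\bf c}$ on $\mk{e}$. Together with the computation on $\mc{V}$ this proves the corollary. Equivalently, once $\mk{z}\subset\mk{o}$ is known one checks $JE_\gamma=\ol{\rho_\gamma}Q_\gamma=I_{\bf c}E_\gamma$ for every $\gamma\in\Gamma$ and feeds this into Corollary \ref{eee11}, comparing $J$ with the matching structure produced by Theorem \ref{suf1} for the same $\rho$, to obtain $J=I_{\bf c}$ on every root space; this is the route suggested by the remark preceding the statement.
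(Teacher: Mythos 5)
Your proof is correct, and its skeleton is the same as the paper's: Proposition \ref{eee1} pins down $J$ on the extended stem subalgebra, Corollary \ref{oo2} identifies those values with the values of $I_{\bf c}={\bf c}I{\bf c}^{-1}$, and Proposition \ref{mxj1} handles $\mc{V}$. Two differences are worth recording. On $\mc{V}$ you compare \eqref{mxj11} directly with \eqref{uu4} (after converting $\gamma(V_\gamma)=\ri\,\ol{\rho_\gamma}$ via \eqref{ca5} and Definition \ref{qqq8}), whereas the paper routes this through the uniqueness statement of Corollary \ref{eee11}; since that corollary is itself an immediate consequence of \eqref{mxj11}, the two arguments are the same in substance. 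More significantly, you noticed that quoting Corollary \ref{oo2} requires the inclusion $\mk{z}\subset\mk{o}$ --- without it ${\bf c}^{-1}Z_\gamma\neq Z_\gamma$ and $I_{\bf c}Z_\gamma\neq Y_\gamma$ --- and you supplied the missing argument: the Nijenhuis computation on the pair $(E_\gamma,E_{-\delta})$ forces $\delta(V_\gamma)=0$, hence $\delta(Z_\gamma)=0$ by \eqref{oo51} and orthogonality. The paper establishes exactly this inclusion only \emph{after} the corollary (Proposition \ref{cor5} and Corollary \ref{qqq77}, by the same computation applied to the pair $(E_\gamma,E_\delta)$), so its one-line proof of the statement contains a forward dependence. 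There is no circularity in the paper, since Proposition \ref{cor5} and Corollary \ref{qqq77} do not rely on the statement being proved, but your ordering --- proving $\mk{z}\subset\mk{o}$ before invoking Corollary \ref{oo2} --- is the logically self-contained one.
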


Corollary \ref{oo2} was proved under the assumption that $\mk{z} \subset \mk{o}_u$, which is also the sufficient condition of our general existence Theorem \ref{suf1}.
We prove next that the condition  $\mk{z} \subset \mk{o}_u$ is also necessary\footnote{Obviously we could now deduce the condition $J\mk{w}\subset \mk{o}_u$ from Proposition \ref{la1}, and the fact that $J = I_{\bf c}$ on $\mk{e}$. We believe that the following direct proof from integrability is more beautiful.}  for admissibility of $I$. We begin with:
\begin{prop} \label{cor5}  Let $I$ be an admissible complex structure and let $J$ match $I$. If $\gamma,\delta \in \Gamma,\ \gamma\neq \delta$, then $\gamma(JE_\delta) = 0$.
\end{prop}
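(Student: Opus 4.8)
The plan is to read off the vanishing of $\gamma(JE_\delta)$ directly from the integrability of $J$, by evaluating the (complexified) Nijenhuis tensor \eqref{cs3} on the pair of stem root vectors $E_\gamma,E_\delta$. First I would record the two structural facts that make the computation collapse. By Theorem \ref{ns1} a), for every stem root we have $JE_\gamma\in\mk{h}^-$, so the abbreviations $V_\gamma = JE_\gamma$ and $V_\delta = JE_\delta$ of \eqref{ca7} are legitimate elements of the Cartan subalgebra $\mk{h}$. Secondly, by Corollary \ref{yyy11} the stem $\Gamma$ is strongly orthogonal, so for $\gamma\neq\delta$ we have $\gamma+\delta\notin\Delta$, and consequently $[E_\gamma,E_\delta] = N_{\gamma,\delta}E_{\gamma+\delta} = 0$.

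With these two facts in hand the Nijenhuis identity simplifies almost entirely. Since $V_\gamma,V_\delta\in\mk{h}$ and $\mk{h}$ is abelian, $[V_\gamma,V_\delta]=0$; and for the mixed brackets the root-space action gives $[V_\gamma,E_\delta]=\delta(V_\gamma)E_\delta$ and $[E_\gamma,V_\delta]=-\gamma(V_\delta)E_\gamma$. Substituting $JE_\gamma=V_\gamma$, $JE_\delta=V_\delta$ into
\begin{gather*}
N_J(E_\gamma,E_\delta) = [JE_\gamma,JE_\delta] - J[JE_\gamma,E_\delta] - J[E_\gamma,JE_\delta] - [E_\gamma,E_\delta],
\end{gather*}
and using that $J$ fixes the coefficients while acting again by $JE_\delta=V_\delta$, $JE_\gamma=V_\gamma$, integrability ($N_J\equiv 0$) yields the single relation
\begin{gather*}
0 = \gamma(V_\delta)\,V_\gamma - \delta(V_\gamma)\,V_\delta.
\end{gather*}

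It then remains only to observe that $V_\gamma$ and $V_\delta$ are linearly independent. This is immediate: $J$ is a (bijective) complex structure and the distinct root vectors $E_\gamma,E_\delta$ are linearly independent, hence so are their images $V_\gamma=JE_\gamma$ and $V_\delta=JE_\delta$. Therefore both coefficients in the displayed relation vanish, giving $\gamma(V_\delta)=0$ and $\delta(V_\gamma)=0$; in particular $\gamma(JE_\delta)=\gamma(V_\delta)=0$, which is the claim. The only genuinely substantial input here is Theorem \ref{ns1} a), which is what places $JE_\gamma$ and $JE_\delta$ inside $\mk{h}$ in the first place; once that is granted, the present statement is a short and essentially forced consequence, and I do not expect any real obstacle beyond keeping the signs in the bracket computation straight.
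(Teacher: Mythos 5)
Your proof is correct and follows essentially the same route as the paper: both evaluate $N_J(E_\gamma,E_\delta)$ using Theorem \ref{ns1} a) and the strong orthogonality of $\Gamma$ to kill the brackets $[V_\gamma,V_\delta]$ and $[E_\gamma,E_\delta]$, leaving a two-term relation whose coefficients must vanish. The only cosmetic difference is that the paper applies $J$ once more to obtain the relation $0=\delta(V_\gamma)E_\delta-\gamma(V_\delta)E_\gamma$ in the manifestly independent vectors $E_\gamma,E_\delta$, whereas you keep it in $V_\gamma,V_\delta$ and invoke the bijectivity of $J$ for their independence.
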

\begin{proof}
Using Theorem \ref{ns1} and strong orthogonality of $\Gamma$  we compute $JN_J (E_\gamma,E_\delta)$ to get:
\begin{gather*}
0 = [JE_\gamma,E_\delta] + [E_\gamma, JE_\delta]
= \delta(V_\gamma)E_\delta -\gamma(V_\delta)E_{\gamma},
\end{gather*}
hence $\gamma(JE_\delta) = \delta(JE_\gamma) = 0$.
\end{proof}

\begin{coro}\label{qqq77} Let I be an admissible $\mk{b}^+$ complex structure on $\mk{u}$. Let $\Gamma$ be the stem of $\Delta^+$.
If $\gamma,\delta \in \Gamma$, then $\gamma(Z_\delta) = 0$.
\end{coro}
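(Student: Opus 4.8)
The plan is to reduce the claim to known facts about a complex structure $J$ matching $I$, which exists because $I$ is admissible, and to treat the diagonal pairs $\gamma = \delta$ and the off-diagonal pairs $\gamma \neq \delta$ separately. Throughout I use that $Z_\delta = IW_\delta$ depends only on $I$, so any choice of matching $J$ is legitimate, and that $W_\delta = \tfrac{\ri}{2}H_\delta$ together with the $\CC$-linearity of the extension of $I$ to $\mk{g}$ gives $Z_\delta = \tfrac{\ri}{2}IH_\delta$.

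For the diagonal case $\gamma = \delta$ I would simply apply the functional $\gamma$ to this last identity and invoke the second relation of \eqref{ca11} in Proposition \ref{eee4}, namely $\gamma(IH_\gamma) = 0$. This yields $\gamma(Z_\gamma) = \tfrac{\ri}{2}\gamma(IH_\gamma) = 0$ at once, with no further input.

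For the off-diagonal case $\gamma \neq \delta$ I would apply $\gamma$ to the first formula in \eqref{oo51}, which reads $JE_\delta = \ol{\rho_\delta}(W_\delta + \ri Z_\delta)$, to obtain
\[
\gamma(JE_\delta) = \ol{\rho_\delta}\bigl(\gamma(W_\delta) + \ri\,\gamma(Z_\delta)\bigr).
\]
The left-hand side vanishes by Proposition \ref{cor5}. Moreover $\gamma(W_\delta) = \tfrac{\ri}{2}\gamma(H_\delta) = \tfrac{\ri}{2}C(\gamma,\delta) = 0$, because $\Gamma$ is a strongly orthogonal set (Corollary \ref{yyy11}), so distinct $\gamma,\delta \in \Gamma$ satisfy $\scal{\gamma,\delta} = 0$ and hence $\gamma(H_\delta) = 0$. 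Since $|\rho_\delta| = 1 \neq 0$, the displayed equation then forces $\gamma(Z_\delta) = 0$.

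Taken together the two cases cover all pairs in $\Gamma$, so the corollary follows. I do not expect any genuine obstacle: the only points requiring a little care are confirming that $\gamma(W_\delta)$ really vanishes off the diagonal, which is exactly strong orthogonality, and noting the harmless dependence of $Z_\delta$ on $I$ alone, so that invoking a matching $J$ is justified.
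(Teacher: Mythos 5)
Your proof is correct and is essentially the paper's own argument: the diagonal case via $\gamma(IH_\gamma)=0$ from Proposition \ref{eee4}, and the off-diagonal case by applying $\gamma$ to \eqref{oo51} and combining Proposition \ref{cor5} with strong orthogonality of $\Gamma$. The only difference is presentational — you spell out the reduction $Z_\delta = \tfrac{\ri}{2}IH_\delta$ and the two cases more explicitly than the paper does.
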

\bpr Let $J$ be any complex structure matching $I$. From
Proposition \ref{eee4} we know that  $\gamma(IH_\gamma) = 0$. If
$\gamma\neq \delta$, then by \eqref{oo51}, Proposition \ref{cor5}
and strong orthogonality of $\Gamma$ we have  $0 =
\gamma(JE_\delta) = \ol{\rho_\delta}(\gamma(W_\delta) -
\ri\gamma(Z_\delta)) = \ri\ol{\rho_\delta}  \gamma(Z_\delta)$. The
corollary is proved. \epr We have a useful consequence of
Proposition \ref{qqq77}
\begin{coro}\label{qqq47}
If $I$ is admissible and $\Gamma = \{\gamma_1,\dots,\gamma_d\}$, then $\gamma_j(P_k) = \gamma_j(Q_k) = \ri \delta_{jk}$.
\end{coro}
Formula \eqref{ca12} and Proposition \ref{mj5} give
\begin{prop}\label{qqq7} Let $J$ match $I$, then $J(\mk{f}^+) = \mk{v}^-,\quad J(\mk{f}^-) =  \mk{v}^+$.
\end{prop}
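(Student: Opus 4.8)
The plan is to read the statement directly off the explicit formula for $J$ on the stem nilpotents, combined with the basis description of $\mk{v}^\pm$ from Proposition \ref{mj5}. The only preliminary point worth making is that admissibility of $I$ already secures the hypotheses under which $P_\gamma$, $Q_\gamma$ and the spaces $\mk{v}^\pm$ of Definition \ref{mj1} are defined. Indeed, by Corollary \ref{qqq77} we have $\gamma(Z_\delta)=0$ for all $\gamma,\delta\in\Gamma$, which says precisely that each $Z_\delta$ lies in $\mk{o}$; since $Z_\delta\in\mk{h}_u$ this gives $\mk{z}\subset\mk{o}_u$. Hence Proposition \ref{mj5} applies and furnishes the bases $\mk{v}^+=span_\CC\{P_\gamma\mid\gamma\in\Gamma\}$ and $\mk{v}^-=span_\CC\{Q_\gamma\mid\gamma\in\Gamma\}$.

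Next I would invoke formula \eqref{ca12} (equivalently \eqref{oo51}), which was established for an arbitrary matching $J$ once Theorem \ref{ns1} had placed $JE_{\pm\gamma}$ inside $\mk{h}$: for each $\gamma\in\Gamma$,
\[
JE_\gamma=\ol{\rho_\gamma}\,Q_\gamma,\qquad JE_{-\gamma}=-\rho_\gamma\,P_\gamma,
\]
where $|\rho_\gamma|=1$ by the first equality in \eqref{ca11}. In particular $JE_\gamma$ is a nonzero scalar multiple of $Q_\gamma$ and $JE_{-\gamma}$ is a nonzero scalar multiple of $P_\gamma$.

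Finally, since $\{E_\gamma\mid\gamma\in\Gamma\}$ is a $\CC$-basis of $\mk{f}^+$ (Definition \ref{di1}) and $J$ is $\CC$-linear, its image is
\[
J(\mk{f}^+)=span_\CC\{JE_\gamma\mid\gamma\in\Gamma\}=span_\CC\{Q_\gamma\mid\gamma\in\Gamma\}=\mk{v}^-,
\]
and symmetrically $J(\mk{f}^-)=span_\CC\{P_\gamma\mid\gamma\in\Gamma\}=\mk{v}^+$. I do not expect a genuine obstacle here: the entire mathematical content already resides in the earlier computation of $JE_{\pm\gamma}$ (formula \eqref{ca12}/\eqref{oo51}) and in the identification of the bases of $\mk{v}^\pm$ in Proposition \ref{mj5}. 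The present proposition is just the bookkeeping that assembles these two facts, the one substantive check being that admissibility forces $\mk{z}\subset\mk{o}_u$ so that $P_\gamma,Q_\gamma$ are legitimately available.
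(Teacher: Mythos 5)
Your proof is correct and follows exactly the paper's own route: the paper derives Proposition \ref{qqq7} as an immediate consequence of formula \eqref{ca12} (equivalently \eqref{oo51}) together with the basis description $\mk{v}^+=span_\CC\{P_\gamma\}$, $\mk{v}^-=span_\CC\{Q_\gamma\}$ from Proposition \ref{mj5}. Your additional remark that Corollary \ref{qqq77} supplies the hypothesis $\mk{z}\subset\mk{o}_u$ needed for Proposition \ref{mj5} is precisely the role that corollary plays in the paper, where it is stated immediately before this proposition.
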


If we add Theorem \ref{suf1} to Corollary \ref{qqq77} we obtain our solution of {\bf Problem A}:
\begin{theorem} \label{mai1}
Let $\mk{u}$ be a compact Lie algebra and let $I$ be a $\mk{b}^+$ complex structure on $\mk{u}$. Let $\Gamma$ be the stem of $\Delta^+$. Then $I$ is admissible if and only if $dim(\mk{u})$ is divisible by 4 and for each $\gamma, \delta\in \Gamma$ we have $\gamma(IW_\delta) = 0$.
\end{theorem}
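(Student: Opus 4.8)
The plan is to recognise the stated numerical condition as a reformulation of the hypothesis $\mk{z} = I(\mk{w})\subset\mk{o}_u$, and then to assemble the two implications from results already in hand: Theorem~\ref{suf1} for sufficiency, and Corollary~\ref{qqq77} together with an elementary dimension count for necessity.

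First I would reconcile the three formulations of the condition. By Definition~\ref{eee81} we have $IW_\delta = Z_\delta$, so ``$\gamma(IW_\delta)=0$ for all $\gamma,\delta\in\Gamma$'' is the same as ``$\gamma(Z_\delta)=0$ for all $\gamma,\delta\in\Gamma$''. Since $W_\delta=\tfrac{\ri}{2}H_\delta$ satisfies $\tau(W_\delta)=W_\delta$, it lies in $\mk{u}$, and as $I$ preserves $\mk{u}$ we get $Z_\delta\in\mk{u}$. Recalling $\mk{o}=\bigcap_{\gamma\in\Gamma}\{H\in\mk{h}\,\vert\,\gamma(H)=0\}$ and $\mk{o}_u=\mk{o}\cap\mk{u}$, the vanishing $\gamma(Z_\delta)=0$ for every $\gamma$ is exactly $Z_\delta\in\mk{o}_u$; hence the condition is equivalent to $\mk{z}\subset\mk{o}_u$.

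For the implication ``if'', I would simply invoke Theorem~\ref{suf1}: assuming $\dim(\mk{u})$ divisible by $4$ and $\mk{z}\subset\mk{o}_u$, that theorem constructs an integrable $J$ with $IJ=-JI$, so $I$ is admissible. For the implication ``only if'', suppose $I$ is admissible and let $J$ match $I$. Setting $K=IJ$, the relations $I^2=J^2=-1$ and $IJ=-JI$ give $K^2=-1$ together with $IK=-KI$, so $I,J,K$ make $\mk{u}$ a module over the quaternions and $\dim_\RR(\mk{u})$ is divisible by $4$. The remaining requirement $\gamma(Z_\delta)=0$ for all $\gamma,\delta\in\Gamma$ is precisely Corollary~\ref{qqq77}. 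By the equivalence of the first paragraph this finishes the argument.

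I expect no real obstacle at this stage: the substance lies upstream. The genuinely hard direction, namely necessity of $\mk{z}\subset\mk{o}_u$, is carried entirely by Corollary~\ref{qqq77}, which is the payoff of the Nijenhuis computations (Theorem~\ref{ns1}, Propositions~\ref{eee4} and~\ref{cor5}). At this point the only thing demanding attention is the bookkeeping that identifies $\gamma(IW_\delta)=0$, $\gamma(Z_\delta)=0$ and $\mk{z}\subset\mk{o}_u$ as one and the same hypothesis.
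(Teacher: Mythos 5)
Your proposal is correct and follows exactly the paper's own route: the paper proves Theorem \ref{mai1} precisely by combining Theorem \ref{suf1} (sufficiency) with Corollary \ref{qqq77} (necessity), which is what you do. Your added bookkeeping — identifying $\gamma(IW_\delta)=0$ with $\mk{z}\subset\mk{o}_u$ via $Z_\delta\in\mk{h}_u$, and the quaternionic-module argument for divisibility of $\dim(\mk{u})$ by $4$ — is exactly the (implicit) glue the paper relies on.
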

We have chosen to express the necessary and sufficient condition
for admissibility of $I$ in the most classical terms,  only using
the notion of stem.
\begin{coro}\label{suf2}
A compact Lie group ${\bf U}$ carries a hypercomplex structure if and only if $rank(u) = 2d + 4k$, where $d$ is the number of elements in the stem $\Gamma$ and $k$ is a nonnegative integer.
\end{coro}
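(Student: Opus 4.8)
The plan is to assemble the corollary from Theorem~\ref{mai1} and Corollary~\ref{qqq3}, translating their hypotheses into a single congruence on $rank(\mk{u})$. First I would record the reduction already available in Subsection~\ref{css}: since ${\bf Ad}\,\mk{u}$ acts transitively on the Borel subalgebras of $\mk{g}$, the group ${\bf U}$ carries a left invariant hypercomplex structure if and only if some $\mk{b}^+$ complex structure $I$ on $\mk{u}$ is admissible (a matching $J$ being exactly the second member of a hypercomplex structure). Next I would compute the relevant dimension. The functionals $\gamma\in\Gamma$ are pairwise orthogonal and nonzero, hence linearly independent on $\mk{h}$; since $\mk{o}$ is the common zero set of the $\gamma\in\Gamma$ and is $\tau$-stable, and $dim_\CC(\mk{h})=rank(\mk{u})$, this yields $dim_\RR(\mk{o}_u)=dim_\CC(\mk{o})=rank(\mk{u})-d$.

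The key arithmetic step is to identify the two hypotheses of Theorem~\ref{mai1} with the stated congruence. Using the splitting $\mk{h}_u=\mk{w}\oplus\mk{o}_u$, which follows from $\delta(W_\gamma)=\tfrac{\ri}{2}C(\delta,\gamma)$ and strong orthogonality of $\Gamma$, the condition $\gamma(IW_\delta)=0$ for all $\gamma,\delta\in\Gamma$ is exactly $\mk{z}=I(\mk{w})\subset\mk{o}_u$. For the divisibility condition I would invoke the decomposition $\mk{u}=\mc{V}^u\oplus\mk{f}_u\oplus\mk{o}_u$ of Remark~\ref{iii7}: since $dim_\RR(\mc{V}^u)\equiv 0 \pmod 4$ and $dim(\mk{f}_u)=3d$, one gets $dim(\mk{u})\equiv 3d+(rank(\mk{u})-d)=rank(\mk{u})+2d \pmod 4$, so $dim(\mk{u})$ is divisible by $4$ if and only if $rank(\mk{u})-2d\equiv 0 \pmod 4$. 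Thus the assertion ``$rank(\mk{u})=2d+4k$ with $k\ge 0$'' is precisely ``$dim(\mk{u})$ divisible by $4$ and $rank(\mk{u})\ge 2d$''.

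It then remains to show that this last pair of conditions is equivalent to admissibility of some $\mk{b}^+$ complex structure. For necessity, if $(I,J)$ is a hypercomplex structure then $dim(\mk{u})$ is divisible by $4$, and Corollary~\ref{qqq3} forces $rank(\mk{u})\ge 2d$. For sufficiency I would construct an $I$ satisfying the hypothesis of Theorem~\ref{suf1}: assuming $dim(\mk{u})\equiv 0 \pmod 4$ and $rank(\mk{u})=2d+4k$, we have $dim_\RR(\mk{o}_u)=d+4k\ge d$, so I can choose a $d$-dimensional subspace $\mk{z}_0\subset\mk{o}_u$, declare $I$ to carry $\mk{w}$ isomorphically onto $\mk{z}_0$ and act as minus the inverse on $\mk{z}_0$ (so $I^2=-1$ there), and put an arbitrary complex structure on the remaining summand $\mk{o}_u\ominus\mk{z}_0$; extending by the $\mk{b}^+$ convention on $\mk{n}^\pm$ gives a $\mk{b}^+$ complex structure with $\mk{z}\subset\mk{o}_u$, whence $I$ is admissible by Theorem~\ref{suf1}. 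The step I expect to require the most care is the bookkeeping of the middle paragraph: verifying that the two seemingly independent hypotheses of Theorem~\ref{mai1} collapse into the single condition $rank(\mk{u})=2d+4k$, and in particular that the leftover summand $\mk{o}_u\ominus\mk{z}_0$ of dimension $rank(\mk{u})-2d=4k$ is automatically even, so that no obstruction beyond $rank(\mk{u})\ge 2d$ and $dim(\mk{u})\equiv 0 \pmod 4$ arises. Once this is checked the equivalence follows at once from the cited results.
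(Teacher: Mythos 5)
Your proposal is correct and follows essentially the argument the paper intends (the corollary is stated without an explicit proof, as a direct assembly of the surrounding results): you combine Theorem~\ref{mai1} (equivalently, Theorem~\ref{suf1} together with Corollary~\ref{qqq3}) with the dimension bookkeeping of Remark~\ref{iii7}, the splitting $\mk{h}_u=\mk{w}\oplus\mk{o}_u$, and the extension construction of Remark~\ref{ij31} to translate ``$\dim(\mk{u})\equiv 0\pmod 4$ and $rank(\mk{u})\ge 2d$'' into $rank(\mk{u})=2d+4k$. Your explicit verification that $rank(\mk{u})-2d\equiv 0\pmod 4$ is exactly the divisibility condition, and your construction of an admissible $I$ with $I(\mk{w})=\mk{z}_0\subset\mk{o}_u$, are precisely the steps the paper leaves implicit.
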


\subsection{The nearest to semisimple}
In the previous subsection we solved {\bf Problem A} from the introduction of this paper. Now we proceed to {\bf Problem B}, that is, we assume that $I$ is an admissible $\mk{b}^+$ complex structure on $\mk{u}$ and describe all complex structures $J$ matching $I$.

By Proposition \ref{eee1} we know that on the extended stem subalgebra $\mk{f}_u \oplus \mk{z}$, any $J$ matching $I$ coincides with the structure $I_{\bf c}$ for some $\rho$ (see Definition \ref{qw4}) .

So we go on to determine the remaining coefficients of the matrix of $J$ (see Definition \ref{bo2}).
Now that we assume $\mk{z}\subset \mk{o}_u$, we shall use the notations of Definition \ref{mj1}.

We assume $rank(\mk{g}) = 2d + 2p $, where $p$ is a nonnegative even integer,
\begin{gather*}
\mk{m}_I^+ = \mk{v}^+ \oplus  \mk{j}^+ \oplus  \mk{f}^+  \oplus \mc{V}^+, \quad dim(\mk{v}^+) = dim(\mk{f}^+) = d,\ dim(\mk{j}^+) = p.
\end{gather*}

In the first place we have the vectors $P_\gamma = W_\gamma - \ri Z_\gamma,\ Q_\gamma = W_\gamma + \ri Z_\gamma$, which are a basis for the subspaces $\mk{v}^+,\mk{v}^-$ respectively.

The following theorem improves the constructive Proposition \ref{ij1} in particular.

\begin{theorem}\label{qqq4} Let $2d = rank(\mk{g})$ and let $I$ be an admissible $\mk{b}^+$ complex structure. Then there is exactly one (up to choice of $\rho$) matching complex structure  $J$. For $\gamma \in \Gamma$ and $\alpha\in \Phi^+_\gamma$  the operator $J$ (with $\rho_\gamma = 1$ for each $\gamma\in\Gamma$) is given by:
\begin{gather*}
 JW_\gamma = - X_\gamma,\ JZ_\gamma = Y_\gamma;\quad    JX_\alpha =- N_{\gamma,-\mu(\alpha)}Y_{\mu(\alpha)},
\end{gather*}
where $X_\alpha = \tfrac{1}{2}(E_\alpha - E_{-\alpha}),\ Y_\alpha = \tfrac{\ri}{2}(E_\alpha + E_{-\alpha})$.
\end{theorem}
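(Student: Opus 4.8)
The plan is to exploit the hypothesis $2d = \mathrm{rank}(\mathfrak{g})$ to eliminate all freedom in the matching structure, so that $J$ becomes forced once $I$ and the torus parameter are fixed. First I would observe that, since $\mathrm{rank}(\mathfrak{g}) = 2d + 2p$ with $\dim_\RR(\mathfrak{z}) = d$ and $\mathfrak{z}\subset\mathfrak{o}_u$ by admissibility (Theorem \ref{mai1}), the equality $2d = \mathrm{rank}(\mathfrak{g})$ forces $p = 0$, hence $\mathfrak{j} = 0$ and $\mathfrak{o}_u = \mathfrak{z}$. Consequently the decomposition \eqref{ca3} collapses to $\mathfrak{g} = \mathcal{V}\oplus\mathfrak{f}\oplus\mathfrak{v} = \mathcal{V}\oplus\mathfrak{e}^\CC$; that is, the whole Lie algebra is the complexified extended stem subalgebra together with $\mathcal{V}$, with no surviving $\mathfrak{j}$-part.

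For uniqueness I would invoke Corollary \ref{eee21}: any $J$ matching $I$ satisfies $JX = \mathbf{c}I\mathbf{c}^{-1}X$ for all $X\in\mathfrak{e}\oplus\mathcal{V}$, where the Cayley transform $\mathbf{c} = \mathbf{c}[\rho]$ uses $\rho_\gamma = \ri\gamma(JE_{-\gamma})$ from Definition \ref{qqq8}. Because $\mathfrak{j} = 0$, the subspace $\mathfrak{e}\oplus\mathcal{V}$ is all of $\mathfrak{g}$, so $J$ is completely pinned down by $I$ and the parameter $\rho$; this is exactly the assertion that there is precisely one matching $J$ up to the choice of $\rho$. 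Existence is not in question, since $I$ is admissible and Theorem \ref{suf1} with $p = 0$ already produces such a $J$.

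It then remains to record the explicit formulas for the normalisation $\rho_\gamma = 1$. The identities $JZ_\gamma = Y_\gamma$ and $JX_\gamma = W_\gamma$ are Proposition \ref{eee1}, and $JW_\gamma = -X_\gamma$ follows on applying $J$ to the latter and using $J^2 = -\rI$. For the action on $\mathcal{V}$ I would start from Proposition \ref{mxj1}, namely $JE_\alpha = N_{\gamma,-\alpha}\gamma(V_\gamma)E_{s_\gamma(\alpha)}$; the choice $\rho_\gamma = 1$ gives $\gamma(U_\gamma) = -\ri$ and hence $\gamma(V_\gamma) = \overline{\gamma(U_\gamma)} = \ri$ by \eqref{ca5}, while $s_\gamma(\alpha) = -\mu(\alpha)$, so $JE_\alpha = \ri N_{\gamma,-\alpha}E_{-\mu(\alpha)}$. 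Applying the antilinear $\tau$ (with $\tau(E_\beta) = -E_{-\beta}$ and $N$ real) yields $JE_{-\alpha} = -\ri N_{\gamma,-\alpha}E_{\mu(\alpha)}$, and combining the two gives $JX_\alpha = \tfrac{1}{2}(JE_\alpha - JE_{-\alpha}) = N_{\gamma,-\alpha}Y_{\mu(\alpha)}$. Finally the relation $N_{\gamma,-\alpha}N_{\gamma,-\mu(\alpha)} = -1$ from Proposition \ref{www1}, together with $|N_{\gamma,-\mu(\alpha)}| = 1$, forces $N_{\gamma,-\alpha} = -N_{\gamma,-\mu(\alpha)}$, turning this into the stated $JX_\alpha = -N_{\gamma,-\mu(\alpha)}Y_{\mu(\alpha)}$. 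The only delicate point is this last computation: one must track the $\tau$-conjugation carefully to pass from $JE_\alpha$ to $JE_{-\alpha}$, re-express the result through $Y_{\mu(\alpha)}$, and apply the sign flip from Proposition \ref{www1}. The conceptual uniqueness step, by contrast, is immediate once $\mathfrak{j} = 0$ has been established.
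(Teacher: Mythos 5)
Your proposal is correct and follows essentially the same route as the paper: the paper's own (very terse) proof simply cites Proposition \ref{eee1} for the first two identities and Proposition \ref{mxj1} for the third, with uniqueness implicit in the fact that these propositions (packaged in Corollary \ref{eee21}) determine $J$ on $\mk{e}\oplus\mc{V}$, which is all of $\mk{g}$ once $2d = rank(\mk{g})$ forces $\mk{j} = 0$. Your added details — the dimension count giving $p=0$, the normalisation $\gamma(V_\gamma)=\ri$, the $\tau$-conjugation step yielding $JX_\alpha = N_{\gamma,-\alpha}Y_{\mu(\alpha)}$, and the sign flip from Proposition \ref{www1} — are exactly the computations the paper leaves to the reader, and they check out.
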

\begin{proof}
The first and the second equalities come from Proposition \ref{eee1}. The third is the result of Proposition \ref{mxj1}.
\end{proof}
We give several equivalent forms of the admissibility condition. Recall that in Subsection \ref{di} we introduced and studied a representative of the opposition involution $\phi = \exp(\pi adX_\Gamma)$.
\begin{coro}\label{min1}
Let $2d = rank(\mk{g})$, let $I$ be a $\mk{b}^+$ complex structure on $\mk{u}$. Then $I$ is admissible if and only if any of the following three equivalent conditions holds

a) $\phi(\mk{m}^+_I) = \mk{m}^-_I$.

b) $\phi(\mk{h}^+) = \mk{h}^-$.

c) $\phi\circ I = -I\circ \phi$.
\end{coro}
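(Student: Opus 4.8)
The plan is to establish the three stated conditions as equivalent to one another by pure structure theory, and then to match condition (b) with admissibility via Theorem \ref{mai1}. Throughout I would use three standing facts about $\phi = \exp(\pi\, ad\, X_\Gamma)$: it is a $\CC$-linear automorphism of $\mk{g}$ lying in ${\bf N}_u(\mk{h})$, so it preserves $\mk{h}$ and restricts there to an involution whose $(+1)$- and $(-1)$-eigenspaces are $\mk{o}$ and $\mk{w}^\CC$ respectively (Corollary \ref{elm14}); and $\phi(\Delta^+) = \Delta^-$ (Proposition \ref{di4}), whence $\phi(\mk{n}^\pm) = \mk{n}^\mp$.

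For (a)$\iff$(b): by Proposition \ref{lem1} we have $\mk{m}^+_I = \mk{n}^+\oplus\mk{h}^+$ and $\mk{m}^-_I = \mk{n}^-\oplus\mk{h}^-$, and since $\phi(\mk{n}^+) = \mk{n}^-$ while $\phi(\mk{h}^+)\subset\mk{h}$, intersecting $\phi(\mk{m}^+_I) = \mk{n}^-\oplus\phi(\mk{h}^+)$ with $\mk{h}$ shows that $\phi(\mk{m}^+_I) = \mk{m}^-_I$ holds exactly when $\phi(\mk{h}^+) = \mk{h}^-$. For (b)$\iff$(c): I would first note that (c) holds \emph{automatically} on every root space, because for $v\in\mk{g}(\alpha)$ one has $Iv = \pm\ri v$ according as $\alpha\in\Delta^\pm$, whereas $\phi v\in\mk{g}(\phi\alpha)$ lies in the opposite $I$-eigenspace (as $\phi\alpha$ has the opposite sign), so $\phi Iv = -I\phi v$. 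Hence (c) on $\mk{g}$ reduces to (c) on $\mk{h}$; and on $\mk{h}$, where $\phi$ is a $\CC$-linear involution and $\mk{h}^\pm$ are the $\pm\ri$-eigenspaces of $I$, the anticommutation $\phi I = -I\phi$ is equivalent to $\phi$ interchanging these eigenspaces, i.e. to $\phi(\mk{h}^+) = \mk{h}^-$, which is (b).

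It then remains to show admissibility $\iff$ (b). In the nearest-to-semisimple case $\mk{o}$ is a $d$-dimensional $\tau$-invariant subspace of $\mk{h}$, so $\dim\mk{o}_u = d$ and $\dim\mk{u} = 3d + \dim\mk{o}_u = 4d$ is divisible by $4$ (Remark \ref{iii7}); thus Theorem \ref{mai1} says $I$ is admissible iff $\gamma(IW_\delta) = \gamma(Z_\delta) = 0$ for all $\gamma,\delta\in\Gamma$, i.e. iff $\mk{z} = I(\mk{w})\subset\mk{o}$. To identify $\mk{z}\subset\mk{o}$ with (b) I plan to argue through the $\phi$-eigenspaces: the inclusion $\mk{z}\subset\mk{o}$ is literally $I(\mk{w}^\CC)\subseteq\mk{o}$ (since $Z_\gamma = IW_\gamma$), and because $\dim_\CC\mk{o} = \dim_\CC\mk{w}^\CC = d$ with $I$ invertible and $I^2 = -1$, this inclusion forces $I(\mk{w}^\CC) = \mk{o}$ and hence $I(\mk{o}) = I^2(\mk{w}^\CC) = \mk{w}^\CC$. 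So $\mk{z}\subset\mk{o}$ is equivalent to $I$ interchanging the two $\phi$-eigenspaces $\mk{o}$ and $\mk{w}^\CC$, which in turn is exactly the statement that $\phi$ anticommutes with $I$ on $\mk{h}$ — and by the previous paragraph this is (b)/(c).

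I expect the main obstacle to be conceptual rather than computational: one must keep clearly apart the $\phi$-eigenspace splitting $\mk{h} = \mk{o}\oplus\mk{w}^\CC$ and the $I$-eigenspace splitting $\mk{h} = \mk{h}^+\oplus\mk{h}^-$, and recognize that the content of the corollary is precisely that $\phi$ swaps $\mk{h}^\pm$ iff $I$ swaps the $\phi$-eigenspaces, the latter being the admissibility condition. The one delicate point is the dimension-and-invertibility step that upgrades the inclusion $I(\mk{w}^\CC)\subseteq\mk{o}$ to the full interchange $I(\mk{o}) = \mk{w}^\CC$; this is exactly where the hypothesis $2d = rank(\mk{g})$ is indispensable (so that $\dim\mk{o} = \dim\mk{w}^\CC$ and no extra summand $\mk{j}$ survives, as in Proposition \ref{mj5} with $p = 0$).
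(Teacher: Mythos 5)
Your proposal is correct and follows essentially the same route as the paper: both reduce (a)$\iff$(b)$\iff$(c) via $\phi(\mk{n}^\pm)=\mk{n}^\mp$ and the eigenspace structure of $\phi$ on $\mk{h}$ (Corollary \ref{elm14}), and both identify admissibility with $I(\mk{w})\subset\mk{o}$ through Theorem \ref{mai1}. Your explicit dimension-counting step upgrading $I(\mk{w}^\CC)\subseteq\mk{o}$ to the full interchange $I(\mk{o})=\mk{w}^\CC$ is only a more detailed rendering of what the paper's four-line argument uses implicitly.
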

\bpr
In any case $\phi(\mk{n}^+) = \mk{n}^-$, so a) is equivalent to b).

It is trivial that  c) is equivalent to b) (imitate the proof of Proposition \ref{oo1}).

Now assume that $I$ is admissible. Then for any $W\in \mk{w}$ we have $IW\in \mk{o}$, whence by Corollary \ref{elm14}
 $\phi(W-\ri IW) = -W-\ri IW = -\tau(W-\ri IW)$ whence the condition b) holds.

 Conversely let condition c) hold. Then for  $W\in \mk{w}$ we have $\phi(IW) = - I\phi(W) = IW$ and by Corollary \ref{elm14} we have $I(\mk{w}) \subset \mk{o}$, whence $I$ is admissible.
\epr

\subsection{The classification}
When $2d < rank(\mk{g})$ and $I$ is an admissible $\mk{b}^+$ complex structure we have to determine the action of a matching complex structure $J$ on the subspace $\mk{j}\subset \mk{o}$ (see Definition \ref{mj1}).

\begin{prop} \label{eta1} Let $rank(\mk{u}) = 2d + 2p,\quad p \in 2\NN$. Then $J(\mk{j}^+) = \mk{j}^-$.
 \end{prop}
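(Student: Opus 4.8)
The plan is to fix an arbitrary $S\in\mk{j}^+$ and prove that $JS\in\mk{j}^-$; since $J$ is injective and $\CC$-linear with $\dim_\CC\mk{j}^+=\dim_\CC\mk{j}^-=p$ (Proposition \ref{mj2}), this inclusion immediately upgrades to the equality $J(\mk{j}^+)=\mk{j}^-$. Because $S\in\mk{h}^+\subset\mk{m}^+_I$, Lemma \ref{oo1} gives $JS\in\mk{m}^-_I=\mk{n}^-\oplus\mk{h}^-$, so I would write $JS=H_0+\sum_{\nu\in\Delta^+}\eta_\nu E_{-\nu}$ with $H_0\in\mk{h}^-$ and scalars $\eta_\nu$. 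It then suffices to show (i) all $\eta_\nu=0$, so that $JS=H_0\in\mk{h}^-$, and (ii) $\gamma(H_0)=0$ for every $\gamma\in\Gamma$, so that $JS\in\mk{o}\cap\mk{h}^-=\mk{j}^-$.

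For the main step (i), the enabling observation is that $J$ sends the negative nilradical back into the "upper" part: since $\mk{n}^-=\mk{f}^-\oplus\mc{V}^-$ and $J(\mk{f}^-)=\mk{v}^+$ (Proposition \ref{qqq7}), $J(\mc{V}^-)=\mc{V}^+\subset\mk{n}^+$ (Proposition \ref{mxj1} and Corollary \ref{eee21}), we have $J(\mk{n}^-)=\mk{v}^+\oplus\mc{V}^+\subset\mk{h}^+\oplus\mk{n}^+$. Now fix $\gamma\in\Gamma$. By Theorem \ref{ns1}, $JE_{-\gamma}=U_\gamma\in\mk{h}^+$, and since $\gamma(S)=0$ one has $[E_{-\gamma},S]=0=[U_\gamma,S]$, whence integrability collapses the Nijenhuis identity to $N_J(E_{-\gamma},S)=[U_\gamma,JS]-J[E_{-\gamma},JS]=0$. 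Here $[E_{-\gamma},JS]\in\mk{n}^-$, so $J[E_{-\gamma},JS]\in\mk{h}^+\oplus\mk{n}^+$ contributes nothing to the $\mk{n}^-$-component, while $[U_\gamma,JS]=-\sum_{\nu}\eta_\nu\,\nu(U_\gamma)E_{-\nu}$ lies in $\mk{n}^-$; comparing $\mk{n}^-$-components gives $\eta_\nu\,\nu(U_\gamma)=0$ for all $\nu\in\Delta^+$ and all $\gamma\in\Gamma$. To finish I use two facts. First, by \eqref{nos1} and Corollary \ref{uu1} every $\nu\in\Delta^+$ satisfies $C(\nu,\gamma)\neq0$ for some $\gamma\in\Gamma$ (namely $C(\nu,\nu)=2$ if $\nu\in\Gamma$, and $C(\nu,\gamma)=1$ if $\nu\in\Phi_\gamma^+$). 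Second, for such a $\gamma$ one has $\nu(U_\gamma)\neq0$: from \eqref{oo51}, $U_\gamma=-\rho_\gamma(W_\gamma-\ri Z_\gamma)$, so $\nu(U_\gamma)=-\rho_\gamma\,\ri\big(\tfrac12 C(\nu,\gamma)-\nu(Z_\gamma)\big)$, where $\tfrac12 C(\nu,\gamma)$ is a nonzero real number and $\nu(Z_\gamma)$ is purely imaginary because $Z_\gamma\in\mk{u}$ (see \eqref{re1}), so the bracket cannot vanish. Hence $\eta_\nu=0$ for every $\nu$, i.e. $JS=H_0\in\mk{h}^-$.

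Step (ii) is then routine: with $JS=H_0\in\mk{h}^-$ and $JE_\gamma=V_\gamma\in\mk{h}^-$ (Theorem \ref{ns1}), all brackets with $S$ or among Cartan elements drop out and $N_J(S,E_\gamma)=-\gamma(H_0)V_\gamma=0$; since $V_\gamma=JE_\gamma\neq0$ this forces $\gamma(H_0)=0$ for all $\gamma\in\Gamma$, so $H_0\in\mk{o}\cap\mk{h}^-=\mk{j}^-$. Combining (i) and (ii) gives $J(\mk{j}^+)\subset\mk{j}^-$, and the dimension count completes the proof. I expect the only real obstacle to be step (i), the vanishing of the nilpotent coefficients $\eta_\nu$; the two ingredients that make it work cleanly are the inclusion $J(\mk{n}^-)\subset\mk{h}^+\oplus\mk{n}^+$ (which turns the $\mk{n}^-$-part of the Nijenhuis relation into a diagonal system in the root basis) and the \emph{fullness} of the stem, ensuring no positive root is orthogonal to all of $\Gamma$, so that every coefficient is pinned to zero.
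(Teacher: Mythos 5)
Your proof is correct, but it takes a genuinely different route from the paper's. The paper expands $JS_q$ in the adapted basis $\{Q_\delta\}\cup\{T_t\}\cup\{E_{-\beta}\}$ and extracts everything from the single identity $N_J(S_q,V_\gamma)=0$ (with $V_\gamma=JE_\gamma$), via a two-stage elimination inside the root-space decomposition: first the coefficients $\eta_{\beta,q}$, $\beta\in\Phi_\gamma^+$, are killed by isolating the $\mk{g}(\mu(\beta))$-components, which requires the precise formulas $JE_{-\beta}=N_{\gamma,-\beta}\,\ol{\gamma(V_\gamma)}\,E_{\gamma-\beta}$ and $\beta(V_\gamma)=-\ri\gamma(V_\gamma)\beta(Q_\gamma)$ from Proposition \ref{mxj1} and \eqref{ca12} together with $\mu(\beta)(Q_\gamma)\neq 0$; then the residual equation $0=\ri b_{\gamma,q}E_\gamma-\eta_{\gamma,q}\sco{H_\gamma+\ri\gamma(V_\gamma)U_\gamma}$ kills $\eta_{\gamma,q}$ and $b_{\gamma,q}$ simultaneously. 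You instead pair $E_{-\gamma}$ with $S$: since $S\in\mk{o}$, the identity $N_J(E_{-\gamma},S)=0$ collapses to $[U_\gamma,JS]=J[E_{-\gamma},JS]$, and your structural inclusion $J(\mk{n}^-)\subset\mk{h}^+\oplus\mk{n}^+$ (assembled from Propositions \ref{qqq7} and \ref{mxj1}) places the two sides in complementary pieces of the triangular decomposition, so both vanish; this yields the diagonal system $\eta_\nu\,\nu(U_\gamma)=0$ in one stroke, and the fullness of the stem plus your real/imaginary-part argument (valid because $Z_\gamma\in\mk{h}_u$, so $\nu(Z_\gamma)\in\ri\RR$ by \eqref{re1}, while $\tfrac12 C(\nu,\gamma)$ is real and nonzero for a suitable $\gamma$) pins every $\eta_\nu$ to zero at once. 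Your second, much shorter computation $N_J(S,E_\gamma)=-\gamma(H_0)V_\gamma$ then disposes of the $\mk{v}^-$-part of the Cartan component, which the paper handles inside the same residual equation. What each buys: the paper's argument stays within its matrix formalism and delivers \eqref{mat1} in exactly the form cited by Theorem \ref{mai2}; yours is more modular, trading the finer coefficient identities for one coarse mapping property of $J$ on $\mk{n}^-$ plus a compactness argument, and the concluding dimension count via Proposition \ref{mj2} legitimately upgrades $J(\mk{j}^+)\subset\mk{j}^-$ to equality.
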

\bpr
Let $S_1,\dots,S_p$ be a basis of $\mk{j}^+$, then $T_k = \tau(S_k),\ k = 1,\dots,p$ is a basis of $\mk{j}^-$.
Then by  \eqref{mj7} $\{P_\gamma\vert\ \gamma \in \Gamma\}\cup \{S_1,\dots,S_p\}$ is a basis of $\mk{h}^+$ and $\{Q_\gamma\vert\ \gamma \in \Gamma\}\cup\{ T_1,\dots,T_p\}$ is a basis of $\mk{h}^-$.
Slightly changing notation for the elements of the matrix $\ms{J}$ (see \eqref{bo8}, Proposition \ref{lemma about J}) for any $q = 1,\dots,p$ we have
\begin{gather} \label{ee2}
J(S_q)  =  \sum_{\delta\in\Gamma}b_{\delta,q}Q_\delta + \sum_{t=1}^p
b_{t q} T_t + \sum_{\beta \in \Delta^+}\eta_{\beta, q} E_{-\beta}.
\end{gather}
From integrability for $q = 1,\dots,p,\quad \gamma\in\Gamma$ we have
  \begin{gather*}
   0 = N_J(S_q,V_\gamma) = -\left [\sum_{\delta\in\Gamma}b_{\delta,q}Q_\delta + \sum_{t=1}^p
b_{t q} T_t + \sum_{\beta \in \Delta^+}\eta_{\beta, q} E_{-\beta}, E_\gamma\right]\\ + J[S_q,E_\gamma]
 - J\left[ \sum_{\delta\in\Gamma}b_{\delta,q}Q_\delta + \sum_{t=1}^p
b_{t q} T_t + \sum_{\beta \in \Delta^+}\eta_{\beta, q} E_{-\beta},V_\gamma\right] \\
 = - b_{\gamma,q}\gamma(Q_\gamma)E_\gamma - \sum_{\beta \in \Delta^+}
\eta_{\beta, q}[E_{-\beta},E_\gamma]  - \sum_{\beta \in \Delta^+}\eta_{\beta, q} \beta(V_\gamma)JE_{-\beta}.
 \end{gather*}
From Proposition \ref{mxj1} and formula \eqref{ca12} for $\beta \in \Phi_{\gamma}^+$ we have
$$
 JE_{-\beta} = -\tau(JE_\beta) = N_{\gamma,- \beta} \ol{\gamma(V_\gamma)} E_{\gamma-\beta},\quad  \beta(V_\gamma) = -\ri\gamma(V_\gamma) \beta(Q_\gamma)
 $$
therefore we have
\begin{gather}\nonumber
 0  =\ri b_{\gamma,q}E_{\gamma} - \sum_{\beta \in \Phi_\gamma^+}
\eta_{\beta, q}N_{\gamma,-\beta}(1 +\ri \beta(Q_\gamma))E_{\gamma-\beta} \\[-2mm]\label{ee1}  \\[-2mm]
 - \sum_{\beta \in \Delta^+ \setminus  \Phi_\gamma^+}
\eta_{\beta q}([E_{-\beta},E_{\gamma}]  - \beta(V_\gamma) JE_{-\beta}). \nonumber
\end{gather}
From Proposition \ref{mxj1} it follows that
$$\sum_{\beta \in \Delta^+ \setminus  \Phi_\gamma^+}
\eta_{\beta q}([E_{-\beta},E_{\gamma}]  - \beta(V_\gamma) JE_{-\beta}) \in \mk{h} + \sum_{\alpha \in \Delta \setminus \Phi_\gamma^+} \mk{g}(\alpha).
$$
Now from \eqref{ee1} it follows that for any $\beta \in \Phi_\gamma^+$  we have
$$
 \eta_{\beta,q}N_{\gamma,-\beta}(1 +\ri \beta(Q_\gamma)) =  -\ri N_{\gamma,-\beta}\eta_{\beta,q}\alpha(Q_\gamma) =0.
 $$
Where  $\alpha = \mu(\beta)$. But for $\alpha \in \Phi_\gamma^+$ we have $\alpha(Q_\gamma)\neq 0$ (see the end of the proof of Proposition \ref{mxj1}), whence for  $\beta \in \Phi^+ ,\quad q=1,\dots,p$ we have
  \begin{gather} \label{ee3}
   \eta_{\beta,q} = 0.
   \end{gather}
Suppressing all terms containing $\eta_{\beta,q},\ \beta \in \Phi^+$, equation  \eqref{ee1} reduces to

\begin{gather*} 0  = \ri b_{\gamma,q}E_\gamma  -   \sum_{\delta \in \Gamma}
\eta_{\delta,q}([E_{-\delta},E_{\gamma}]  - \delta(V_\gamma) JE_{-\delta}) \\
= \ri b_{\gamma,q}E_\gamma -
\eta_{\gamma, q}(H_\gamma + \ri \gamma(V_\gamma) U_\gamma).
\end{gather*}
By \eqref{ca10} $H_\gamma + \ri \gamma(V_\gamma ) U_\gamma \neq 0$ and we see that for  $\gamma \in \Gamma,\quad q=1,\dots,p$ we have
\begin{gather}\label{ee4}
\eta_{\gamma,q} = b_{\gamma,q}  = 0.
\end{gather}
 Thus we conclude (see \eqref{ee2}, \eqref{ee3},\eqref{ee4}) that for any $q = 1,\dots,p$
\begin{gather}\label{mat1}
J(S_q) = \sum_{k=1}^{p}b_{k q} T_k .
\end{gather}
The proposition is proved.
\epr

 We are ready to present our solution of {\bf Problem B} from the introduction.
\begin{theorem} \label{mai2}
 Let $\mk{u}$ be a compact Lie algebra with $rank(\mk{u}) = 2d + 2p$, where $d$ is the number of roots in the stem $\Gamma$ of $\Delta^+$ and $p$ is a nonnegative even integer.  Let $I$ be an admissible  $\mk{b}^+$-complex structure  on $\mk{u}$.Then any hypercomplex structure extending $I$ may be determined by a complex structure $J$ matching $I$, so that there exists a $p\times p$ complex matrix ${\bf b}$, with $\ol{\bf b}{\bf b} = -1$ and for  $\gamma\in \Gamma,\ \alpha \in \Phi^+_\gamma$ we have
\begin{gather}\label{mat2}
JE_\gamma =  Q_\gamma,\quad JE_\alpha =  \ri N_{\gamma,- \alpha}E_{s_\gamma(\alpha)},\quad
J(S_q) = \sum_{k=1}^p b_{k q}T_k.\,
\end{gather}
 where $S_1,\dots,S_p$ is a basis of $\mk{j}^+$, and $T_k = \tau(S_k),\ k = 1,\dots,p$.
\end{theorem}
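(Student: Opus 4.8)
The plan is to obtain Theorem \ref{mai2} as a synthesis of the results already in hand, by splitting $\mk{g}$ into the three $J$-invariant blocks $\mk{f}^+\oplus\mk{f}^-\oplus\mk{v}$, the representation space $\mc{V}$, and the complement $\mk{j}$, and reading off the action of $J$ on each from the appropriate proposition. First I would fix the normalization. Given the matching $J$, the torus parameter $\rho_\gamma = \ri\gamma(JE_{-\gamma})$ is determined by \eqref{oo51}; choosing the representative $\rho$ (equivalently, the Weyl--Chevalley basis) so that $\rho_\gamma = 1$ for every $\gamma\in\Gamma$, formula \eqref{oo51} collapses to $JE_\gamma = Q_\gamma$. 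This is the first relation in \eqref{mat2}, and it records the content of Proposition \ref{eee1} and Corollary \ref{eee21}, which assert that on the extended stem subalgebra $J$ agrees with ${\bf c}\circ I\circ{\bf c}^{-1}$.

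Next I would handle the block $\mc{V}$. Proposition \ref{mxj1} already gives $JE_\alpha = N_{\gamma,-\alpha}\gamma(V_\gamma)E_{s_\gamma(\alpha)}$ for $\alpha\in\Phi_\gamma^+$, so only $\gamma(V_\gamma)$ remains to be evaluated. Under the normalization $\rho_\gamma = 1$ we have $V_\gamma = Q_\gamma = W_\gamma + \ri Z_\gamma$, and since $\gamma(W_\gamma) = \tfrac{\ri}{2}\gamma(H_\gamma) = \ri$ together with $\gamma(Z_\gamma) = 0$ (Corollary \ref{qqq77}), this yields $\gamma(V_\gamma) = \ri$ and hence $JE_\alpha = \ri N_{\gamma,-\alpha}E_{s_\gamma(\alpha)}$, the second relation in \eqref{mat2}. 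At this stage the action of $J$ on $\mc{V}\oplus\mk{f}\oplus\mk{v}$ is completely pinned down by $I$ alone, with no residual freedom.

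The only remaining block is $\mk{j}$, and its decoupling from the rest is exactly Proposition \ref{eta1}: its proof shows that every coefficient $\eta_{\beta,q}$ with $\beta\in\Phi^+$ vanishes, as do $\eta_{\gamma,q}$ and $b_{\gamma,q}$ for $\gamma\in\Gamma$, leaving $J(S_q) = \sum_{k=1}^p b_{kq}T_k$, the third relation in \eqref{mat2}. To extract the constraint on ${\bf b}$ I would use $\tau\circ J = J\circ\tau$ and $T_q = \tau(S_q)$ to compute $J(T_q) = \sum_k \ol{b_{kq}}S_k$, whence $J^2(S_q) = \sum_j(\ol{\bf b}{\bf b})_{jq}S_j$; imposing $J^2 = -\rI$ on $\mk{j}_u$ forces $\ol{\bf b}{\bf b} = -\rI_p$. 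Conversely, Theorem \ref{suf1} shows that every such ${\bf b}$ produces a genuine integrable $J$ matching $I$, so \eqref{mat2} parametrizes precisely the hypercomplex structures extending $I$.

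The main obstacle does not lie in this assembly: the technical heart was already carried out in Proposition \ref{eta1}, namely the vanishing of the $\eta$-coefficients and the off-diagonal $b$-coefficients, i.e. $J(\mk{j}^+) = \mk{j}^-$. Within the present proof the only delicate points are the bookkeeping of the normalization $\rho_\gamma = 1$, so that the cited formulas align with \eqref{mat2}, and the verification that the single relation $\ol{\bf b}{\bf b} = -\rI_p$ captures exactly the complex-structure condition on the $\mk{j}$-block with no hidden coupling to $\mc{V}$ --- a point guaranteed by the decoupling established in Proposition \ref{eta1}.
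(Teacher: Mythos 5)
Your proposal is correct and follows essentially the same route as the paper's own (very terse) proof: normalize $\rho_\gamma = 1$, read off $JE_\gamma = Q_\gamma$ from \eqref{oo51}, read off $JE_\alpha$ from Proposition \ref{mxj1} (with the evaluation $\gamma(V_\gamma)=\ri$, which follows from Corollary \ref{qqq77}), and read off $J(S_q)$ from Proposition \ref{eta1}. The extra details you supply --- the explicit derivation of $\ol{\bf b}{\bf b}=-\rI_p$ from $J^2=-1$ together with $\tau\circ J = J\circ\tau$, and the converse parametrization via Theorem \ref{suf1} --- correctly fill in what the paper leaves implicit rather than constituting a different method.
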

\bpr
We choose $\rho_\gamma = 1$ for all $\gamma \in \Gamma$. The first equality is in  \eqref{oo51}. The second is in Proposition \ref{mxj1}. The third follows from Proposition \ref{eta1}.
\epr
Theorems \ref{mai1} and \ref{mai2} allow one to study the parameter spaces for the classes of equivalent hypercomplex structures on a compact connected Lie group ${\bf U}$. This will be done in a subsequent paper, we give only two characteristic examples in the nearest to semisimple case where the equivalence classes of hypercomplex structures are in a bijective correspondence with equivalence classes of admissible complex structures .

\begin{ex}\label{mai5} Let $I$ and $I'$ be two admissible $\mk{b}^+$-complex structures on $su(2d +1)$.
(see Example \ref{fraka}, a)).
Then $I$ is equivalent to $I'$ if and only if either $I = I'$ or  $IH_\gamma = -I'H_\gamma$ for each $\gamma  \in\Gamma$.

The parameter space of equivalence classes of hypercomplex structures on  $SU(2d +1)$ is $\ZZ_2 \backslash {\bf GL}(d,\RR)$.
\end{ex}

 \begin{ex}\label{mai6}
Let ${\bf U} = Sp(d)\times T^d$. The universal covering group is $\wt{\bf U} \cong Sp(d)\times\RR^d$.
Now $\Gamma$ is the set of long roots in $\Delta^+$ (see Example \ref{frakc}). Up to equivalence, there is {\bf exactly one}  left invariant hypercomplex structure on the (noncompact) universal cover group  $\wt{\bf U} $. Indeed all bases $Z_1,\dots,Z_d$ of $\mk{o}_u$ are equivalent under the action of
$GL(d,\RR) \cong \{g\in {\bf Aut}(\bf U)\vert dg(\mk{b}^+) = \mk{b}^+\}$.

The parameter space of equivalence classes of hypercomplex structures on  $Sp(d)\times {\bf T}^d$ is obviously ${\bf GL}_{d}(\ZZ)\backslash {\bf GL}_{d}(\RR)$.
\end{ex}

\begin{ack}
G. K. Dimitrov
acknowledges support from the European Operational program HRD
contract BGO051PO001/07/3.3-02/53 with the Bulgarian
Ministry of Education.
\end{ack}


\begin{thebibliography}{99}

\bibitem{AgaKan002} Agaoka Y., Kaneda E., Strongly orthogonal systems of roots,
Hokkaido math. jour. 2002, vol. 31, no1, pp. 107-136

\bibitem{Bourbaki68} Bourbaki N.,
Groupes et algebres de Lie,
Hermann, Paris, 1968, 1975

\bibitem{Griffiths631} Griffiths P.A. Some geometric and analytic properties of homogeneous complex manifolds,
PART I: Sheaves and cohomology

Acta Math.,110(1963) P. 115 - 155

\bibitem{Griffiths632} Griffiths P.A. Some geometric and analytic properties of homogeneous complex manifolds,
Part II: Deformation and bundle theory

Acta Math.,110(1963)157-208

\bibitem{Harish-Chandra56} Harish-Chandra,
Representations of semisimple Lie groups VI, Amer. J. Math.,78 (1956), 564-628


\bibitem{Helgason78} Helgason S., Differential Geometry, Lie Groups, and
Symmetric Spaces, Academic Press, 1978

\bibitem{Humphreys78} Humphreys J. E.,
Introduction to Lie algebras and representation theory, Springer
Ferlag 1978

\bibitem{Joyce92} Joyce D., Compact hypercomplex and quaternionic
manifolds, J. Diff. Geom., 35(1992), 743 - 761

\bibitem{Morimoto56} Morimoto A.,
Structures complexes invariantes sur les groupes de Liesemi-simples,
C.R. Acad Sci.,Paris 242(1956) 1101 - 1103

\bibitem{Onishchik04} Onishchik A.L.,
Lectures on Real Semisimple Lie Algebras and their Representations,
ESI Lectures in Mathematics and Physics European Mathematical
Society, 2004.

\bibitem{Pedersen000} Pedersen H., Hypercomplex structures,
Imada preprint,1(2000)

\bibitem{PedPoo99} Pedersen H., Poon Y,. S., Hypercomplex structures on group manifolds,
Odense preprint,(1999)

\bibitem{Samelson53} Samelson H.,
A class of complex analytic manifolds, Portugaliae Math. 12(1953)
129 - 132


\bibitem{Snow86} Snow D., Invariant complex structures on reductive Lie groups,
J Reine Angew. Math.,371 (1986), 191 - 215.

\bibitem{SpSeTrPr88} Spindel P., Sevrin A., Troost W., Van Proeyen A., Extended super-symmetric $\sigma$-models on group manifolds, Nuclear Phys. B 308 (1988), 662 - 698

\bibitem{Tits62}Tits, J.
Espaces homogènes complexes compacts.
Commentarii Mathematici Helvetici, Vol.37 (1962-1963)


\bibitem{Tits66} Tits J., Sur les constantes de structure et le theoreme d'existence des algebres de Lie semi-simples,
Inst. Hautes Etudes Sc. Publ. Math. 31 (1966), 25-58.



\bibitem{ViDriOn90} Vinberg, E, Gorbacevich, V Onishchik, A.,
Structure of Lie groups and algebras, (Russian)
Moskow, VINITI, 1990


\bibitem{Wang54} Wang H. C.,
Closed manifolds with a homogeneous complex structure,
Amer. J. Math., 76(1954) 1-32

\bibitem{Warner72} Warner G.,
Harmonic Analysis on Semi-Simple Lie Groups I,
Springer Verlag 1972

\bibitem{Wolf65} Wolf J.A.,
Complex homogeneous contact manifolds and
quaternionic symmetric spaces,
Jour. Math. Mech. (Indiana Univ. Math. J.) 14(6)(1965),1033-1047

\bibitem{Wolf69} Wolf J.A.,
The action of a real semisimple Lie group on a complex flag manifold.
I: Orbit structure and holomorphic arc components,
Bull. Amer. Math. Soc. 75(1969),1121-1237

\end{thebibliography}
\end{document}